\documentclass[10pt,a4paper,twoside]{amsart}

\usepackage[utf8]{inputenc}
\usepackage{lmodern}
\usepackage{textcomp}
\usepackage[T1]{fontenc}
\usepackage{microtype}

\usepackage[
    top=30truemm,
    bottom=30truemm,
    left=30truemm,
    right=30truemm
]{geometry}

\usepackage{xcolor}
\definecolor{dgreen}{rgb}{0.00, 0.40, 0.35}
\definecolor{dblue}{rgb}{0.10, 0.30, 0.65}

\usepackage{graphicx}
\usepackage{tikz}
\usepackage{tikz-cd}

\usepackage{mathtools,amsmath,amssymb,mathrsfs,bm}
\usepackage{amsthm}

\usepackage{multirow,arydshln}
\usepackage{url}
\usepackage{comment}

\usepackage{hyperref}
\hypersetup{
    colorlinks=true,
    citecolor=dgreen,
    linkcolor=dblue,
    urlcolor=black,
}

\usepackage{aliascnt}
\theoremstyle{definition}

\newtheorem{definition}{Definition}[section]

\newaliascnt{theorem}{definition}
\newtheorem{theorem}[theorem]{Theorem}
\aliascntresetthe{theorem}

\newaliascnt{proposition}{definition}
\newtheorem{proposition}[proposition]{Proposition}
\aliascntresetthe{proposition}

\newaliascnt{lemma}{definition}
\newtheorem{lemma}[lemma]{Lemma}
\aliascntresetthe{lemma}

\newaliascnt{fact}{definition}

\aliascntresetthe{fact}

\newaliascnt{remark}{definition}
\newtheorem{remark}[remark]{Remark}
\aliascntresetthe{remark}

\newaliascnt{corollary}{definition}
\newtheorem{corollary}[corollary]{Corollary}
\aliascntresetthe{corollary}

\newaliascnt{example}{definition}
\newtheorem{example}[example]{Example}
\aliascntresetthe{example}

\newtheorem*{conjecture}{Conjecture}

\newaliascnt{convention}{definition}

\aliascntresetthe{convention}

\newaliascnt{observation}{definition}

\aliascntresetthe{observation}

\newaliascnt{problem}{definition}

\aliascntresetthe{problem}

\newaliascnt{setup}{definition}

\aliascntresetthe{setup}

\newaliascnt{formula}{definition}

\aliascntresetthe{theorem}

\newaliascnt{claim}{definition}
\newtheorem{claim}[claim]{Claim}
\aliascntresetthe{claim}

\newtheorem{maintheorem}{Main Theorem}

\usepackage[capitalise,nameinlink,noabbrev]{cleveref}
\crefname{theorem}{Theorem}{Theorems}
\crefname{proposition}{Proposition}{Propositions}
\crefname{lemma}{Lemma}{Lemmata}
\crefname{corollary}{Corollary}{Corollaries}
\crefname{definition}{Definition}{Definitions}
\crefname{remark}{Remark}{Remarks}
\crefname{example}{Example}{Examples}
\crefname{conjecture}{Conjecture}{Conjectures}
\crefname{convention}{Convention}{Conventions}
\crefname{formula}{Formula}{Formulas}
\crefname{claim}{Claim}{Claims}
\crefname{maintheorem}{Main Theorem}{Main Theorems}
\crefname{section}{\S}{\S\S}
\crefformat{section}{#2\S#1#3}
\crefformat{enumi}{#2(#1)#3}

\newcommand{\xto}[1]{\xrightarrow{#1}}
\newcommand{\imm}{\looparrowright}
\newcommand{\into}{\hookrightarrow}

\renewcommand{\le}{\leqslant}
\renewcommand{\ge}{\geqslant}

\renewcommand{\epsilon}{\varepsilon}
\renewcommand{\phi}{\varphi}

\renewcommand{\tilde}{\widetilde}

\newcommand{\A}{\mathcal{A}}
\newcommand{\K}{\mathcal{K}}
\newcommand{\G}{\mathcal{G}}

\newcommand{\Z}{\mathbb Z}

\newcommand{\R}{\mathbb R}
\newcommand{\C}{\mathbb C}

\newcommand{\I}{\textrm{I}}
\newcommand{\II}{\textrm{I\hspace{-1pt}I}}
\newcommand{\III}{\textrm{I\hspace{-1pt}I\hspace{-1pt}I}}

\newcommand{\pt}{\mathrm{pt.}}
\newcommand{\GL}{\mathrm{GL}}

\newcommand{\SO}{\mathrm{SO}}
\renewcommand{\O}{\mathrm{O}}

\newcommand{\U}{\mathrm{U}}

\newcommand{\Imm}{\mathrm{Imm}}
\newcommand{\Hom}{\mathrm{Hom}}

\newcommand{\Ker}{\operatorname{\mathrm{Ker}}}

\renewcommand{\Im}{\operatorname{\mathrm{Im}}}

\newcommand{\Tor}{\operatorname{\mathrm{Tor}}}

\newcommand{\Int}{\operatorname{\mathrm{Int}}}
\newcommand{\id}{\operatorname{\mathrm{id}}}
\newcommand{\pr}{\operatorname{\mathrm{pr}}}
\newcommand{\SI}{\mathrm{SI}}
\newcommand{\Tp}{\mathrm{Tp}}
\newcommand{\FR}{\mathrm{FR}}

\newcommand{\Co}{\mathrm{Co}}

\newcommand{\B}{\mathrm{B}}
\newcommand{\E}{\mathrm{E}}
\newcommand{\BO}{\mathrm{BO}}

\newcommand{\BSO}{\mathrm{BSO}}

\newcommand{\BU}{\mathrm{BU}}

\newcommand{\Diff}{\mathrm{Diff}}

\newcommand{\near}{\mathrm{near}}
\newcommand{\far}{\mathrm{far}}

\newcommand{\prot}{\mathrm{prot}}

\newcommand{\ind}{\operatorname{\mathrm{ind}}}
\newcommand{\sgn}{\operatorname{\mathrm{sgn}}}

\newcommand{\ol}{\overline}
\newcommand{\lk}{\mathrm{lk}}
\newcommand{\rot}{\mathrm{rot}}
\newcommand{\rel}{\textrm{rel}}
\newcommand{\connhom}{\delta^*}
\newcommand{\spacesymbol}{\, \text{\textvisiblespace} \,}

\makeatletter
\@namedef{subjclassname@2020}{\textup{2020} Mathematics Subject Classification}
\makeatother

\title[Thom polynomials relative to prescribed maps around the boundary]{Thom polynomials relative to prescribed maps \\ around the boundary} 
\author{Masato Tanabe}
\address[M.~Tanabe]{RIKEN Center for Interdisciplinary Theoretical and Mathematical Sciences (iTHEMS), RIKEN, Wako 351-0198, Japan}
\email{masato.tanabe@riken.jp}

\begin{document}

\begin{abstract}
    Thom polynomials are universal cohomological obstructions to the appearance of singularities of given types in differentiable maps.
    As an application, various invariants of immersions have been expressed in terms of singularities of their extension maps, known as singular Seifert surfaces.
    To place these results in a unified framework, we aim in this paper to establish a relative version of Thom polynomial theory.
    Our results consist of four parts.
    (1) We introduce Thom polynomials {\it relative to} prescribed maps around the boundary (or a closed codimension-zero submanifold) that avoid singularities of given types.
    (2) We show a structure theorem for Thom polynomials relative to framable immersions. It expresses them as the sum of the term obtained by substituting Kervaire's relative characteristic classes into the absolute Thom polynomial and a universal correction term.
    (3) We determine correction terms in several cases, not only reinterpreting earlier works as instances of relative Thom polynomials but also recovering some of them.
    Most earlier formulas are summarized as the vanishing of correction terms.
    (4) We give suggestive evidence for the relative Thom polynomials of multi-singularity types $A_0^k$ with an application.
\end{abstract}

\maketitle
\thispagestyle{empty}

\tableofcontents

\section{Introduction}

\subsection{Background}

Thom polynomials are universal cohomological obstructions to the appearance of singularities of given types in differentiable maps.
Since the pioneering work of Thom~\cite{Tho55, Tho57}, the theory has provided various formulas for invariants of manifolds in terms of counting singularities of maps on them.
The theory has also been extensively developed in deep connection with enumerative problems arising in complex/algebraic geometry and representation theory.
Recently, introductory expositions on the theory were published by Rim\'anyi and Ohmoto~\cite{Rim24, Ohm26}; we refer the reader to these and the references therein for further background.

Here, we briefly recall the classical Thom polynomial theory, to which we will refer as the {\it absolute} version (\cref{section:ATP}).
Let $\eta$ be a singularity type for maps from $m$-manifolds to $n$-manifolds.
The {\it Thom polynomial} of $\eta$ is the unique polynomial $\Tp(\eta) = \Tp(\eta)(cl_i, cl'_j)$ in universal characteristic classes $cl_i, cl'_j$ ($1 \le i \le m$, $1 \le j \le n$) which satisfies the following universality:
for any compact $m$-manifold $M$, any $n$-manifold $N$, and any generic map $f \colon M \to N$, 
the substitution $cl_i \mapsto cl_i(M), cl'_j \mapsto f^* cl_j(N)$ into $\Tp(\eta)$ gives the Poincar\'e dual to $\ol{\eta(f)} \subset M$, the closure of the $\eta$-singularity locus of $f$.
More precisely, $\Tp(\eta)$ is the equivariant fundamental class of a cycle corresponding to $\eta$ which is invariant under the action associated with the classification of singularities:
\[\Tp(\eta) = \Tp(\eta)(cl_i, cl'_j) \in H^*(\B \G; R) \cong R[cl_i, cl'_j].\]
Indeed, the characteristic classes $cl_i, cl'_j$ and coefficients $R$ depend on the geometric category of singularities as shown in \cref{table:geometric-context}; 
the substitution is the pull-back of $\Tp(\eta)$ along the classifying map of the $K$-jet bundle $J^K(M, N)$ together with the graph map of $f$ ($K$ is a sufficiently large integer).
A large collection of computed polynomials can be found on Rim\'anyi's website~\cite{TPP}.

\begin{table*}[ht]
    \centering
    \renewcommand{\arraystretch}{1.2} 
    \setlength{\tabcolsep}{6pt} 
    \caption{Basic data for Thom polynomials depending on categories}
    \label{table:geometric-context}
    \begin{tabular}{|c||c|c|c|}
        \hline
        category & classifying space $\B \G$ & variables $cl_i, cl'_j$ & coefficients $R$ \\ \hline
        real $C^\infty$ unoriented & $\BO(m) \times \BO(n)$ & Stiefel--Whitney classes & $\Z_2$ \\ \hline
        real $C^\infty$ oriented & $\BO(m) \times \BO(n)$ & Pontryagin classes and $2$-torsion & $\Z$ \\ \hline
        complex analytic &$\BU(m) \times \BU(n)$ & Chern classes & $\Z$ \\ \hline
    \end{tabular}
\end{table*}

The main subject in this paper is counting singularities {\it relative to prescribed conditions}.
It has already appeared in several contexts, such as pure generalizations of results in absolute case and the extendability problem of maps \cite{Mor29,Lev95,Sae20}.
In particular, our motivation arises from studies on immersions.
In~\cite{ESz03}, Ekholm--Sz\H{u}cs introduced the notion of a {\it singular Seifert surface}, an extension of a given immersion to a null-cobordism of the given source manifold, which is possibly singular but non-singular around the boundary; they established a formula for Smale's complete invariant~\cite{Sma59} of sphere immersions of certain dimensions, in terms of counting singularities on a singular Seifert surface. 
This extends Hughes--Melvin's formula~\cite{HM85}, which requires the existence of a non-singular extension for a given immersion.
Furthermore, their viewpoint and method have subsequently been applied to various classes of immersions with concrete applications~\cite{SSzT02, ST02, Juh05, ESz06, Tak04, Tak06, Tak07a, Tak07b, ET11, Tak12, NP15, Kin15, PS24, GP24, PT25, Tan25} (see also~\cite{Pin18} for a survey).
Takase and Ohmoto referred to the formulas as `relative Thom polynomials'~\cite{Tak07a, Tak12, Ohm26}.
However, their precise definition has not yet been given and the results remain scattered.
In this paper, to place them in a unified framework, we aim to establish a relative version of Thom polynomial theory.

\subsection{Main result}

Our result consists of four parts: 
defining relative Thom polynomials, 
showing their structure theorem, 
determining concrete structures in several cases, 
and showing suggestive evidence of relative Thom polynomials for multi-singularities.

\subsubsection{Definition}

In cases in which we are mainly interested, an immersion $\iota \colon V \imm \R^n$ is prescribed.
We extend it slightly in a normal direction and obtain an immersion $\phi \colon V \times [0, \epsilon] \imm \R^n$. We also extend $\phi$ to a possibly singular map $f \colon M \to \R^n$ on a null-cobordism $M$ of $V \times \{0\}$, and count its singularities.
Here, the extension from $\iota$ to $\phi$ is necessary to adjust the dimensions of source manifolds.
Keeping this situation in mind, we begin with the following general setup.

Let $\eta$ be a singularity type for maps from $m$-manifolds to $n$-manifolds, and fix the following as a {\it prescribed datum}:
\begin{itemize}
    \item compact $m$-manifold $M_0$; 
    \item an $n$-manifold $N_0$;
    \item a map $\phi \colon M_0 \to N_0$ having no $\eta$-singularities
\end{itemize}
{\it We embed $M_0$ into $\B \G$ via the graph map of $\phi$ together with the classifying map of $J^K(M_0, N_0)$.}
Then we define the Thom polynomial of $\eta$ {\it relative to $\phi$} as a relative cohomology class
\[
    \Tp(\eta | \phi) \in H^*(\B \G, M_0; R)
\]
so that for any {\it extension datum} of $\phi$, i.e.,
\begin{itemize}
    \item any $m$-manifold $M$ containing $M_0$ as a closed submanifold\footnote{In this paper, a submanifold is said to be {\it closed} if it is a closed subset of the ambient manifold.} such that $\partial M \subset \partial M_0$\footnote{This condition is required to apply the Alexander duality below.}; 
    \item any $n$-manifold $N$ containing $N_0$ as a closed submanifold;
    \item any extension $f \colon M \to N$ of $\phi$ which is generic with respect to $\eta$,
\end{itemize}
the pull-back of $\Tp(\eta | \phi)$ along the classifying map of $J^K(M, N)$ together with the graph map of $f$ gives the Alexander dual to $\ol{\eta(f)} \subset M - M_0$ (\cref{definition:RTP-sing,theorem:rel-locus}; see also \cref{subsection:RTP-obst} for Steenrod's obstruction theoretic description).
Of course, the restriction of $\Tp(\eta | \phi)$ to $H^*(\B \G; R)$ is just $\Tp(\eta)$.
However, $\Tp(\eta | \phi)$ differs crucially from $\Tp(\eta)$ in that it may not form a polynomial because of the non-equivariance of $M_0 \subset \B \G$. Hence, the pull-back may not be a simple substitution of characteristic classes.

\subsubsection{Structure theorem}

The next problem is to describe the structure of relative Thom polynomials.
For this purpose, we employ Kervaire's relative characteristic classes, which are relative versions of the usual characteristic classes for vector bundles (see \cref{section:RCC}; they can also be regarded as relative Thom polynomials).
We introduce the notion of {\it framable prescribed datum}, from which the relative characteristic classes are induced in a quite natural way (\cref{definition:framable-prescribed-data}). In particular, the map $\phi$ is assumed to be non-singular.
Once such a datum is given, we can substitute the induced classes into the absolute Thom polynomial and also compare the resulting relative class to the relative Thom polynomial.
The main result is stated as follows and has three versions, as in the absolute version.
Let $\epsilon^k$ denote the trivial vector bundle of rank $k$ and $\gamma^l$ the tautological bundle over $\BO(l)$ or $\BU(l)$.
Assume that $m \le n$.

\begin{maintheorem}[real $C^\infty$ unoriented version; see \cref{maintheorem:BO-Z2-restated}]\label{maintheorem:BO-Z2}
{\it
    For any $\K$-singularity type $\eta \subset J^K(m, n)$ of codimension $q$ and any framable prescribed datum $\phi \colon M_0 \imm N_0$, there is a unique class $\alpha(\eta | \phi) \in H^{q-1}(M_0; \Z_2)$ satisfying the following: for any frame $\Theta$ on $N_0$,
    \[\Tp(\eta | \phi) = \Tp(\eta) \Big( (\pr_1)^* w_i(\epsilon^{n-m} \oplus \gamma^m | \theta), (\pr_2)^* w_j(\gamma^n | \Theta) \Big) + \connhom \alpha(\eta | \phi)\]
    in $H^q(\BO(m) \times \BO(n), M_0; \Z_2)$, 
    where $\connhom$ is the connecting homomorphism.
}
\end{maintheorem}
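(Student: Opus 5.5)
The proof will rest on the long exact sequence of the pair $(\B\G, M_0)$ with $\B\G = \BO(m)\times\BO(n)$:
\[
H^{q-1}(\B\G;\Z_2) \to H^{q-1}(M_0;\Z_2) \xto{\connhom} H^q(\B\G, M_0;\Z_2) \xto{j^*} H^q(\B\G;\Z_2) \xto{i^*} H^q(M_0;\Z_2).
\]
Set $P := \Tp(\eta)\big((\pr_1)^* w_i(\epsilon^{n-m}\oplus\gamma^m|\theta), (\pr_2)^* w_j(\gamma^n|\Theta)\big)$, a relative class obtained by substituting Kervaire's relative Stiefel--Whitney classes (defined using the frame $\theta$ induced on $\epsilon^{n-m}\oplus TM_0 \cong \phi^*TN_0$ by the framable datum and $\Theta$) into the absolute Thom polynomial, computed with the relative cup product $H^*(\B\G,M_0)\otimes H^*(\B\G)\to H^*(\B\G,M_0)$. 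Since every nonconstant monomial contains at least one relative factor, this is well defined as long as $\Tp(\eta)$ has no constant term, which holds because $q\ge 1$.

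\textbf{Existence.} By the restriction property of relative characteristic classes recalled in \cref{section:RCC}, $j^* w_i(\,\cdot\,|\theta)= w_i(\,\cdot\,)$. Hence $j^*P=\Tp(\eta)$. The definition of $\Tp(\eta|\phi)$ also gives $j^*\Tp(\eta|\phi)=\Tp(\eta)$. The difference $\Tp(\eta|\phi)-P$ therefore lies in $\Ker j^* = \Im\connhom$, so it equals $\connhom\alpha$ for some $\alpha\in H^{q-1}(M_0;\Z_2)$.

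\textbf{Independence of $\Theta$ and uniqueness.} These are the main obstacle, since $\connhom$ is injective only modulo the image of $i^*\colon H^{q-1}(\B\G)\to H^{q-1}(M_0)$. To handle this I would first show that $i^*$ kills all positive-degree classes. The embedding $M_0\to\B\G$ classifies $(TM_0,\phi^*TN_0)$. Using the framing, $\phi^*TN_0$ is trivial, and $TM_0\oplus\epsilon^{n-m}\cong\phi^*TN_0$ is trivial as well, so every Stiefel--Whitney class of both factors vanishes on $M_0$. Since $H^*(\BO(m)\times\BO(n);\Z_2)$ is generated by these classes, $i^*$ vanishes in positive degrees. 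For $q\ge2$ this makes $\connhom$ injective on $H^{q-1}(M_0)$, and $\alpha$ is unique for a fixed $\Theta$. The case $q=1$, where $H^0$ contributes constants, needs a separate remark: the constants map to zero under $\connhom$ (the image of $H^0(\B\G)\to H^0(M_0)$), so one takes $\alpha$ modulo constants, or checks that the statement's normalization excludes them.

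\textbf{Independence of the frame.} It remains to show that $P$ does not depend on $\Theta$, so that one $\alpha$ works for every frame. Changing $\Theta$ changes $\theta$ by a map $M_0\to\O(n)$, and Kervaire's classes change by $\connhom$ of the pulled-back transgressions. I expect the cleanest route is to show that $P$ itself is frame-independent. Write $w(\,\cdot\,|\theta')-w(\,\cdot\,|\theta)=\connhom(c)$. In any product, a term involving two relative factors then contains $\connhom(c)\cdot x$ with $x$ relative, and this product vanishes because $\connhom(c)\smile x=\connhom(c\smile i^*x)$ and $i^*x=0$. The only surviving difference is $\connhom\big(c\smile i^*(\text{absolute cofactor})\big)$, which is zero unless the cofactor has degree zero, i.e.\ unless it comes from the linear terms of $\Tp(\eta)$.

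For the linear terms I would use the fact that $\theta$ is induced compatibly from $\Theta$. Both relative classes are then pulled back from the same frame on $N_0$, and $w_j(\epsilon^{n-m}\oplus\gamma^m|\theta)$ and $w_j(\gamma^n|\Theta)$ change by the same $\connhom c_j$. $\Tp(\eta)$ depends only on the differences $w(\gamma^n)/w(\gamma^m)$ (Damon-type property for $\K$-types), so these changes cancel in the linear part; I would verify this cancellation carefully as the crux. Finally, I will compare the result with \cref{maintheorem:BO-Z2-restated}.
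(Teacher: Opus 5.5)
Your proposal is essentially the paper's proof. Existence comes from exactness of the pair sequence, since $\Tp(\eta|\phi)$ and the substituted class both map to $\Tp(\eta)$ under $j^*$, and uniqueness comes from injectivity of $\connhom$. Independence of $\Theta$ is obtained in three steps: every term containing two relative factors is killed via $u\smile\connhom v=0$; the two surviving linear changes are matched by naturality of difference classes, $d_{w_q}(\xi,\theta)=\phi^*d_{w_q}(\Xi,\Theta)$; and they cancel because $a(\eta)=-b(\eta)$ (\cref{theorem:ATP-for-K}). The only variation is in how you get $i^*=0$ in positive degrees: you read it off directly from the triviality of $\epsilon^{n-m}\oplus TM_0\cong\phi^*TN_0$, whereas the paper deduces it from the section $s_\Theta$ of $j^*$. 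The two arguments are equivalent.

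Two caveats apply equally to the paper's argument.

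First, your $q=1$ caution is correct. The map $\connhom\colon H^0(M_0;\Z_2)\to H^1(\BO(m)\times\BO(n),M_0;\Z_2)$ kills the class $1$, which is the image of $H^0(\BO(m)\times\BO(n);\Z_2)$. So \cref{theorem:inj-conn-hom} fails for $q=1$. For such types (e.g.\ $A_1\subset J^K(m,m)$) one only gets uniqueness of $\alpha(\eta|\phi)$ modulo $\Ker\connhom$, as in \cref{maintheorem:BO-Z2-restated}, not the literal uniqueness claimed in the statement.

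Second, the step you flag as the crux needs $w_q$ to be one of the substituted variables, i.e.\ $q\le m$.
\begin{itemize}
\item For $q\ge m+2$ nothing is at stake, because $H^{q-1}(M_0;\Z_2)=0$.
\item For $q=m+1\le n$ with $H^m(M_0;\Z_2)\neq0$ (i.e.\ $M_0$ has a closed component), $a(\eta)=0$ while $b(\eta)$ can be $1$.
\end{itemize}
A concrete case: take $A_1\subset J^K(1,2)$, so $\Tp(A_1)=w'_2+w'_1w_1+w_1^2$. Let $\phi\colon S^1\to S^1\times\R$ be the zero section, and let $\Theta,\Xi$ be two frames differing by one full rotation along $S^1$. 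Then the substituted class changes by $\connhom\phi^*d_{w_2}(\Xi,\Theta)\neq0$, since $\connhom$ is injective on $H^1(S^1;\Z_2)$. Hence no single $\alpha$ works for both frames.

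Your Damon-type cancellation goes through verbatim once you do one of two things. Either exclude this case, or also substitute the relative classes $w_i(\epsilon^{n-m}\oplus\gamma^m|\theta)$ for $m<i\le n$; this is the quotient form of \cref{remark:relative-char-class-for-K} with the full rank-$n$ denominator. The same edge case is silently skipped in the fourth step of the paper's proof of \cref{claim:uniqueness}.
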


\begin{maintheorem}[real $C^\infty$ oriented version; see \cref{maintheorem:BO-Z-restated} and also \cref{remark:2-torsion}]\label{maintheorem:BO-Z}
{\it
    For any cooriented $\K$-singularity type $\eta \subset J^K(m, n)$ of codimension $q$ and any framable prescribed datum $\phi \colon M_0 \imm N_0$, there is a unique class $\alpha(\eta | \phi) \in H^{q-1}(M_0; \Z)$ modulo $2$-torsion satisfying the following: for any frame $\Theta$ on $N_0$,
    \[\Tp(\eta | \phi) \equiv \Tp(\eta) \Big( (\pr_1)^* p_i(\epsilon^{n-m} \oplus \gamma^m | \theta), (\pr_2)^* p_j(\gamma^n | \Theta) \Big) + \connhom \alpha(\eta | \phi)\]
    in $H^q(\BO(m) \times \BO(n), M_0; \Z)$ modulo $2$-torsion.
}
\end{maintheorem}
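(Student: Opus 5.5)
The plan is to reduce the statement to a computation in the long exact sequence of the pair $(\B\G, M_0) = (\BO(m)\times\BO(n), M_0)$ with integer coefficients, and then to mimic the argument of \cref{maintheorem:BO-Z2}.

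\textbf{Step 1.} First I will check that the right-hand side is well defined modulo $2$-torsion. The framable prescribed datum gives a framing $\theta$ of $\epsilon^{n-m}\oplus\phi^*TM_0$, identified with $\phi^*TN_0$ via $d\phi$. Together with a frame $\Theta$ on $N_0$, this yields Kervaire's relative Pontryagin classes $(\pr_1)^*p_i(\epsilon^{n-m}\oplus\gamma^m|\theta)$ and $(\pr_2)^*p_j(\gamma^n|\Theta)$ in $H^*(\B\G, M_0;\Z)$ modulo $2$-torsion (\cref{section:RCC}). Relative classes live in a ring, or at least form a module over $H^*(\B\G)$, via the relative cup product $H^*(X,A)\otimes H^*(X)\to H^*(X,A)$. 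So I can substitute them into $\Tp(\eta)$, writing each monomial with at least one relative factor; the constant term vanishes since $q>0$.

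\textbf{Step 2.} Let $D := \Tp(\eta|\phi) - \Tp(\eta)(\text{relative classes})$. Both terms restrict to $\Tp(\eta)$ in $H^q(\B\G;\Z)$. For $\Tp(\eta|\phi)$ this holds by definition (\cref{definition:RTP-sing}). For the substituted term it holds because Kervaire's classes restrict to the absolute Pontryagin classes modulo $2$-torsion. Hence $j^*D=0$, and exactness of
\[H^{q-1}(\B\G)\to H^{q-1}(M_0)\xrightarrow{\connhom} H^q(\B\G,M_0)\xrightarrow{j^*} H^q(\B\G)\]
gives $D=\connhom\alpha$ for some $\alpha\in H^{q-1}(M_0;\Z)$. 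This settles existence for one fixed frame $\Theta$.

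\textbf{Step 3 (independence of $\Theta$).} Changing $\Theta$ changes $p_j(\gamma^n|\Theta)$ by $\connhom$ of a class pulled back from the classifying map $N_0\to\mathrm{O}(n)$ (or $\SO(n)$) of the change of frame. Rationally, that class comes from the suspension of the Pontryagin classes to $H^{4j-1}(\SO(n))$, i.e. from primitive transgressive generators. The difference in the substituted polynomial is then of the form $\connhom(\beta)\cdot x$ with $x$ absolute. By the module identity $\connhom(\beta)\cdot x=\connhom(\beta\cdot x|_{M_0})$, this is $\connhom$ of something. I must then show that this correction is zero modulo torsion, not merely in the image of $\connhom$.

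For this I plan to use that on $M_0$ the map $\phi$ is an immersion. Consequently $\phi^*TN_0\cong\epsilon^{n-m}\oplus TM_0$ is compatible with the framings, so the absolute classes restricted to $M_0$ satisfy the relation that makes the first-order variation cancel. The $\Z_2$ case presumably does this via Kervaire's characterization of relative classes as obstruction classes, and I would follow it.

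\textbf{Step 4 (uniqueness).} Uniqueness means $\connhom$ is injective modulo $2$-torsion on $H^{q-1}(M_0)$. Equivalently, $H^{q-1}(\B\G;\Z)\to H^{q-1}(M_0;\Z)$ is zero modulo torsion. I expect this to follow from framability: $M_0\to\B\G$ factors, up to the frames, through a map whose rational pull-back of Pontryagin classes vanishes, because the bundles $\epsilon^{n-m}\oplus TM_0$ and $\phi^*TN_0$ are trivialized. So all positive-degree rational classes pull back to zero. This is why the statement is only modulo $2$-torsion: torsion classes of $H^*(\BO;\Z)$ need not restrict trivially.

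\textbf{Main obstacle.} I expect the main difficulty to be Step 3, showing that the correction for a change of $\Theta$ vanishes exactly, not just up to $\Im\connhom$. There I would try to express $\Tp(\eta)$ modulo torsion as a polynomial in the relative Pontryagin classes of the virtual difference bundle $\gamma^n-\gamma^m$ alone. For $\K$-singularities this is the Damon-type dependence of $\Tp(\eta)$ only on the quotient classes. The frame change then enters only through the difference of $\theta$ and $\Theta$, which the immersion $\phi$ compatibly fixes.
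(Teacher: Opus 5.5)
\textbf{Overall.} Your outline has the same architecture as the paper's proof: fix $\Theta$, get $\alpha$ from the exact sequence of $(\B\G,M_0)$, get uniqueness from injectivity of $\connhom$, then show independence of $\Theta$. But Step 3, the only step with real content, is announced rather than proved. The mechanism you sketch there also cannot work. Every positive-degree absolute class of $\BO(m)\times\BO(n)$ restricts to zero on $M_0$ (this is your Step 4), so there is no ``relation among absolute classes on $M_0$'' to exploit.

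\textbf{What Step 3 needs.} The missing piece is the following computation. When $\Theta$ is replaced by $\Xi$, \emph{both} frames move, since the induced source frame becomes $\xi$:
\[
p_k(\epsilon^{n-m}\oplus\gamma^m|\xi)=p_k(\epsilon^{n-m}\oplus\gamma^m|\theta)+\connhom d_{p_k}(\xi,\theta),\qquad (\pr_2)^*p_k(\gamma^n|\Xi)=(\pr_2)^*p_k(\gamma^n|\Theta)+\connhom\phi^*d_{p_k}(\Xi,\Theta).
\]
(i) In every monomial of degree $\ge 2$ the variation vanishes, because $u\smile\connhom v=0$ for a relative class $u$ (\cref{theorem:easy}). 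In your own formulation, $\connhom(v)\cdot x=\connhom(v\cdot x|_{M_0})$, and $x|_{M_0}=0$ because $x$ is the restriction of a relative class. So the whole change is $\connhom\bigl(a(\eta)\,d_{p_{q/4}}(\xi,\theta)+b(\eta)\,\phi^*d_{p_{q/4}}(\Xi,\Theta)\bigr)$, where $a(\eta),b(\eta)$ are the coefficients of $p_{q/4}$ and $p'_{q/4}$; if $4\nmid q$ there is nothing to check.

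(ii) The frames $\xi,\theta$ are the images of $\phi^*\Xi,\phi^*\Theta$ under $\phi^*TN_0\cong\epsilon^{n-m}\oplus TM_0$. Naturality of difference classes therefore gives $d_{p_{q/4}}(\xi,\theta)=\phi^*d_{p_{q/4}}(\Xi,\Theta)$.

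(iii) $\K$-invariance (\cref{theorem:ATP-for-K}) gives $a(\eta)=-b(\eta)$.

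Hence the change is exactly zero, and injectivity of $\connhom$ finishes the proof. Your last-paragraph idea of ``relative quotient classes'' is the same argument repackaged (cf.\ \cref{remark:relative-char-class-for-K}). However, their $\Theta$-independence needs precisely (i) and (ii), which you did not supply.

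\textbf{Step 2.} $j^*D$ is not $0$; it is the $2$-torsion ($\Im\beta$) part of $\Tp(\eta)$. Before invoking exactness you must lift it to a $2$-torsion relative class. One way is to take $\beta$ of a relative Stiefel--Whitney polynomial, as in \cref{remark:2-torsion}; this is possible because $j^*$ is onto with $\Z_2$ coefficients in positive degrees.

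\textbf{Step 4.} Your diagnosis here is wrong: torsion classes \emph{do} restrict trivially to $M_0$. They are $\beta$ of polynomials in $w_i,w'_j$, and these vanish on $M_0$. The $p_i,p'_j$ also vanish on $M_0$ integrally, not just rationally, since the relevant bundles are trivialized there. So $\connhom$ is injective in positive degrees outright; the paper gets this from the section $s_\Theta$, which makes $j^*$ surjective. Your rational argument alone yields uniqueness only modulo \emph{all} torsion. That is weaker than the statement if $H^{q-1}(M_0;\Z)$ has odd torsion. The ``modulo $2$-torsion'' in the statement comes instead from the substitution ignoring the $\Im\beta$-part of $\Tp(\eta)$.
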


\begin{maintheorem}[complex analytic version; see \cref{maintheorem:BU-Z-restated}]\label{maintheorem:BU-Z}
{\it
    For any complex $\K$-singularity type $\eta \subset J^K(m, n)$ of codimension $q$ and any framable complex prescribed datum $\phi \colon M_0 \imm N_0$, there is a unique class $\alpha(\eta | \phi) \in H^{2q-1}(M_0; \Z)$ satisfying the following: for any complex frame $\Theta$ on $N_0$,
    \[\Tp(\eta | \phi) = \Tp(\eta) \Big( (\pr_1)^* c_i(\epsilon^{n-m} \oplus \gamma^m | \theta), (\pr_2)^* c_j(\gamma^n | \Theta) \Big) + \connhom \alpha(\eta | \phi)\]
    in $H^{2q}(\BU(m) \times \BU(n), M_0; \Z)$.
}
\end{maintheorem}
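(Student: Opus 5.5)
The plan rests on the long exact sequence of the pair $(\B\G, M_0)$. Write $T$ for the substituted class $\Tp(\eta)\big((\pr_1)^*cl_i(\epsilon^{n-m}\oplus\gamma^m|\theta),(\pr_2)^*cl_j(\gamma^n|\Theta)\big)\in H^*(\B\G,M_0;R)$. By the definition of Kervaire's relative classes (\cref{section:RCC}), the image of a relative class $cl(\xi|\theta)$ under the restriction $j^*\colon H^*(\B\G,M_0)\to H^*(\B\G)$ is the absolute class $cl(\xi)$. Since $j^*$ is a ring homomorphism (the cup product $H^*(X,A)\otimes H^*(X,A)\to H^*(X,A)$ is compatible with $j^*$), this gives $j^*T=\Tp(\eta)$. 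By the remark after the definition of $\Tp(\eta|\phi)$, also $j^*\Tp(\eta|\phi)=\Tp(\eta)$.

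Existence then follows from exactness. The difference $\Tp(\eta|\phi)-T$ lies in $\Ker j^*=\Im\connhom$, where $\connhom\colon H^{q-1}(M_0)\to H^q(\B\G,M_0)$ is the connecting map. Hence there is $\alpha$ with $\Tp(\eta|\phi)=T+\connhom\alpha$. In the complex case the degrees shift to $2q-1$ and $2q$. In the oriented case everything is read modulo $2$-torsion, which is harmless because $\Tp(\eta)$ is only a polynomial in Pontryagin classes modulo $2$-torsion (\cref{remark:2-torsion}).

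For uniqueness, the preceding map in the sequence is the restriction $i^*\colon H^{q-1}(\B\G)\to H^{q-1}(M_0)$, so it suffices to show $\Ker\connhom=\Im i^*=0$. Framability should give this. The embedding $M_0\to\B\G$ is the classifying map of $J^K(M_0,N_0)$ together with the graph of $\phi$. For a framable datum, $TM_0\oplus\epsilon^{n-m}\cong\phi^*TN_0$ is framed by $\theta$, and $TN_0$ is framed by $\Theta$. So the composite $M_0\to\B\G\to\BO(m)\times\BO(n)$ factors up to homotopy through $\BO(m)$ via the map classifying $TM_0$, whose stabilization is null. I would try to strengthen this to the statement that $M_0\to\BO(m)\times\BO(n)$ is itself null-homotopic (or factors through a space such as $\O(n)/\O(m)$ with trivial positive-degree image in the range needed), which gives $i^*=0$ in positive degrees. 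Degree $q-1=0$ needs separate care, but then $q=1$ and there $\connhom$ on $H^0$ kills constants; this can be handled by normalizing or by noting that the class is determined.

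The claim "for any frame $\Theta$" also has to be checked. Different frames $\Theta$ change $T$ by $\connhom$ of something, so a priori $\alpha$ could depend on $\Theta$. I would show that the relative characteristic classes $cl_j(\gamma^n|\Theta)$ pulled back to $(\B\G,M_0)$ do not depend on $\Theta$ once the frame $\theta$ is induced from $\Theta$ through $\phi$. The reason is that changing $\Theta$ changes $\theta$ compatibly, and the difference classes, which are obstruction classes of maps $M_0\to \O(n)$, cancel in the polynomial combination. Otherwise I would state $\alpha$ as depending only on $\phi$ after this cancellation.

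I expect the main obstacle to be the injectivity of $\connhom$, that is, the vanishing of $i^*$. Framability trivializes the bundles only stably, so I would first verify carefully what \cref{definition:framable-prescribed-data} actually guarantees about the classifying map of $M_0$.
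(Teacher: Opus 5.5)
Your existence argument is correct and is essentially the paper's \cref{claim:quasi-existence}: $j^*$ sends both $\Tp(\eta|\phi)$ and the substituted class $T_\Theta$ to $\Tp(\eta)$, so their difference lies in $\Im\connhom$.

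The statement asserts more, however: a single $\alpha$ works for \emph{every} complex frame $\Theta$. This forces $T_\Theta=\Tp(\eta|\phi)-\connhom\alpha$ to be independent of $\Theta$, and this is where your proposal has a genuine gap.

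Your claim that the classes $(\pr_2)^*c_j(\gamma^n|\Theta)$ do not depend on $\Theta$ is false. Replacing $\Theta$ by $\Xi$ changes them by $\connhom\phi^*d_{c_j}(\Xi,\Theta)$, and changes $(\pr_1)^*c_i(\epsilon^{n-m}\oplus\gamma^m|\theta)$ by $\connhom d_{c_i}(\xi,\theta)$. These are nonzero in general. For example, take $m=n=1$, $M_0=N_0$ an annulus in $\C$, $\phi=\id$, $\Theta=\partial/\partial z$ and $\Xi=z\,\partial/\partial z$. Then the difference class is a generator of $H^1(S^1;\Z)$, and $\connhom$ is injective there.

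What cancels is only the specific combination $\Tp(\eta)$, and this needs three inputs you do not supply:
\begin{itemize}
\item[(a)] $u\smile\connhom v=0$ for a relative class $u$ (\cref{theorem:easy}). Hence in $T_\Xi-T_\Theta$ every nonlinear monomial drops out and only the degree-$q$ generators survive. What remains is $a(\eta)\,\connhom d_{c_q}(\xi,\theta)+b(\eta)\,\connhom\phi^*d_{c_q}(\Xi,\Theta)$, where $a(\eta),b(\eta)$ are the coefficients of $c_q,c'_q$ in $\Tp(\eta)$.
\item[(b)] Naturality: $d_{c_q}(\xi,\theta)=\phi^*d_{c_q}(\Xi,\Theta)$, because $\xi,\theta$ are transported from $\Xi,\Theta$ through $\phi^*TN_0\cong\epsilon^{n-m}\oplus TM_0$.
\item[(c)] The relation $a(\eta)=-b(\eta)$.
\end{itemize}
Step (c) is exactly where the hypothesis that $\eta$ is a $\K$-singularity type enters: its Thom polynomial is a polynomial in the quotient Chern classes (\cref{theorem:ATP-for-K}). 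Your proposal never uses this hypothesis. Without it, the residual term $(a(\eta)+b(\eta))\,\connhom d_{c_q}(\xi,\theta)$ need not vanish, and no common $\alpha$ need exist.

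On uniqueness, the step you flag as the main obstacle is easy, and you already hold the key fact. Every generator $c_i,c'_j$ of $H^*(\BU(m)\times\BU(n);\Z)$ has a relative lift, namely the relative Chern class, and products of lifts are lifts. So $j^*$ is surjective in all positive degrees; the paper packages this as a section $s_\Theta$ of $j^*$. By exactness $i^*=0$ in positive degrees, so $\connhom$ is injective on $H^{2q-1}(M_0;\Z)$ (\cref{theorem:inj-conn-hom}).

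In the complex case it is even more direct: $H^{2q-1}(\BU(m)\times\BU(n);\Z)=0$, since this cohomology is concentrated in even degrees. Your degree-zero worry never arises, because $2q-1\ge 1$.

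Your null-homotopy route is unnecessary, and as written it is left unproven. (It does hold here: $\BU(m)\to\BU$ is $(2m+1)$-connected and $M_0$ has real dimension $2m$, so the stably trivial $TM_0$ is trivial.) With injectivity established, (a)--(c) give $\Theta$-independence and uniqueness of $\alpha$ together.
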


As a corollary, for any extension datum $f \colon M \to N$ of $\phi$, the Alexander dual to $\ol{\eta(f)} \subset M - M_0$ is expressed as the sum of the substitution part and the class $\alpha(\eta | \phi)$ (\cref{theorem:exp-imm}).
We call $\alpha(\eta | \phi)$ the {\it correction term of $\Tp(\eta | \phi)$}.
This is independent of the choice of an extension datum and of a frame $\Theta$ on $N_0$. (Hence, so is the substitution part.)
Furthermore, $\alpha(\eta | \phi)$ forms a regular homotopy invariant of $\phi$ (\cref{corollary:correction-reg-htpy-inv}).
We also recover and generalize Takase's formula for self-intersection of singular loci~\cite{Tak12} (\cref{remark:alt-proof-8-dim}).

The proofs of the Main Theorems are completely parallel in the three categories.
Each of them is divided into two steps.
First, we find a splitting of the cohomology long exact sequence for the pair $(\B \G, M_0)$ along the relative characteristic classes.
Second, we compare correction terms with respect to two frames on $N_0$.
The $\K$-invariance of $\eta$ is crucial to the second step.
Our argument also partially applies to the case where $m \ge n$ (\cref{theorem:exp-subm}).

\subsubsection{Determining correction terms}

It is important to determine correction terms for the structure of relative Thom polynomials.
First, we consider the complex $\K$-singularity type $A_1$ in dimensions $(m, n) = (1, 1)$ and show that the correction term $\alpha(A_1 | \phi)$ for a certain $\phi$ vanishes 
(\cref{formula:A1-cpx-m=1}). It recovers the relative version of the Riemann--Hurwitz formula.

We also return to earlier works on singular Seifert surfaces, and apply our framework to them.
We determine correction terms for framable prescribed data of the form $\phi \colon V \times [0, \epsilon] \to \R^n$ ($V$ is a null-cobordant $(m - 1)$-manifold) with the $\K$-singularity types listed in \cref{table:list-sing} (see \cref{subsection:list-sing-types} for the definitions of types). This includes also corresponding works in the case where $m \ge n$.
Then we observe that the correction terms vanish in many cases, and that these vanishings are equivalent to the formulas obtained by the corresponding earlier works, despite differences in their nature and proofs.
For example, in Cases (i) and (ii), the vanishing is shown within our framework (\cref{formula:A1-real,formula:A1-cpx}).
The key is to compare correction terms for two prescribed data (\cref{theorem:correction-compare}).
It partially recovers and generalizes N\'emethi--Pint\'er's formulas for the (complex) Smale invariant of immersions associated with map-germs~\cite{NP15} (\cref{theorem:NP15-real,theorem:NP15}).
The correction terms for Cases (iii)--(ix) are determined by reinterpreting corresponding earlier formulas as those of relative Thom polynomials.
In particular, the vanishing in Case (viii) yields a formula for Takase's cobordism invariant of immersions in terms of classical invariants such as the $\mu$-invariant and Hirzebruch's signature defect (\cref{formula:Sigma2-dim4}).
Only in Case (vii), the vanishing remains unknown. However, the vanishing also yields a similar formula for Ekholm--Sz\H{u}cs--Takase--Saeki's regular homotopy invariant of immersions (\cref{conj-formula:A2-SSzT02}).

\begin{table}[ht]
    \centering
    \renewcommand{\arraystretch}{1.2} 
    \setlength{\tabcolsep}{6pt} 
    \caption{Correction terms for framable prescribed data $\phi \colon V \times [0, \epsilon] \to \R^n$}
    \label{table:list-sing}
    \makebox[\textwidth][c]{
    \begin{tabular}{|c||l|l|l|l||l|l|l|}
        \hline
        Case & type $\eta$ & dimensions $(m, n)$ & bdry.\ $V$ & $\alpha(\eta | \phi) \stackrel{?}{=} 0$ & coeff. & see & relevance \\ \hline
        (i) & $A_1$ & $m \le n$ & sphere & Yes & $\Z_2$ & \cref{formula:A1-real} & \cite{NP15} \\ 
        (ii) &  & $m \le n$ (cpx.\ an.) & sphere & Yes & $\Z$ & \cref{formula:A1-cpx} & \cite{NP15} \\ 
        (iii) &  & $m \ge n$ & any & Yes & $\Z_2$ & \cref{formula:A1-nega} & \cite{Sae20} \\
        (iv) &  & $m \ge n = 1$ & any & Yes & $\Z$ & \cref{formula:A1-fxn} & \cite{Mor29} \\ \hline
        (v) & $A_2$ & $(2k, 2)$, $k \ge 1$ & any & Yes & $\Z_2$ & \cref{formula:A2-subm} & \cite{Lev95} \\
        (vi) &  & $(4k, 6k-1)$, $k \ge 1$ & sphere & Yes & $\Z$ & \cref{formula:A2-ESz03} & \cite{ESz03} \\ 
        (vii) &  & $(4, 5)$ & any & ? & $\Z$ & \cref{formula:A2-SSzT02} & \cite{SSzT02} \\ \hline
        (viii) & $\Sigma^2$ & $(4, 4)$ & any & Yes & $\Z$ & \cref{formula:Sigma2-dim4} & \cite{Tak07a,ET11} \\ \hline
        (ix) & $\Sigma_\FR$ & $(8, 8)$ & any & Yes & $\Z$ & \cref{formula:8-dim} & \cite{Tak12} \\ \hline
    \end{tabular}
    }
\end{table}

These results lead us to the following expectation.

\begin{conjecture}
{\it
    For any $\K$-singularity type $\eta$ of codimension $m$, any null-cobordant $(m - 1)$-manifold $V$, and any framable prescribed datum of the form $\phi \colon V \times [0, \epsilon] \to \R^n$,
    \[\alpha(\eta | \phi) = 0.\]
}
\end{conjecture}

\subsubsection{Observation for multi-singularities}

Multi-singularities of maps have also been deeply studied in the complex algebraic/analytic category~\cite{Ron73, Ron84, Kle87, Rim02, Kaz03, Kaz06, Ohm24}.
Although few are known in the real $C^\infty$ category~\cite{Her75, Ron80, Szu86, ESz02}, real multi-singularities on singular Seifert surfaces play important roles in~\cite{ESz03, SSzT02, Tak04, Tak06}.
In particular, Ekholm--Sz\H{u}cs~\cite{ESz03} essentially showed a relative version of Herbert's multiple-point formula, which enumerates $A_0^k$-points of immersions.
This is suggestive evidence for relative Thom polynomials of multi-singularities.
We give the precise statement and proof of a relative version of Herbert's multiple-point formula (\cref{section:rel-multi}).
We also apply it to generalize Ekholm--Sz\H{u}cs' formula for Ekholm's linking invariant of generic immersions~\cite{Ekh01a} (\cref{section:linking}).
Note that the linking invariant has recently been applied to the study of complex singularities and high-codimensional knots~\cite{PS24, PT25, GT25}.

\vspace{10pt}

It would be interesting to find structure theorems and determine correction terms for other singularity types and broader contexts in Thom polynomial theory (cf.~\cite{Kaz06, Rim24, Ohm24}).
This direction would illuminate the aspect of Thom polynomials as topological obstructions.

\subsection*{Organization}

The rest of this paper is organized as follows.
In \cref{section:ATP}, we recall Thom polynomials for singularities of maps.
In \cref{section:RTP}, we define relative Thom polynomials.
In \cref{section:RCC}, we recall Kervaire's relative characteristic classes.
In \cref{section:main}, we show the structure theorem of relative Thom polynomials and show several properties of correction terms.
In \cref{section:classif-imm}, we recall classification of immersions and fix some conventions.
\cref{section:case-A1,section:case-general-posi,section:case-general-nega} are devoted to determining correction terms for the $\K$-singularity types listed in \cref{table:list-sing}.
In \cref{section:multi-case}, we discuss suggestive evidence of relative Thom polynomials for multi-singularities.
The reader can read \cref{section:multi-case} independently of the other sections.

\subsection*{Conventions}

Throughout this paper, all manifolds and maps are assumed to be of class $C^\infty$ unless specifically noted.
The boundary of an oriented manifold is oriented by the outward normal first convention.
For a locally finite cycle $C$ of codimension $q$ in a space $X$, let $[[C]] \in H^q(X, X - C)$ denote the {\it refined fundamental class} of $C$ (the Thom class); for a subset $A \subset X$ such that $A \cap C = \varnothing$, let $[C] \in H^q(X, A)$ denote the restriction of $[[C]]$ (the Poincar\'e or Alexander dual).
If $C$ consists of finite points, let $\# C$ denote its algebraic number.
For a group $G$, let $\{G\}$ denote the $G$-local system naturally determined by the context.

\subsection*{Acknowledgements}

The author is sincerely grateful to Toru Ohmoto, Masamichi Takase, Naohiko Kasuya, L\'aszl\'o Feh\'er, Gerg\H{o} Pint\'er, Andr\'as Sz\H{u}cs, Tam\'as Terpai, Andr\'as Cs\'epai, and Rich\'ard Rim\'anyi for stimulating discussions during the author's visit and for invaluable comments on earlier versions of this article.
Notably, Toru Ohmoto guided him toward the topic of this paper, and has given him constant encouragement and insightful advice. 
L\'aszl\'o Feh\'er kindly pointed out issues of sign conventions and also suggested to him descriptions of relative Thom polynomials and relative characteristic classes which are clearer than those in earlier versions.
This work was supported by JSPS KAKENHI 25KJ0480.

\subsection*{Use of AI}

The author used ChatGPT 5.5 Thinking/Pro to help identify typographical errors and minor inconsistencies. The author takes full responsibility for the mathematical content in this paper, including all statements and proofs.

\section{Absolute Thom polynomials}\label{section:ATP}

We briefly recall notions and facts on singularity theory of differentiable maps and Thom polynomials.
See~\cite{MN20, Rim24, Ohm26} for details.

\subsection{Conventions}\label{subsection:sing-types}

Let $m$, $n$, and $K$ be positive integers.
Throughout this paper, let $\G$ denote one of Mather's groups
\begin{align*}
    \A &= \Diff(\R^m, 0) \times \Diff(\R^n, 0), \\
    \K &= \{H(x, y) = (H_0(x), H_1(x, y)) \in \Diff(\R^m \times \R^n, (0, 0)) \mid H_1(x, 0) = 0\}.
\end{align*}
These groups act on the space of map-germs $f \colon (\R^m, 0) \to (\R^n, 0)$ by
\begin{align*}
    (\sigma, \tau) \cdot f &= \tau \circ f \circ \sigma^{-1} && ((\sigma, \tau) \in \A), \\
    H \cdot f &= H_1 \circ (\id_{(\R^m, 0)}, f) \circ (H_0)^{-1} && (H = (H_0, H_1) \in \K)
\end{align*}
and on the {\it $K$-jet space} $J^K(m, n)$ in the same way.
Two (jets of) map-germs are said to be {\it $\G$-equivalent} if they lie in the same $\G$-orbit.

\begin{definition}\label{definition:sing-type}
    A $\G$-{\it singularity type of codimension $q$} is a $\G$-invariant and Whitney stratified cycle\footnote{See~\cite{Gor81} for the (co)homology theory of Whitney stratified cycles.}\footnote{By abuse of notation, we often identify a cycle with its support.} $\eta \subset J^K(m, n)$ with $\Z_2$ coefficients which carries a refined fundamental class, i.e., 
    \[H^q(J^K(m, n), J^K(m, n) - \ol{\eta}; \Z_2) \cong \Z_2.\]
\end{definition}

Throughout this paper, we will consider only $\G$-singularity types $\eta \subset J^K(m, n)$ such that every jet in $\eta$ is represented by some $K$-$\G$-determined map-germ.
We will also call an $\A$- or $\K$-singularity type a singularity type for short.

For a singularity type $\eta \subset J^K(m, n)$ and a map $f \colon M \to N$ from an $m$-manifold $M$ to an $n$-manifold $N$, we will use the following terminologies.

\begin{itemize}
    \item Let $J^K(M, N)$ denote the {\it $K$-jet bundle} over $M \times N$, with fiber $J^K(m, n)$ and structure group $\A$.
    \item Let $\eta(M, N)$ denote the subbundle of $J^K(M, N)$ with fiber $\eta$. Note that $\ol{\eta(M, N)}$ admits a stratification induced by that of $\ol{\eta}$.
    \item The map $f$ is said to be {\it generic with respect to $\eta$} if its $K$-jet extension $j^K f \colon M \to J^K(M, N)$ is transverse to every stratum of $\ol{\eta(M, N)}$.
    \item The subset $\eta(f) = (j^K f)^{-1}(\eta(M, N)) \subset M$ is called the {\it $\eta$-locus} of $f$. If $f$ is generic with respect to $\eta$, then the closure $\ol{\eta(f)}$ forms a Whitney stratified cycle of codimension $q$ with $\Z_2$ coefficients.
\end{itemize}

\subsection{Thom polynomials}

We begin with a general setup.
For a Lie group $G$, let $\B G$ denote the classifying space of principal $G$-bundles.
For a $G$-space $X$, let $\B_G X$ denote the Borel construction $\E G \times_G X$.

Let $G$ be a Lie group and $X$ an affine $G$-space.
Let $\eta \subset X$ be a $G$-invariant and Whitney stratified cycle of codimension $q$.
Then it carries its $G$-equivariant refined fundamental class
\[[[\B_G \ol{\eta}]] \in H_G^q(X, X - \ol{\eta}; \Z_2) \cong H^q(\B_G X, \B_G(X - \ol{\eta}); \Z_2)\]
and its restriction (the fundamental class)
\[[\B_G \ol{\eta}] \in H_G^q(X; \Z_2) \cong H^q(\B_G X; \Z_2).\]

\begin{definition}\label{definition:ATP-general}
    The {\it Thom polynomial} of $\eta$ is the class
    \[\Tp(\eta) = (\pi^*)^{-1} [\B_G \ol{\eta}] \in H^q_G(\pt; \Z_2) \cong H^q(\B G; \Z_2),\]
    where $\pi \colon \B_G X \to \B G$ is the projection.
\end{definition}

Our main interest is in the case $G = \G$ and $X = J^K(m, n)$.
In this case, the homotopy equivalence $\G \simeq J^1\G \simeq \O(m) \times \O(n)$ yields the isomorphism
\begin{align*}
    H^*(\B \G; \Z_2) 
    &\cong H^*(\BO(m) \times \BO(n); \Z_2) \\
    &= \Z_2[w_1, \dots, w_m, w'_1, \dots, w'_n],
\end{align*}
where $w_i, w'_j$ are universal Stiefel--Whitney classes of degree $i, j$, respectively. 
Thus, $\Tp(\eta)$ forms a homogeneous polynomial of degree $q$ and we will use the following notations:
\[\Tp(\eta) = \Tp(\eta)(w_i, w'_j).\]

The following characterizes Thom polynomials.

\begin{theorem}[{\cite{Tho55, HK56}}]\label{theorem:ATP}
{\it 
Let $\eta \subset J^K(m, n)$ be a singularity type of codimension $q$.
Then $\Tp(\eta) \in \Z_2[w_i, w'_j]$ is a unique homogeneous polynomial of degree $q$ satisfying the following: 
for any compact $m$-manifold $M$, $n$-manifold $N$, and map $f \colon M \to N$ generic with respect to $\eta$, 
\[ \left[ \ol{\eta(f)} \right] = \Tp(\eta)(w_i(M), f^*w_j(N)) \in H^q(M; \Z_2).\]
}
\end{theorem}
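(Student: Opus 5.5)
The proof has two parts: the substitution formula (existence) and uniqueness of the polynomial.

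\emph{Existence.} Let $f \colon M \to N$ be generic with respect to $\eta$. Consider the jet extension $j^K f \colon M \to J^K(M,N)$ and the classifying map $\kappa \colon J^K(M,N) \to \B_\G J^K(m,n)$ of the jet bundle. This bundle is associated to the principal $\G$-bundle (structure group $\A \subset \K$, reduced up to homotopy to $\O(m)\times\O(n)$) induced by the frame bundles of $TM$ and of $f^*TN$.

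Because $\eta(M,N) = \kappa^{-1}(\B_\G\eta)$, naturality of refined fundamental classes under maps transverse to every stratum gives
\[(j^K f)^*\kappa^*[[\B_\G\ol\eta]] = [[\ol{\eta(f)}]] \in H^q(M, M-\ol{\eta(f)};\Z_2).\]
Restricting to $H^q(M;\Z_2)$ yields $[\ol{\eta(f)}]$.

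On the other hand, $[\B_\G\ol\eta] = \pi^*\Tp(\eta)$, and $\pi\circ\kappa\circ j^K f \colon M \to \B\G \simeq \BO(m)\times\BO(n)$ classifies the pair $(TM, f^*TN)$. Hence the pull-back of $\Tp(\eta)(w_i,w'_j)$ is $\Tp(\eta)(w_i(M), f^*w_j(N))$.

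The one genuinely delicate point is the transversality/naturality step for Whitney stratified cycles. I would handle it stratum by stratum: the Thom class of a stratified cycle is determined on its top stratum, and the smaller strata have codimension at least $q+1$, so they do not affect degree-$q$ classes. Compactness of $M$ is used so that the Poincar\'e dual makes sense as an ordinary class.

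\emph{Uniqueness.} This is the main obstacle. I need test maps $f \colon M \to N$ such that the classifying maps $M \to \BO(m)\times\BO(n)$ detect every nonzero polynomial of degree $q$. I would take $N$ to be the total space of a vector bundle $E \to M$ of rank $n$ over a compact $M$. Then $f$ is a section, so $f^*TN \cong TM \oplus E$ up to the base identification, and after adjusting $E$ stably we get arbitrary pairs $(TM$-type class, $f^*TN$ class$)$.

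The pairs that arise are maps $M \to \BO(m)\times\BO(n)$ with $M$ compact. The cleanest choice is to approximate $\BO(m)\times\BO(n)$ by compact smooth manifolds: products of Grassmannians $Gr_m(\R^{L})\times Gr_n(\R^{L})$ for large $L$. Each is $q$-connected onto its image for $L \gg q$, so it is injective on $H^q(-;\Z_2)$.

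Take a closed manifold $X$ (for instance a tubular or disk-bundle thickening of this Grassmannian product, doubled to make it closed) of dimension at least $q$, equipped with bundles $\xi^m, \zeta^n$ pulled back from the Grassmannians. Then let $M$ be the total space, compactified, of a disk bundle realizing $TM$ stably, and use genericity to perturb $f$ so that it is generic with respect to $\eta$ within its homotopy class, by the Thom transversality theorem.

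If two polynomials $P, P'$ both satisfy the formula, then $(P-P')$ evaluated on every such pair vanishes. For the vanishing to force $P=P'$, the resulting classes must pull back injectively, and this is exactly where I expect care to be needed. I would fall back on the standard argument of Thom/Haefliger--Kosi\'nski: use maps $f$ whose jet sections realize the universal classifying map on finite skeleta, i.e.\ take $M$ to be a compact manifold thickening of the $(q+1)$-skeleton of $\B_\G J^K(m,n)$. Any degree-$q$ class is detected there.
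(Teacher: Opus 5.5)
Your existence argument is the paper's proof. Both pull $[\B_\G\ol{\eta}]$ back along $j^Kf$ and the bundle map $J^K(M,N)\to\B_\G J^K(m,n)$ to get $[\ol{\eta(f)}]$ by genericity. Both then go around the other side of the commutative diagram, using that the classifying map $M\times N\to\B\G$ of $J^K(M,N)$ is homotopic to that of $TM\times TN$, so $(\id,f)^*$ turns $\Tp(\eta)$ into $\Tp(\eta)(w_i(M),f^*w_j(N))$. The paper does not prove uniqueness; $\Tp(\eta)$ is simply defined as $(\pi^*)^{-1}[\B_\G\ol{\eta}]$.

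Your uniqueness part has a genuine gap. Every admissible test object is a map from an $m$-manifold, and its classifying map to $\BO(m)$ is that of its own tangent bundle.
\begin{itemize}
\item Products of Grassmannians $Gr_m(\R^L)\times Gr_n(\R^L)$, disk bundles over them, and thickenings of the $(q+1)$-skeleton of $\B_\G J^K(m,n)$ are not $m$-dimensional, so they cannot be sources.
\item Similarly, the total space of a rank-$n$ bundle over $M$ has dimension $m+n$, not $n$, so it cannot be the target.
\item No $j^Kf$ can ``realize the universal classifying map'': its composite to $\B\G$ factors through the pair $(TM,f^*TN)$ over an $m$-dimensional space.
\end{itemize}

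In fact no choice of test maps can work for fixed $(m,n)$, because uniqueness in the literal sense fails.
\begin{itemize}
\item If $q>m$, then $H^q(M;\Z_2)=0$ for every $m$-manifold $M$, so every homogeneous polynomial of degree $q$ satisfies the identity. For example, for $A_1\subset J^K(m,n)$ with $n\ge 2m$, both $0$ and $\Tp(A_1)=\tilde{w}_{n-m+1}\neq 0$ satisfy it.
\item Even for $(m,n)=(1,1)$, all degree-one polynomials give the same result, because $w_1$ vanishes on every $1$-manifold.
\end{itemize}

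Uniqueness only becomes meaningful after enlarging the class of test objects. One option is transverse sections of the pull-back of $\B_\G J^K(m,n)\to\B\G$ over compact manifolds mapping to $\B\G$. In that setting your Grassmannian-approximation argument goes through directly, but that is not the statement about maps $f\colon M\to N$.
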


Its proof is well-known, but we write it down as it will be helpful for later arguments.

\begin{proof}
    We consider the diagram
    \[
    \begin{tikzcd}[column sep=2em]
        & {J^K(M, N)} \arrow[d] \arrow[r] & {\B_\G J^K(m, n)} \arrow[d, "\pi"] \\
        M \arrow[r, "{(\mathrm{id}, f)}"'] \arrow[ru, "j^K f"] & M \times N \arrow[r] & \B \G,
    \end{tikzcd}
    \]
    where vertical maps are projections and horizontal maps are classifying maps. This induces the diagram
    \[
    \begin{tikzcd}[column sep=2em]
        & {H^q(J^K(M, N); \Z_2)} \arrow[ld, "(j^K f)^*"'] & {H^q(\B_\G J^K(m, n); \Z_2)} \arrow[l] \\
        H^q(M; \Z_2) & H^q(M \times N; \Z_2) \arrow[u] \arrow[l, "{(\mathrm{id}, f)^*}"] & H^q(\B \G; \Z_2). \arrow[u, "\cong", "\pi^*"'] \arrow[l]
    \end{tikzcd}
    \]
    The pull-back of $[\B_\G \ol{\eta}] \in H^q(\B_\G J^K(m, n); \Z_2)$ to $H^q(M; \Z_2)$ through upper side arrows coincides with $[\ol{\eta(f)}]$ by the genericity of $f$.
    The pull-back of $\Tp(\eta) \in H^q(\B \G; \Z_2)$ to $H^q(M; \Z_2)$ through lower side arrows coincides with $\Tp(\eta)(w_i(M), f^*w_j(N))$ since the classifying map $M \times N \to \B \G$ of $J^K(M, N)$ is homotopic to that of $TM \times TN$.
    The commutativity of the diagram completes the proof.
\end{proof}

\begin{remark}[real $C^\infty$ oriented version]\label{remark:TP-coori}
    If a type $\eta$ is coorientable in $J^K(m, n)$, a choice of a coorientation defines the Thom polynomial $\Tp(\eta)$ in the ring
    \begin{align*}
        H^*(\B \G; \Z) 
        &\cong H^*(\BO(m) \times \BO(n); \Z) \\
        &= \Z \left[p_1, \dots, p_{\lfloor m/2 \rfloor}, p'_1, \dots, p'_{\lfloor n/2 \rfloor} \right] \oplus \Im \beta,
    \end{align*}
    and satisfies the same universality as in \cref{theorem:ATP}.
    Here,
    \[\beta \colon H^{*-1}(\BO(m) \times \BO(n); \Z_2) \to H^*(\BO(m) \times \BO(n); \Z)\] 
    denotes the Bockstein homomorphism.
    Hence, $\Im \beta$ is precisely the $2$-torsion part of $H^*(\BO(m) \times \BO(n); \Z)$, which is generated by monomials of the form $\beta(w_1^{i_1} \cdots w_m^{i_m} (w'_1)^{j_1} \cdots (w'_n)^{j_n})$ (or elements whose mod $2$ reductions are monomials of $w_i$ and $w'_j$).
    However, in this paper, we will often ignore these $2$-torsion elements and express the Thom polynomial of a cooriented singularity type $\eta$ as $\Tp(\eta)(p_i, p_j')$. See also \cref{remark:2-torsion}.
\end{remark}

\begin{remark}[complex analytic version]\label{remark:cpx-Tp}
    The notions introduced above are also properly defined in the complex analytic category, and we have 
    \[\G \simeq J^1\G \simeq \U(m) \times \U(n).\]
    Therefore, the Thom polynomial for a complex singularity type is defined in the ring
    \begin{align*}
        H^*(\B \G; \Z) 
        &\cong H^*(\BU(m) \times \BU(n); \Z) \\
        &= \Z[c_1, \dots, c_m, c'_1, \dots, c'_n],
    \end{align*}
    and satisfies the universality for holomorphic maps between complex manifolds.
\end{remark}

Now, we briefly recall a special property of Thom polynomials for $\K$-singularity types.
The following is also similar for the real $C^\infty$ oriented version and the complex analytic version.
Let $\tilde{w}_k$ denote the quotient $k$-th Stiefel--Whitney classes, namely, 
\[1 + \tilde{w}_1 + \tilde{w}_2 + \cdots = \frac{1 + w'_1 + w'_2 + \cdots}{1 + w_1 + w_2 + \cdots}.\]
Then the substitution $w_i \mapsto w_i(M)$ and $w'_j \mapsto f^*w_j(N)$ induces $\tilde{w}_k \mapsto w_k(f) = w_k(f^*TN - TM)$, where $f^*TN - TM$ is a virtual bundle.

\begin{theorem}[\cite{Tho55, Ohm94, FR04}]\label{theorem:ATP-for-K}
{\it
    For any $\K$-singularity type $\eta$, its Thom polynomial $\Tp(\eta)$ is also a polynomial in $\tilde{w}_i$'s.
    In particular, if $\eta$ is of codimension $q$, then the coefficient of $w_q$ in $\Tp(\eta)$ is the negative of that of $w'_q$.
}
\end{theorem}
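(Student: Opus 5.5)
We plan to use the fact that the $\K$-action on jets only sees the source and the target through the relative tangent data, so that the equivariant fundamental class of $\ol{\eta}$ comes from a classifying space depending only on the virtual bundle $f^*TN - TM$.

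First we reduce from $\K$ to its linear part. Since $\K \simeq J^1\K$, it suffices to work with the maximal compact subgroup $\O(m) \times \O(n)$, and we view $\Tp(\eta) \in H^q(\BO(m) \times \BO(n); \Z_2)$ as in \cref{definition:ATP-general}.

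The key input is the $\K$-invariance of $\eta$, used in the following way. Fix a large $r$ and embed $\O(m) \hookrightarrow \O(n+r)$ and $\O(n) \hookrightarrow \O(n+r)$ via stabilization. Consider a jet $f \colon (\R^m,0) \to (\R^n,0)$. A $\K$-equivalence can replace the germ $f$ by its suspension $f \times \id_{\R^r}$, after identifying $\R^m\times\R^r$ and $\R^n\times\R^r$ appropriately. One checks that $\K$-orbits are stable under this unfolding, in the sense that $\eta$ pulls back to a suspended type $\eta'$ in $J^K(m+r, n+r)$, and the restriction $\B\G_{m+r,n+r}\to\B\G_{m,n}$ sends $\Tp(\eta')$ to $\Tp(\eta)$.

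This is the main obstacle. The standard route, following Feh\'er--Rim\'anyi and Ohmoto, is to replace the $\K$-action by the contact group acting on the pair of linear spaces. The isotropy of a normal form then contains a torus acting diagonally on source and target, and one uses localization to show that $\Tp(\eta)$ lies in the image of $H^*(\BO)$ under the map classifying $\gamma'-\gamma$.

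More concretely, the plan is to show that $\Tp(\eta)$ is invariant under the operation of adding a common line bundle $\lambda$ to source and target. One realizes this by a $\K$-equivariant inclusion
\[
J^K(m,n) \to J^K(m+1,n+1), \qquad f \mapsto f \times \id.
\]
Its image is transverse to the suspended orbit, and the suspended cycle restricts to $\eta$. It then follows that $\Tp(\eta)(w(\gamma\oplus\lambda), w(\gamma'\oplus\lambda)) = \Tp(\eta)(w(\gamma),w(\gamma'))$.

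A polynomial in $w_i, w'_j$ that is stable under simultaneous twisting of both total classes by $1+\lambda$ is a polynomial in the quotient classes $\tilde w_k$. This is the splitting-principle argument: in the formal variables, multiplicative invariance under common factors identifies the invariant subring with $\Z_2[\tilde w]$.

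For the last claim, expand in $\tilde w$. The only degree-$q$ term of $\Tp(\eta)$ containing a single $w_q$ or a single $w'_q$ comes from $\tilde w_q$, since products of lower $\tilde w$'s only contribute decomposable monomials. Now
\[
\tilde w_q = w'_q - w_q + (\text{decomposables}),
\]
so the coefficients of $w_q$ and $w'_q$ are negatives of each other. The same argument applies verbatim with Pontryagin classes in the oriented case and Chern classes in the complex case.
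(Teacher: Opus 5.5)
The paper gives no proof of this statement; it is quoted from Thom, Ohmoto and Feh\'er--Rim\'anyi (Damon's theorem). So I compare your proposal with the standard argument.

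\textbf{What is right.} Your central mechanism is correct. The suspension $\sigma\colon J^K(m,n)\to J^K(m+1,n+1)$, $f\mapsto f\times\id$, is equivariant for $(A,B,c)\mapsto(A\oplus c,B\oplus c)$ from $\O(m)\times\O(n)\times\O(1)$ to $\O(m+1)\times\O(n+1)$; it is not ``$\K$-equivariant''. It is transverse to every $\K$-invariant stratum, because the $\K$-tangent space at $f\times\id$ already contains all directions normal to the image. The torus/localization paragraph plays no role and can be dropped.

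\textbf{Gap 1: the displayed identity.} What the suspension actually gives is
$\Tp(\eta')\big(w(\gamma\oplus\lambda),w(\gamma'\oplus\lambda)\big)=\Tp(\eta)\big(w(\gamma),w(\gamma')\big)$ in $H^q(\BO(m)\times\BO(n)\times\BO(1);\Z_2)$. Here $\eta'\subset J^K(m+1,n+1)$ is the $\K$-saturation of $\sigma(\eta)$. This is not your identity for $\Tp(\eta)$ itself. At fixed ranks $(m,n)$ there is no operation of adding $\lambda$ to both bundles. Treating $\Tp(\eta')$ and $\Tp(\eta)$ as the same polynomial is the stability property, which is automatic only when $q\le\min(m,n)$. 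This is repairable: also setting $\lambda$ trivial, you get the cancellation property $P(a_1,\dots,a_m,t;b_1,\dots,b_n,t)=P(a,0;b,0)$ in the roots, for $P=\Tp(\eta')$.

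\textbf{Gap 2: the algebraic step (the real gap).} Your claim that ``invariance under common factors identifies the invariant subring with $\Z_2[\tilde w]$'' is not a splitting-principle triviality. It is also not literally true: at finite rank the $\tilde w_k$ are infinitely many and satisfy relations. The implication ``cancellation $\Rightarrow$ polynomial in the $\tilde w_k$'' is the characterization theorem for supersymmetric polynomials (Stembridge). Integrally it rests on the factorization of super-Schur functions through $\prod_{i,j}(b_j-a_i)$. You would have to prove it over both $\Z_2$ and $\Z$.

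\textbf{A fix that avoids the algebra.} Suspend along a bundle of rank $\ell\ge q$ instead of a line. The map $f\mapsto f\times\id_{\R^\ell}$ is equivariant for $\O(m)\times\O(n)\times\O(\ell)\to\O(m+\ell)\times\O(n+\ell)$, and it is transverse by the same tangent-space computation.
\begin{itemize}
\item Pull back over $G_m(\R^{m+\ell})\times\BO(n)$ via $(\gamma^m,\gamma^n,\gamma^{m\perp})$.
\item Since $\gamma^m\oplus\gamma^{m\perp}$ is trivial and $w(\gamma^n\oplus\gamma^{m\perp})=w(\gamma^n)\,w(\gamma^m)^{-1}$, you get $\Tp(\eta)=\Tp(\eta^{(\ell)})(0,\dots,0;\tilde w_1,\tilde w_2,\dots)$ in $H^q(G_m(\R^{m+\ell})\times\BO(n);\Z_2)$.
\item The relations in $H^*(G_m(\R^{m+\ell});\Z_2)$ begin in degree $\ell+1$, so $H^q(\BO(m))\to H^q(G_m(\R^{m+\ell}))$ is injective for $\ell\ge q$.
\end{itemize}
This exhibits $\Tp(\eta)$ directly as a polynomial in the $\tilde w_k$, with no appeal to supersymmetric polynomials. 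The same argument works with Chern classes, and with Pontryagin classes modulo $2$-torsion.

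\textbf{Final paragraph.} Extracting the linear part $c\,(w'_q-w_q)$ from $c\,\tilde w_q$ is correct, and it does not depend on which $\tilde w$-expression you choose. One caveat: when $q>m$ the variable $w_q$ is absent, so the ``in particular'' must be read for that formal expression.
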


\begin{remark}
    Although \cref{theorem:ATP-for-K} is enough for our argments in this paper, Thom polynomials for $\K$-singularity types also have the so-called stability property.
    See \cite{Ohm94, FR04} for details.
\end{remark}

Due to the above facts, the literature usually expresses $\Tp(\eta)$ by $\tilde{w}_i$'s. Furthermore, it also might fit relative Thom polyomials (see \cref{remark:relative-char-class-for-K}).
However, we express $\Tp(\eta)$ by $w_i$, $w'_j$ in this paper for a clearer presentation of the Main Theorem.

\subsection{Examples of singularity types}\label{subsection:list-sing-types}

Here, we list the singularity types we will be concerned with in this paper. All of them are stable $\K$-singularity types.
The explicit forms of their Thom polynomials will be recalled from \cref{section:case-A1} onward.

\begin{enumerate}
    \item $A_\mu \subset J^K(m, n)$, the type defined by the local algebra $\R[[x]] / (x^{\mu + 1})$. This is of codimension $\mu (|m - n| + 1)$. In particular, the closure $\ol{A_1}$ consists of all jets of singular map-germs. Furthermore, the type $A_2 \subset J^K(m, n)$ is coorientable if $m < n$ and $n - m + 1 \equiv 0 \pmod 2$ \cite[Proposition 4.1]{And82}, \cite[Theorem 6]{Rim00}.
    \item $\Sigma^i \subset J^K(m, n)$, the type consisting of jets $J^K f(0)$ such that $\dim \Ker df(0) = i$. This is of codimension $i (|m - n| + i)$. Furthermore, the type $\Sigma^i \subset J^K(m, n)$ is coorientable if both $n - m$ and $i$ are even \cite{Ron71, And82, Rim00}.
    \item $\Sigma_\FR \subset J^K(m, m)$ (notice that the source and target dimensions coincide)\footnote{The naming `$\Sigma_\FR$' is due to the discoverer Feh\'er--Rim\'anyi~\cite{FR02} and Takase's naming~\cite{Tak12}.}, the type defined by a linear combination of specific two local algebras. This is of codimension $8$ and coorientable \cite{FR02}.
\end{enumerate}

\begin{remark}
    The type $A_\mu$ coincides with the higher order Thom--Boardman singularity type $\Sigma^{1, \dots, 1, 0}$ ($1$ repeats $\mu$ times), and it holds that $\ol{A_\mu} = \ol{\Sigma^{1, \dots, 1, 0}} = \ol{\Sigma^{1, \dots, 1}}$.
    In this paper, we adopt the notation $A_\mu$.
\end{remark}

\section{Definition of relative Thom polynomials}\label{section:RTP}

In this section, we introduce the notion of relative Thom polynomials.

\subsection{Relative Thom polynomials as fundamental classes}\label{subsection:RTP-fund}

Again, let $G$ be a Lie group and $X$ an affine $G$-space.
Let $\eta \subset X$ be a $G$-invariant and Whitney stratified cycle of codimension $q$ with $\Z_2$ coefficients (the construction is similar for $\Z$ coefficients if $\eta$ is coorientable).
In addition, fix a subcomplex $S \subset \B G$ with respect to some triangulation, and a section $\sigma \colon S \to \B_G X$ which does not meet $\B_G \ol{\eta}$.
Therefore, the refined fundamental class $[[\B_G \ol{\eta}]]$ induces the restriction
\[[\B_G \ol{\eta}] \in H^q(\B_G X, \sigma(S); \Z_2).\]

\begin{definition}\label{definition:RTP-general}
    The {\it Thom polynomial of $\eta$ relative to $\sigma$} is the class
    \[\Tp(\eta | \sigma) = (\pi^*)^{-1} [\B_G \ol{\eta}] \in H^q(\B G, S; \Z_2),\]
    where $\pi \colon (\B_G X, \sigma(S)) \to (\B G, S)$ is the projection.
\end{definition}

We refer to Thom polynomials in \cref{definition:ATP-general} as {\it absolute}, and to the above class as {\it relative}.

\begin{remark}
    The ring $H^*(\B G, S; \Z_2)$ need not be a polynomial ring since $\sigma(S)$ need not be $G$-invariant.
    Hence, the naming `polynomial' is just suggestive.
\end{remark}

The following is by definition.

\begin{proposition}\label{theorem:rel-to-abs}
{\it
    Let $j \colon (\B G, \varnothing) \to (\B G, S)$ denote the natural inclusion. Then
    \[j^*\Tp(\eta | \sigma) = \Tp(\eta) \in H^q(\B G; \Z_2).\]
}
\end{proposition}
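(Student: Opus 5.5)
The plan is to prove the identity by naturality of the restriction maps that define both Thom polynomials, checking that the relevant square of cohomology groups commutes.

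First consider the inclusions of pairs
\[j \colon (\B G, \varnothing) \to (\B G, S), \qquad \tilde{j} \colon (\B_G X, \varnothing) \to (\B_G X, \sigma(S)).\]
The projection $\pi$ is a map of pairs $(\B_G X, \sigma(S)) \to (\B G, S)$, since $\pi \circ \sigma = \id_S$. It also restricts to the absolute projection $\pi \colon \B_G X \to \B G$. Hence $\pi \circ \tilde{j} = j \circ \pi$ as maps of pairs, and functoriality gives
\[\pi^* \circ j^* = \tilde{j}^* \circ \pi^*\]
on $H^q(\B G, S; \Z_2)$.

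Next I check that $\tilde{j}^*$ carries the relative fundamental class $[\B_G \ol{\eta}] \in H^q(\B_G X, \sigma(S); \Z_2)$ to the absolute one in $H^q(\B_G X; \Z_2)$. Both classes are defined as restrictions of the refined class
\[[[\B_G \ol{\eta}]] \in H^q(\B_G X, \B_G(X - \ol{\eta}); \Z_2)\]
along the inclusions of pairs
\[(\B_G X, \varnothing) \subset (\B_G X, \sigma(S)) \subset (\B_G X, \B_G(X - \ol{\eta})).\]
The second inclusion makes sense because $\sigma(S) \cap \B_G \ol{\eta} = \varnothing$. The composite of the two inclusions is the inclusion that defines the absolute fundamental class, so by functoriality of restriction, $\tilde{j}^*$ of the relative class equals the absolute class.

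Finally, combine the two steps. Applying $\pi^*$ to $j^*\Tp(\eta | \sigma)$ gives
\[\pi^* j^* \Tp(\eta | \sigma) = \tilde{j}^* \pi^* \Tp(\eta | \sigma) = \tilde{j}^* [\B_G \ol{\eta}]_{\mathrm{rel}} = [\B_G \ol{\eta}] = \pi^* \Tp(\eta).\]
The absolute $\pi^*$ is an isomorphism, because $X$ is affine and hence contractible, so $\pi$ is a homotopy equivalence. Therefore $j^*\Tp(\eta | \sigma) = \Tp(\eta)$.

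The one point needing care is that the relative $\pi^*$ is also an isomorphism, so that $\Tp(\eta | \sigma)$ is well defined in \cref{definition:RTP-general}. This follows from the five lemma applied to the long exact sequences of the two pairs. The absolute map $\B_G X \to \B G$ is a homotopy equivalence as just noted, and $\pi|_{\sigma(S)} \colon \sigma(S) \to S$ is a homeomorphism with inverse $\sigma$. No other obstacle arises, and the argument is identical with $\Z$ coefficients when $\eta$ is coorientable.
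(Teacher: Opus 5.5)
Your argument is correct and is exactly what the paper means by ``by definition'': naturality of $\pi^*$ with respect to the inclusions $j$ and $\tilde{j}$, combined with the fact that the relative and absolute classes $[\B_G \ol{\eta}]$ are both restrictions of the same refined class $[[\B_G \ol{\eta}]]$. Your five-lemma check that the relative $\pi^*$ is an isomorphism, which makes $\Tp(\eta | \sigma)$ well defined, is a detail the paper leaves implicit.
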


Now, we consider the case where $G = \G$ and $X = J^K(m, n)$.

\begin{definition}\label{definition:prescribed-data}
    Let $\eta \subset J^K(m, n)$ be a singularity type.
    A {\it prescribed datum} for $\eta$ is a map $\phi \colon M_0 \to N_0$, where
    \begin{itemize}
        \item $M_0$ is a compact $m$-manifold; 
        \item $N_0$ is an $n$-manifold;
        \item $\phi \colon M_0 \to N_0$ is generic with respect to $\eta$ and satisfies that $\eta(\phi) = \varnothing$.
    \end{itemize}
\end{definition}

\begin{definition}\label{definition:extension-data}
    Let $\phi \colon M_0 \to N_0$ be a prescribed datum for a singularity type $\eta \subset J^K(m, n)$.
    Then, an {\it extension datum} of $\phi$ is a map $f \colon M \to N$, where
    \begin{itemize}
        \item $M$ is an $m$-manifold containing $M_0$ as a closed submanifold such that $\partial M \subset \partial M_0$;
        \item $N$ is an $n$-manifold containing $N_0$ as a closed submanifold;
        \item $f \colon M \to N$ is an extension of $\phi$ which is generic with respect to $\eta$.
    \end{itemize}
\end{definition}

\begin{example}[singular Seifert surfaces: case 1]\label{example:sing-Seif-surf-1}
    Let $V$ be a closed null-cobordant $(m - 1)$-manifold and $\iota \colon V \imm \R^n$ an immersion with trivial normal bundle.
    Choosing a normal $1$-frame (nowhere vanishing normal vector field), $\iota$ is extended to an immersion $\phi \colon V \times [0, \epsilon] \imm \R^n$.    
    The map $\phi$ forms a prescribed datum for any singularity type.
    Its extension datum $f \colon M \to \R^n$, where $M$ is a null-cobordism of $V$, is called a {\it singular Seifert surface} in earlier works, e.g.,~\cite{ESz03,SSzT02,Tak07a,ET11,Tak12}.
\end{example}

\begin{example}[singular Seifert surfaces: case 2]\label{example:sing-Seif-surf-2}
    Let $V$ be a closed null-cobordant $(m - 1)$-manifold and $\iota \colon V \imm \R^{n - 1}$ an immersion.
    The map $\iota \times \id_{[0, \epsilon]} \colon V \times [0, \epsilon] \imm \R^{n - 1} \times [0, \epsilon]$ forms a prescribed datum for any singularity type.
    Its extension datum $f \colon M \to \R^n_+$, where $M$ is a null-cobordism of $V$, is called a {\it singular Seifert surface} as well, or a {\it singular slice manifold} in~\cite{GP24}.
\end{example}

We will meet their special cases in \cref{section:case-A1,section:case-general-posi}.

\begin{example}[extension problem of maps]\label{example:ext-prob}
    Let $M$ be a closed $m$-manifold and $N$ an $n$-manifold with $m \ge n$.
    Let $S \subset M$ be a closed submanifold and $\phi \colon \nu(S) \to N$ a submersion on its tubular neighborhood.
    The map $\phi$ forms a prescribed datum for any singularity type $\eta$.
    Saeki~\cite{Sae20} studied a special case of the extension problem of $\phi$ to a map $f \colon M \to N$ with a condition on their singular loci.
    We will see it in \cref{section:case-general-nega}.
\end{example}

To the end of this subsection, we fix a prescribed datum $\phi \colon M_0 \to N_0$ for a singularity type $\eta \subset J^K(m, n)$.
Then we have the diagram

\vspace{10pt}

\[
\begin{tikzcd}[column sep=2em]
& {J^K(M_0, N_0)} \arrow[d] \arrow[r] & {\B_\G J^K(m, n)} \arrow[d] \\
M_0 \arrow[r, "{(\mathrm{id}, \phi)}"'] \arrow[ru, "j^K \phi"] & M_0 \times N_0 \arrow[r] & \B \G,
\end{tikzcd}
\]

\vspace{10pt}

\noindent{where} the right square is the pull-back and the classifying map is chosen to be an embedding (this exists and is unique up to isotopy).
For simplicity, {\it the embedded image $M_0 \subset \B \G$ will be denoted by $M_0$ as well}.
Notice that $\B_\G \ol{\eta} \cap j^K \phi(M_0) = \varnothing$ by assumption.

\begin{definition}\label{definition:RTP-sing}
    We define the {\it Thom polynomial of $\eta$ relative to $\phi$} to be the Thom polynomial of $\eta$ relative to $j^K \phi$:
    \[\Tp(\eta | \phi) = \Tp(\eta | j^K \phi) \in H^q(\B \G, M_0; \Z_2)\]
\end{definition}

\begin{lemma}\label{theorem:rel-locus}
{\it 
    For any extension datum $f \colon M \to N$ of $\phi$, the class
    \[ \left[ \ol{\eta(f)} \right] \in H^q(M, M_0; \Z_2)\]
    coincides with 
    the pull-back of $\Tp(\eta | \phi) \in H^q(\B \G, M_0; \Z_2)$
    under the lower horizontal maps in

    \vspace{10pt}

    \[
    \begin{tikzcd}[column sep=2em]
    & {H^q(J^K(M, N), j^K \phi(M_0); \Z_2)} \arrow[ld, "(J^K f)^*"'] & {H^q(\B_\G J^K(m, n), j^K \phi(M_0); \Z_2)} \arrow[l] \\
    H^q(M, M_0; \Z_2) & H^q(M \times N, (\id \times \phi)(M_0); \Z_2) \arrow[u] \arrow[l, "{(\mathrm{id}, f)^*}"] & H^q(\B \G, M_0; \Z_2). \arrow[u] \arrow[l]
    \end{tikzcd}
    \]
}
\end{lemma}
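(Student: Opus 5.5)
The plan is to mirror the proof of \cref{theorem:ATP}, carrying the subspace $M_0$ along at every stage. First, all maps in the diagram are maps of pairs. The jet extension sends $(M, M_0)$ to $(J^K(M, N), j^K\phi(M_0))$ because $f|_{M_0} = \phi$. The graph map sends $(M, M_0)$ to $(M \times N, (\id \times \phi)(M_0))$. The classifying maps send these pairs to $(\B_\G J^K(m, n), j^K\phi(M_0))$ and $(\B\G, M_0)$ respectively. To arrange the last point, I would choose the classifying map of $J^K(M, N)$ so that it extends the embedding of the graph of $\phi$ already fixed in the definition. This is possible since $M_0 \times N_0 \subset M \times N$ is a closed submanifold, so a cofibration, and the classifying map on the subspace extends by the homotopy extension property. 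The right square remains a pull-back, and the triangle commutes on the nose. Hence the relative diagram in the statement commutes.

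Second, I would identify the pull-back of $[\B_\G \ol{\eta}] \in H^q(\B_\G J^K(m, n), j^K\phi(M_0); \Z_2)$ along the upper route with $[\ol{\eta(f)}] \in H^q(M, M_0; \Z_2)$. Since the right square is a pull-back, the refined class $[[\B_\G\ol{\eta}]]$ pulls back to $[[\ol{\eta(M, N)}]] \in H^q(J^K(M,N), J^K(M,N) - \ol{\eta(M,N)}; \Z_2)$. Because $f$ is generic, $j^K f$ is transverse to every stratum, so pulling back further gives $[[\ol{\eta(f)}]] \in H^q(M, M - \ol{\eta(f)}; \Z_2)$. Naturality of the restriction maps then does the rest. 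The hypothesis $\eta(\phi) = \varnothing$ gives $M_0 \subset M - \ol{\eta(f)}$, and $j^K\phi(M_0)$ is disjoint from $\ol{\eta(M,N)}$. So restricting to the pairs with subspace $M_0$, respectively $j^K\phi(M_0)$, commutes with the pull-backs. The result is exactly the Alexander dual $[\ol{\eta(f)}] \in H^q(M, M_0; \Z_2)$ in the sense of the Conventions. The condition $\partial M \subset \partial M_0$ ensures that $\ol{\eta(f)}$ lies in the interior away from the boundary, so this class is the genuine Alexander dual.

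Third, along the lower route, $\Tp(\eta | \phi) = (\pi^*)^{-1}[\B_\G\ol{\eta}]$ by \cref{definition:RTP-sing,definition:RTP-general}. Here $\pi^*$ is an isomorphism on relative cohomology. This follows from the five lemma applied to the long exact sequences of the pairs: $\pi$ is a homotopy equivalence, since its fibre $J^K(m,n)$ is affine, and it restricts to a homeomorphism $j^K\phi(M_0) \to M_0$. By commutativity, pulling $\Tp(\eta | \phi)$ back along the bottom row equals pulling $[\B_\G\ol{\eta}]$ back along the top. This equals $[\ol{\eta(f)}]$ by the second step.

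I expect the main obstacle to be the first step. One must ensure the classifying map of $J^K(M, N)$ restricts exactly to the chosen embedding $M_0 \subset \B\G$, not merely up to homotopy, since otherwise relative classes are not comparable. I would handle this with the homotopy extension property, as described above. I would also note that any two such choices are homotopic relative to $M_0$, because $\B\G$ is a classifying space. The transversality and pull-back of refined classes is standard, exactly as in \cref{theorem:ATP}.
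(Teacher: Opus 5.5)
Your proposal is correct and follows the paper's route. The paper gives no separate argument for this lemma; it treats it as the relative analogue of the proof of \cref{theorem:ATP}, and the same diagram chase reappears in the proof of \cref{theorem:exp-imm}. Your three steps are exactly that argument: commutativity of the relative diagram, pulling back the refined class by genericity and restricting away from $M_0$, and identifying $\Tp(\eta | \phi)$ through the five-lemma isomorphism $\pi^*$.

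Your extra care in making the classifying map of $J^K(M,N)$ agree exactly with the fixed one over $M_0$ fills in a detail the paper leaves implicit. Do that extension at the level of bundle maps, using equivariant homotopy extension for equivariant maps into the universal bundle. Then the upper-right map also sends $j^K\phi(M_0)$ onto the prescribed section, rather than only the base maps agreeing.
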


\begin{remark}
    If $\eta \subset J^K(m, n)$ is coorientable, the relative Thom polynomial of $\eta$ is defined in $H^q(\BO(m) \times \BO(n), M_0; \Z)$.
\end{remark}

We also consider the complex analytic version and repeat the definitions of analogous notions.
Let $J^K(m, n)$ denote the jet space for holomorphic germs.

\begin{definition}\label{definition:prescribed-data-BU}
    Let $\eta \subset J^K(m, n)$ be a complex singularity type.
    A {\it complex prescribed datum} for $\eta$ is a holomorphic map $\phi \colon M_0 \to N_0$, where
    \begin{itemize}
        \item $M_0$ is a compact complex $m$-manifold;
        \item $N_0$ is a complex $n$-manifold;
        \item $\phi \colon M_0 \to N_0$ is generic with respect to $\eta$ and satisfies that $\eta(\phi) = \varnothing$.
    \end{itemize}
\end{definition}

\begin{definition}\label{definition:extension-data-BU}
    Let $\phi \colon M_0 \to N_0$ be a complex prescribed datum for a complex singularity type $\eta \subset J^K(m, n)$.
    Then, its {\it complex extension datum} of $\phi$ is a holomorphic map $f \colon M \to N$, where
    \begin{itemize}
        \item $M$ is a compact complex $m$-manifold containing $M_0$ as a closed submanifold such that $\partial M \subset \partial M_0$;
        \item $N$ is a complex $n$-manifold containing $N_0$ as a closed submanifold;
        \item $f \colon M \to N$ is an extension of $\phi$ which is generic with respect to $\eta$.
    \end{itemize}
\end{definition}

Then the relative Thom polynomial $\Tp(\eta | \phi) \in H^{2q}(\BU(m) \times \BU(n), M_0; \Z)$ is defined.

\subsection{Relative Thom polynomials as obstruction classes}\label{subsection:RTP-obst}

There is another description of $\Tp(\eta | \phi)$ which is much closer to Steenrod's obstruction theory~\cite{Ste51}.
This point for the absolute case can be found in Feh\'er--Rim\'anyi~\cite[\S2]{FR04}.
For simplicity, let $\pi_1$ denote the {\it abelianization} of fundamental group.

\subsubsection{Quick review on Steenrod's obstruction theory}

Let $E$ be a fiber bundle over a CW complex $B$ with fiber $F$ and structure group $G$.
Assume that, for some integer $q \ge 1$, the following condition holds:
\begin{itemize}
    \item if $q \ge 2$, then $F$ is $(q-2)$-connected;
    \item if $q = 1$, then $\pi_0(F)$ is a group and $G$ acts on $\pi_0(F)$ by group automorphisms.
\end{itemize}
Then, given a section $\sigma$ of $E$ over a closed subcomplex $S \subset B$, we have the {\it primary obstruction class} of $E$ relative to $\sigma$:
\[\mathfrak{o}_q(E | \sigma) \in H^q(B, S; \{\pi_{q-1}(F)\}),\]
that is the precise obstruction to extending $\sigma$ to the union of $S$ and the $q$-skeleton of $B$ (see \cite[\S\S16.5, 29.4]{Ste51} for details).
Furthermore, given another section $\tau$ over $S$, we have the {\it primary difference class} from $\tau$ to $\sigma$:
\[d(\sigma, \tau) \in H^{q-1}(S; \{\pi_{q-1}(F)\}),\]
that is the precise obstruction to homotoping $\tau$ to $\sigma$ over the $(q-1)$-skeleton of $S$.
It holds that
\[\mathfrak{o}_q(E | \sigma) - \mathfrak{o}_q(E | \tau) = \connhom d(\sigma, \tau)\]
in $H^q(B, S; \{\pi_{q-1}(F)\})$, where $\connhom$ denotes the connecting homomorphism.

\subsubsection{Another description of relative Thom polynomials}

Let $G$ be a Lie group and $X$ an affine $G$-space.
Let $\eta \subset X$ be a $G$-invariant and Whitney stratified cycle of codimension $q$.
Assume that the section $\sigma$ of $\B_G (X - \ol{\eta})$ over a closed subcomplex $S \subset \B G$ is given.
If $q \ge 2$, then 
$F = X - \ol{\eta}$ is $(q-2)$-connected by dimensional reason, 
and the class 
\[\mathfrak{o}_q(\B_G (X - \ol{\eta}) | \sigma) \in H^q(\B G, S; \{\pi_{q-1}(X - \ol{\eta})\})\]
is defined (if $q = 1$, we also require the above conditions).
To compare this class to the Thom polynomial, 
we consider the mod $2$ reduction of the local system $\{\pi_{q-1}(X - \ol{\eta})\}$ as follows.
Since the complement $X - \ol{\eta}$ is $(q-2)$-connected, by the Hurewicz theorem and the Alexander duality, we have
\[
    \pi_{q-1}(X - \ol{\eta}) \cong H_{q-1}(X - \ol{\eta}; \Z) \cong H^0(\ol{\eta}; or),
\]
where $or$ denotes the orientation sheaf on $\ol{\eta}$. 
Then we have the mod $2$ reduction 
\[\pi_{q-1}(X - \ol{\eta}) \otimes \Z_2 \cong H^0(\ol{\eta}; or) \otimes \Z_2 \cong \Z_2\]
(recall that a singularity type carries a fundamental class).
We have the following.

\begin{proposition}\label{theorem:another}
{\it
    It holds that
    \[\Tp(\eta | \sigma) = \mathfrak{o}_q(\B_G (X - \ol{\eta}) | \sigma) \bmod 2 \in H^q(\B G, S; \Z_2).\]
}
\end{proposition}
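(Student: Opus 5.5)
The plan is to prove the equality on the total space $\B_G X$ and then descend along $\pi^*$, which is an isomorphism because $X$ is affine, hence contractible, so $\pi \colon (\B_G X, \sigma(S)) \to (\B G, S)$ is a homotopy equivalence of pairs. The key fact I will use is naturality of primary obstructions under pull-back: if $E' = \pi^* \B_G(X - \ol{\eta})$ is the pulled-back bundle over $\B_G X$, then $\pi^* \mathfrak{o}_q(\B_G(X - \ol{\eta}) | \sigma) = \mathfrak{o}_q(E' | \pi^*\sigma)$. It therefore suffices to show that $\mathfrak{o}_q(E' | \pi^*\sigma) \bmod 2 = [\B_G \ol{\eta}] \in H^q(\B_G X, \sigma(S); \Z_2)$.

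Next I observe that $E'$ has a tautological section $\Delta$ defined over the open set $\B_G(X - \ol{\eta})$: a point $x \in \B_G X$ over $b \in \B G$ lies in the fiber $X_b$, and $\Delta(x) = x$ lies in $(X - \ol{\eta})_b$ precisely when $x \notin \B_G \ol{\eta}$. On $\sigma(S)$, the section $\pi^*\sigma$ is $x = \sigma(b) \mapsto \sigma(b)$, which agrees with $\Delta$. Consequently the relative obstruction is the obstruction to extending $\Delta$ from $\sigma(S)$ across $\B_G X$. The standard localization principle of obstruction theory then applies: since $\Delta$ is defined off the closed set $\B_G \ol{\eta}$, the obstruction is the image of a localized class in $H^q(\B_G X, \B_G(X - \ol{\eta}) \cup \sigma(S))$. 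I will compute that class on a small normal disc $D^q$ transverse to the top stratum of $\B_G \ol{\eta}$ at a point.

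On such a disc, the section $\Delta|_{\partial D^q}$ is, in a trivialization, the inclusion of a small linking sphere $S^{q-1} \to X - \ol{\eta}$. Its class in $\pi_{q-1}(X - \ol{\eta}) \cong H_{q-1}(X - \ol{\eta}) \cong H^0(\ol{\eta}; or)$ is exactly the generator corresponding to the local sheet of $\ol{\eta}$ via Alexander duality. After reducing mod 2, the localized obstruction evaluates to $1$ on every transverse disc, which is the defining property of the refined Thom class $[[\B_G \ol{\eta}]]$ in $H^q(\B_G X, \B_G X - \B_G\ol{\eta}; \Z_2)$. Restricting to $(\B_G X, \sigma(S))$ then yields $[\B_G \ol{\eta}]$. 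I need the lower-dimensional strata to contribute nothing; they have codimension $> q$, so they do not meet the $q$-skeleton after a general-position adjustment, and the cocycle is determined by the top stratum.

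The step I expect to be the main obstacle is making the localization rigorous on the infinite-dimensional Borel construction with Whitney stratified $\ol{\eta}$. To handle it, I would first pass to finite-dimensional approximations $\E G_N \times_G X$ (or to $J^K(M,N)$-type models), which is legitimate because cohomology in degree $q$ stabilizes. I would then choose a triangulation in which $\B_G \ol{\eta}$ is a subcomplex transverse to the dual cells. With that setup, the obstruction cocycle of $\Delta$ on a $q$-cell counts, mod 2, its intersections with the top stratum, which is exactly the cocycle that represents the Thom class.
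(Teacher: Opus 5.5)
Your proof is correct, but it is organized differently from the paper's.

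The paper stays on the base. It first extends $\sigma$ over $S$ and the $(q-1)$-skeleton of $\B G$, and reads the obstruction cocycle on a $q$-cell $e$ as the mod $2$ linking number of $s(\partial e)$ with $\ol{\eta}$. It then extends further to a section $\tilde{\sigma}$ of $\B_G X$ transverse to $\B_G \ol{\eta}$, so that this linking number becomes the intersection number of $\tilde{\sigma}(e)$ with $\B_G \ol{\eta}$. Hence $\mathfrak{o}_q \bmod 2 = [\tilde{\sigma}^{-1}(\B_G \ol{\eta})] = \tilde{\sigma}^*[\B_G \ol{\eta}]$, and $\tilde{\sigma}^* = (\pi^*)^{-1}$ because $\pi \circ \tilde{\sigma} = \id$.

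You instead lift the extension problem to $\B_G X$ by naturality of obstructions along $\pi$. There the tautological section $\Delta$ serves as a universal example, and the linking-sphere computation identifies its localized obstruction directly with $[[\B_G \ol{\eta}]]$. The two pictures are related by pulling back: along $\tilde{\sigma}$, your $\Delta$ pulls back to $\tilde{\sigma}$ itself, so the paper's cocycle is $\tilde{\sigma}^*$ of yours.

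What your route buys is a universal, $\sigma$-independent statement. The localized obstruction of $\Delta$ in $H^q(\B_G X, \B_G(X - \ol{\eta}); \Z_2)$ is the refined fundamental class, and the formula for every admissible $\sigma$ follows by restriction to $\sigma(S)$ and applying $(\pi^*)^{-1}$. The price is extra machinery:
\begin{itemize}
\item naturality of relative primary obstructions for the map of pairs $(\B_G X, \sigma(S)) \to (\B G, S)$;
\item a localization step relative to the open set $\B_G(X - \ol{\eta})$, which, as you note, must be made rigorous by passing to finite-dimensional approximations and replacing that open set by the complement of a regular neighbourhood of $\B_G \ol{\eta}$;
\item a check that a class in $H^q(\B_G X, \B_G(X - \ol{\eta}); \Z_2)$ is detected on transverse discs. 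This follows from $H^q(X, X - \ol{\eta}; \Z_2) \cong \Z_2$; your dual-cell cocycle description in the last paragraph handles it.
\end{itemize}
The paper's argument is more economical at the cochain level. It needs only transversality of one extension and the standard fact that a global section of a bundle with contractible fibre inverts $\pi^*$.
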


\begin{proof}
    Extend $\sigma$ to the union of $S$ and the $(q-1)$-skeleton of $\B G$.
    Then the class $\mathfrak{o}_q(\B_G (X - \ol{\eta})) \bmod 2$ is the cochain such that for each $q$-cell $e$ of $\B G$ it attains the linking number of $s(\partial e)$ and $\ol{\eta}$ mod $2$ (here, $s|_{\partial e}$ is regarded as a map to the fiber $X - \ol{\eta}$ via some trivialization on $e$).
    Furthermore, extend $\sigma$ to whole $\B G$ as a section of $\B_G X$ so that it is transverse to $\ol{\eta}$, and let $\tilde{\sigma}$ denote the resulting section.
    Then we have
    \[\mathfrak{o}_q(\B_G (X - \ol{\eta}) | \sigma) \bmod 2 = [\tilde{\sigma}^{-1}(\B_G \ol{\eta})] \in H^q(\B G, S; \Z_2).\]
    The right-hand side can also be computed as follows:
    \begin{align*}
        [\tilde{\sigma}^{-1}(\B_G \ol{\eta})] 
        &= \tilde{\sigma}^* [\B_G \ol{\eta}] \\
        &= (\pi^*)^{-1} [\B_G \ol{\eta}] \\
        &= \Tp(\eta | \sigma),
    \end{align*}
    where $\pi \colon \B_G X \to \B G$ is the projection.
\end{proof}

The following is immediate. 

\begin{proposition}\label{theorem:diff-RTP}
{\it
    Let $\tau \colon S \to \B_G X$ be another section which does not meet $\B_G \ol{\eta}$. Then
    \[\Tp(\eta | \sigma) - \Tp(\eta | \tau) = \connhom d_\eta(\sigma, \tau) \in H^q(\B G, S; \Z_2),\]
    where $d_\eta(\sigma, \tau) \in H^{q-1}(S; \Z_2)$ is the difference class from $\tau$ to $\sigma$ as sections of $\B_G (X - \ol{\eta})$.
}
\end{proposition}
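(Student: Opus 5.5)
The plan is to reduce the statement to the obstruction-theoretic description of \cref{theorem:another} together with the standard difference formula for primary obstructions recalled just above it. Since $F = X - \ol{\eta}$ is $(q-2)$-connected, both $\sigma$ and $\tau$ are sections over $S$ of the same bundle $\B_G(X - \ol{\eta})$, with fiber $F$ and structure group $G$. By \cref{theorem:another}, $\Tp(\eta | \sigma)$ and $\Tp(\eta | \tau)$ are the mod $2$ reductions of $\mathfrak{o}_q(\B_G(X-\ol{\eta}) | \sigma)$ and $\mathfrak{o}_q(\B_G(X-\ol{\eta}) | \tau)$, respectively. Steenrod's formula gives
\[\mathfrak{o}_q(E | \sigma) - \mathfrak{o}_q(E | \tau) = \connhom d(\sigma, \tau)\]
in $H^q(\B G, S; \{\pi_{q-1}(F)\})$. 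Apply the coefficient reduction $\{\pi_{q-1}(F)\} \to \pi_{q-1}(F) \otimes \Z_2 \cong \Z_2$ to this formula, and define $d_\eta(\sigma,\tau)$ as the mod $2$ reduction of $d(\sigma,\tau)$. Because the reduction is a map of coefficient systems, it commutes with the connecting homomorphism of the pair $(\B G, S)$, and this yields the claim.

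The one point needing care is that the reduction is well defined as a map of local systems to the constant system $\Z_2$. This holds because $\pi_{q-1}(F) \cong H^0(\ol{\eta}; or)$, and $\ol{\eta}$ carries a $\Z_2$ fundamental class, so the mod $2$ quotient is canonically $\Z_2$ and $G$ acts trivially on it. The twisting by $or$ therefore disappears after reduction.

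For $q = 1$ the same argument applies under the extra hypotheses recorded before \cref{theorem:another}.

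Alternatively, one can argue directly with fundamental classes, avoiding obstruction theory:
\begin{itemize}
\item Extend $\sigma$ and $\tau$ to sections $\tilde\sigma$ and $\tilde\tau$ of $\B_G X$ over $\B G$, each transverse to $\B_G\ol{\eta}$.
\item Since $X$ is affine, $\tilde\sigma$ and $\tilde\tau$ are homotopic through the linear homotopy over $\B G \times [0,1]$, which we perturb to be transverse.
\item The preimage of $\B_G\ol{\eta}$ in $\B G \times [0,1]$ is then a relative cycle whose boundary on $S \times [0,1]$ represents $d_\eta(\sigma, \tau)$; this identifies the difference with $\connhom d_\eta$.
\end{itemize}

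The only mild obstacle is the sign and ordering convention, namely that $d(\sigma,\tau)$ is the difference from $\tau$ to $\sigma$. Over $\Z_2$ this is immaterial.
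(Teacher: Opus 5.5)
Your main argument is correct and follows the paper's route: the paper calls the statement immediate from \cref{theorem:another} (relative Thom polynomials are mod $2$ reductions of primary obstruction classes), combined with Steenrod's identity $\mathfrak{o}_q(E | \sigma) - \mathfrak{o}_q(E | \tau) = \connhom d(\sigma, \tau)$ recalled just before it. Your check that reducing the local system $\{\pi_{q-1}(X - \ol{\eta})\}$ to the constant system $\Z_2$ is a coefficient map commuting with $\connhom$ supplies the one detail the paper leaves implicit.
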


In the case where $G = \G$ and $X = J^K(m, n)$, we have the following.
Namely, for every singularity type, its relative Thom polynomial admits the description as in \cref{theorem:another}.

\begin{proposition}\label{theorem:obstruction-class-OK}
{\it
    Let $\eta \subset J^K(m, n)$ be a singularity type of codimension $q \ge 1$.
    Let $\phi \colon M_0 \to N_0$ be a prescribed datum for $\eta$.
    Then the obstruction class
    $\mathfrak{o}_q(\B_G (X - \ol{\eta}) | j^K \phi) \in H^q(\B G, M_0; \{\pi_{q-1}(X - \ol{\eta})\})$
    is always defined.
}
\end{proposition}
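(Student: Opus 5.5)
The plan is to verify directly the hypotheses of Steenrod's primary obstruction theory recalled above, in the form stated before Proposition~\ref{theorem:another}. Here the bundle is $E = \B_\G(J^K(m,n) - \ol{\eta})$ over $\B\G$, with fiber $F = J^K(m,n) - \ol{\eta}$ and structure group $\G$. The section is $j^K\phi$ over the subcomplex $M_0 \subset \B\G$. We take a triangulation of $\B\G$ (or of a finite-dimensional approximation, which suffices in degree $q$) in which the embedded compact manifold $M_0$ is a closed subcomplex. By the prescribed-datum assumption $\eta(\phi)=\varnothing$, the section $j^K\phi$ takes values in $E$, so it only remains to check the connectivity conditions on $F$. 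I would split the argument into the cases $q \ge 2$ and $q = 1$.

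For $q \ge 2$, the fiber $J^K(m,n)$ is a real affine space, hence contractible. The set $\ol{\eta}$ is a Whitney stratified closed subset of codimension $q$, so every stratum has codimension at least $q$. By general position (transversality of maps of spheres and disks of dimension at most $q-1$ to the strata), any map $S^k \to F$ with $k \le q-2$ extends to $D^{k+1} \to J^K(m,n)$. Since $k+1 \le q-1 < q$, this extension can be perturbed off $\ol{\eta}$ relative to the boundary. Hence $F$ is $(q-2)$-connected, and the obstruction class is defined with coefficients in the local system $\{\pi_{q-1}(F)\}$. The action of $\pi_1(\B\G) \cong \pi_0(\G)$ on $\pi_{q-1}(F)$ is induced by the $\G$-action on $F$, so the local system is well defined.

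The case $q = 1$ is the main obstacle. Here we must show that $\pi_0(F)$ carries a group structure and that $\G$ acts on it by group automorphisms. I would first identify the components of $F$. Since $\ol{\eta}$ is a $\G$-invariant stratified cycle of codimension one carrying a fundamental class, $\pi_0(F)$ is governed by $H_0(F) \cong H^0(\ol{\eta};or)$ via Alexander duality. However, a codimension-one cycle can separate the affine space into several pieces, so it is not automatic that $\pi_0(F)$ is a group.

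I would try to show that in $J^K(m,n)$ the only codimension-one $\G$-singularity types occur when $m=n$, namely $\eta = A_1 = \Sigma^1$, the locus $\det df(0)=0$. In that case $F$ consists of exactly two components, the orientation-preserving and the orientation-reversing jets. We identify $\pi_0(F) \cong \Z_2$, and every action on $\Z_2$ is automatically by group automorphisms. More generally, if a type of codimension one had more components, I would use $\K$- or $\A$-invariance together with the linear structure of the $1$-jet part to reduce to the $1$-jet space, where $\ol{\eta}$ is a union of $\GL$-orbit closures. I would then check directly which codimension-one orbit closures occur: only $\det = 0$ for $m=n$, and none when $m \ne n$, since there the singular locus has codimension $|m-n|+1 \ge 2$. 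If this reduction fails in some degenerate case, a fallback is to note that Proposition~\ref{theorem:another} only uses the mod $2$ reduction. One could then replace $\pi_0(F)$ by $H_0(F;\Z)\otimes\Z_2$, which is a group on which $\G$ acts linearly.
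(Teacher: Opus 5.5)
Your route is the paper's. For $q\ge 2$ only the $(q-2)$-connectivity of $X-\ol{\eta}$ is needed, and your general-position argument gives it. For $q=1$ the only type is $A_1\subset J^K(m,m)$, whose complement is $\simeq\GL(m;\R)\simeq\O(m)$ with $\pi_0\cong\Z_2$. To pin down that case, do not reduce to $J^1$, since a general type is not pulled back from $J^1$. Instead, note that the nonsingular jets form a single open dense $\G$-orbit. Hence an invariant $\ol{\eta}$ of positive codimension lies in $\ol{\Sigma^1}$, which has codimension $|m-n|+1$. This forces $m=n$, and since the fold jets form an open dense orbit in $\ol{\Sigma^1}$, it also gives $\ol{\eta}=\ol{A_1}$.

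\textbf{The gap.} The step that fails is your claim that every action on $\Z_2$ is automatically by group automorphisms. Since $\mathrm{Aut}(\Z_2)$ is trivial, that would need $\G$ to act trivially on $\pi_0(F)$, and it does not. On linear parts, $\G\simeq\O(m)\times\O(m)$ acts by $X\mapsto BXA^{-1}$. Any element with $\det A\det B=-1$, for instance a reflection of the source, interchanges $\GL^{+}(m;\R)$ and $\GL^{-}(m;\R)$. That interchange moves the identity element of $\pi_0$. So the hypothesis ``acts by group automorphisms'', read literally, is not verified by your argument and in fact fails here. The paper's one-line ``the compatibility of the action of $\G$ also holds'' has to be understood in the following weaker sense.

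\textbf{The repair.} The interchange is translation by the nonzero element of $\O(m)/\SO(m)\cong\Z_2$, and translations preserve differences $[y]-[x]$ of components. The degree-one obstruction cochain uses only such differences: on a $1$-cell it records whether the section at the two endpoints lies in the same component after transport. So the cochain takes values in $\widetilde{H}_0(F;\Z)\otimes\Z_2\cong\Z_2$, on which $\G$ acts trivially. Equivalently, the class is the obstruction to extending the given section of the two-sheeted covering of fibrewise components. This is essentially your fallback, and it should become the main argument for $q=1$. It must use the reduced group $\widetilde{H}_0$, not $H_0(F;\Z)\otimes\Z_2\cong\Z_2^2$, to match $\pi_0$ and the Alexander-duality identification $\cong\Z_2$.
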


\begin{proof}
    There is nothing to prove if $q \ge 2$.
    The remaining case is only $A_1 \subset J^K(m, m)$, whose codimension is $1$.
    However, we have 
    \[
        J^K(m, m) - \ol{A_1} 
        \simeq J^1(m, m) - \ol{A_1} 
        \simeq \O(m)
    \]
    and $\pi_0(\O(m)) \cong \Z_2$. The compatibility of the action of $\G$ also holds.
\end{proof}

\begin{remark}
    The arguments in this subsection are similar for $\Z$ coefficients if $\eta$ is coorientable.
    Also in the complex analytic category, every singularity type $\eta \subset J^K(m, n)$ of complex codimension $q \ge 1$ defines the obstruction class with $\Z$ coefficients.
    Indeed, the real codimension of $\eta$ is at least $2$.
\end{remark}

\section{Relative characteristic classes}\label{section:RCC}

In this section, we recall Kervaire's relative characteristic classes.
See~\cite{Ker57} for details.
We also relate them to relative Thom polynomials.
Throughout this section, let $B$ be a simplicial complex and $S \subset B$ a subcomplex.

\begin{definition}\label{definition:RCC-Euler}
    Let $E$ be a real oriented vector bundle over $B$ of rank $m$.
    Assume that a $1$-frame (nowhere-vanishing section) $\theta$ of $E$ is given over $S$.
    Then the primary obstruction to extending $\theta$ to $B$ is called {\it Euler class relative to $\theta$}, and denoted by
    \[e(E | \theta) \in H^m(B, S; \{\pi_{m - 1}(\R^m - \{0\})\}) \cong H^m(B, S; \Z).\]
\end{definition}

To consider other types, recall the following two spaces.
The Stiefel manifold $V_{m, r}$ is the space of $r$-frames in $\R^m$.
This is homotopy equivalent to $\O(m) / \O(m - r)$, $(m - r - 1)$-connected, and 
\[\pi_{m - r}(V_{m, r}) \cong
\begin{cases}
    \Z & (m - r > 0 \text{ even, or } r = 1) \\
    \Z_2 & (\text{otherwise}).
\end{cases}\]
The complex Stiefel manifold $W_{m, r}$ is the space of $r$-frames in $\C^m$.
This is homotopy equivalent to $\U(m) / \U(m - r)$, $(2m - 2r)$-connected, and 
\[\pi_{2m - 2r + 1}(W_{m, r}) \cong \Z.\]

\begin{definition}\label{definition:RCC-SW}
    Let $E$ be a real vector bundle over $B$ of rank $m$.
    Assume that an $(m - q + 1)$-frame $\theta$ of $E$ is given over $S$.
    Then for each integer $i \ge q$, the primary obstruction to extending the $(m - i + 1)$-frame $\theta^i$ consisting of the last $(m - i + 1)$ components of $\theta$ to $B$ is defined in $H^i(B, S; \{\pi_{i - 1}(V_{m, m - i + 1})\})$.
    Its mod $2$ reduction is called the {\it $i$-th Stiefel--Whitney class relative to $\theta$}, and denoted by
    \[w_i(E | \theta) \in H^i(B, S; \Z_2).\]
\end{definition}

\begin{definition}\label{definition:RCC-Chern}
    Let $E$ be a complex vector bundle over $B$ of rank $m$.
    Assume that an $(m - q + 1)$-frame $\theta$ of $E$ is given over $S$.
    Then for each integer $i \ge q$, the primary obstruction class to extending the $(m - i + 1)$-frame $\theta^i$ consisting of the last $(m - i + 1)$ components of $\theta$ to $B$ is called the {\it $i$-th Chern class relative to $\theta$}, and denoted by
    \[c_i(E | \theta) \in H^{2i}(B, S; \{\pi_{2i - 1}(W_{m, m - i + 1})\}) \cong H^{2i}(B, S; \Z).\]
\end{definition}

\begin{definition}\label{definition:RCC-Pontryagin}
    Let $E$ be a real vector bundle over $B$ of rank $m$.
    Assume that an $(m - 2q + 1)$-frame $\theta$ of $E$ is given over $S$. This induces an $(m - 2q + 1)$-frame $\theta_\C$ of the complexified bundle $E \otimes \C$.
    Then for each integer $i \ge q$, define the {\it $i$-th Pontryagin class relative to $\theta$} as
    \[p_i(E | \theta) = (-1)^i c_{2i}(E \otimes_\R \C | \theta_\C) \in H^{4i}(B, S; \Z).\]
\end{definition}

\begin{definition}\label{definition:RCN}
    When the base space $B$ is a compact $m$-manifold $M$ and the subspace $S$ is the boundary $\partial M$, we define the {\it relative characteristic number} as
    \[w_m[E | \theta] = \langle w_m(E | \theta), [M, \partial M] \rangle \in \Z_2.\]
    This is similar for other types as well.
\end{definition}

We will use the following properties.
Although we discuss only relative Stiefel--Whitney classes, the other cases are similar.
Hereafter, let $E$ be a real vector bundle over $B$ of rank $m$, and assume that an $(m - q + 1)$-frame $\theta$ of $E$ is given over $S$.

\begin{proposition}
{\it
    It holds that
    \[j^*w_i(E | \theta) = w_i(E)\]
    in $H^i(B; \Z_2)$, where $j \colon (B, \varnothing) \to (B, S)$ is the natural inclusion.
}
\end{proposition}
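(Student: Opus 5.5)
The statement asks that the restriction of the relative class to the absolute one is the ordinary Stiefel--Whitney class. The plan is to compare two obstruction classes through naturality of Steenrod's primary obstruction under the map of pairs $j \colon (B, \varnothing) \to (B, S)$, and then invoke the classical obstruction-theoretic description of $w_i(E)$.

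First, fix $i \ge q$ and the $(m-i+1)$-frame $\theta^i$ over $S$. By definition, $w_i(E|\theta)$ is the mod $2$ reduction of the primary obstruction
\[\mathfrak{o}_i(V_{m-i+1}(E) | \theta^i) \in H^i(B, S; \{\pi_{i-1}(V_{m, m-i+1})\}),\]
where $V_{m-i+1}(E)$ is the associated Stiefel bundle. Its fiber is $(i-2)$-connected, so the obstruction theory recalled in \cref{subsection:RTP-obst} applies. At the cochain level, I extend $\theta^i$ to a section $s$ over $S \cup B^{(i-1)}$. The obstruction cocycle then assigns to each $i$-cell $e$ not in $S$ the homotopy class of $s|_{\partial e}$ in $\pi_{i-1}(V_{m,m-i+1})$, and it vanishes on cells of $S$.

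Next, apply $j^*$, which on cochains simply forgets that the cocycle vanishes on $S$. The resulting cocycle agrees with the obstruction cocycle for the restricted section $s|_{B^{(i-1)}}$ on all cells outside $S$. On a cell $e \subset S$, the section $s$ extends over $e$ (namely by $\theta^i$), so the value there is $0$, again in agreement. Hence $j^*\mathfrak{o}_i(V_{m-i+1}(E)|\theta^i)$ equals the absolute primary obstruction $\mathfrak{o}_i(V_{m-i+1}(E))$. This absolute class is independent of the section over the $(i-1)$-skeleton, because the fiber is $(i-2)$-connected and so all such sections are homotopic over $B^{(i-2)}$; this is the standard argument.

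Finally, reduction mod $2$ commutes with $j^*$, and the mod $2$ reduction of the absolute primary obstruction to an $(m-i+1)$-frame is $w_i(E)$; this is Steenrod's characterization of Stiefel--Whitney classes \cite{Ste51}. Combining the two steps gives $j^*w_i(E|\theta) = w_i(E)$.

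The main point needing care is the identification of the reduction map on local coefficients. When $\pi_{i-1}(V_{m,m-i+1}) \cong \Z$ the system may be twisted, but reducing mod $2$ kills the twist and gives $\Z_2$ compatibly in the relative and absolute settings. Apart from this, the argument is naturality of obstruction cocycles, and the same argument works verbatim for the Chern, Pontryagin and Euler variants.
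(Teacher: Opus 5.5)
Your argument is correct. For an extension $s$ of $\theta^i$ over $S \cup B^{(i-1)}$, the relative obstruction cocycle, regarded as an absolute cochain via $j^*$, agrees cell by cell with the absolute obstruction cocycle of $s|_{B^{(i-1)}}$: it vanishes on the $i$-cells of $S$ because $s$ extends over them by $\theta^i$. Mod $2$ reduction of the coefficient system then commutes with $j^*$. The paper gives no proof of this property; it records it as a standard consequence of Kervaire's obstruction-theoretic definition (\cref{definition:RCC-SW}), and your cochain-level naturality argument is exactly the justification it takes for granted.
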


\begin{proposition}\label{theorem:naturality-rel}
{\it
    Let $f \colon (B', S') \to (B, S)$ be a simplicial map, and consider the induced frame $f^*\theta$ of $f^*E$ over $S'$.
    Then
    \[f^*w_i(E | \theta) = w_i(f^*E | f^*\theta)\]
    in $H^i(B', S'; \Z_2)$.
}
\end{proposition}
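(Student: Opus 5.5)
The plan is to prove the naturality $f^*w_i(E \mid \theta) = w_i(f^*E \mid f^*\theta)$ at the cochain level, using the definition of $w_i(E \mid \theta)$ as the mod $2$ reduction of a primary obstruction class (\cref{definition:RCC-SW}). That obstruction belongs to the fibre bundle $V_i(E)$ of $(m-i+1)$-frames of $E$, with fibre $V_{m,m-i+1}$, relative to the section $\theta^i$ over $S$. Two observations set things up. The frame bundle of $f^*E$ is canonically $f^*V_i(E)$. The section $(f^*\theta)^i$ of it over $S'$ is the pull-back of $\theta^i$, since taking the last $m-i+1$ components commutes with pulling back.

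First, I would extend $\theta^i$ to a section $s$ of $V_i(E)$ over $S \cup B^{(i-1)}$. This is possible because the fibre is $(i-2)$-connected. Because $f$ is simplicial, it maps $B'^{(i-1)}$ into $B^{(i-1)}$ and $S'$ into $S$. Hence $f^*s$ is a section of $f^*V_i(E)$ over $S' \cup B'^{(i-1)}$ extending $(f^*\theta)^i$. It is therefore an admissible choice for computing the obstruction of $f^*E$. The obstruction class does not depend on this choice, by Steenrod's theory recalled in \cref{subsection:RTP-obst}, so I may use it.

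Next, I would compare the two obstruction cocycles on an $i$-simplex $e'$ of $B'$. The value $c(f^*s)(e')$ is the homotopy class in $\pi_{i-1}(V_{m,m-i+1})$ of $f^*s|_{\partial e'}$, read through a trivialization over $e'$. There are two cases.

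\begin{itemize}
\item If $f$ maps $e'$ homeomorphically onto an $i$-simplex $e$, a trivialization over $e$ pulls back to one over $e'$. The resulting class equals $c(s)(e)$, up to the orientation sign, which vanishes mod $2$ and is correct with $\Z$ coefficients if orientations are tracked.
\item If $f$ collapses $e'$ onto a lower-dimensional simplex, then $f^*s|_{\partial e'}$ factors through a map of $\partial e'$ into a disk over which $s$ is defined. It is therefore null-homotopic and the cocycle vanishes. This matches $(f^\#c(s))(e')=0$.
\end{itemize}

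Together these give the cochain identity $c(f^*s) = f^\# c(s)$. This identity holds in the local coefficient system $\{\pi_{i-1}\}$, pulled back along $f$. Passing to cohomology and reducing mod $2$ yields the claim.

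The only delicate points are routine. One is that the local systems match under pull-back, which is immediate since $f^*V_i(E)$ has the same fibre and the pulled-back structure group. The other is that cochains vanishing on $S$ pull back to cochains vanishing on $S'$, which follows from $f(S') \subset S$. I expect the degenerate-simplex case to be the main step needing care, and I would handle it as above by factoring through the image simplex.
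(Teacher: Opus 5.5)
Your proposal is correct. Pulling back one particular extension of $\theta^i$ over $S \cup B^{(i-1)}$ is legitimate because the primary obstruction does not depend on that choice, and your two-case comparison (simplices mapped homeomorphically versus collapsed) gives the cochain identity $c(f^*s) = f^\# c(s)$, which reduces mod $2$ to the claim. The paper states this proposition without proof, referring to Kervaire's work for the relative characteristic classes; your argument is the standard naturality of Steenrod's primary obstruction cocycles under simplicial maps that this reference relies on.
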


For another $(m - q + 1)$-frame $\rho$ of $E$ over $S$ and an integer $i \ge q$, let 
\[d_{w_i}(\theta, \rho) \in H^{i - 1}(S; \Z_2)\]
denote (the mod $2$ reduction of) the difference class from $\theta^i$ to $\rho^i$.
Let $\connhom \colon H^{i - 1}(S; \Z_2) \to H^i(B, S; \Z_2)$ denote the connecting homomorphism.

\begin{proposition}\label{theorem:diff-SW}
{\it
    Assume that another $(m - q + 1)$-frame $\rho$ of $E$ is given over $S$.
    Then
    \[w_i(E | \theta) - w_i(E | \rho) = \connhom d_{w_i}(\theta, \rho)\]
    in $H^i(B, S; \Z_2)$.
}
\end{proposition}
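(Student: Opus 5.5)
This is the standard relation between obstructions relative to two sections, specialised to frames. I would derive it directly from the general formula recalled in the quick review of Steenrod's obstruction theory.

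\emph{Setup.} Let $F_i \to B$ be the associated bundle of $(m-i+1)$-frames of $E$, with fibre $V_{m,m-i+1}$. This fibre is $(i-2)$-connected, so the primary obstruction theory of the previous subsection applies with $q = i$. The frames $\theta$ and $\rho$ over $S$ give two sections $\theta^i, \rho^i$ of $F_i$ over $S$, namely their last $(m-i+1)$ components. Steenrod's formula then gives
\[
\mathfrak{o}_i(F_i \,|\, \theta^i) - \mathfrak{o}_i(F_i \,|\, \rho^i) = \connhom d(\theta^i, \rho^i)
\]
in $H^i(B, S; \{\pi_{i-1}(V_{m,m-i+1})\})$.

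\emph{Reduction mod $2$.} Next I reduce this identity mod $2$ via the coefficient homomorphism $\pi_{i-1}(V_{m,m-i+1}) \to \Z_2$. This homomorphism is surjective: the group is $\Z$ or $\Z_2$, and its generator maps to $1$. It is also compatible with the local system, since $\Z_2$ has no nontrivial automorphisms. Because $\connhom$ is natural with respect to coefficient homomorphisms, the mod $2$ reduction of $\connhom d(\theta^i,\rho^i)$ equals $\connhom$ applied to the reduced difference class, which is $d_{w_i}(\theta, \rho)$ by definition. By \cref{definition:RCC-SW}, the reductions of the two obstruction classes are $w_i(E \,|\, \theta)$ and $w_i(E \,|\, \rho)$. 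This gives the claimed identity. Since we are over $\Z_2$, the signs of the difference are irrelevant.

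\emph{Main obstacle.} The only real point to check is sign and direction conventions. Steenrod's formula must use the same convention as $d_{w_i}(\theta,\rho)$, that is, the difference ``from $\theta^i$ to $\rho^i$'' versus ``from $\rho^i$ to $\theta^i$''. With $\Z_2$ coefficients this issue disappears. In the Chern and Pontryagin analogues with $\Z$ coefficients, one instead fixes the convention to agree with the one used in \cref{theorem:diff-RTP}.
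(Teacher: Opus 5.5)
Your argument is correct and matches what the paper intends. The paper records this property without a separate proof, as a direct consequence of the Steenrod relation $\mathfrak{o}_q(E | \sigma) - \mathfrak{o}_q(E | \tau) = \connhom d(\sigma, \tau)$ from its obstruction-theory review, applied to the bundle of $(m-i+1)$-frames and reduced mod $2$ by naturality of $\connhom$ in the coefficients; you also rightly note that the paper's inconsistent wording of the direction of $d_{w_i}(\theta, \rho)$ (``from $\theta^i$ to $\rho^i$'' versus ``from $\tau$ to $\sigma$'' for $d(\sigma, \tau)$) is harmless over $\Z_2$.
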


\begin{proposition}\label{theorem:naturality-diff}
{\it
    Assume that another $(m - q + 1)$-frame $\rho$ of $E$ is given over $S$.
    Furthermore, let $f \colon (B', S') \to (B, S)$ be a simplicial map, and consider the induced frames $f^*\theta$, $f^*\rho$ of $f^*E$ over $S'$.
    Then
    \[f^* d_{w_i}(\theta, \rho) = d_{w_i}(f^* \theta, f^* \rho)\]
    in $H^{i - 1}(S'; \Z_2)$.
}
\end{proposition}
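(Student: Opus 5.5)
We prove the naturality $f^* d_{w_i}(\theta, \rho) = d_{w_i}(f^*\theta, f^*\rho)$ by reducing it to naturality of Steenrod's primary difference cochain at the cochain level, then passing to cohomology and reducing mod $2$. Recall that $d_{w_i}(\theta, \rho)$ is the mod $2$ reduction of the primary difference class from $\theta^i$ to $\rho^i$, viewed as sections over $S$ of the associated Stiefel bundle $V(E)$ with fiber $V_{m, m-i+1}$. This fiber is $(i-2)$-connected, so the first nontrivial difference lives in degree $i-1$, with coefficients in $\{\pi_{i-1}(V_{m, m-i+1})\}$. The pull-back Stiefel bundle satisfies $V(f^*E) = f^*V(E)$, and $(f^*\theta)^i = f^*(\theta^i)$, since taking the last $m-i+1$ components commutes with pulling back.

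The cochain-level argument runs as follows. Over the $(i-2)$-skeleton of $S$, homotope $\rho^i$ to agree with $\theta^i$; this is possible by connectivity, and the resulting difference class is independent of the choice. The difference cochain assigns to each $(i-1)$-simplex $e$ of $S$ the element of $\pi_{i-1}(V_{m,m-i+1})$ obtained by gluing $\theta^i|_e$ and $\rho^i|_e$ along $\partial e$, computed in a trivialization of $V(E)|_e$.

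Now pull everything back along $f$. The sections $f^*\theta^i$ and $f^*\rho^i$ already agree on the $(i-2)$-skeleton of $S'$, because a simplicial map sends skeleta into skeleta. For an $(i-1)$-simplex $e'$ of $S'$ there are two cases:
\begin{itemize}
\item If $f$ maps $e'$ homeomorphically onto an $(i-1)$-simplex $e$, the gluing map for $e'$ is the gluing map for $e$ composed with $f|_{e'}$, read in the pulled-back trivialization. Its value is therefore the value on $e$, transported by the local system. This is exactly $(f^\sharp d)(e')$.
\item If $f$ collapses $e'$ to a lower-dimensional simplex, both sections agree on the image. The glued sphere map then factors through a disk, so it is trivial, again matching $(f^\sharp d)(e')=0$.
\end{itemize}
Hence $d(f^*\theta^i, f^*\rho^i) = f^\sharp d(\theta^i, \rho^i)$ as cochains. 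Taking cohomology and reducing mod $2$ gives the claim, since mod $2$ reduction commutes with $f^*$.

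The main point needing care is the bookkeeping of local coefficients and the identification $\pi_{i-1}(V_{m,m-i+1})\otimes\Z_2\cong\Z_2$. We will check that this identification is compatible with the bundle isomorphism $f^*V(E)\to V(E)$ covering $f$. This holds because the isomorphism is fiberwise the identity on $V_{m,m-i+1}$, so the induced map on fiber homotopy groups is the identity.
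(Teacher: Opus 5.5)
Your proof is correct. It is the standard cochain-level naturality of Steenrod's primary difference cochain, applied after replacing $\rho^i$ by a homotopic section that agrees with $\theta^i$ on the $(i-2)$-skeleton and pulling that homotopy back along $f$, followed by mod $2$ reduction. The paper gives no separate argument for this proposition; it simply recalls it as a standard property of Kervaire's relative classes and Steenrod's obstruction theory, so your proposal supplies exactly the routine verification the paper takes for granted.
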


The topic of the rest of this section was pointed out by L\'aszl\'o Feh\'er.
We consider the standard action of $\GL(m; \R)$ on $\Hom(\R^{m - q + 1}, \R^m)$.
Let $\Sigma \subset \Hom(\R^{m - q + 1}, \R^m)$ denote the subset consisting of linear maps of rank less than $m - q + 1$.
This forms a $\GL(m; \R)$-invariant and stratified cycle of codimension $q$.

\begin{proposition}\label{theorem:another-RCC-SW}
{\it
    Assume that an $(m - q + 1)$-frame $\theta$ of the tautological bundle $\gamma^m = \B_{\GL(m; \R)} \R^m$ is given over a subcomplex $S \subset \B\GL(m; \R)$.
    Then
    \[w_i(\gamma^m | \theta) = \Tp(\Sigma | \theta^i)\]
    in $H^i(\B\GL(m; \R), S; \Z_2) \cong H^i(\BO(m), S; \Z_2)$.
    In particular, 
    \[w_i = w_i(\gamma^m) = \Tp(\Sigma)\]
    in $H^i(\BO(m); \Z_2)$.
}
\end{proposition}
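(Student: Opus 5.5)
The plan is to reduce the statement to \cref{theorem:another}, applied with $G = \GL(m; \R)$. Here I read the $\Sigma$ in the statement as depending on $i$, which is what makes it well defined. Set $X_i = \Hom(\R^{m-i+1}, \R^m)$ and let $\Sigma_i \subset X_i$ be the locus of linear maps of rank $< m-i+1$. Its codimension is $(m-(m-i))\cdot((m-i+1)-(m-i)) = i$. The right-hand side is then $\Tp(\Sigma_i | \theta^i)$, where $\theta^i$ is regarded as a section of $\B_{\GL(m;\R)} X_i$ over $S$. This requires identifying that Borel construction with a frame bundle, which I do first.

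First I identify the geometric objects. The Borel construction $\B_{\GL(m;\R)} X_i = \E\GL(m;\R) \times_{\GL(m;\R)} \Hom(\R^{m-i+1}, \R^m)$ is the bundle $\Hom(\epsilon^{m-i+1}, \gamma^m)$, whose fiber over a point is the space of $(m-i+1)$-tuples of vectors of $\gamma^m$. Its open subbundle $\B_{\GL(m;\R)}(X_i - \ol{\Sigma_i})$ consists of the linearly independent tuples, i.e.\ it is the bundle of $(m-i+1)$-frames of $\gamma^m$. Its fiber $X_i - \ol{\Sigma_i}$ is the noncompact Stiefel manifold, which deformation retracts by Gram--Schmidt onto $V_{m,m-i+1}$. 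Consequently the $(m-i+1)$-frame $\theta^i$ over $S$ (the last components of $\theta$) is exactly a section $S \to \B_{\GL(m;\R)} X_i$ that misses $\B_{\GL(m;\R)} \ol{\Sigma_i}$. Moreover, the obstruction class $\mathfrak{o}_i(\B_{\GL(m;\R)}(X_i - \ol{\Sigma_i}) | \theta^i)$ is literally the primary obstruction used in \cref{definition:RCC-SW}, valued in $H^i(\B\GL(m;\R), S; \{\pi_{i-1}(V_{m,m-i+1})\})$. When $i=1$ we have $X_1 - \ol{\Sigma_1} \simeq \GL(m;\R)$, and the $q=1$ hypotheses are checked as in the proof of \cref{theorem:obstruction-class-OK}.

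Second, I apply \cref{theorem:another} with $\sigma = \theta^i$. This gives
\[\Tp(\Sigma_i | \theta^i) = \mathfrak{o}_i(\B_{\GL(m;\R)}(X_i - \ol{\Sigma_i}) | \theta^i) \bmod 2,\]
so it remains to check that the two mod $2$ reductions of the coefficient system agree. One reduction is the one in \cref{theorem:another}, via Hurewicz and Alexander duality, i.e.\ the linking number with $\ol{\Sigma_i}$ mod $2$. The other is the standard reduction $\pi_{i-1}(V_{m,m-i+1}) \to \Z_2$ in the definition of $w_i$.

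This compatibility is the main point to verify. Both reductions are nonzero homomorphisms $\pi_{i-1}(V_{m,m-i+1}) \to \Z_2$, and the group is cyclic ($\Z$ or $\Z_2$), so it suffices to show that a generator links $\ol{\Sigma_i}$ an odd number of times. For the generator I would take the standard one: fix the first $m-i$ vectors as $e_{i+1}, \dots, e_m$ and let the last vector run over the unit sphere $S^{i-1} \subset \R^i = \langle e_1, \dots, e_i\rangle$. This sphere bounds the disk of tuples $(e_{i+1}, \dots, e_m, v)$ with $|v| \le 1$ in $X_i$. The disk meets $\ol{\Sigma_i}$ only at $v = 0$, which is a point of the top stratum, where the rank is exactly $m-i$. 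At that point the intersection is transverse, because the normal space to the top stratum there is identified with the complement $\R^i$ in which $v$ varies. Hence the linking number is $\pm1$, and the two reductions coincide. I also need that $\ol{\Sigma_i}$ carries a $\Z_2$ fundamental class; this holds because its singular strata have codimension $\ge 2$ inside it.

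Finally, the isomorphism $H^i(\B\GL(m;\R), S; \Z_2) \cong H^i(\BO(m), S; \Z_2)$ comes from the homotopy equivalence $\O(m) \simeq \GL(m;\R)$. The absolute statement $w_i = \Tp(\Sigma_i)$ follows by taking $S = \varnothing$, or equivalently by applying $j^*$ together with \cref{theorem:rel-to-abs} and the compatibility $j^* w_i(\gamma^m | \theta) = w_i(\gamma^m)$.
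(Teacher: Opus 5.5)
Your proposal is correct and takes the paper's approach: the paper's proof is the single line that the statement is immediate from \cref{theorem:another}, with $\B_{\GL(m;\R)}(\Hom(\R^{m-i+1},\R^m) - \ol{\Sigma})$ tacitly identified with the bundle of $(m-i+1)$-frames of $\gamma^m$. Your additional steps make explicit what the paper leaves implicit: you read $\Sigma$ as the codimension-$i$ rank locus, and you use the fiber sphere of $V_{m,m-i+1} \to V_{m,m-i}$ to check that the Alexander-duality reduction and the standard mod $2$ reduction of the coefficient system agree.
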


\begin{proof}
    It is immediate from \cref{theorem:another}.
\end{proof}

We also consider the singularity type $A_1 \subset J^1(m, m - q + 1) = \Hom(\R^m, \R^{m - q + 1})$.
Recall that this is codimension $q$ and endowed with an action of $\G \simeq J^1 \G = \GL(m; \R) \times \GL(m - q + 1; \R)$.
It is well-known that $\Tp(A_1)(w_i, 0) = w_q$, and hence, we have
\[\Tp(A_1)(w_i, 0) = \Tp(\Sigma).\]
This equality can be explained more precisely.
In fact, the subset $\Sigma$ coincides with $A_1$ under transposing linear maps; furthermore, forgetting the $\GL(m - q + 1; \R)$-action on $A_1$, the two subsets form the same refined fundamental class.
Its relative version will be discussed in \cref{subsection:case-A1-nega}.

\section{Structure of relative Thom polynomials}\label{section:main}

In this section, we show a structure theorem of relative Thom polynomials.

\subsection{Real \texorpdfstring{$C^\infty$}{C-infty} unoriented version}\label{subsection:main-BO}

Throughout this subsection, we assume that $m \le n$.

\subsubsection{Setup}\label{subsubsection:setup}

First, we prepare relative characteristic classes to be substituted into absolute Thom polynomials.
We introduce the following notion.

\begin{definition}\label{definition:framable-prescribed-data}
    A {\it framable prescribed datum} is a prescribed datum $\phi \colon M_0 \to N_0$ for the type $A_1$ (hence, $\phi$ is an immersion) such that
    \begin{itemize}
        \item the normal bundle of $\phi \colon M_0 \to N_0$ is trivial;
        \item $N_0$ is parallelizable, i.e., $TN_0$ is trivial.
    \end{itemize}
\end{definition}

For simplicity, we fix a trivialization of the normal bundle of $\phi$, namely, an immersion $\phi$ is assumed to be a framed immersion in usual sense.

We fix a framable prescribed datum $\phi \colon M_0 \imm N_0$ and a frame $\Theta$ on $N_0$.
Then they (and the trivialization of the normal bundle of $\phi$) naturally induce a frame $\theta$ of $\epsilon^{n - m} \oplus TM_0$ via the isomorphism
\[\phi^*TN_0 \cong \epsilon^{n-m} \oplus TM_0.\]
Furthermore, we regard the two frames $\theta$, $\Theta$ as subframes of some universal bundles as follows.
Consider an embedding $\kappa_1 \colon M_0 \into \BO(m)$ which is a classifying map of $TM_0$ (this is unique up to isotopy).
Let $\gamma^l$ denote the tautological bundle over $\BO(l)$.
Then the frame $\theta$ induces a frame of the direct sum $\epsilon^{n-m} \oplus \gamma^m$ over $\kappa_1(M_0)$.
In the same way, consider an embedding $\kappa_2 \colon N_0 \into \BO(n)$; the frame $\Theta$ induces a frame of $\gamma^n$ over $\kappa_2(N_0)$.
Therefore, two types of relative Stiefel--Whitney classes are defined:
\begin{align*}
    w_i(\epsilon^{n-m} \oplus \gamma^m | \theta) &\in H^i(\BO(m), \kappa_1(M_0); \Z_2) & (1 \le i \le m), \\
    w_j(\gamma^n | \Theta)  &\in H^j(\BO(n), \kappa_2(N_0); \Z_2) & (1 \le j \le n).
\end{align*}
Moreover, consider the embedding
\[\kappa \circ (\id, \phi) \colon M_0 \into M_0 \times N_0 \into \BO(m) \times \BO(n),\]
where $\kappa$ is the classifying map of $J^K(M_0, N_0)$.
Since $\kappa \simeq \kappa_1 \times \kappa_2$, we have projections
\begin{align*}
    \pr_1 &\colon (\BO(m) \times \BO(n), M_0) \to (\BO(m), \kappa_1(M_0)), \\
    \pr_2 &\colon (\BO(m) \times \BO(n), M_0) \to (\BO(n), \kappa_2(N_0)).
\end{align*}
Consequently, we have the classes
\begin{align*}
    (\pr_1)^* w_i(\epsilon^{n-m} \oplus \gamma^m | \theta) &\in H^i(\BO(m) \times \BO(n), M_0; \Z_2) & (1 \le i \le m), \\
    (\pr_2)^* w_j(\gamma^n | \Theta)  &\in H^j(\BO(m) \times \BO(n), M_0; \Z_2) & (1 \le j \le n).
\end{align*}
The following is immediate from \cref{theorem:naturality-rel}.

\begin{lemma}
{\it
    For any extension datum $f \colon M \to N$ of $\phi \colon M_0 \to N_0$,
    \begin{align*}
        (\kappa \circ (\id, f))^* (\pr_1)^* w_i(\epsilon^{n-m} \oplus \gamma^m | \theta) &= w_i(\epsilon^{n-m} \oplus TM | \theta) &\in H^i(M, M_0; \Z_2), \\
        (\kappa \circ (\id, f))^* (\pr_2)^* w_j(\gamma^n | \Theta) &= w_j(f^*TN | \Theta) &\in H^j(M, M_0; \Z_2), 
    \end{align*}
    where $\kappa \colon M \times N \to \BO(m) \times \BO(n)$ is the classifying map of $J^K(M, N)$.
}
\end{lemma}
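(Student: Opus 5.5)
The plan is to reduce each identity to the naturality statement \cref{theorem:naturality-rel}, applied to the composite map of pairs
\[
\Phi = \pr_\ell \circ \kappa \circ (\id, f) \colon (M, M_0) \to (\BO(m), \kappa_1(M_0)) \quad \text{or} \quad (\BO(n), \kappa_2(N_0)),
\]
with $\ell = 1, 2$. The only real work is to check that $\Phi$ is a map of pairs, that it pulls the universal bundle back to the right bundle, and that it pulls the universal frame back to the given frame.

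First, $\Phi$ is a map of pairs. Since $f$ extends $\phi$, the composite $\kappa \circ (\id, f)$ restricted to $M_0$ equals $\kappa \circ (\id, \phi)$. Its image is exactly the embedded copy of $M_0$ in $\BO(m) \times \BO(n)$ used to define the relative classes. Here we choose the classifying map $\kappa$ of $J^K(M,N)$ to extend the one of $J^K(M_0,N_0)$. This is possible since classifying maps extend from a closed subcomplex, and different choices change nothing up to homotopy of pairs. Using $\kappa \simeq \kappa_1 \times \kappa_2$, again chosen compatibly on $M_0$, the projections $\pr_1 \circ \kappa \circ (\id, f)$ and $\pr_2 \circ \kappa \circ (\id, f)$ classify $TM$ and $f^*TN$ respectively. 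On $M_0$ they restrict to $\kappa_1$ and $\kappa_2 \circ \phi$.

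Second, the bundles and frames pull back correctly. The first composite pulls $\epsilon^{n-m} \oplus \gamma^m$ back to $\epsilon^{n-m} \oplus TM$. Over $M_0$, the frame $\theta$ of $\epsilon^{n-m} \oplus \gamma^m$ on $\kappa_1(M_0)$ was defined by transporting the frame $\theta$ of $\epsilon^{n-m} \oplus TM_0$, so it pulls back to that same frame. Likewise $\gamma^n$ pulls back to $f^*TN$, whose restriction to $M_0$ is $\phi^*TN_0$. The frame $\Theta$ on $\kappa_2(N_0)$ pulls back to $\phi^*\Theta$, which the lemma denotes $\Theta$. Applying \cref{theorem:naturality-rel} then gives both equalities.

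The main obstacle is a technical one. \cref{theorem:naturality-rel} is stated for simplicial maps, so we need a triangulation of $(M, M_0)$ and a simplicial approximation of $\Phi$ relative to $M_0$. We also need to check that the resulting classes do not depend on homotopies of the classifying maps fixed on $M_0$. This follows from the homotopy invariance of primary obstruction classes relative to a fixed section over the subcomplex.
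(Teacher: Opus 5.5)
Your proposal is correct and follows the paper's own route: the paper simply calls the lemma immediate from the naturality of relative Stiefel--Whitney classes (\cref{theorem:naturality-rel}) applied to $\pr_\ell \circ \kappa \circ (\id, f)$. Your checks supply the details the paper leaves implicit: that this composite is a map of pairs, that it pulls back the bundles and frames correctly, and that $\kappa$ can be chosen to extend the classifying map over $M_0$. Your concern about simplicial maps is harmless, because primary obstruction classes are natural for arbitrary continuous maps of CW pairs via cellular approximation.
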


\subsubsection{Main theorem and its proof}

This subsection is devoted to proving the following.

\begin{theorem}[$=$ \cref{maintheorem:BO-Z2}]\label{maintheorem:BO-Z2-restated}
{\it
    For any $\K$-singularity type $\eta \subset J^K(m, n)$ of codimension $q$ and any framable prescribed datum $\phi \colon M_0 \imm N_0$, there is a class $\alpha(\eta | \phi) \in H^{q-1}(M_0; \Z_2)$ satisfying the following: for any frame $\Theta$ on $N_0$, 
    \[\Tp(\eta | \phi) = \Tp(\eta) \Big( (\pr_1)^* w_i(\epsilon^{n-m} \oplus \gamma^m | \theta), (\pr_2)^* w_j(\gamma^n | \Theta) \Big) + \connhom \alpha(\eta | \phi)\]
    in $H^q(\BO(m) \times \BO(n), M_0; \Z_2)$, 
    where $\connhom$ is the connecting homomorphism.
    Furthermore, $\alpha(\eta | \phi)$ is unique modulo $\Ker \connhom$.
}
\end{theorem}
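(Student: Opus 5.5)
Write $T_\Theta := \Tp(\eta)\big((\pr_1)^* w_i(\epsilon^{n-m} \oplus \gamma^m | \theta), (\pr_2)^* w_j(\gamma^n | \Theta)\big)$, with $\theta$ the frame induced from $\Theta$ and the normal framing of $\phi$.

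The proof has two steps: first produce $\alpha$ for one fixed frame $\Theta_0$, then show that the same $\alpha$ works for every other frame.

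\emph{Step 1: existence for a fixed frame.} Consider the long exact sequence of the pair $(\BO(m)\times\BO(n), M_0)$:
\[
H^{q-1}(M_0;\Z_2)\xto{\connhom} H^q(\BO(m)\times\BO(n), M_0;\Z_2)\xto{j^*} H^q(\BO(m)\times\BO(n);\Z_2).
\]
By \cref{theorem:rel-to-abs}, $j^*\Tp(\eta|\phi)=\Tp(\eta)$. Since $j^*$ is a ring homomorphism on the relative cup products and $j^*$ of each relative Stiefel--Whitney class is the absolute one, we also get $j^*T_{\Theta_0}=\Tp(\eta)(w_i,w'_j)$. The relative products make sense because a class of positive degree in $H^*(X,M_0)$ times an absolute class lands in $H^*(X,M_0)$. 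Constant terms do not arise, since $\Tp(\eta)$ is homogeneous of degree $q\ge1$. Hence $\Tp(\eta|\phi)-T_{\Theta_0}\in\Ker j^*=\Im\connhom$, and we can choose $\alpha$ with $\connhom\alpha=\Tp(\eta|\phi)-T_{\Theta_0}$. This $\alpha$ is unique modulo $\Ker\connhom$ by construction.

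\emph{Step 2: independence of the frame.} This is the main obstacle: we must show $T_\Theta=T_{\Theta_0}$ for every frame $\Theta$. Two frames on $N_0$ differ by a map $g\colon N_0\to\O(n)$. The frame $\theta$ on $\epsilon^{n-m}\oplus TM_0\cong\phi^*TN_0$ changes by $g\circ\phi$, the same matrix function expressed via $\phi^*$. By \cref{theorem:diff-SW} and \cref{theorem:naturality-diff},
\[
w_i(\epsilon^{n-m}\oplus\gamma^m|\theta)-w_i(\epsilon^{n-m}\oplus\gamma^m|\theta_0)=\connhom d_i
\quad\text{and}\quad
w_j(\gamma^n|\Theta)-w_j(\gamma^n|\Theta_0)=\connhom D_j .
\]
Under the identification of $M_0$, both $d_i$ and $D_j$ pull back to the same class $\phi^*g^*\delta_k\in H^{k-1}(M_0;\Z_2)$, where $\delta_k$ is the difference class on $\O(n)$ (essentially the transgression/suspension of $w_k$). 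In particular $(\pr_1)^*\connhom d_k=(\pr_2)^*\connhom D_k$ in $H^k(\BO(m)\times\BO(n),M_0)$.

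We then expand the polynomial. A key fact is that $(\connhom x)\cdot(\connhom y)=0$ for a product of two relative classes both coming from $\connhom$, because $\connhom x\cdot \connhom y=\connhom(x\cdot j^*i^*\connhom y)$ and $i^*\connhom=0$ by exactness. Therefore the change $T_\Theta-T_{\Theta_0}$ is linear in the differences $\connhom d_k$, and equals
\[
\sum_k\Big(\frac{\partial\Tp(\eta)}{\partial w_k}+\frac{\partial\Tp(\eta)}{\partial w'_k}\Big)\cdot\connhom(\phi^*g^*\delta_k),
\]
with the partial derivatives evaluated at absolute classes restricted to $M_0$. Here we use \cref{theorem:ATP-for-K}: $\Tp(\eta)$ is a polynomial in the quotient classes $\tilde w$, and the variation of $\tilde w$ under simultaneous equal shifts of $w_k$ and $w'_k$ vanishes to first order. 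For example, $\tilde w_1=w'_1-w_1$, and in general the differential of $(1+w')/(1+w)$ is $\big(\delta(w')-\delta(w)\big)/(1+w)$ modulo products of differences.

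The remaining care is the mixed products. Rigorously, $(\connhom a)\cdot b=\connhom(a\cdot b|_{M_0})$, so everything reduces to computing in $H^*(M_0)$. On $M_0$, the restrictions of $TM_0\oplus\epsilon^{n-m}$ and of $\phi^*TN_0$ are identified, so the pulled-back classes agree there. The linear variation is then $\sum_k \partial_{\tilde w}\Tp\cdot(\text{difference of }\tilde\delta)$, and this vanishes because the two difference classes coincide on $M_0$. Hence $T_\Theta=T_{\Theta_0}$, and $\alpha$ satisfies the statement for every $\Theta$.
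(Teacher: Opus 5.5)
Your proposal is correct and is essentially the paper's proof: existence comes from exactness of the sequence of the pair $(\BO(m)\times\BO(n), M_0)$, and frame-independence comes from expanding the substituted polynomial via difference classes, using naturality ($d_{w_k}(\xi,\theta)=\phi^*d_{w_k}(\Xi,\Theta)$) and the $\K$-invariance of $\eta$. The paper shortens your last step with \cref{theorem:easy}: $u\smile\connhom v=0$ for \emph{every} relative class $u$, so all your restricted coefficients $b|_{M_0}$ vanish, only the linear monomials $w_q, w'_q$ change, and one only needs their coefficients to be opposite; this also avoids your claim that the differential of $(1+w')/(1+w)$ is $(\delta w'-\delta w)/(1+w)$, which is false unless $w=w'$ and holds in your argument only because the restrictions to $M_0$ agree (indeed vanish).
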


The proof is divided into showing the following two claims.
Let $\eta \subset J^K(m, n)$ be a $\K$-singularity type of codimension $q$ and $\phi \colon M_0 \imm N_0$ a framable prescribed datum.

\begin{claim}\label{claim:quasi-existence}
{\it
    For any frame $\Theta$ on $N_0$, there is a unique class $\alpha(\eta | \phi, \Theta) \in H^{q-1}(M_0; \Z_2)$ satisfying the following:
    \[\Tp(\eta | \phi) = \Tp(\eta) \Big( (\pr_1)^* w_i(\epsilon^{n-m} \oplus \gamma^m | \theta), (\pr_2)^* w_j(\gamma^n | \Theta) \Big) + \connhom \alpha(\eta | \phi, \Theta)\]
    in $H^q(\BO(m) \times \BO(n), M_0; \Z_2)$.
}
\end{claim}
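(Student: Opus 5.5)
The plan is to use the long exact sequence of the pair $(\BO(m) \times \BO(n), M_0)$ with $\Z_2$ coefficients:
\[
H^{q-1}(\BO(m) \times \BO(n); \Z_2) \xto{\ \iota^*\ } H^{q-1}(M_0; \Z_2) \xto{\ \connhom\ } H^q(\BO(m) \times \BO(n), M_0; \Z_2) \xto{\ j^*\ } H^q(\BO(m) \times \BO(n); \Z_2).
\]
Existence of $\alpha(\eta | \phi, \Theta)$ will follow from exactness at the middle term. Uniqueness will follow from showing that $\iota^*$ vanishes.

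\emph{Existence.} Set
\[
D = \Tp(\eta | \phi) - \Tp(\eta) \Big( (\pr_1)^* w_i(\epsilon^{n-m} \oplus \gamma^m | \theta), (\pr_2)^* w_j(\gamma^n | \Theta) \Big).
\]
By \cref{theorem:rel-to-abs}, $j^*\Tp(\eta | \phi) = \Tp(\eta)$. The map $j^*$ is a ring homomorphism, compatible with the relative cup products used to evaluate the polynomial, so it commutes with substitution into $\Tp(\eta)$. It sends $w_i(\epsilon^{n-m} \oplus \gamma^m | \theta)$ to $w_i(\epsilon^{n-m} \oplus \gamma^m) = w_i(\gamma^m) = w_i$, and it sends $w_j(\gamma^n | \Theta)$ to $w'_j$; these use the proposition preceding \cref{theorem:naturality-rel} and the naturality of $j$ under $\pr_1$, $\pr_2$. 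Hence the second term of $D$ also maps to $\Tp(\eta)(w_i, w'_j)$, so $j^* D = 0$, and exactness gives $D = \connhom \alpha$ for some $\alpha \in H^{q-1}(M_0; \Z_2)$. One point to check is that all degrees match. Since $\Tp(\eta)$ is homogeneous of degree $q$, every monomial is a product of relative classes of positive degree. Its constant term is $0$, since $\eta \neq J^K(m,n)$. So the substitution lands in $H^q(\ldots, M_0)$ as the relative cup product $H^a(X, A) \otimes H^b(X, A) \to H^{a+b}(X, A)$ requires.

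\emph{Uniqueness.} It suffices that $\connhom$ is injective, i.e.\ that $\iota^* = 0$ in degree $q - 1$. The ring $H^*(\BO(m) \times \BO(n); \Z_2)$ is generated by the $w_i$ and $w'_j$. Their pull-backs to $M_0$ along $\kappa \circ (\id, \phi)$ are $w_i(TM_0)$ and $\phi^* w_j(TN_0)$. The latter vanish because $N_0$ is parallelizable. The former vanish because $\epsilon^{n-m} \oplus TM_0 \cong \phi^*TN_0$ is trivial (the normal bundle of $\phi$ being trivialized), so $w(TM_0) = 1$. Therefore $\iota^*$ kills every element of positive degree, and $\connhom$ is injective on $H^{q-1}(M_0; \Z_2)$ whenever $q \ge 2$.

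\emph{Main obstacle.} The delicate case is $q = 1$, which by the proof of \cref{theorem:obstruction-class-OK} only occurs for $A_1 \subset J^K(m, m)$. There $H^0(\BO(m) \times \BO(n); \Z_2) \to H^0(M_0; \Z_2)$ sends $1$ to the constant class, so $\Ker \connhom$ is generated by the constant class and is nonzero. I would therefore state uniqueness for $q \ge 2$ outright. For $q = 1$ I would read it as uniqueness modulo $\Ker \connhom$, i.e.\ in reduced cohomology $\tilde H^0(M_0; \Z_2)$, which is exactly the form recorded in \cref{maintheorem:BO-Z2-restated}. I would also double-check the compatibility of relative cup products with the relative Stiefel--Whitney classes. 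That compatibility is where the frames $\theta$ and $\Theta$ genuinely enter the argument; the rest is formal.
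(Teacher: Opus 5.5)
Your argument is correct and essentially the paper's. The paper also uses the long exact sequence of $(\BO(m)\times\BO(n), M_0)$: existence comes from $\Ker j^* = \Im\connhom$, and uniqueness from $i^*=0$. The paper gets $i^*=0$ from the section $s_\Theta$ of $j^*$ (which sends $w_i, w'_j$ to the relative classes), whereas you get it directly from the vanishing of $w(TM_0)$ and $\phi^*w(TN_0)$ on $M_0$; these are two sides of the same fact.

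Your caveat about $q=1$ is also right. The section $s_\Theta$ only splits $j^*$ in positive degrees, since $H^0(\BO(m)\times\BO(n), M_0;\Z_2)=0$. So for $A_1\subset J^K(m,m)$ the constant class lies in $\Ker\connhom$, and both the claimed uniqueness and \cref{theorem:inj-conn-hom} hold only modulo that class.
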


\begin{proof}
    We focus on the cohomology long exact sequence of the pair $(\BO(m) \times \BO(n), M_0)$:
    \begin{align*}
        H^{q-1}(\BO(m) \times \BO(n); \Z_2) \xto{i^*} H^{q-1}(M_0; \Z_2) \ \\ \xto{\connhom} H^q(\BO(m) \times \BO(n), M_0; \Z_2) \xto{j^*} H^q(\BO(m) \times \BO(n); \Z_2) \xto{i^*} H^q(M_0; \Z_2).
    \end{align*}
    We choose a frame $\Theta$ on $N_0$.
    Then the map $j^*$ admits a section
    \[s_\Theta \colon H^q(\BO(m) \times \BO(n); \Z_2) \to H^q(\BO(m) \times \BO(n), M_0; \Z_2)\]
    defined by 
    \begin{align*}
        w_i = (\pr_1)^* w_i(\epsilon^{n-m} \oplus \gamma^m) &\mapsto (\pr_1)^* w_i(\epsilon^{n-m} \oplus \gamma^m | \theta), \\
        w'_j = (\pr_2)^* w_j(\gamma^n) &\mapsto (\pr_2)^* w_j(\gamma^n | \Theta).
    \end{align*}
    In particular, $j^*$ is a surjection, namely, $i^*$ is zero.
    Therefore, we obtain the splitting exact sequence
    \[0 \to H^{q-1}(M_0; \Z_2) \xto{\connhom} H^q(\BO(m) \times \BO(n), M_0; \Z_2) \xto{j^*} H^q(\BO(m) \times \BO(n); \Z_2) \to 0.\]
    Since $\Tp(\eta | \phi)$ and $s_\Theta(\Tp(\eta))$ are mapped to the same class $\Tp(\eta)$ by $j^*$, \cref{claim:quasi-existence} is proven.
\end{proof}

Notice that by this argument, we also saw the following.

\begin{proposition}\label{theorem:inj-conn-hom}
{\it
    For each integer $q$, the connecting homomorphism
    \[\connhom \colon H^{q-1}(M_0; \Z_2) \to H^q(\BO(m) \times \BO(n), M_0; \Z_2)\]
    is an injection.
}
\end{proposition}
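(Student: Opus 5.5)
This is already contained in the proof of \cref{claim:quasi-existence}. The plan is to read off injectivity of $\connhom$ from the long exact sequence of the pair $(\BO(m) \times \BO(n), M_0)$. By exactness at $H^{q-1}(M_0; \Z_2)$, the kernel of $\connhom$ is the image of the restriction
\[i^* \colon H^{q-1}(\BO(m) \times \BO(n); \Z_2) \to H^{q-1}(M_0; \Z_2).\]
So it suffices to show $i^* = 0$ in every degree. Equivalently, $j^* \colon H^{q-1}(\BO(m) \times \BO(n), M_0; \Z_2) \to H^{q-1}(\BO(m) \times \BO(n); \Z_2)$ should be surjective in every degree, with $q-1$ in place of $q$.

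Surjectivity comes from the section $s_\Theta$ built in the proof of \cref{claim:quasi-existence}, after fixing a frame $\Theta$ on $N_0$. The ring $H^*(\BO(m) \times \BO(n); \Z_2)$ is generated by $w_i$ and $w'_j$ of positive degree. Each generator has a relative lift, namely $(\pr_1)^* w_i(\epsilon^{n-m} \oplus \gamma^m | \theta)$ and $(\pr_2)^* w_j(\gamma^n | \Theta)$. By the first proposition of \cref{section:RCC} and naturality, these lifts restrict under $j^*$ to $w_i$ and $w'_j$.

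Every element of positive degree is a sum of monomials in these generators. To lift a monomial, I will take the cup product of the lifts of its factors. Here $H^*(X, A)$ is a module over $H^*(X)$, and the relative cup product $H^*(X,A) \otimes H^*(X,A) \to H^*(X,A)$ is compatible with $j^*$. Hence $j^*$ is surjective onto every positive-degree group, and $i^*$ vanishes in positive degrees.

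The only delicate point, and the step I expect to be the main obstacle, is degree $0$, i.e.\ $q = 1$. There the claim $i^* = 0$ fails literally, because the unit $1 \in H^0(\BO(m) \times \BO(n); \Z_2)$ restricts nontrivially to $H^0(M_0; \Z_2)$ whenever $M_0 \neq \varnothing$. I would treat this case separately and interpret the statement with reduced cohomology: the sequence is exact with $\tilde H^{q-1}(M_0)$, and for $q - 1 = 0$ the kernel of $\connhom$ is exactly the constants. Under that interpretation injectivity holds. Alternatively, I would note that the proposition is only needed for $q \ge 2$, or for the relevant positive degrees in the Main Theorem.

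With this caveat, the proof reduces to the single observation that each generator of the target ring lifts to relative cohomology, which the relative Stiefel--Whitney classes provide.
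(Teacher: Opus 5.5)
Your argument is correct and is the paper's own: injectivity of $\connhom$ is read off from the exact sequence of the pair, using the section $s_\Theta$ from the proof of \cref{claim:quasi-existence}. That section sends $w_i, w'_j$ to the relative Stiefel--Whitney classes and is extended by relative cup products, as you make explicit, so $j^*$ is onto and $i^* = 0$ in positive degrees.

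Your degree-zero caveat is also right, and the paper's ``for each integer $q$'' passes over it. For $q = 1$ and $M_0 \neq \varnothing$ one has $H^0(\BO(m)\times\BO(n), M_0;\Z_2) = 0$, so the constant class $1 \in H^0(M_0;\Z_2)$ lies in $\Ker\connhom$. The proposition therefore holds only for $q \neq 1$, or modulo constants. Consequently, for codimension-one types such as $A_1 \subset J^K(m,m)$, the correction term $\alpha(\eta|\phi)$ is determined only up to constants.
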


\begin{claim}\label{claim:uniqueness}
{\it
    The class $\alpha(\eta | \phi, \Theta)$ is independent of the choice of $\Theta$.
}
\end{claim}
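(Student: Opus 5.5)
The plan is to compare the two sections $s_\Theta$ and $s_{\Theta'}$ of $j^*$ from the proof of \cref{claim:quasi-existence} for two frames $\Theta, \Theta'$ on $N_0$. By that proof,
\[\connhom\bigl(\alpha(\eta | \phi, \Theta) - \alpha(\eta | \phi, \Theta')\bigr) = s_{\Theta'}(\Tp(\eta)) - s_\Theta(\Tp(\eta)).\]
By \cref{theorem:inj-conn-hom}, it therefore suffices to show that $s_\Theta(\Tp(\eta)) = s_{\Theta'}(\Tp(\eta))$. Note that changing $\Theta$ also changes the induced frame $\theta$ on $\epsilon^{n-m} \oplus TM_0$, say to $\theta'$. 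Both $\theta$ and $\theta'$ are obtained from $\phi^*\Theta$ and $\phi^*\Theta'$ through the same isomorphism $\phi^*TN_0 \cong \epsilon^{n-m} \oplus TM_0$.

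First I will linearize the difference. Here $s_\Theta$ means substituting relative classes into a polynomial without constant term, using the relative cup product $H^*(X, A) \otimes H^*(X, A) \to H^*(X, A)$. By \cref{theorem:diff-SW} and \cref{theorem:naturality-rel}, each generator changes by a coboundary:
\begin{align*}
(\pr_1)^* w_i(\epsilon^{n-m} \oplus \gamma^m | \theta) - (\pr_1)^* w_i(\epsilon^{n-m} \oplus \gamma^m | \theta') &= \connhom x_i,\\
(\pr_2)^* w_j(\gamma^n | \Theta) - (\pr_2)^* w_j(\gamma^n | \Theta') &= \connhom y_j,
\end{align*}
where $x_i$ and $y_j$ are the corresponding difference classes restricted to $M_0$. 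For any relative class $b \in H^*(X, M_0)$ we have $\connhom x \cup b = \connhom(x \cup i^* j^* b)$, and $i^* j^* b = 0$ because $b$ restricts to zero on $M_0$. Hence, in any monomial of degree $\ge 2$, expanding $(a' + \connhom x)(b' + \connhom y) - a'b'$ gives only terms of the form $\connhom(\cdot) \cup (\text{relative class})$, and all of these vanish. Consequently,
\[s_\Theta(\Tp(\eta)) - s_{\Theta'}(\Tp(\eta)) = \connhom\bigl(c_q x_q + c'_q y_q\bigr),\]
where $c_q$ and $c'_q$ are the coefficients of $w_q$ and $w'_q$ in $\Tp(\eta)$.

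Second, this is the point where $\K$-invariance enters. By \cref{theorem:ATP-for-K}, $c_q = -c'_q$, and these are equal mod $2$. It therefore remains to show that $x_q = y_q$ in $H^{q-1}(M_0; \Z_2)$, i.e.
\[d_{w_q}(\theta, \theta') = \phi^* d_{w_q}(\Theta, \Theta').\]
This follows from \cref{theorem:naturality-diff} applied to the bundle map covering $\phi$. The point is that $\theta$ and $\theta'$ are exactly the pull-backs of $\Theta$ and $\Theta'$ under the identification $\phi^*TN_0 \cong \epsilon^{n-m} \oplus TM_0$.

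The main obstacle is bookkeeping. The embedded $M_0 \subset \BO(m) \times \BO(n)$ is $\kappa \circ (\id, \phi)$, which is only homotopic to $\kappa_1 \times (\kappa_2 \circ \phi)$. One must check that, over $M_0$, the two bundles $(\pr_1)^*(\epsilon^{n-m} \oplus \gamma^m)$ and $(\pr_2)^*\gamma^n$ are identified compatibly with the frames. Concretely, the restrictions of $\pr_1$ and $\pr_2$ to $M_0$ must induce the maps under which $x_q$ and $y_q$ become the two sides of the displayed equality in $H^{q-1}(M_0; \Z_2)$. I would handle this by fixing the homotopy $\kappa \simeq \kappa_1 \times \kappa_2$ once and transporting frames along it, which gives the identification $\phi^*TN_0 \cong \epsilon^{n-m} \oplus TM_0$ canonically over $M_0$.
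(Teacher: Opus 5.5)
Your proposal is correct and follows essentially the same route as the paper: both reduce, via injectivity of $\connhom$, to showing that the two substitutions agree. Both then kill all nonlinear terms using $u \smile \connhom v = 0$, identify the two surviving linear difference classes through naturality ($d_{w_q}(\theta,\theta') = \phi^* d_{w_q}(\Theta,\Theta')$), and cancel them using the $\K$-invariance relation between the coefficients of $w_q$ and $w'_q$; your direct derivation of $\connhom x \cup b = 0$ and your care about the homotopy $\kappa \simeq \kappa_1 \times \kappa_2$ only make explicit details the paper leaves implicit.
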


\begin{proof}
    We choose two frames $\Theta$ and $\Xi$ on $N_0$, and put 
    \[D = \connhom (\alpha(\eta | \phi, \Theta) - \alpha(\eta | \phi, \Xi)).\]
    We already have
    \begin{align*}
        \Tp(\eta | \phi) 
        &= \Tp(\eta) \Big((\pr_1)^* w_i(\epsilon^{n-m} \oplus \gamma^m | \theta), (\pr_2)^* w_j(\gamma^n | \Theta) \Big) +  \connhom \alpha(\eta | \phi, \Theta), \\
        &= \Tp(\eta) \Big((\pr_1)^* w_i(\epsilon^{n-m} \oplus \gamma^m | \xi), (\pr_2)^* w_j(\gamma^n | \Xi) \Big) + \connhom \alpha(\eta | \phi, \Xi),
    \end{align*}
    where $\xi$ is the frame of $\epsilon^{n-m} \oplus TM_0$ induced by $(\phi, \Xi)$.
    Hence,
    \begin{align*}
        D &= \Tp(\eta) \Big((\pr_1)^* w_i(\epsilon^{n-m} \oplus \gamma^m | \xi), (\pr_2)^* w_j(\gamma^n | \Xi) \Big) \\
            & \qquad - \Tp(\eta) \Big((\pr_1)^* w_i(\epsilon^{n-m} \oplus \gamma^m | \theta), (\pr_2)^* w_j(\gamma^n | \Theta) \Big).
    \end{align*}
    First, by \cref{theorem:naturality-diff}, 
    \begin{align*}
        w_i(\epsilon^{n-m} \oplus \gamma^m | \xi) &= w_i(\epsilon^{n-m} \oplus \gamma^m | \theta) + \connhom d_{w_i}(\xi, \theta), \\
        w'_j(\gamma^n | \Xi) &= w'_j(\gamma^n | \Theta) + \connhom d_{w_j}(\Xi, \Theta),
    \end{align*}
    where $d_{w_i}(\xi, \theta)$ is the difference class between two $(n - i + 1)$-frames which are defined by the last $(n - i + 1)$ components of $\xi$ and $\theta$, and $d_{w_j}(\Xi, \Theta)$ is defined similarly.
    Hence, we have
    \begin{align*}
        D 
        &= \Tp(\eta) \Big((\pr_1)^* w_i(\epsilon^{n-m} \oplus \gamma^m | \theta) + \connhom d_{w_i}(\xi, \theta), (\pr_2)^* w_j(\gamma^n | \Theta) + \connhom \phi^* d_{w_j}(\Xi, \Theta) \Big) \\
            & \qquad - \Tp(\eta) \Big((\pr_1)^* w_i(\epsilon^{n-m} \oplus \gamma^m | \theta), (\pr_2)^* w_j(\gamma^n | \Theta) \Big).
    \end{align*}
    Second, recall the following fact.

    \begin{lemma}\label{theorem:easy}
    {\it
        Let $(X, A)$ be a space pair and $R$ a ring.
        Then for any classes $u \in H^i(X, A; R)$ and $v \in H^{j-1}(A; R)$, 
        \[u \smile \connhom v = 0 \in H^{i + j}(X, A; R).\]
    }
    \end{lemma}

    \noindent{Using this, we have}
    \begin{align*}
        D &= a(\eta) \cdot \connhom d_{w_q}(\xi, \theta) + b(\eta) \cdot \connhom \phi^* d_{w_q}(\Xi, \Theta) \\
        &= \connhom \big( a(\eta) \cdot d_{w_q}(\xi, \theta) + b(\eta) \cdot \phi^* d_{w_q}(\Xi, \Theta) \big),
    \end{align*}
    where $a(\eta)$ (resp.~$b(\eta)$) is the coefficient of the monomial $w_q$ (resp.~$w'_q$) in $\Tp(\eta)(w_i, w'_j)$.
    Third, by the naturality of difference classes,
    \[d_{w_q}(\xi, \theta) = \phi^* d_{w_q}(\Xi, \Theta).\]
    Hence, we have
    \[D = (a(\eta) + b(\eta)) \cdot \connhom d_{w_q}(\xi, \theta).\]
    Fourth, by the $\K$-invariance of $\eta$ and \cref{theorem:ATP-for-K}, we have
    \[a(\eta) = -b(\eta).\]
    This shows 
    \[\alpha(\eta | \phi, \Theta) \equiv \alpha(\eta | \phi, \Xi)\]
    in $H^{q-1}(M_0; \Z_2)$ modulo $\Ker \connhom$.
    Finally, \cref{theorem:inj-conn-hom} proves \cref{claim:uniqueness}.
\end{proof}

Thus, \cref{maintheorem:BO-Z2-restated} is proven.

\subsubsection{Consequences}

The following is immediate from \cref{maintheorem:BO-Z2-restated}.

\begin{corollary}
{\it
    The first term of the right-hand side in \cref{maintheorem:BO-Z2-restated}, 
    \[\Tp(\eta) \Big( (\pr_1)^* w_i(\epsilon^{n-m} \oplus \gamma^m | \theta), (\pr_2)^* w_j(\gamma^n | \Theta) \Big),\] 
    is independent of the choice of a frame $\Theta$ on $N_0$.
}
\end{corollary}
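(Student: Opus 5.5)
The corollary follows directly from \cref{maintheorem:BO-Z2-restated}, with one subtlety to address. Fix two frames $\Theta$ and $\Xi$ on $N_0$, and let $\theta$ and $\xi$ be the frames of $\epsilon^{n-m} \oplus TM_0$ they induce through $\phi$ and the fixed normal trivialization. Denote the corresponding substitution classes by
\[S_\Theta = \Tp(\eta)\Big((\pr_1)^* w_i(\epsilon^{n-m} \oplus \gamma^m | \theta), (\pr_2)^* w_j(\gamma^n | \Theta)\Big)\]
and similarly $S_\Xi$. Applying the theorem to each frame gives
\[\Tp(\eta | \phi) = S_\Theta + \connhom \alpha(\eta | \phi) = S_\Xi + \connhom \alpha(\eta | \phi).\]

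The subtlety is that the theorem, as restated, only makes $\alpha(\eta | \phi)$ unique modulo $\Ker \connhom$. So I will not assume that a single class serves for both frames. Instead I will use the sharper statements already proved. \cref{claim:quasi-existence} gives classes $\alpha(\eta|\phi,\Theta)$ and $\alpha(\eta|\phi,\Xi)$ with $\Tp(\eta|\phi) = S_\Theta + \connhom\alpha(\eta|\phi,\Theta)$ and the analogue for $\Xi$. \cref{claim:uniqueness} shows that these two classes coincide; its proof shows directly that the difference of the $\connhom$-images vanishes. That is exactly the equality $\connhom\alpha(\eta|\phi,\Theta) = \connhom\alpha(\eta|\phi,\Xi)$ needed here, and \cref{theorem:inj-conn-hom} upgrades it to equality of the classes themselves, although injectivity is not actually required for this step.

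Subtracting the two expressions for $\Tp(\eta|\phi)$ then gives
\[S_\Theta - S_\Xi = \connhom\alpha(\eta|\phi,\Xi) - \connhom\alpha(\eta|\phi,\Theta) = 0\]
in $H^q(\BO(m)\times\BO(n), M_0; \Z_2)$, which is the claimed independence of $\Theta$.

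Equivalently, the computation in the proof of \cref{claim:uniqueness} already shows
\[S_\Xi - S_\Theta = (a(\eta)+b(\eta))\,\connhom d_{w_q}(\xi,\theta),\]
which vanishes by \cref{theorem:ATP-for-K}. The only step that needs care is checking that the cross terms in that expansion die by \cref{theorem:easy}. This holds because each substituted variable is itself a relative class, so any product of a relative class with a $\connhom$-image vanishes. With that, only the linear terms in $w_q$ and $w'_q$ survive, and these cancel because $a(\eta) = -b(\eta)$.
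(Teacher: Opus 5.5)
Your argument is correct and is essentially the paper's: the paper reads the corollary off from \cref{maintheorem:BO-Z2-restated}, whose proof consists of exactly the two claims you invoke, and your second paragraph reproduces the computation in the proof of \cref{claim:uniqueness}. The subtlety you flag does not arise, because the theorem's existence clause already provides a single class $\alpha(\eta|\phi)$ valid for every frame $\Theta$, so subtracting the two identities gives $S_\Theta = S_\Xi$ at once.
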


\begin{remark}\label{remark:relative-char-class-for-K}
    It would be reasonable to define the $i$-th universal quotient Stiefel--Whitney class relative to the framable prescribed datum $\phi$ and the frame $\Theta$, 
    \[\tilde{w}_i(\spacesymbol | \phi, \Theta) \in H^i(\BO(m) \times \BO(n), M_0; \Z_2),\]
    as the $i$-th part of
    \[
        \frac{1 + (\pr_2)^* w_1(\gamma^n | \Theta) + (\pr_2)^* w_2(\gamma^n | \Theta) + \cdots}{1 + (\pr_1)^* w_1(\epsilon^{n-m} \oplus \gamma^m | \theta) + (\pr_1)^* w_2(\epsilon^{n-m} \oplus \gamma^m | \theta) + \cdots}.
    \]
    Then it holds that 
    \[\Tp(\eta) \Big( (\pr_1)^* w_i(\epsilon^{n-m} \oplus \gamma^m | \theta), (\pr_2)^* w_j(\gamma^n | \Theta) \Big) = \Tp_\K(\eta)(\tilde{w}_i(\spacesymbol | \phi, \Theta)),\]
    where $\Tp_\K(\eta)(\tilde{w}_i)$ is the expression of the Thom polynomial of $\eta$ so that $\tilde{w}_i$ is to be substituted by $w_i(f^* TN - TM)$.
\end{remark}

We also give the following notion.

\begin{definition}\label{definition:bdry-term}
    We call the class $\alpha(\eta | \phi) \in H^{q-1}(M_0; \Z_2)$ in \cref{maintheorem:BO-Z2-restated} the {\it correction term} of $\Tp(\eta | \phi)$.
\end{definition}

We see several properties of correction terms.
To the end of this subsection, we fix a singularity type $\eta \subset J^K(m, n)$ of codimension $q$ and a framable prescribed datum $\phi \colon M_0 \imm N_0$.

\begin{theorem}[universality]\label{theorem:exp-imm}
{\it 
    For any extension datum $f \colon M \to N$ of $\phi$ and any frame $\Theta$ on $N_0$, 
    \[ \left[ \ol{\eta(f)} \right] = \Tp(\eta) \Big(w_i(\epsilon^{n-m} \oplus TM | \theta), f^* w_j(TN | \Theta) \Big) + \connhom \alpha(\eta | \phi)\]
    in $H^q(M, M_0; \Z_2)$.
}
\end{theorem}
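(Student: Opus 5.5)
The plan is to pull back the identity of \cref{maintheorem:BO-Z2-restated} along the classifying map of the extension datum and identify each pulled-back term. Let $f \colon M \to N$ be an extension datum of $\phi$ and $\Theta$ a frame on $N_0$. Write $\Phi = \kappa \circ (\id, f) \colon (M, M_0) \to (\BO(m) \times \BO(n), M_0)$, where $\kappa$ is the classifying map of $J^K(M, N)$. We choose $\kappa$ so that its restriction to the graph of $\phi$ is the fixed embedding of $M_0$; this is possible because the classifying embedding is unique up to isotopy and extends by homotopy extension. Then $\Phi$ is a map of pairs.

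\emph{Step 1.} Apply $\Phi^*$ to both sides of the identity in \cref{maintheorem:BO-Z2-restated}. By \cref{theorem:rel-locus}, $\Phi^* \Tp(\eta | \phi) = [\ol{\eta(f)}] \in H^q(M, M_0; \Z_2)$.

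\emph{Step 2.} Treat the substitution term. Since $\Phi^*$ is a ring homomorphism on relative cohomology, it commutes with evaluating the polynomial $\Tp(\eta)$. The constant term causes no trouble: $\Tp(\eta)$ is homogeneous of positive degree $q$, so every monomial contains at least one relative class, and the products land in relative cohomology via the relative cup product. By the preceding lemma (the consequence of \cref{theorem:naturality-rel}), the variables pull back to $w_i(\epsilon^{n-m} \oplus TM | \theta)$ and $w_j(f^*TN | \Theta) = f^* w_j(TN | \Theta)$. Hence the first term pulls back to $\Tp(\eta)\big(w_i(\epsilon^{n-m} \oplus TM | \theta), f^* w_j(TN | \Theta)\big)$.

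\emph{Step 3.} Treat the correction term. The connecting homomorphism is natural for maps of pairs, and $\Phi|_{M_0}$ is the identity onto the embedded $M_0$. Therefore $\Phi^* \connhom \alpha(\eta | \phi) = \connhom (\Phi|_{M_0})^* \alpha(\eta | \phi) = \connhom \alpha(\eta | \phi)$, where the right-hand $\connhom$ is the connecting homomorphism of $(M, M_0)$.

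Summing the three pulled-back terms gives the claimed formula. The formula holds for every $\Theta$, because $\alpha(\eta | \phi)$ is independent of $\Theta$ (\cref{claim:uniqueness}).

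The main obstacle is the bookkeeping in Step 2. One must make sure the homotopy $\kappa \simeq \kappa_1 \times \kappa_2$ can be taken relative to $M_0$, so that the relative Stiefel--Whitney classes defined through $\pr_1, \pr_2$ really pull back to the relative classes of $\epsilon^{n-m} \oplus TM$ and $f^*TN$ with the frames $\theta, \Theta$. I would settle this by constructing $\kappa$ as a product of classifying maps for $TM$ and $TN$ extending $\kappa_1, \kappa_2$. The bundle isomorphism $\phi^*TN_0 \cong \epsilon^{n-m} \oplus TM_0$ used to define $\theta$ must also be the restriction of the corresponding data on $M$.
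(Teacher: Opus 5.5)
Your proposal is correct and is essentially the paper's argument: pull back the identity of \cref{maintheorem:BO-Z2-restated} along the lower horizontal maps $(M,M_0)\to(M\times N,(\id,\phi)(M_0))\to(\BO(m)\times\BO(n),M_0)$, then use \cref{theorem:rel-locus} for the left side and naturality (\cref{theorem:naturality-rel}, together with naturality of $\connhom$) for the right side; choosing the classifying maps relative to $M_0$ only makes explicit what the paper leaves implicit. The one slip is your final remark: the isomorphism $\phi^*TN_0\cong\epsilon^{n-m}\oplus TM_0$ enters only through the frame $\theta$ over $M_0$, so nothing on $M$ needs to restrict to it (and $df$ induces no such splitting away from $M_0$, since $f$ is singular there).
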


\begin{proof}
    Consider the diagram
    \[
    \begin{tikzcd}[column sep=2em]
    & {(J^K(M, N), j^K \phi(M_0))} \arrow[r] \arrow[d] & {(\B_\G J^K(m, n), j^K \phi(M_0))} \arrow[d] \\
    {(M, M_0)} \arrow[r, "{(\id, f)}"'] \arrow[ru, "J^K f"] & {(M \times N, (\id, \phi)(M_0))} \arrow[r] & {(\B \G, M_0),}
    \end{tikzcd}
    \]
    and induce the diagram
    \[
    \begin{tikzcd}[column sep=2em]
    & {H^q(J^K(M, N), j^K \phi(M_0); \Z_2)} \arrow[ld, "(J^K f)^*"'] & {H^q(\B_\G J^K(m, n), j^K \phi(M_0); \Z_2)} \arrow[l] \\
    H^q(M, M_0; \Z_2) & H^q(M \times N, (\id, \phi)(M_0); \Z_2) \arrow[u] \arrow[l, "{(\mathrm{id}, f)^*}"] & H^q(\B \G, M_0; \Z_2). \arrow[u] \arrow[l]
    \end{tikzcd}
    \]
    Pull back the identity in \cref{maintheorem:BO-Z2-restated} to $H^q(M, M_0; \Z_2)$ by the lower horizontal maps.
    Then the assertion follows from \cref{theorem:rel-locus,theorem:naturality-rel}.
\end{proof}

\begin{theorem}[comparison of two maps]\label{theorem:correction-compare}
{\it
    Let $\psi \colon M_0 \imm N_0$ be another framable prescribed datum.
    Let $\Theta$ be a frame on $N_0$, and let $\theta_\phi, \theta_\psi$ denote the frames of $\epsilon^{n-m} \oplus \gamma^m|_{M_0}$ corresponding to $\phi, \psi$, respectively.
    Then
    \[\alpha(\eta | \phi) -  \alpha(\eta | \psi) = d_\eta(j^K \phi, j^K \psi) - a(\eta) \cdot d_{w_q}(\theta_\phi, \theta_\psi)\]
    in $H^{q-1}(M_0; \Z_2)$, 
    where $a(\eta)$ is the coefficient of the monomial $w_q$ in $\Tp(\eta)(w_i, w'_j)$.
}
\end{theorem}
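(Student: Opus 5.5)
The plan is to reduce the statement to the two difference formulas already available: \cref{theorem:diff-RTP} for relative Thom polynomials and \cref{theorem:diff-SW} for relative Stiefel--Whitney classes. The nonlinear part of $\Tp(\eta)$ will then be killed by \cref{theorem:easy}, and the correction terms compared using the injectivity in \cref{theorem:inj-conn-hom}.

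First I will place both prescribed data over one subcomplex of $\B\G = \BO(m) \times \BO(n)$. For either map, the embedding of $M_0$ into $\BO(m) \times \BO(n)$ is the classifying map of $J^K(M_0,N_0)$ composed with the graph map, which is homotopic to $\kappa_1 \times (\kappa_2 \circ \phi)$, respectively $\kappa_1 \times (\kappa_2 \circ \psi)$. The first factor is the same for both. Since $N_0$ is parallelizable, $\phi^*TN_0$ and $\psi^*TN_0$ are both trivialized by $\Theta$. I therefore expect the two embeddings to be isotopic, and I fix an isotopy carrying the second onto the first. This gives a single subcomplex $S \cong M_0$ carrying two sections $j^K\phi$ and $j^K\psi$ of $\B_\G(J^K(m,n) - \ol{\eta})$, together with the identification under which both relative cohomology groups become $H^*(\B\G, S; \Z_2)$. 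In this model the $\BO(n)$-frame pulled back from $\Theta$ along $\phi$ and along $\psi$ is the same trivialization of $\epsilon^n$. Hence
\[(\pr_2)^* w_j(\gamma^n | \Theta)\]
is the same class for both data. \textbf{This identification is the main obstacle.} One must check that the isotopy can be chosen compatibly with the frame $\Theta$ (at least up to homotopy of frames over the $(q-1)$-skeleton), so that no extra difference class $d_{w_j}$ in the second factor appears. If it cannot be arranged directly, I would instead keep that extra term and show it is annihilated by the $\K$-invariance cancellation $a(\eta) = -b(\eta)$ from \cref{theorem:ATP-for-K}, as in the proof of \cref{claim:uniqueness}.

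Granting this, \cref{theorem:diff-RTP} gives
\[\Tp(\eta | \phi) - \Tp(\eta | \psi) = \connhom d_\eta(j^K\phi, j^K\psi).\]
On the other side, apply \cref{maintheorem:BO-Z2-restated} to both data with the same frame $\Theta$. The only differing inputs are the classes $w_i(\epsilon^{n-m} \oplus \gamma^m | \theta_\phi)$ and $w_i(\epsilon^{n-m} \oplus \gamma^m | \theta_\psi)$. By \cref{theorem:diff-SW} these differ by $\connhom d_{w_i}(\theta_\phi, \theta_\psi)$. Expanding $\Tp(\eta)$ and using \cref{theorem:easy} exactly as in the proof of \cref{claim:uniqueness}, every monomial of degree at least two in which a $\connhom$-term appears vanishes. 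Only the linear monomial $w_q$ survives, so the difference of the substitution parts is $a(\eta)\, \connhom d_{w_q}(\theta_\phi, \theta_\psi)$.

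Subtracting the two instances of \cref{maintheorem:BO-Z2-restated} therefore yields
\[\connhom d_\eta(j^K\phi, j^K\psi) = a(\eta)\, \connhom d_{w_q}(\theta_\phi, \theta_\psi) + \connhom\big(\alpha(\eta | \phi) - \alpha(\eta | \psi)\big).\]
Since $\connhom$ is injective by \cref{theorem:inj-conn-hom}, we conclude
\[\alpha(\eta | \phi) - \alpha(\eta | \psi) = d_\eta(j^K\phi, j^K\psi) - a(\eta) \cdot d_{w_q}(\theta_\phi, \theta_\psi).\]
Over $\Z_2$, signs cause no trouble.
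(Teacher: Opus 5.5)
Your proof is correct and is essentially the paper's: subtract the two instances of \cref{maintheorem:BO-Z2-restated} taken with the same $\Theta$, use \cref{theorem:diff-RTP} on one side and \cref{theorem:diff-SW} together with \cref{theorem:easy} on the other to reduce the substitution parts to $a(\eta)\,\connhom d_{w_q}(\theta_\phi,\theta_\psi)$, and conclude by the injectivity in \cref{theorem:inj-conn-hom}. The identification you flag as the main obstacle is used silently in the paper and can be arranged directly: use $\Theta$ itself to identify $\phi^*TN_0 \cong \epsilon^n \cong \psi^*TN_0$ (for instance, homotope $\kappa_2$ to a constant map covered by the trivialization $\Theta$, as the paper does in the $m \ge n$ setting), so that both embeddings carry the same frame, and your fallback via $a(\eta) = -b(\eta)$ is not needed.
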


\begin{proof}
    By \cref{maintheorem:BO-Z2-restated}, we have
    \begin{align*}
        \Tp(\eta | \phi) &= \Tp(\eta) \Big( (\pr_1)^* w_i(\epsilon^{n-m} \oplus \gamma^m | \theta_\phi), (\pr_2)^* w_j(\gamma^n | \Theta) \Big) + \connhom \alpha(\eta | \phi), \\
        \Tp(\eta | \psi) &= \Tp(\eta) \Big( (\pr_1)^* w_i(\epsilon^{n-m} \oplus \gamma^m | \theta_\psi), (\pr_2)^* w_j(\gamma^n | \Theta) \Big) + \connhom \alpha(\eta | \psi).
    \end{align*}
    Subtracting these equations and using \cref{theorem:diff-RTP}, 
    \begin{align*}
        &\connhom d_\eta(j^K \phi, j^K \psi) \\
        & \quad = \Tp(\eta) \Big((\pr_1)^* w_i(\epsilon^{n-m} \oplus \gamma^m | \theta_\phi), (\pr_2)^* w_j(\gamma^n | \Theta) \Big) \\
        & \quad \qquad - \Tp(\eta) \Big((\pr_1)^* w_i(\epsilon^{n-m} \oplus \gamma^m | \theta_\psi), (\pr_2)^* w_j(\gamma^n | \Theta) \Big) \\
        & \quad \qquad \qquad + \connhom (\alpha(\eta | \phi) - \alpha(\eta | \psi)).
    \end{align*}
    Furthermore, using \cref{theorem:diff-SW} and \cref{theorem:easy}, we also have 
    \begin{align*}
        &\Tp(\eta) \Big((\pr_1)^* w_i(\epsilon^{n-m} \oplus \gamma^m | \theta_\phi), (\pr_2)^* w_j(\gamma^n | \Theta) \Big) \\
        & \quad - \Tp(\eta) \Big((\pr_1)^* w_i(\epsilon^{n-m} \oplus \gamma^m | \theta_\psi), (\pr_2)^* w_j(\gamma^n | \Theta) \Big) \\
        =&\Tp(\eta) \Big((\pr_1)^* w_i(\epsilon^{n-m} \oplus \gamma^m | \theta_\psi) + \connhom d_{w_i}(\theta_\phi, \theta_\psi), (\pr_2)^* w_j(\gamma^n | \Theta) \Big) \\
        & \quad - \Tp(\eta) \Big((\pr_1)^* w_i(\epsilon^{n-m} \oplus \gamma^m | \theta_\psi), (\pr_2)^* w_j(\gamma^n | \Theta) \Big) \\
        =& \connhom a(\eta) \cdot d_{w_q}(\theta_\phi, \theta_\psi).
    \end{align*}
    Finally, \cref{theorem:inj-conn-hom} proves the assertion.
\end{proof}

\begin{corollary}[regular homotopy invariance]\label{corollary:correction-reg-htpy-inv}
{\it 
    Let $\psi \colon M_0 \imm N_0$ be another framable prescribed datum.
    If $\phi$ and $\psi$ are mutually regularly homotopic, then
    \[\alpha(\eta | \phi) = \alpha(\eta | \psi)\]
    in $H^{q-1}(M_0; \Z_2)$.
    Therefore, the series $(\alpha(\eta | \phi))_\eta$, where $\eta$ runs over all singularity types in $J^K(m, n)$, forms a regular homotopy invariant of the immersion $\phi$.
}
\end{corollary}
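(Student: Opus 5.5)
The plan is to derive the corollary from \cref{theorem:correction-compare}: I will show that both difference classes on its right-hand side vanish when $\phi$ and $\psi$ are regularly homotopic. Fix a frame $\Theta$ on $N_0$ and a regular homotopy $\phi_t \colon M_0 \imm N_0$ ($t \in [0,1]$) with $\phi_0 = \phi$ and $\phi_1 = \psi$. Since the framable structure includes a trivialization of the normal bundle, I interpret regular homotopy as one through framed immersions, so that the normal framing is carried along. If it is not, the trivialization of the normal bundle of $\phi_t$ can be transported along the homotopy by the covering homotopy property. This induces a compatible framing for $\psi$, and I would take that as the one in the datum.

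First step: the term $d_\eta(j^K\phi, j^K\psi)$. The family $t \mapsto j^K\phi_t$ is a homotopy of sections of $\B_\G J^K(m,n)$ over $M_0 \subset \B \G$. Each $\phi_t$ is an immersion, so $j^K\phi_t$ avoids $\ol{A_1} \supset \ol{\eta}$ for all $t$, because every singularity type of positive codimension lies in $\ol{A_1}$ when $m \le n$. The homotopy therefore stays inside the complement bundle $\B_\G(J^K(m,n) - \ol{\eta})$. The embedding of $M_0$ into $\B\G$ also moves with $t$, but only up to isotopy, and this can be absorbed by a bundle isomorphism. The difference class of homotopic sections is zero, so $d_\eta(j^K\phi, j^K\psi) = 0$.

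Second step: the term $d_{w_q}(\theta_\phi, \theta_\psi)$. The frame $\theta_{\phi_t}$ of $\epsilon^{n-m} \oplus TM_0$ is the pull-back of $\Theta$ under $\phi_t^* TN_0 \cong \epsilon^{n-m} \oplus TM_0$. This isomorphism is built from $d\phi_t$ and the normal framing, both of which vary continuously in $t$. Hence $\theta_{\phi_t}$ is a homotopy of frames from $\theta_\phi$ to $\theta_\psi$, and so is every subframe $\theta^q$. This gives $d_{w_q}(\theta_\phi,\theta_\psi)=0$. Substituting both vanishings into \cref{theorem:correction-compare} yields $\alpha(\eta|\phi)=\alpha(\eta|\psi)$. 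Since this holds for every $\eta$, the family $(\alpha(\eta|\phi))_\eta$ is a regular homotopy invariant.

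The main obstacle is the bookkeeping in the first step. The subspace $M_0 \subset \B\G$ is defined through the embedding $\kappa \circ (\id,\phi)$, which depends on $\phi$. \cref{theorem:correction-compare} implicitly identifies the two embedded copies, and I need the homotopy to respect that identification. I would handle this by noting that $\kappa\circ(\id,\phi_t)$ is a homotopy of embeddings into an infinite-dimensional classifying space, which can be chosen to be an isotopy. Then I would transport everything to $t=0$ by an ambient isotopy, which preserves all the relative classes by naturality (\cref{theorem:naturality-rel,theorem:naturality-diff}).
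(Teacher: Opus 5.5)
Your proposal is correct and follows the paper's proof. The paper also uses the regular homotopy to make both $d_\eta(j^K\phi, j^K\psi)$ and $d_{w_q}(\theta_\phi,\theta_\psi)$ vanish, then concludes $\alpha(\eta|\phi)=\alpha(\eta|\psi)$ from \cref{theorem:correction-compare}; your extra bookkeeping (carrying the normal framing along the homotopy, checking $\ol{\eta}\subset\ol{A_1}$, and identifying the two embedded copies of $M_0$ in $\B\G$ by an isotopy) only makes explicit what the paper's one-line argument leaves implicit.
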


\begin{proof}
    By the existence of an immersion $\Phi$, both the classes $\connhom d_\eta(j^K \phi, j^K \psi)$ and $\connhom d_{w_q}(\theta_\phi, \theta_\psi)$ vanish.
    Then the assertion follows from \cref{theorem:correction-compare}.
\end{proof}

\begin{corollary}[self-intersection formula]\label{theorem:self-intersection}
{\it
    It holds that
    \[\Tp(\eta | \phi)^2 = \Tp(\eta) \Big((\pr_1)^* w_i(\epsilon^{n-m} \oplus \gamma^m | \theta), (\pr_2)^* w_j(\gamma^n | \Theta) \Big)^2\]
    in $H^{2q}(\BO(m) \times \BO(n), M_0; \Z_2)$.
}
\end{corollary}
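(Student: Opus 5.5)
Write $T = \Tp(\eta) \big( (\pr_1)^* w_i(\epsilon^{n-m} \oplus \gamma^m | \theta), (\pr_2)^* w_j(\gamma^n | \Theta) \big)$. By \cref{maintheorem:BO-Z2-restated} we have
\[\Tp(\eta | \phi) = T + \connhom \alpha(\eta | \phi)\]
in $H^q(\BO(m) \times \BO(n), M_0; \Z_2)$. The plan is to square both sides and show that every term involving $\connhom \alpha(\eta | \phi)$ vanishes.

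Expanding the square using the cup product on relative cohomology $H^*(X, A) \otimes H^*(X, A) \to H^*(X, A)$ gives
\[\Tp(\eta | \phi)^2 = T^2 + T \smile \connhom \alpha + \connhom \alpha \smile T + \connhom \alpha \smile \connhom \alpha.\]
With $\Z_2$ coefficients the two mixed terms coincide up to graded commutativity, and in any case each vanishes separately.

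First, $T \smile \connhom \alpha = 0$ by \cref{theorem:easy}, applied with $u = T \in H^q(X, A; \Z_2)$ and $v = \alpha \in H^{q-1}(A; \Z_2)$. The term $\connhom \alpha \smile T$ vanishes by the same lemma after commuting the factors, which costs at most a sign.

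Next, $\connhom \alpha \smile \connhom \alpha = 0$, again by \cref{theorem:easy}, now with $u = \connhom \alpha$, which is itself a relative class in $H^q(X, A; \Z_2)$.

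The only point needing care is that \cref{theorem:easy} is stated as a product in $H^*(X, A)$. This is justified by the standard identity $u \smile \connhom v = \pm \connhom (i^* u \smile v)$, together with the fact that the restriction $i^* u$ of a relative class $u \in H^*(X, A)$ to $A$ is zero. Alternatively, $\connhom v$ lies in the image of $\connhom$, so it restricts to zero on $X$ in the sense relevant to the relative-times-relative product, and it factors through $H^*(X, A)$ supported near $A$. I would simply invoke \cref{theorem:easy} as stated; I expect no real obstacle here. Combining the two steps yields $\Tp(\eta | \phi)^2 = T^2$.
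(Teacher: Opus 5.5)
Your proof is correct and is the paper's argument: the paper also obtains the corollary from \cref{maintheorem:BO-Z2-restated} and \cref{theorem:easy}, by substituting $\Tp(\eta | \phi) = T + \connhom \alpha(\eta | \phi)$, squaring, and killing every term that contains $\connhom \alpha(\eta | \phi)$. Your justification of \cref{theorem:easy} is the right one, namely $u \smile \connhom v = \pm \connhom\big((i^* j^* u) \smile v\big)$ with $i^* j^* = 0$ on $H^*(X, A)$; the ``alternatively'' sentence is vague but not needed.
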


\begin{proof}
    This is immediate from \cref{maintheorem:BO-Z2-restated,theorem:easy}.
\end{proof}

\begin{corollary}\label{theorem:diff-class-for-sing}
{\it
    Let $\psi \colon M_0 \imm N_0$ be another framable prescribed datum.
    If $\alpha(\eta | \phi) = \alpha(\eta | \psi) = 0$, then
    \[d_\eta(j^K \phi, j^K \psi) = a(\eta) \cdot d_{w_q}(\theta_\phi, \theta_\psi)\]
    in $H^{q-1}(M_0; \Z_2)$, 
    where $a(\eta)$ is the coefficient of the monomial $w_q$ in $\Tp(\eta)(w_i, w'_j)$.
}
\end{corollary}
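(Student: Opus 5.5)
This is a direct specialization of \cref{theorem:correction-compare}. I will apply that theorem to the pair $\phi,\psi$ with a common frame $\Theta$ on $N_0$, which gives
\[\alpha(\eta | \phi) - \alpha(\eta | \psi) = d_\eta(j^K \phi, j^K \psi) - a(\eta) \cdot d_{w_q}(\theta_\phi, \theta_\psi)\]
in $H^{q-1}(M_0; \Z_2)$.

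By hypothesis $\alpha(\eta | \phi) = \alpha(\eta | \psi) = 0$, so the left-hand side vanishes. Rearranging over $\Z_2$ then gives $d_\eta(j^K \phi, j^K \psi) = a(\eta) \cdot d_{w_q}(\theta_\phi, \theta_\psi)$, which is exactly the assertion.

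The only point to check is that the difference class $d_{w_q}(\theta_\phi, \theta_\psi)$ is the same class that appears in \cref{theorem:correction-compare}. In that theorem it was defined using one fixed frame $\Theta$. Since both $\alpha$'s are independent of $\Theta$ by \cref{claim:uniqueness}, the stated identity holds for any choice of $\Theta$. Its right-hand side is then read with $\theta_\phi,\theta_\psi$ induced from the same $\Theta$. I expect no real obstacle here: the statement is a one-line consequence, and the only care needed is to keep this convention consistent.
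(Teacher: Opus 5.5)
Your proposal is correct and matches the paper's proof, which simply states that the corollary is immediate from \cref{theorem:correction-compare}: you set both correction terms to zero in that identity and rearrange. Your note that $\theta_\phi$ and $\theta_\psi$ must be induced from a common frame $\Theta$ on $N_0$ is the correct reading of the statement.
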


\begin{proof}
    This is immediate from \cref{theorem:correction-compare}.
\end{proof}

\subsection{Real \texorpdfstring{$C^\infty$}{C-infty} oriented and complex analytic versions}\label{subsection:main-other-versions}

All results shown in the previous subsection hold in the other categories.
For later use, we pick up versions of some of them.

\subsubsection{Real $C^\infty$ oriented version}

\begin{theorem}[$=$ \cref{maintheorem:BO-Z}]\label{maintheorem:BO-Z-restated}
{\it
    For any cooriented $\K$-singularity type $\eta \subset J^K(m, n)$ of codimension $q$ and any framable prescribed datum $\phi \colon M_0 \imm N_0$, there is a unique class $\alpha(\eta | \phi) \in H^{q-1}(M_0; \Z)$ modulo $2$-torsion satisfying the following: for any frame $\Theta$ on $N_0$,
    \[\Tp(\eta | \phi) \equiv \Tp(\eta) \Big( (\pr_1)^* p_i(\epsilon^{n-m} \oplus \gamma^m | \theta), (\pr_2)^* p_j(\gamma^n | \Theta) \Big) + \connhom \alpha(\eta | \phi)\]
    in $H^q(\BO(m) \times \BO(n), M_0; \Z)$ modulo $2$-torsion.
    Therefore, 
    for any extension datum $f \colon M \to N$ of $\phi$,
    \[ \left[ \ol{\eta(f)} \right] \equiv \Tp(\eta) \Big(p_i(\epsilon^{n-m} \oplus TM | \theta), f^* p_j(TN | \Theta) \Big) + \connhom \alpha(\eta | \phi)\]
    in $H^q(M, M_0; \Z)$ modulo $2$-torsion.
}
\end{theorem}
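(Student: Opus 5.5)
The plan is to repeat the two-step argument of \cref{maintheorem:BO-Z2-restated} with $\Z$ coefficients, working throughout in the quotient by $2$-torsion. In that quotient, $H^*(\BO(m) \times \BO(n); \Z)$ is the polynomial ring in $p_i, p'_j$ by \cref{remark:TP-coori}. We read $\Tp(\eta)(p_i,p'_j)$ as its free part. The relative Pontryagin classes $(\pr_1)^* p_i(\epsilon^{n-m} \oplus \gamma^m | \theta)$ and $(\pr_2)^* p_j(\gamma^n | \Theta)$ are defined as in \cref{definition:RCC-Pontryagin}, via the complexified frames $\theta_\C$ and $\Theta_\C$. Naturality, the difference formula and naturality of difference classes (the analogues of \cref{theorem:naturality-rel,theorem:diff-SW,theorem:naturality-diff}) hold for them with $\Z$ coefficients, since they are relative Chern classes.

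\emph{Step 1 (splitting).} Consider the long exact sequence of $(\BO(m) \times \BO(n), M_0)$ with $\Z$ coefficients, tensored with $\Z[1/2]$; this tensoring is exact and kills exactly the $2$-torsion. Define a multiplicative section $s_\Theta$ of $j^*$ on the polynomial part by $p_i \mapsto (\pr_1)^* p_i(\cdots|\theta)$ and $p'_j \mapsto (\pr_2)^* p_j(\cdots|\Theta)$. It is a ring map because the relative cohomology ring is a ring, even without a unit.

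The one subtle point is that the target of $j^*$ in a given degree is not a polynomial ring in positive degree alone: the relative cohomology has no unit. In degree $q \ge 1$, however, every element of the free part is a sum of monomials of positive degree, and each such monomial has a canonical relative lift. Hence $j^*$ is surjective modulo $2$-torsion. Consequently $i^* = 0$ in every positive degree, and $\connhom$ is injective modulo $2$-torsion in degrees $q-1 \ge 1$. Degree $0$ needs a separate word: if $q = 1$, the kernel of $\connhom$ is the image of $H^0$ of the ambient space. Cooriented types of codimension $1$ are essentially just $A_1$ in equal dimensions, and I would state uniqueness modulo $\Ker \connhom$ there.

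It then follows that $\Tp(\eta|\phi) - s_\Theta(\Tp(\eta))$ lies in $\Ker j^* = \Im \connhom$ modulo torsion. This gives $\alpha(\eta|\phi,\Theta)$, unique modulo $2$-torsion.

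\emph{Step 2 (independence of $\Theta$).} Take two frames $\Theta$ and $\Xi$, and expand $s_\Xi(\Tp(\eta)) - s_\Theta(\Tp(\eta))$ using the difference formula. By \cref{theorem:easy}, every product containing a $\connhom$ term with another positive-degree relative factor vanishes. Only the linear monomials of degree $q$ survive: $a(\eta)\,p_{q/4}$ and $b(\eta)\,p'_{q/4}$, or nothing if $4 \nmid q$. Their contribution is $\connhom$ of $a\,d_{p}(\xi,\theta) + b\,\phi^* d_{p}(\Xi,\Theta)$.

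Naturality of difference classes, applied to complexified frames pulled back along $\phi$, identifies $d_p(\xi,\theta) = \phi^* d_p(\Xi,\Theta)$, because $\xi$ and $\theta$ are induced from $\Xi$ and $\Theta$ through $\phi^*TN_0 \cong \epsilon^{n-m} \oplus TM_0$. The integral analogue of \cref{theorem:ATP-for-K} then gives $a = -b$, since $\Tp(\eta)$ is a polynomial in the quotient Pontryagin classes modulo $2$-torsion. So the difference vanishes modulo torsion, and injectivity from Step 1 gives uniqueness.

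\emph{Step 3 (the second formula).} The statement about an extension datum $f$ follows exactly as in \cref{theorem:exp-imm}. Pull the identity back along $\kappa\circ(\id,f)$, using \cref{theorem:rel-locus} (valid with $\Z$ coefficients for coorientable $\eta$) and naturality of relative Pontryagin classes. Pullback preserves $2$-torsion, so the congruence survives.

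I expect the main obstacle to be the bookkeeping of $2$-torsion in the oriented case. Specifically, the integral form of \cref{theorem:ATP-for-K} only guarantees $a = -b$ modulo the Bockstein part, and one must check that the torsion terms of $\Tp(\eta)$, substituted with relative classes, stay torsion after the substitution. I would handle this by doing everything in $H^*(-;\Z[1/2])$ from the start.
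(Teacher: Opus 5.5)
Your skeleton matches the paper's: a section $s_\Theta$ of $j^*$ built from relative Pontryagin classes, independence of $\Theta$ via \cref{theorem:easy}, naturality of difference classes and $a(\eta)=-b(\eta)$, then pullback along $\kappa\circ(\id,f)$. Steps~2 and~3, and the uniqueness half of Step~1, go through after inverting $2$. Your worry about $a=-b$ is unfounded: $a$ and $b$ are coefficients in the free polynomial part, where the description by quotient Pontryagin classes is exact.

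\textbf{The gap: existence of an integral correction term.} What does not go through is the existence of $\alpha(\eta|\phi)$ in $H^{q-1}(M_0;\Z)$ rather than after inverting $2$. After tensoring with $\Z[1/2]$, Step~1 only produces $\alpha'\in H^{q-1}(M_0;\Z)\otimes\Z[1/2]$ with $\Tp(\eta|\phi)-s_\Theta(P)=\connhom\alpha'$, where $P$ is the free part of $\Tp(\eta)$. Nothing in your argument shows that $\alpha'$ comes from $H^{q-1}(M_0;\Z)$.

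Integrally, $z=\Tp(\eta|\phi)-s_\Theta(P)$ satisfies $j^*z=T$, the torsion part $T\in\Im\beta$ of $\Tp(\eta)$. So $z\in\connhom H^{q-1}(M_0;\Z)+(\text{torsion})$ holds exactly when $T$ has a torsion preimage under $j^*$. For an abstract extension such as $0\to\Z\xrightarrow{2}\Z\to\Z_2\to0$ this fails: there $z$ generates $\Z$, and only $2z$ lies in $\Im\connhom$. Discarding the torsion terms of $\Tp(\eta)$, as you propose, is precisely where this information is lost.

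\textbf{The fix.} The missing step is what \cref{remark:2-torsion} supplies. Write $T=\beta(t)$ with $t\in H^{q-1}(\BO(m)\times\BO(n);\Z_2)$. Since $q-1\ge1$, the mod-$2$ section from \cref{claim:quasi-existence} gives $\tilde t$ with $j^*\tilde t=t$. By naturality of the Bockstein, $j^*\beta(\tilde t)=T$, and $2\beta(\tilde t)=0$. Hence $z-\beta(\tilde t)\in\Ker j^*=\Im\connhom$ integrally, which yields an integral $\alpha$. The paper builds exactly this into $s_\Theta$ on $\Im\beta$ and so obtains an integral splitting.

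\textbf{Two smaller points.} First, the case $q=1$ is vacuous, so no weakening of uniqueness is needed. For $m<n$ every type lies in $\overline{A_1}$, which has codimension $n-m+1\ge2$. For $m=n$, the only codimension-one type $A_1\subset J^K(m,m)$ is not coorientable: $H^1(\BO(m)\times\BO(m);\Z)=0$, while its mod-$2$ Thom polynomial $w_1+w'_1$ is nonzero. Second, inverting $2$ gives uniqueness only modulo $2$-primary torsion of $H^{q-1}(M_0;\Z)$, which may contain elements of order $4$ and higher. That is a reasonable reading of ``modulo $2$-torsion'', but you should state it explicitly.
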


\begin{remark}\label{remark:2-torsion}
    The ambiguity on modulo $2$-torsion can be removed.
    In fact, in the proof of this version, we define a section $s_\Theta$ of
    \begin{align*}
        j^* \colon H^q(\BO(m) \times \BO(n), M_0; \Z) 
        &\to H^q(\BO(m) \times \BO(n); \Z) \\
        &\qquad \cong \Z \left[p_1, \dots, p_{\lfloor \frac{m}{2} \rfloor}, p'_1, \dots, p'_{\lfloor \frac{n}{2} \rfloor} \right] \oplus \Im \beta
    \end{align*}
    on the $2$-torsion part $\Im \beta$ by
    \begin{align*}
        \beta(w_i) &\mapsto (\pr_1)^* \beta \left( w_i(\epsilon^{n-m} \oplus \gamma^m | \theta) \right), \\
        \beta(w'_j) &\mapsto (\pr_2)^* \beta \left( w_j(\gamma^n | \Theta) \right).
    \end{align*}
    Therefore, there is a precisely unique class $\alpha(\eta | \phi) \in H^{q-1}(M_0; \Z)$ satisfying
    \[\Tp(\eta | \phi) = s_\Theta \Tp(\eta) + \connhom \alpha(\eta | \phi).\]
\end{remark}

\begin{corollary}\label{theorem:self-intersection-BO-Z}
{\it
    It holds that
    \[\Tp(\eta | \phi)^2 \equiv \Tp(\eta) \Big((\pr_1)^* p_i(\epsilon^{n-m} \oplus \gamma^m | \theta), (\pr_2)^* p_j(\gamma^n | \Theta) \Big)^2\]
    in $H^{2q}(\BO(m) \times \BO(n), M_0; \Z)$ modulo $2$-torsion.
}
\end{corollary}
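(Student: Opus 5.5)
The plan is to follow the proof of \cref{theorem:self-intersection} in the unoriented case, now with $\Z$ coefficients and the Pontryagin-class substitution. Put
\[P = \Tp(\eta) \Big((\pr_1)^* p_i(\epsilon^{n-m} \oplus \gamma^m | \theta), (\pr_2)^* p_j(\gamma^n | \Theta) \Big) \in H^q(\BO(m) \times \BO(n), M_0; \Z).\]
By \cref{maintheorem:BO-Z-restated} we can write
\[\Tp(\eta | \phi) = P + \connhom \alpha(\eta | \phi) + \tau,\]
where $\tau$ is a $2$-torsion class. If we instead use the refined section $s_\Theta$ of \cref{remark:2-torsion}, then $\tau$ is exactly the image under $s_\Theta$ of the Bockstein part of $\Tp(\eta)$.

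Squaring this expression, the cup product is bilinear and graded-commutative, so we get
\[\Tp(\eta|\phi)^2 = P^2 + 2P\smile\connhom\alpha \pm \connhom\alpha\smile\connhom\alpha + (\text{terms involving } \tau).\]
Here $P$ is a relative class in $H^q(\BO(m)\times\BO(n), M_0;\Z)$. Hence \cref{theorem:easy}, with $X=\BO(m)\times\BO(n)$, $A=M_0$ and $R=\Z$, kills $P\smile\connhom\alpha$. The same lemma kills $\connhom\alpha\smile\connhom\alpha$: take $u=\connhom\alpha$, which is itself a relative class. We may have to apply the lemma on the other side via graded commutativity, which only introduces a sign. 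Only $P^2$ and the $\tau$-terms remain.

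The remaining step, and the only real point to check, is that every term involving $\tau$ is $2$-torsion. Since $2\tau=0$, we have $2(\tau\smile x)=(2\tau)\smile x=0$ for any $x$, and likewise for $x\smile\tau$ and $\tau^2$. So these terms are $2$-torsion and vanish modulo $2$-torsion, which gives the claimed congruence in $H^{2q}(\BO(m)\times\BO(n),M_0;\Z)$ modulo $2$-torsion.

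One must make sure the cup product of two relative classes is taken in $H^*(X,A)$ via the relative cup product, as in \cref{theorem:easy}. With the precise statement of \cref{remark:2-torsion}, the same argument also gives the sharper identity $\Tp(\eta|\phi)^2=(s_\Theta\Tp(\eta))^2$ on the nose.
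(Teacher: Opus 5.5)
Your proof is correct and is essentially the paper's argument: the paper obtains this corollary directly from \cref{maintheorem:BO-Z-restated} and \cref{theorem:easy}, by expanding the square and killing every term that contains $\connhom\alpha(\eta|\phi)$. Your check that the leftover terms involving the torsion discrepancy $\tau$ are themselves $2$-torsion spells out what the paper leaves implicit. So does the sharper identity $\Tp(\eta|\phi)^2=(s_\Theta\Tp(\eta))^2$ that you derive via \cref{remark:2-torsion}.
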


\begin{remark}\label{remark:alt-proof-8-dim}
    This gives an alternative proof of the formula for $[\Sigma^2(f)]$ shown by Takase~\cite{Tak12}.
\end{remark}

\subsubsection{Complex analytic version}

Here, $J^K(m, n)$ denotes the jet space for holomorphic germs.

\begin{theorem}[$=$ \cref{maintheorem:BU-Z}]\label{maintheorem:BU-Z-restated}
{\it
    For any complex $\K$-singularity type $\eta \subset J^K(m, n)$ of codimension $q$ and any framable complex prescribed datum $\phi \colon M_0 \imm N_0$, there is a unique class $\alpha(\eta | \phi) \in H^{2q-1}(M_0; \Z)$ satisfying the following: for any complex frame $\Theta$ on $N_0$,
    \[\Tp(\eta | \phi) = \Tp(\eta) \Big( (\pr_1)^* c_i(\epsilon^{n-m} \oplus \gamma^m | \theta), (\pr_2)^* c_j(\gamma^n | \Theta) \Big) + \connhom \alpha(\eta | \phi)\]
    in $H^{2q}(\BU(m) \times \BU(n), M_0; \Z)$.
    Therefore, 
    for any complex extension datum $f \colon M \to N$ of $\phi$,
    \[ \left[ \ol{\eta(f)} \right] = \Tp(\eta) \Big(c_i(\epsilon^{n-m} \oplus TM | \theta), f^* c_j(TN | \Theta) \Big) + \connhom \alpha(\eta | \phi)\]
    in $H^{2q}(M, M_0; \Z)$.
}
\end{theorem}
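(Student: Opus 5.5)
The plan is to repeat, in the complex category, the two-step argument used for \cref{maintheorem:BO-Z2-restated}. Here $\Z_2$ is replaced by $\Z$, Stiefel--Whitney classes by the relative Chern classes of \cref{definition:RCC-Chern}, and $\BO$ by $\BU$. Since $\U(m)$ is connected, all local systems $\{\pi_{2i-1}(W_{m, m-i+1})\}$ are trivial copies of $\Z$. The relative Thom polynomial is defined with $\Z$ coefficients because complex types have real codimension $2q \ge 2$ and carry a canonical coorientation. The second assertion, about extension data, will then follow exactly as in \cref{theorem:exp-imm}: pull back along the classifying map of $J^K(M,N)$ together with the graph map of $f$, and use \cref{theorem:rel-locus} and the naturality \cref{theorem:naturality-rel} of relative Chern classes.

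\textbf{Step 1 (splitting).} Fix a complex frame $\Theta$ on $N_0$. Together with the framing of the complex normal bundle of $\phi$, it induces a complex frame $\theta$ of $\epsilon^{n-m} \oplus TM_0$. I will define $s_\Theta \colon H^{2q}(\BU(m) \times \BU(n); \Z) \to H^{2q}(\BU(m) \times \BU(n), M_0; \Z)$ on the polynomial ring $\Z[c_i, c'_j]$ by sending $c_i \mapsto (\pr_1)^* c_i(\epsilon^{n-m} \oplus \gamma^m | \theta)$ and $c'_j \mapsto (\pr_2)^* c_j(\gamma^n | \Theta)$, extended multiplicatively using the relative cup product. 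Because $H^*(\BU(m)\times\BU(n);\Z)$ is a free polynomial ring with no torsion, $s_\Theta$ is a well-defined ring-theoretic section of $j^*$. The torsion issue that forced the ``modulo $2$-torsion'' caveat in the oriented real case does not arise.

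Exactness of the long exact sequence of the pair then gives that $j^*$ is surjective, so $i^* = 0$ in every degree, and hence that $\connhom \colon H^{2q-1}(M_0;\Z) \to H^{2q}(\BU(m)\times\BU(n), M_0;\Z)$ is injective. Since $j^*(\Tp(\eta|\phi) - s_\Theta \Tp(\eta)) = 0$ by \cref{theorem:rel-to-abs}, there is a unique $\alpha(\eta|\phi,\Theta)$ with $\Tp(\eta|\phi) = s_\Theta\Tp(\eta) + \connhom \alpha(\eta|\phi,\Theta)$.

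\textbf{Step 2 (independence of $\Theta$).} This is the main point. Take a second frame $\Xi$ with induced frame $\xi$, and apply the complex analogue of \cref{theorem:diff-SW}: $c_i(\cdot|\xi) = c_i(\cdot|\theta) + \connhom d_{c_i}(\xi,\theta)$, and similarly for the classes $c'_j$. Expanding $\Tp(\eta)$ and applying \cref{theorem:easy}, every product involving a $\connhom$-term and another positive-degree factor vanishes. Only the linear monomials $c_q$ and $c'_q$ survive, giving
\[
D = \connhom\bigl(a(\eta)\, d_{c_q}(\xi,\theta) + b(\eta)\, \phi^* d_{c_q}(\Xi,\Theta)\bigr).
\]
The framed-immersion identification $\phi^*TN_0 \cong \epsilon^{n-m}\oplus TM_0$ carries the pulled-back frames to $\xi$ and $\theta$. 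By naturality (\cref{theorem:naturality-diff}), this gives $d_{c_q}(\xi,\theta) = \phi^* d_{c_q}(\Xi,\Theta)$. The complex version of \cref{theorem:ATP-for-K}, which says $\Tp(\eta)$ is a polynomial in the quotient Chern classes, gives $a(\eta) = -b(\eta)$. Hence $D = 0$, and injectivity of $\connhom$ yields $\alpha(\eta|\phi,\Theta) = \alpha(\eta|\phi,\Xi)$.

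The step I expect to need the most care is verifying the naturality identity for difference classes with integer coefficients, including signs. In the $\Z_2$ case signs were irrelevant, whereas here the identity must hold exactly in $\Z$. I would check it directly from the definition of primary difference classes in the complex Stiefel manifolds $W_{n,n-q+1}$, using that the bundle isomorphism over $M_0$ is complex linear and so preserves the canonical generators of $\pi_{2q-1}(W_{n,n-q+1}) \cong \Z$.
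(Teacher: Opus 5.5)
Your proposal is correct and is essentially the paper's own proof. The paper obtains this theorem by transposing the two claims of the unoriented case to the complex setting: first the section $s_\Theta$ splitting the long exact sequence of $(\BU(m)\times\BU(n), M_0)$, then frame-independence via \cref{theorem:easy}, naturality of difference classes and $a(\eta)=-b(\eta)$. It then finishes with the pull-back argument of \cref{theorem:exp-imm}. One small point that you (and the paper) pass over: if $q>m$ there is no variable $c_q$, so $a(\eta)=-b(\eta)$ is not literally available. This is harmless, because then $H^{2q-1}(M_0;\Z)=0$, since $\dim_\R M_0 = 2m < 2q-1$.
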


\begin{theorem}\label{theorem:correction-compare-cpx}
{\it
    Let $\phi, \psi \colon M_0 \imm N_0$ be two framable complex prescribed data.
    Let $\Theta$ be a complex frame on $N_0$, and let $\theta_\phi, \theta_\psi$ denote the complex frames of $\epsilon^{n-m} \oplus TM_0$ corresponding to $\phi, \psi$.
    Then
    \[\alpha(\eta | \phi) -  \alpha(\eta | \psi) = d_\eta(j^K \phi, j^K \psi) - a(\eta) \cdot d_{c_q}(\theta_\phi, \theta_\psi)\]
    in $H^{2q-1}(M_0; \Z)$.
}
\end{theorem}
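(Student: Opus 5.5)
The argument follows the proof of \cref{theorem:correction-compare} verbatim, transposed to the complex analytic category. By \cref{maintheorem:BU-Z-restated} applied to $\phi$ and to $\psi$ with the same complex frame $\Theta$ on $N_0$, we have two expressions
\[\Tp(\eta | \phi) = \Tp(\eta)\big((\pr_1)^* c_i(\epsilon^{n-m} \oplus \gamma^m | \theta_\phi), (\pr_2)^* c_j(\gamma^n | \Theta)\big) + \connhom \alpha(\eta | \phi)\]
and the analogous one for $\psi$, in $H^{2q}(\BU(m) \times \BU(n), M_0; \Z)$. Both embeddings of $M_0$ into $\BU(m) \times \BU(n)$ are isotopic, since they are built from the same $\kappa_1$ and the same $\kappa_2$ composed with homotopic maps. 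We therefore regard both sections $j^K\phi$ and $j^K\psi$ as living over the same subcomplex. Subtracting the two identities and using the complex analogue of \cref{theorem:diff-RTP} gives
\[\connhom d_\eta(j^K\phi, j^K\psi) = \big[\text{difference of substituted polynomials}\big] + \connhom(\alpha(\eta|\phi) - \alpha(\eta|\psi)).\]

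Next we compute the difference of substituted polynomials. The target classes $(\pr_2)^* c_j(\gamma^n | \Theta)$ are common to both and cancel. For the source classes, the Chern analogue of \cref{theorem:diff-SW} gives
\[c_i(\epsilon^{n-m}\oplus\gamma^m | \theta_\phi) = c_i(\epsilon^{n-m}\oplus\gamma^m | \theta_\psi) + \connhom d_{c_i}(\theta_\phi, \theta_\psi).\]
Expanding $\Tp(\eta)$ and applying \cref{theorem:easy}, every monomial of degree at least $2$ in which a $\connhom$-term appears vanishes, because it is a product of a relative class with a $\connhom$-class. This includes products of two $\connhom$-terms, since $\connhom v$ is itself a relative class. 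Only the linear monomial $c_q$ survives, so the difference equals $\connhom\big(a(\eta)\, d_{c_q}(\theta_\phi,\theta_\psi)\big)$, where $a(\eta)$ is the coefficient of $c_q$ in $\Tp(\eta)(c_i, c'_j)$. One point needs care here: the relative classes $c_i(\cdot|\theta)$ for $i<q$ are not defined by a partial frame of length $m-q+1$. However, we are given a full frame $\theta$, so every $c_i$ is relative and \cref{theorem:easy} applies to all monomials.

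Finally, we need injectivity of $\connhom\colon H^{2q-1}(M_0;\Z)\to H^{2q}(\BU(m)\times\BU(n),M_0;\Z)$. This is the complex analogue of \cref{theorem:inj-conn-hom}. It follows from the existence of the section $s_\Theta$ of $j^*$, given by $c_i\mapsto (\pr_1)^*c_i(\cdot|\theta)$ and $c'_j\mapsto(\pr_2)^*c_j(\cdot|\Theta)$. Here it matters that $H^*(\BU(m)\times\BU(n);\Z)$ is a free polynomial ring, so $s_\Theta$ is well defined as a ring map without any $2$-torsion subtlety. Its existence forces $j^*$ to be surjective in every degree, so the preceding $i^*$ is zero, and hence $\connhom$ is injective. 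Cancelling $\connhom$ yields the claimed formula.

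The only step needing care is checking that the Chern-class versions of \cref{theorem:diff-SW}, \cref{theorem:naturality-diff} and \cref{theorem:easy} hold with $\Z$ coefficients and consistent sign conventions. In particular, the sign of $d_{c_q}$ and the identification $\pi_{2q-1}(W_{m+(n-m),\,\cdot})\cong\Z$ must match the orientation conventions used for $d_\eta$. I expect this sign bookkeeping to be the main obstacle, and I would fix it by using Kervaire's convention, under which the primary obstruction difference formula $\mathfrak o(\sigma)-\mathfrak o(\tau)=\connhom d(\sigma,\tau)$ is applied uniformly to both $\eta$ and $\Sigma$.
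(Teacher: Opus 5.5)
Your proposal is correct and follows the paper's argument. The paper obtains this theorem by transposing the proof of \cref{theorem:correction-compare} to the complex category: apply the main theorem to $\phi$ and $\psi$ with the same $\Theta$, subtract using \cref{theorem:diff-RTP}, reduce the substituted difference to $\connhom\, a(\eta)\, d_{c_q}(\theta_\phi,\theta_\psi)$ via \cref{theorem:diff-SW} and \cref{theorem:easy}, and cancel $\connhom$ by injectivity. One simplification is available: injectivity of $\connhom$ on $H^{2q-1}(M_0;\Z)$ is immediate because $H^{2q-1}(\BU(m)\times\BU(n);\Z)=0$, so you do not need the section $s_\Theta$ for that step.
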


\begin{remark}\label{remark:almost-cpx-structure}
    Assume that $\eta = A_1$ and $K = 1$. Then
    \cref{theorem:correction-compare-cpx} remains valid for maps $\phi$, $\psi$ which are holomorphic with respect to different complex structures on $M_0$ if these complex structures are homotopic as almost complex structures.
    Indeed, it suffices to use $\Hom_\C(TM_0, TN_0)$ instead of $J^1(M_0, N_0)$, and the subbundles $A_1(M_0, N_0)$ with respect to homotopic almost complex structures are identified by the homotopy.
\end{remark}

In advance of the forthcoming sections, we give an application to the simplest case.
We consider the complex $\K$-singularity type $A_1 \subset J^1(1, 1)$.
Recall that this is of complex codimension $1$ and has the Thom polynomial
\[\Tp(A_1)(c_i, c'_j) = -c_1 + c'_1.\]

\begin{theorem}\label{formula:A1-cpx-m=1}
{\it
    Let $M_0$, $N_0$ be Riemann surfaces which are both diffeomorphic to $S^1 \times [0, \epsilon]$ as $C^\infty$ manifolds.
    Then, for any framable complex prescribed datum $\phi \colon M_0 \imm N_0$ such that $\phi^{-1}(S^1 \times \{0\}) = S^1 \times \{0\}$,
    \[\alpha(A_1 | \phi) = 0\]
    in $H^1(S^1; \Z)$.
}
\end{theorem}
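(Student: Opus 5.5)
The plan is to detect $\alpha(A_1 | \phi)$ with one well-chosen extension datum, via the universality statement in \cref{maintheorem:BU-Z-restated}. Since $M_0 \cong S^1 \times [0,\epsilon]$, we have $H^1(M_0;\Z) \cong \Z$. Suppose we glue a disk onto the circle $S^1 \times \{0\}$ to get $M \cong D^2$, so that $M_0$ is a collar of $\partial M$ and $\partial M = S^1\times\{\epsilon\} \subset \partial M_0$. Then $\connhom \colon H^1(M_0;\Z) \to H^2(M, M_0;\Z) \cong \Z$ is an isomorphism. Hence $\alpha(A_1|\phi)=0$ as soon as, for one extension datum $f \colon M \to N$, the algebraic count $\#A_1(f)$ equals the relative characteristic number $\langle -c_1(TM|\theta) + f^*c_1(TN|\Theta), [M,M_0]\rangle$.

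The first step is to reduce to a standard model. By \cref{theorem:correction-compare-cpx} and \cref{remark:almost-cpx-structure}, $\alpha(A_1|\phi)$ is unchanged under regular homotopy through immersions. This holds even when the immersions are holomorphic for different but homotopic complex structures, and any two complex structures on an annulus are homotopic as almost complex structures. Next, $\phi$ is holomorphic and hence orientation preserving, and $\phi^{-1}(S^1\times\{0\}) = S^1\times\{0\}$. Identifying $N_0$ with a round annulus in $\C$, these facts give that $\phi$ restricted to the inner circle has some degree $k \ge 1$ onto the inner circle. I then expect $\phi$ to be regularly homotopic, through framed immersions respecting this boundary condition, to the model $z \mapsto z^k$ on a standard annulus $\{r \le |z| \le 1\}$ in $\C$. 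Immersions of the annulus into the annulus are classified by the degree and the rotation of the differential, and both are fixed by holomorphicity. Verifying this classification carefully, including that the relevant difference classes $d_{A_1}$ and $d_{c_1}$ in \cref{theorem:correction-compare-cpx} both vanish, is the step I expect to be the main obstacle.

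For the model, take $M = N = $ the disk $\{|z| \le 1\}$, and take the generic perturbation $f(z) = z^k + \delta z$ with small $\delta \neq 0$, which agrees up to regular homotopy with $z^k$ on the collar. This $f$ has exactly $k-1$ nondegenerate critical points, each counted with sign $+1$ by holomorphicity, so $[\ol{A_1(f)}] = k-1$. For the substitution, take $\Theta = \partial/\partial w$ on $N_0$. It extends to a frame over $N$, so $f^*c_1(TN|\Theta) = 0$. The frame $\theta$ on $TM_0$ is $(df)^{-1}\partial/\partial w$, which is approximately $\tfrac{1}{k} z^{1-k}\,\partial/\partial z$ near the boundary circle. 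The obstruction to extending this over the disk is the index $1-k$, so $c_1(TM|\theta)[M,M_0] = 1-k$. Hence $\Tp(A_1) = -c_1 + c_1'$ evaluates to $k-1$, which matches the count. This forces $\connhom\alpha(A_1|\phi) = 0$, and since $\connhom$ is injective, $\alpha(A_1|\phi) = 0$. Along the way I will check the sign and orientation conventions (outward normal first, and the orientation of $S^1\times\{0\}$ as $\partial M$), because a sign slip here would change $k-1$ into $1-k$.
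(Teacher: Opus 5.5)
Your proposal is correct and follows the paper's proof. You reduce, via homotopy of the complex structures and regular-homotopy invariance of the correction term, to a holomorphic perturbation of $z \mapsto z^k$ on the disk. You then check that the substituted relative classes give exactly $\#A_1(f) = k-1$, so injectivity of $\connhom$ forces $\alpha(A_1|\phi)=0$.

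The only difference is cosmetic. You take the constant frame $\Theta = \partial/\partial w$, so $f^*c_1(TN|\Theta)=0$ and $c_1(TM|\theta)$ contributes the index $1-k$. The paper instead takes the outward normal frame and uses the relative Poincar\'e--Hopf theorem to get $-\chi(D^2)+\deg f\cdot\chi(D^2)=k-1$. The two computations agree, as they must, since the substitution term is independent of $\Theta$.
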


\begin{proof}
    First, notice that both the complex structures on $M_0, N_0$ are homotopic to the standard one as complex structures, since $\SO(2)/\U(1)$ is one point.
    Furthermore, the correction term is invariant up to regular homotopy of $\phi$.
    Hence, we may assume that the map $\phi$ is the restriction of a holomorphic map $f \colon D^2 \to D^2$, a small holomorphic perturbation of the map 
    \[D^2 \to D^2; \quad z \mapsto z^{\deg \phi}.\]
    (Note that $\deg \phi > 0$ since $\phi$ is holomorphic.)
    Now, let $\Theta$ be the frame on $N_0$ obtained by the complexification of an outward normal vector field.
    Then, by \cref{maintheorem:BU-Z-restated}, we have
    \begin{align*}
        [A_1(f)]
        &= -c_1(TD^2 | \theta) + f^* c_1(TD^2 | \Theta) + \connhom \alpha(A_1 | \phi).
    \end{align*}
    Since $\theta$ and $\Theta$ are outward, by the relative version of the Poincar\'e--Hopf theorem, 
    \begin{align*}
        \# A_1(f) 
        &= -\chi(D^2) + \deg f \cdot \chi(D^2) + \langle \alpha(A_1 | \phi), [\partial D^2] \rangle.
    \end{align*}
    Since $\# A_1(f) = \deg \phi - 1$ and $\deg f = \deg \phi$, we have 
    $\alpha(A_1 | \phi) = 0$.
\end{proof}

\begin{corollary}[relative version of the Riemann--Hurwitz formula]
{\it
    Let $M, N$ be compact Riemann surfaces with boundary $\partial M = \partial N = S^1$.
    Let $f \colon (M, \partial M) \to (N, \partial N)$ be a generic holomorphic map which is an immersion around $\partial M$.
    Then
    \[[A_1(f)] = -c_1(TM | \theta) + f^* c_1(TN | \Theta)\]
    in $H^2(M, \partial M; \Z)$.
    Therefore,  
    \[\# A_1(f) = -\chi(M) + \deg f \cdot \chi(N).\]
}
\end{corollary}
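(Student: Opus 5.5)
The plan is to reduce the corollary to \cref{maintheorem:BU-Z-restated} combined with the vanishing in \cref{formula:A1-cpx-m=1}, and then to evaluate on the fundamental class. Set $M_0 \subset M$ to be a collar neighborhood of $\partial M \cong S^1$. By shrinking it if necessary, we may assume that $f$ is an immersion on $M_0$. We may also assume that $f|_{M_0}$ lands in a collar $N_0 \subset N$ of $\partial N$ and satisfies $f^{-1}(\partial N) \cap M_0 = \partial M$. This is possible because $f$ maps $\partial M$ to $\partial N$ and $M-\partial M$ into the interior; if needed, restrict $f$ to $f^{-1}(N_0)$ near the boundary. Then $\phi = f|_{M_0} \colon M_0 \imm N_0$ is a complex prescribed datum for $A_1$. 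It is framable, since the annulus $N_0$ is parallelizable, and since $m=n=1$ the normal bundle of $\phi$ is zero. With this choice, $f$ is a complex extension datum of $\phi$; we only need the boundary condition $\partial M \subset \partial M_0$.

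Next I will identify $(M_0, N_0)$ with annuli $S^1 \times [0,\epsilon]$ so that $\phi^{-1}(S^1\times\{0\}) = S^1 \times\{0\}$. This puts us exactly in the hypotheses of \cref{formula:A1-cpx-m=1}, so $\alpha(A_1|\phi)=0$. Plugging $\Tp(A_1) = -c_1 + c'_1$ into the universality statement of \cref{maintheorem:BU-Z-restated} gives
\[[A_1(f)] = -c_1(TM|\theta) + f^*c_1(TN|\Theta)\]
in $H^2(M,\partial M;\Z)$. Here I use the excision/homotopy equivalence $H^2(M,M_0)\cong H^2(M,\partial M)$, and $\theta$ is the frame induced by $(\phi,\Theta)$. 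I will choose $\Theta$ to be the complexified outward normal of $N$ along $N_0$.

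For the numerical formula, I evaluate both sides on $[M,\partial M]$. The left side gives $\#A_1(f)$, counted positively because holomorphic critical points carry positive local orientation. For the first term on the right, $\theta = f^*\Theta$ pulled back through $df$, which is complex linear and orientation preserving. Since $f$ maps $\partial M$ to $\partial N$ with the interior going to the interior, the pulled-back frame is outward-pointing up to homotopy. The relative Poincar\'e--Hopf theorem then gives $\langle c_1(TM|\theta),[M,\partial M]\rangle = \chi(M)$. For the second term, naturality (\cref{theorem:naturality-rel}) gives $f^*c_1(TN|\Theta) = c_1(f^*TN|f^*\Theta)$. Evaluating it amounts to pushing forward: $f_*[M,\partial M] = \deg f\,[N,\partial N]$ as a map of pairs, and $\langle c_1(TN|\Theta),[N,\partial N]\rangle=\chi(N)$, again by relative Poincar\'e--Hopf with an outward frame.

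I expect the main obstacle to be checking that the frame $\theta$ is homotopic to an outward frame, and that the degree is well defined as a map of pairs. Both reduce to the boundary behaviour assumption $f^{-1}(\partial N)=\partial M$ near the boundary, together with orientation preservation of holomorphic maps, which I would verify locally in the collar.
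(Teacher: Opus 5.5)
Your proposal is correct and follows essentially the same route as the paper. You apply \cref{maintheorem:BU-Z-restated} to the collar datum $\phi = f|_{M_0}$, kill the correction term with \cref{formula:A1-cpx-m=1}, and evaluate both sides on $[M,\partial M]$ using outward frames and the relative Poincar\'e--Hopf theorem, mirroring the computation inside the proof of \cref{formula:A1-cpx-m=1}.
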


\subsection{In dimensions \texorpdfstring{$m \ge n$}{m-ge-n}}

We try to trace the proof of \cref{maintheorem:BO-Z2-restated} for the case where $m \ge n$, although it is impossible in general.
Let us restart from the setup.

\begin{definition}\label{definition:framable-prescribed-data-subm}
    A {\it framable prescribed datum} is a prescribed datum $\phi \colon M_0 \to N_0$ for the type $A_1$ (then $\phi$ is a submersion) such that $N_0$ is parallelizable, i.e., $TN_0$ admits a global frame.
\end{definition}

We fix a framable prescribed datum $\phi \colon M_0 \to N_0$ and a global frame $\Theta$ on $N_0$.
Then an $n$-frame $\theta$ of $TM_0$ is induced via a homotopically unique choice of splitting of the exact sequence
\[0 \to \Ker d\phi \to TM_0 \xto{d\phi} \phi^*TN_0 \to 0.\]
In the same way as in \cref{subsection:main-BO}, we also have relative Stiefel--Whitney classes:
\begin{align*}
    w_i(\gamma^m | \theta) &\in H^i(\BO(m), \kappa_1(M_0); \Z_2) & (m - n + 1 \le i \le m), \\
    w_j(\gamma^n | \Theta) &\in H^j(\BO(n), \kappa_2(N_0); \Z_2) & (1 \le j \le n),
\end{align*}
where $\kappa_1 \colon M_0 \to \BO(m)$ and $\kappa_2 \colon N_0 \to \BO(n)$ are classifying maps of $TM_0$ and $TN_0$, respectively.
Now, recall that in the proof of \cref{maintheorem:BO-Z2-restated}, it was important to find a nice splitting of a short exact sequence.
However, in the present case, the range of $i$ is insufficient.
To overcome this difficulty, we will discard $w_j(\gamma^n | \Theta)$ and use only $w_i(\gamma^m | \theta)$ as follows.
We choose $\kappa_1$ to be an embedding, but $\kappa_2$ a constant map (this is possible since $N_0$ is parallelizable).
Then a classifying map of the jet bundle $J^K(M_0, N_0)$,
\[\kappa \colon M_0 \to \BO(m) \times \BO(n),\]
can be chosen to be an embedding and decomposed into $\kappa_1$ and the inclusion
\[\iota = (\id, *) \colon (\BO(m), \kappa_1(M_0)) \into (\BO(m) \times \BO(n), \kappa_1(M_0) \times \{*\}).\]
We write the embedded images $\kappa_1(M_0)$ and $\kappa_1(M_0) \times \{*\}$ as $M_0$ for simplicity.
Then we have the following.

\begin{theorem}\label{theorem:exp-subm}
{\it 
    For any singularity type $\eta \subset J^K(m, n)$ of codimension $q$ so that $\Tp(\eta)(w_i, 0)$ belongs to $\Z_2[w_{m - n + 1}, \ldots, w_m]$ and any framable prescribed datum $\phi \colon M_0 \to N_0$, there is a class $\alpha(\eta | \phi, \Theta) \in H^{q-1}(M_0; \Z_2)$ such that
    \[\iota^* \Tp(\eta | \phi) = \Tp(\eta) \big( \hat{w}_i(\gamma^m | \theta), 0 \big) + \connhom \alpha(\eta | \phi, \Theta)\]
    in $H^q(\BO(m), M_0; \Z_2)$, where
    \[\hat{w}_i(\gamma^m | \theta) =
    \begin{cases}
        w_i(\gamma^m | \theta) & (m - n + 1 \le i \le m), \\
        0 & (\text{otherwise}).
    \end{cases}
    \]
    Furthermore, $\alpha(\eta | \phi, \Theta)$ is unique modulo $\Ker \connhom$.
    Therefore, for any extension datum $f \colon M \to N_0$ of $\phi$ with the same target manifold, 
    \[ \left[ \ol{\eta(f)} \right] = \Tp(\eta) \big( \hat{w}_i(TM | \theta), 0 \big) + \connhom \alpha(\eta | \phi, \Theta)\]
    in $H^q(M, M_0; \Z_2)$, where
    \[\hat{w}_i(TM | \theta) =
    \begin{cases}
        w_i(TM | \theta) & (m - n + 1 \le i \le m), \\
        0 & (\text{otherwise}).
    \end{cases}
    \]
}
\end{theorem}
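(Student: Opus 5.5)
We follow the first step of the proof of \cref{claim:quasi-existence}, now on the pair $(\BO(m), M_0)$, where $M_0$ is embedded by $\kappa_1$. Consider the long exact sequence
\[
H^{q-1}(\BO(m); \Z_2) \xto{i^*} H^{q-1}(M_0; \Z_2) \xto{\connhom} H^q(\BO(m), M_0; \Z_2) \xto{j^*} H^q(\BO(m); \Z_2) \xto{i^*} H^q(M_0; \Z_2).
\]
We cannot expect $j^*$ to split on all of $H^q(\BO(m); \Z_2)$, because relative classes $w_i(\gamma^m | \theta)$ are only available for $m - n + 1 \le i \le m$. We therefore restrict to the subring $\Z_2[w_{m-n+1}, \dots, w_m]$ and define a ring homomorphism
\[
s_\theta \colon \Z_2[w_{m-n+1}, \dots, w_m] \to H^*(\BO(m), M_0; \Z_2), \quad w_i \mapsto w_i(\gamma^m | \theta).
\]
Products of relative classes are taken in the relative cup product ring, which has no unit, but each monomial of positive degree is fine. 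The map $s_\theta$ satisfies $j^* \circ s_\theta = \id$, since $j^* w_i(\gamma^m | \theta) = w_i$. By hypothesis, $\Tp(\eta)(w_i, 0)$ lies in this subring, so $s_\theta(\Tp(\eta)(w_i,0)) = \Tp(\eta)(\hat{w}_i(\gamma^m | \theta), 0)$ is a well-defined relative lift.

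Next we compute $j^* \iota^* \Tp(\eta | \phi)$. Naturality of restriction $(\BO(m), \varnothing) \to (\BO(m), M_0)$ together with \cref{theorem:rel-to-abs} gives
\[
j^* \iota^* \Tp(\eta | \phi) = \iota^* \Tp(\eta) = \Tp(\eta)(w_i, 0),
\]
because $\iota = (\id, *)$ pulls back each $w'_j$ to $0$. Hence $\iota^* \Tp(\eta | \phi) - \Tp(\eta)(\hat{w}_i(\gamma^m | \theta), 0)$ lies in $\Ker j^* = \Im \connhom$. This yields a class $\alpha(\eta | \phi, \Theta)$, unique modulo $\Ker \connhom$, which is exactly the claimed statement. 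Unlike \cref{theorem:inj-conn-hom}, we make no claim that $\connhom$ is injective, since $s_\theta$ is not surjective onto $H^q(\BO(m); \Z_2)$.

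For the universality statement, let $f \colon M \to N_0$ be an extension datum. Because the target is $N_0$ itself, the classifying map of $J^K(M, N_0)$ factors, after composing with $(\id, f)$, through $\iota \circ \kappa_1^M$. Here $\kappa_1^M \colon (M, M_0) \to (\BO(m), M_0)$ classifies $TM$ and extends $\kappa_1$. Pulling back the identity through this map, \cref{theorem:rel-locus} turns the left side into $[\ol{\eta(f)}]$, and \cref{theorem:naturality-rel} turns $w_i(\gamma^m | \theta)$ into $w_i(TM | \theta)$. The connecting homomorphism is natural, so the term $\connhom \alpha$ pulls back to $\connhom \alpha$ in $H^q(M, M_0; \Z_2)$, with $\alpha$ restricted along the identity of $M_0$.

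The main obstacle is this factorization. The classifying map of $J^K(M, N_0)$ on the second factor must be compatible with the constant $\kappa_2$ chosen on $N_0$; this is where the trivialization by $\Theta$ enters. We will verify that $f^*TN_0$ is trivialized by $\Theta$, so $(\id,f)$ composed with the classifying map is homotopic rel $M_0$ to $\iota \circ \kappa_1^M$. This homotopy is rel $M_0$ because, over $M_0$, the frame $\theta$ was built from $\Theta$ via the splitting of $d\phi$. That compatibility is what makes $\theta$ the right relative datum in both places.
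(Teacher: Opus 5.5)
Your proposal is correct and follows the paper's proof: it uses the exact sequence of $(\BO(m), M_0)$ and notes that $j^*\iota^*\Tp(\eta | \phi)$ and $j^*\Tp(\eta)(\hat{w}_i(\gamma^m | \theta), 0)$ both equal $\Tp(\eta)(w_i, 0)$, which is where the hypothesis $\Tp(\eta)(w_i,0) \in \Z_2[w_{m-n+1}, \dots, w_m]$ enters; exactness then gives $\alpha(\eta | \phi, \Theta)$, unique only modulo $\Ker \connhom$. The factorization you flag as the main obstacle is actually automatic: because $\kappa_2$ is taken constant on $N_0$, the classifying map of $J^K(M, N_0)$ can be chosen as $\kappa_1^M \times \kappa_2$, so $\kappa \circ (\id, f) = \iota \circ \kappa_1^M$ exactly, with no homotopy rel $M_0$ needed, and \cref{theorem:rel-locus} together with \cref{theorem:naturality-rel} give the second formula directly.
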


\begin{proof}
    We consider the exact sequence of the pair $(\BO(m), M_0)$:
    \[H^{q-1}(M_0; \Z_2) \xto{\connhom} H^q(\BO(m), M_0; \Z_2) \xto{j^*} H^q(\BO(m); \Z_2).\]
    By the assumption for $\Tp(\eta)$, we have
    \[j^* \iota^* \Tp(\eta | \phi) = \Tp(\eta) \big( \hat{w}_i(\gamma^m | \theta), 0 \big) \in H^q(\BO(m); \Z_2).\] 
    Then the assertion is immediate from the exactness of the sequence.
\end{proof}

\begin{definition}
    We also call the class $\alpha(\eta | \phi, \Theta)$ in \cref{theorem:exp-subm} or its image $\connhom \alpha(\eta | \phi, \Theta)$ the {\it correction term} of $\Tp(\eta | \phi)$ with respect to $\Theta$.
\end{definition}

\section{Smale invariants of immersions and variants}\label{section:classif-imm}

In this section, we briefly recall classification of sphere immersions up to regular homotopy, its complex analogue, and fix some conventions.

\subsection{Smale invariants}

For manifolds $V$ and $W$, let $\Imm[V, W]$ denote the set of regular homotopy classes of immersions $V \imm W$.

\begin{definition}[{\cite{Sma59}; see also \cite[p.246]{Hir59}}]
    For an immersion $\iota \colon S^r \imm \R^n$, define its {\it Smale invariant} as the difference class
    \[\Omega(\iota) = d_{A_1}(j^1 \iota, j^1 \iota_0) \in H^r(S^r; \pi_r(V_{n, r})) \cong \pi_r(V_{n, r}),\]
    where $\iota_0 \colon S^r \imm \R^n$ is the standard inclusion and $V_{n, r} \simeq \SO(n)/\SO(n - r)$ is the Stiefel manifold.
\end{definition}

\begin{theorem}[\cite{Sma59}]
{\it
    The correspondence
    \[\Omega \colon \Imm[S^r, \R^n] \to \pi_r(V_{n, r}); \quad [\iota] \mapsto \Omega(\iota)\]
    is a bijection.
    Furthermore, endowing $\Imm[S^r, \R^n]$ with a group structure by connected sum, $\Omega$ forms a group isomorphism.
}
\end{theorem}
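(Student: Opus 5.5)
The plan is to deduce the statement from the Smale--Hirsch $h$-principle and then identify the resulting homotopy set using the cell structure of $S^r$. Throughout I assume $r < n$; the case $r = n$ is degenerate, since a closed manifold does not immerse in $\R^n$.

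\emph{Step 1: reduce regular homotopy to formal immersions.} By the Smale--Hirsch theorem, taking the $1$-jet $\iota \mapsto j^1 \iota$ is a weak homotopy equivalence. It goes from the space of immersions $S^r \imm \R^n$ to the space of fiberwise injective bundle maps $TS^r \to T\R^n$, or equivalently to the space of sections of the bundle $E \to S^r$ whose fiber over $x$ is the space of linear monomorphisms $T_xS^r \to \R^n$. In the language of the paper, these are exactly the sections of the jet bundle that avoid $\ol{A_1}$. The fiber of $E$ is homotopy equivalent to $V_{n,r}$. Hence $\Imm[S^r,\R^n]$ is in bijection with the set of homotopy classes of sections of $E$. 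This step is quoted, not reproved; it is the genuinely deep input.

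\emph{Step 2: classify sections of $E$ by a difference element.} Give $S^r$ the CW structure with one $0$-cell $D_-$ (thickened to the lower hemisphere) and one $r$-cell $D_+$. Over the contractible $D_-$, every section is homotopic to the reference section $j^1\iota_0$, and the homotopy extends by the homotopy extension property. So we may assume that a section $s$ agrees with $j^1\iota_0$ on $D_-$. Trivialize $E|_{D_+}$. Then $s|_{D_+}$ and $j^1\iota_0|_{D_+}$ are two maps $D^r \to V_{n,r}$ that agree on $\partial D^r$. Gluing them gives an element of $\pi_r(V_{n,r})$, and this element is exactly the difference cochain on the unique $r$-cell. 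Homotopies of sections rel $D_-$ change this element by zero, and every element of $\pi_r(V_{n,r})$ is realized by modifying $s$ inside $D_+$. Hence sections up to homotopy correspond bijectively to $\pi_r(V_{n,r})$, possibly up to the action of $\pi_1$ of the fiber. That action must be checked to be trivial: $V_{n,r}$ is simply connected when $n-r \ge 2$, and for $n-r=1$ one has $V_{n,n-1} \simeq \SO(n)$, a topological group, on which the action is trivial. With this in hand, $\Omega$ is a bijection.

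\emph{Step 3: additivity under connected sum.} Form $\iota \# \iota'$ so that $\iota$ and $\iota'$ differ from the standard embedding only on two disjoint small disks in the upper cell. Difference cochains are additive over disjoint supports, which gives $\Omega(\iota\#\iota') = \Omega(\iota)+\Omega(\iota')$. Since $\Omega$ is a bijection, this transports the group structure. It also shows that connected sum is well defined and associative on regular homotopy classes, and that $\iota_0$ is the unit.

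I expect the main obstacle to be Step 1. Without the $h$-principle one would have to reproduce Smale's covering-homotopy argument for the restriction map on immersions. The only delicate point in Step 2 is the $\pi_1$-action when $n = r+1$.
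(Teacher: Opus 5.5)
\paragraph{Comparison with the paper.} The paper gives no proof of this statement. It is quoted from Smale~\cite{Sma59}, with $\Omega$ phrased as the difference class $d_{A_1}(j^1\iota,j^1\iota_0)$ following Hirsch~\cite{Hir59}. Your route is to apply the Smale--Hirsch $h$-principle for $r<n$ and then do obstruction theory on the two-cell structure of $S^r$. This is a legitimate and standard derivation. It also explains why the paper can define $\Omega$ as a difference class of $1$-jet sections avoiding $\ol{A_1}$: $S^r$ has no cells strictly between dimensions $0$ and $r$, so the top-cell difference is the only invariant. Your remark that $r<n$ is implicitly assumed is also correct.

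Keep in mind that this is a reduction rather than an independent proof. Hirsch's theorem is built on Smale's covering homotopy theorem for restricting immersions of $D^r$ to a collar of $\partial D^r$. Smale's own proof is essentially that theorem plus the computation of the homotopy of the fibre. Your Step 1 imports all of it at once.

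\paragraph{Step 2 when $n=r+1$.} Knowing that the fibre is a topological group does not by itself remove the $\pi_1$-ambiguity. A loop $\gamma$ of sections over $D_-$ acts on difference elements a priori only affinely, by $\alpha\mapsto\gamma_\#\alpha+d(s_0^\gamma,s_0)$. Here $s_0^\gamma$ is the reference section $j^1\iota_0$ carried around $\gamma$, so triviality of $\gamma_\#$ is not enough on its own. In this case the problem is harmless because $E$ is fibre-homotopy trivial. Extend a monomorphism $A\colon T_xS^r\to\R^{r+1}$ by sending $x$ to the unit normal of $\Im A$ that makes the determinant positive. This identifies $E$ with $S^r\times\GL^+(r+1)\simeq S^r\times\SO(r+1)$ and sends $j^1\iota_0$ to the constant map at the identity. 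Hence sections up to homotopy are $[S^r,\SO(r+1)]=\pi_r(\SO(r+1))$. For $n-r\ge 2$ your argument is already complete, since $V_{n,r}$ is simply connected.

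\paragraph{Step 3.} This is the more substantial omission. Step 3 presupposes two lemmas that you do not supply.
\begin{enumerate}
\item[(i)] Every immersion $S^r\imm\R^n$ is regularly homotopic to one that agrees with $\iota_0$ outside a small disk. One proof: precompose with a diffeotopy of $S^r$ that squeezes the complement of a small disk into a patch where $\iota$ is an embedding. Then use that any two embeddings $D^r\to\R^n$ with $r<n$ are isotopic, together with isotopy extension. Alternatively, invoke the relative $h$-principle.
\item[(ii)] The usual tube connected sum of two such immersions is regularly homotopic to your two-disk model. One proof: an ambient isotopy straightens the embedded ``dumbbell'' onto the round sphere. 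It transports each disk piece together with its reference piece, so the local difference elements are unchanged.
\end{enumerate}
Without (ii) you have shown additivity only for the model operation. In particular, your conclusion that connected sum is well defined on regular homotopy classes so far applies only to that model, not to arbitrary choices of points and tubes. With (i) and (ii) supplied, the argument is complete modulo the quoted $h$-principle.
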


\begin{remark}
    If $n \ge r + 2$, then removing one point from $S^n$ induces a group isomorphism
    \[\Imm[S^r, S^n] \cong \Imm[S^r, \R^n].\]
\end{remark}

We will focus on the following cases in this paper.
Other cases are also computed by Paechter (e.g.,~\cite{Pae56}; see also \cref{remark:high-dim-cpx-Smale-inv}).

\begin{example} 
    For any integer $k \ge 1$, 
    \begin{itemize}
        \item $\Imm[S^k, \R^{2k}] \cong \pi_k(V_{2k, k}) \cong \Z$ if $k$ is even or $k = 1$, otherwise $\Z_2$. The values correspond to the self-intersection numbers;
        \item $\Imm[S^{4k - 1}, \R^{4k + 1}] \cong \pi_{4k - 1}(V_{4k + 1, 4k - 1}) \cong \Z$;
    \end{itemize}
\end{example}



When $r = 4k - 1$ and $n = 4k + 1$, the following is well-known.

\begin{lemma}[{cf.~Kervaire--Milnor \cite[Lemma 2]{KeMi58}}]\label{theorem:KM-coeff}
{\it
    Let $\iota \colon S^{4k - 1} \imm \R^{4k + 1}$ be an immersion.
    Consider the classes
    \begin{align*}
        \Omega(\iota) = d_{A_1}(j^1 \iota, j^1 \iota_0) &\in H^{4k - 1}(S^{4k - 1}; \pi_{4k - 1}(V_{4k + 1, 4k - 1})), \\
        d_{p_k}(\theta_\iota, \theta_{\iota_0}) &\in H^{4k - 1}(S^{4k - 1}; \pi_{4k - 1}(W_{4k + 1, 2k+2})), \\
        p_k(\epsilon^2 \oplus TD^{4k} | \theta_\iota) &\in H^{4k}(D^{4k}, S^{4k - 1}; \pi_{4k - 1}(W_{4k + 1, 2k+2})).
    \end{align*}
    Then they are related by the formula
    \[\Omega(\iota) \xmapsto{(p \circ i)_*} \pm \frac{1}{a_k (2k-1)!} \cdot d_{p_k}(\theta_\iota, \theta_{\iota_0}) \xmapsto{\connhom} \pm \frac{1}{a_k (2k-1)!} \cdot p_k(\epsilon^2 \oplus TD^{4k} | \theta_\iota),\]
    where
    \[a_k = 
    \begin{cases}
    2 & (k \text{ is odd}), \\
    1 & (k \text{ is even}),
    \end{cases}\]
    and 
    \begin{align*}
        i &\colon V_{4k + 1, 4k - 1} \into \SO(4k + 1) \into \U(4k + 1), \\
        p &\colon \U(4k + 1) \to W_{4k + 1, 2k + 2}
    \end{align*}
    are the standard inclusion and projection, respectively.
}
\end{lemma}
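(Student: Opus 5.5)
The plan is to compute everything on the universal coefficient groups and then check the two maps separately. Write $V = V_{4k+1,4k-1}$ and $W = W_{4k+1,2k+2}$. Both $\pi_{4k-1}(V)$ and $\pi_{4k-1}(W)$ are infinite cyclic: $\pi_{4k-1}(W_{4k+1,2k+2}) \cong \Z$ because $2(4k+1) - 2(2k+2) + 1 = 4k-1$. So the first arrow $(p \circ i)_*$ is multiplication by some integer. I will identify this integer with $\pm a_k (2k-1)!$, which is Kervaire--Milnor's computation. For the second arrow I will use the relation between difference classes and relative obstruction classes (\cref{theorem:diff-SW} and its Chern/Pontryagin analogue).

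\textbf{Step 1: naturality of the first arrow.} A difference class is natural under maps of fibers. By definition, $\Omega(\iota)$ is the difference class of the $(4k-1)$-frames $d\iota$ and $d\iota_0$ in $\R^{4k+1}$, i.e.\ of two maps $S^{4k-1} \to V$. Complexifying via $i$ and taking the last $2k+2$ vectors via $p$ turns these into the frames $\theta_\iota^{2k}$, $\theta_{\iota_0}^{2k}$ of $\epsilon^2 \oplus TD^{4k} \otimes \C$. Here $p_k$ corresponds to $c_{2k}$, which needs a $(4k+1) - 2k + 1 = 2k+2$ frame. Hence
\[d_{p_k}(\theta_\iota, \theta_{\iota_0}) = (p \circ i)_* \Omega(\iota)\]
up to the sign $(-1)^k$ built into the definition of $p_k$. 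It then remains to show that $(p\circ i)_*$ is multiplication by $\pm a_k(2k-1)!$ on generators.

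\textbf{Step 2: the integer.} To identify the integer, factor $(p \circ i)_*$ as
\[\pi_{4k-1}(V) \to \pi_{4k-1}(\SO(4k+1)) \cong \pi_{4k-1}(\SO) \to \pi_{4k-1}(\U) \to \pi_{4k-1}(W).\]
The first map is surjective by the fibration $\SO(2) \to \SO(4k+1) \to V$ together with stability, so it maps a generator to a generator modulo torsion. The map $\pi_{4k-1}(\SO) = \Z \to \pi_{4k-1}(\U) = \Z$ is complexification, which by Bott is multiplication by $a_k$ (namely $1$ or $2$ according to the parity of $k$). The map $\pi_{4k-1}(\U(4k+1)) \to \pi_{4k-1}(W)$ is read off from the fibration $\U(2k-1) \to \U(4k+1) \to W$. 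A generator of $\pi_{4k-1}(\U)$ clutches a bundle on $S^{4k}$ with $c_{2k} = \pm(2k-1)!$, while the generator of $\pi_{4k-1}(W)$ corresponds to the primary obstruction, i.e.\ $c_{2k}$ itself. So this last map is multiplication by $(2k-1)!$ up to sign, by Bott's integrality theorem.

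This is the main obstacle. The cleanest route is to cite Kervaire--Milnor \cite[Lemma 2]{KeMi58} directly, as the statement's ``cf.''\ suggests, and only justify the identification of generators with care.

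\textbf{Step 3: the second arrow.} The second arrow $\connhom$ is the coboundary for the pair $(D^{4k}, S^{4k-1})$. The standard inclusion $\iota_0$ bounds the standard embedding of $D^{4k}$, so $\theta_{\iota_0}$ extends over $D^{4k}$ and $p_k(\epsilon^2 \oplus TD^{4k} \mid \theta_{\iota_0}) = 0$. The Pontryagin analogue of \cref{theorem:diff-SW} then gives
\[p_k(\cdot \mid \theta_\iota) = \connhom d_{p_k}(\theta_\iota, \theta_{\iota_0}).\]
Dividing by the integer $\pm a_k(2k-1)!$, which is legitimate in these infinite cyclic groups, yields the stated chain.
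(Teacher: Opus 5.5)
Your argument is correct, and it unpacks what the paper only cites: the paper gives no proof beyond ``cf.\ Kervaire--Milnor \cite[Lemma 2]{KeMi58}''. Your ingredients are exactly what that lemma packages, so you obtain a self-contained proof from the same two Bott facts. They are:
\begin{itemize}
\item naturality of difference classes under $p\circ i$;
\item Bott's result that complexification $\pi_{4k-1}(\SO)\to\pi_{4k-1}(\U)$ is multiplication by $a_k$;
\item the factor $\pm(2k-1)!$ for $\pi_{4k-1}(\U(4k+1))\to\pi_{4k-1}(W_{4k+1,2k+2})$, whose cokernel is $\pi_{4k-2}(\U(2k-1))\cong\Z_{(2k-1)!}$;
\item $p_k(\,\cdot\,|\theta_{\iota_0})=0$, together with the Pontryagin analogue of \cref{theorem:diff-SW}.
\end{itemize}

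One point in Step~1 is not right as stated: $p\circ i$ applied to $j^1\iota$ does not produce $\theta_\iota$. Let $\tilde A$ be the differential completed by the normal frame. Then $\theta_\iota$ is given by the columns of the pointwise inverse $\tilde A^{-1}$; this is the argument in the proof of \cref{theorem:opposite-sign-diff-and-frame}. Inversion acts by $-1$ on $\pi_{4k-1}(\U(4k+1))$, so there is a sign beyond the $(-1)^k$ in the definition of $p_k$. This does not affect the lemma, which is only stated up to $\pm$. It is, however, exactly the sign the paper fixes right after the lemma.

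Two smaller remarks:
\begin{itemize}
\item $i$ is only defined on $\pi_{4k-1}$, or after fixing the normal framing. On homotopy it is the inverse of the isomorphism induced by $\SO(4k+1)\to V_{4k+1,4k-1}$, which holds because $\pi_{4k-1}(\SO(2))=\pi_{4k-2}(\SO(2))=0$. So ``surjective modulo torsion'' should simply read ``isomorphism''.
\item Regarding $j^1\iota$ as a map $S^{4k-1}\to V_{4k+1,4k-1}$ needs a tangent trivialization, which $S^{4k-1}$ generally lacks. You avoid this by working on $S^{4k-1}\times[0,\epsilon]$, as the paper does.
\end{itemize}
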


The sign ambiguity is due to the choice of generators of the homotopy groups
which are all isomorphic to $\Z$.
In this paper, we choose `$-$' as the sign in \cref{theorem:KM-coeff}.
Indeed, this is compatible with the convention of Ekholm--Takase~\cite[p.254]{ET11} when $r = 3$ (see also \cref{formula:Sigma2-dim4}).
This is also reasonable from the proof of \cref{theorem:opposite-sign-diff-and-frame} below.

\subsection{Complex Smale invariants}\label{subsection:cpx-Smale-inv}

We will consider a complex analogue of Smale invariants.

\begin{definition}\label{definition:nearly-standard}
    Let $J_0$ denote the complex $m$-manifold structure on $S^{2m - 1} \times [0, \epsilon]$ as the standard collar neighborhood of the unit disk $D^{2m} \subset \C^m$.
    We call another complex $m$-manifold structure $J$ on $S^{2m - 1} \times [0, \epsilon]$ {\it nearly standard} if $J$ is homotopic to $J_0$ as an almost complex structure.
\end{definition}

We will see an example in the proof of \cref{theorem:NP15}.

The following notion is based on the work of N\'emethi--Pint\'er~\cite{NP15}.

\begin{definition}\label{definition:cpx-Smale-inv}
    Let $\phi \colon S^{2m - 1} \times [0, \epsilon] \imm \C^n$ be an immersion which is holomorphic with respect to a nearly standard complex structure.
    Then we define its {\it complex Smale invariant} as the difference class
    \[\Omega_\C(\phi) = d_{A_1}(j^1 \phi, j^1 \phi_0)\]
    in $H^{2m - 1}(S^{2m - 1}; \pi_{2m - 1}(W_{n, m})) \cong \pi_{2m - 1}(W_{n, m})$,
    where $\phi_0 \colon S^{2m - 1} \times [0, \epsilon] \into D^{2m} \into \C^n$ is the standard inclusion and two jet sections are regarded as sections under the same almost complex structure by a homotopy.
\end{definition}

We will focus on the case where $n = 2m - 1$ and prepare the following lemma. 

\begin{lemma}\label{theorem:opposite-sign-diff-and-frame}
{\it
    Let $\phi \colon S^{2m - 1} \times [0, \epsilon] \imm \C^{2m - 1}$ be an immersion which is holomorphic with respect to a nearly standard complex structure. 
    Assume that the normal bundle of $\phi$ is trivial as a complex vector bundle.
    Consider the classes
    \begin{align*}
        \Omega_\C(\phi) = d_{A_1}(j^1 \phi, j^1 \phi_0) &\in H^{2m - 1}(S^{2m - 1}; \pi_{2m - 1}(W_{2m - 1, m})), \\
        d_{c_m}(\theta_\phi, \theta_{\phi_0}) &\in H^{2m - 1}(S^{2m - 1}; \pi_{2m - 1}(W_{2m - 1, m})), \\
        c_m(\epsilon^{m - 1} \oplus TD^{2m} | \theta_\phi) &\in H^{2m}(D^{2m}, S^{2m - 1}; \pi_{2m - 1}(W_{2m - 1, m})).
    \end{align*}
    Then they are related by the formula
    \[\Omega_\C(\phi) = - d_{c_m}(\theta_\phi, \theta_{\phi_0}) \xmapsto{\connhom} - c_m(\epsilon^{m - 1} \oplus TD^{2m} | \theta_\phi).\]
}    
\end{lemma}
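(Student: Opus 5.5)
The lemma makes two claims. The second arrow is a formal consequence of \cref{theorem:diff-SW} in its Chern-class version. The standard inclusion $\phi_0$ extends to the non-singular embedding $D^{2m} \into \C^{2m-1}$. Hence the frame $\theta_{\phi_0}$ of $\epsilon^{m-1} \oplus TD^{2m}|_{S^{2m-1}}$ extends over $D^{2m}$ as an $m$-frame, and
\[c_m(\epsilon^{m-1} \oplus TD^{2m} | \theta_{\phi_0}) = 0.\]
This gives
\[c_m(\epsilon^{m-1} \oplus TD^{2m} | \theta_\phi) = \connhom d_{c_m}(\theta_\phi, \theta_{\phi_0}),\]
so applying $\connhom$ to $-d_{c_m}(\theta_\phi,\theta_{\phi_0})$ yields $-c_m(\epsilon^{m-1}\oplus TD^{2m}|\theta_\phi)$. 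The real content is the first equality, $\Omega_\C(\phi) = -d_{c_m}(\theta_\phi, \theta_{\phi_0})$, an identity between two elements of $\pi_{2m-1}(W_{2m-1,m})$.

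To prove it I will make both difference classes explicit as maps $S^{2m-1} \to W_{2m-1,m}$. After the homotopy of almost complex structures, trivialize $TM_0 \cong \epsilon^m$ using the standard structure on the collar in $\C^m$. Then $j^1\phi$ and $j^1\phi_0$ become maps $S^{2m-1} \to \Hom_\C(\C^m, \C^{2m-1}) - \ol{A_1}$, which is the space of injective linear maps and deformation retracts onto $W_{2m-1,m}$.
\begin{itemize}
\item Since $d\phi_0$ is the constant standard inclusion, $\Omega_\C(\phi)$ is represented by $x \mapsto d\phi_x$, that is, by the $m$-frame $(d\phi_x(e_1), \dots, d\phi_x(e_m))$ in $\C^{2m-1}$.
\item On the frame side, the relevant last $m$ components of $\theta_\phi$ are the $m$-frame of $\epsilon^{m-1} \oplus TM_0 = \phi^* T\C^{2m-1}$ obtained by pulling back the standard frame $\Theta$. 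In the trivialization of $\epsilon^{m-1}\oplus TM_0$ given by the chosen normal framing and the trivialization of $TM_0$, this is the image under $\pi_m$ of $g_x^{-1}$. Here $g_x \in \U(2m-1)$ is the unitary completion $(d\phi_x, \nu_x)$ of $d\phi_x$ by the normal framing, and $\pi_m \colon \U(2m-1) \to W_{2m-1,m}$ takes the last $m$ columns. The same description holds for $\theta_{\phi_0}$ with $g \equiv \id$.
\end{itemize}
Thus $d_{c_m}(\theta_\phi,\theta_{\phi_0})$ is represented by $x\mapsto \pi_m(g_x^{-1})$, while $\Omega_\C(\phi)$ is $x \mapsto p(g_x)$, where $p$ takes the first $m$ columns.

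The main obstacle is comparing $[p\circ g]$ with $[\pi_m\circ g^{-1}]$ in $\pi_{2m-1}(W_{2m-1,m})$. The class $[g] \in \pi_{2m-1}(\U(2m-1))$ depends on the normal framing, but the frame class uses the same framing, so only the image matters.
\begin{enumerate}
\item Inversion acts as $-1$ on $\pi_*$ of a topological group, so $[g^{-1}] = -[g]$.
\item It remains to show that $p_*$ and $(\pi_m)_*$ agree on $\pi_{2m-1}(\U(2m-1))$. Precomposing with a fixed permutation matrix moving the last $m$ columns to the first $m$ is a map homotopic to a translation of $\U(2m-1)$, since $\U(2m-1)$ is connected. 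Hence it induces the identity on $\pi_*$ up to the identification of base points, and $p_* = (\pi_m)_*$.
\item A related check: transposition or conjugation could appear in the identification of $\Hom$ with frames. Conjugation acts on $\pi_{2m-1}(\U)$ by $(-1)^m$, so I would verify that the identification used for $\Theta$ is complex linear, which means no conjugation enters.
\end{enumerate}
Combining these gives $\Omega_\C(\phi) = -d_{c_m}(\theta_\phi,\theta_{\phi_0})$. I expect the bookkeeping of conventions, namely the direction of difference classes and the ordering of "last components", to be the delicate part. I would fix it by testing a single generator, mirroring the sign choice made after \cref{theorem:KM-coeff}.
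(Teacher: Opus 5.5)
Your proposal is correct and follows essentially the same route as the paper: lift $d\phi$ to $\GL(2m-1;\C)$ (or $\U(2m-1)$) using the complex normal framing, observe that $\theta_\phi$ is given by the pointwise inverse of this lift, use that inversion is negation on $\pi_{2m-1}$ of a topological group, and push down to $W_{2m-1,m}$. The second arrow comes, as in the paper, from $c_m(\epsilon^{m-1}\oplus TD^{2m}\,|\,\theta_{\phi_0})=0$.

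The only difference is cosmetic. The paper orders the lift as $\tilde A=[B\mid A]$ with the normal frame first, so both classes are read off from the last $m$ columns and your permutation step (2) is unnecessary. Your argument also shows that the injectivity of $\pi_{2m-1}(\GL(2m-1;\C))\to\pi_{2m-1}(W_{2m-1,m})$, which the paper invokes, is not needed for the descent.
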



\begin{proof}
    The latter correspondence is obvious from $c_m(\epsilon^{m - 1} \oplus TD^{2m} | \theta_{\phi_0}) = 0$.
    We show the former one.
    Let $\mathfrak{t} = (t_1, \dots, t_m)$ and $\mathfrak{e} = (e_1, \dots, e_{2m - 1})$ denote the standard complex frame on $M_0$ and $\C^{2m - 1}$, respectively. 
    We also choose a normal frame $\mathfrak{n} = (n_1, \dots, n_{m - 1})$ of $\phi$.
    On the one hand, the jet extension $j^1 \phi$ is regarded as a map 
    \[A \colon M_0 \to W_{2m - 1, m}\]
    along $\mathfrak{t}$ and $\mathfrak{e}$.
    (Hence, $W_{2m - 1, m}$ appears as the moduli of bundle homomorphisms from $TM_0$ to $T\C^{2m - 1}$.)
    We lift this map to a map
    \[\tilde{A} = [B | A] \colon M_0 \to W_{2m - 1, 2m - 1} = \GL(2m - 1; \C)\]
    along $\mathfrak{n}$.
    On the other hand, the frame $\theta_\phi$ of $\epsilon^{m - 1} \oplus TM_0$ was defined by the frame $\mathfrak{e}$ and the isomorphism
    \[\phi^* T\C^{2m - 1} \xto{\cong} \epsilon^{m - 1} \oplus TM_0.\]
    Notice that this isomorphism is represented by $\tilde{A}^{-1}$.
    Hence, the frame $\theta_\phi$ is regarded as the map
    \[\tilde{A}^{-1} \colon M_0 \to W_{2m - 1, 2m - 1} = \GL(2m - 1; \C),\]
    that is the pointwise inverse of the map $\tilde{A}$.
    The class $d_{c_m}(\theta_\phi, \theta_{\phi_0})$ was defined using the last $m$ vectors of $\theta_\phi$ (and $\theta_{\phi_0}$), i.e., the last $m$ columns of $\tilde{A}^{-1}$.
    (Hence, $W_{2m - 1, m}$ appears as the moduli of bundle homomorphisms from the subbundle of $T\C^{2m - 1}$ generated by $(e_m, \dots, e_{2m - 1})$ to $\epsilon^{m - 1} \oplus TM_0$.)

    Now, we compare the two classes $d_{A_1}(j^1 \phi, j^1 \phi_0)$ and $d_{c_m}(\theta_\phi, \theta_{\phi_0})$.
    They are regarded as the homotopy classes of maps $M_0 \to W_{2m - 1, m}$ and admit the lifts $\tilde{A}, \tilde{A}^{-1} \colon M_0 \to \GL(2m - 1; \C)$, respectively.
    Here, recall that the group structure of the homotopy group of a Lie group is expressed by the pointwise multiplication of two maps.
    Hence, the lifts $\tilde{A}, \tilde{A}^{-1}$ are negatives of each other as elements of $\pi_{2m - 1}(\GL(2m - 1; \C))$.
    This relationship descends to that as elements of $\pi_{2m - 1}(W_{2m - 1, m})$, since the standard projection $\GL(2m - 1; \C) \to W_{2m - 1, m}$ induces the injection
    \[\pi_{2m - 1}(\GL(2m - 1; \C)) \to \pi_{2m - 1}(W_{2m - 1, m}),\]
    which is regarded as the injection
    \[\Z \to \Z; \quad a \mapsto \pm (m - 1)! \cdot a.\]
    (The choice of this sign does not affect the relationship between the two classes being negatives of each other.)
    This completes the proof.
\end{proof}

\begin{remark}\label{remark:high-dim-cpx-Smale-inv}
    Only when $m = 2$, the complex Smale invariant $\Omega_\C(\Phi)$ is related to the Smale invariant of the immersion $\Phi|_{\partial \mathfrak{D}}^{\partial \mathfrak{B}} \colon S^3 \to S^5$ by
    \[\Omega_\C(\Phi) = -\Omega(\Phi|_{\partial \mathfrak{D}}^{\partial \mathfrak{B}}),\]
    under explicit choices of generators of $\pi_3(W_{3,2}) \cong \Z$ and $\pi_3(V_{5,3}) \cong \Z$~\cite{NP15}.
    When $m > 2$, the corresponding group $\pi_{2m - 1}(W_{2m - 1,m})$ is always isomorphic to $\Z$, while $\pi_{2m - 1}(V_{4m-3, 2m - 1})$ is isomorphic to $\Z_2 \oplus \Z_2$ if $m$ is odd and $\Z_4$ if $m$ is even~\cite{Pae56}. 
\end{remark}

\section{Determining correction terms I: Type \texorpdfstring{$A_1$}{A1} in dimensions \texorpdfstring{$m \le n$}{m-le-n}}\label{section:case-A1}

From \cref{section:case-A1,section:case-general-posi,section:case-general-nega}, we determine correction terms of relative Thom polynomials for $\K$-singularity types listed in \cref{table:list-sing} (see also \cref{subsection:list-sing-types}).
We will always consider the case where a prescribed source manifold is $M_0 = V \times [0, \epsilon]$, a target manifold is $N_0 = N = \R^n$.
Then a frame $\Theta$ on $N$ exists homotopically uniquely and extends to a homotopically unique frame on $N$. These frames can be chosen to be standard ones.

In this section, we apply the results in \cref{subsection:main-BO} to the simplest type $A_1 \subset J^K(m, n)$, where $m \le n$ and give an application.

\subsection{Case (i): real \texorpdfstring{$C^\infty$}{C-infty} case}\label{subsection:posi}

Let $m, n$ be integers with $1 \le m \le n$.
Recall that $A_1 \subset J^K(m, n)$ is of codimension $n - m + 1$ and its closure $\ol{A_1} = \ol{\Sigma^1}$ consists of jets whose $1$-jet part has rank less than $m$.
It is known that for any generic map $f \colon M \to N$, 
\[\Tp(A_1)(f) = w_{n - m + 1}(f) = w_{n - m + 1}(f^*TN - TM).\]
Indeed, the locus $\ol{A_1(f)}$ is the singular point set of $f$, which is precisely the primary obstruction for the virtual bundle $f^*TN - TM$ to be the genuine bundle of rank $n - m$.
In particular, 
\begin{align*}
    \Tp(A_1)(w_i, 0) 
    &= \bar{w}_{n - m + 1} \\
    &= \text{the } (n - m + 1) \text{-th part of } (1 + w_1 + w_2 + \cdots)^{-1} \\
    &= w_{n - m + 1} + \text{sum of products of lower degree terms}.
\end{align*}

\begin{theorem}\label{formula:A1-real}
{\it
    Let $\phi \colon S^{m - 1} \times [0, \epsilon] \imm \R^n$ be an immersion with trivial normal bundle.
    Then
    \[\alpha(A_1 | \phi) = 0\]
    in $H^{n - m}(S^{m - 1}; \Z_2)$.
}
\end{theorem}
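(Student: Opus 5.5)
The plan is to reduce to the standard inclusion via the comparison formula of \cref{theorem:correction-compare}. Let $\phi_0 \colon S^{m-1} \times [0, \epsilon] \into D^m \subset \R^n$ be the collar of the standard disk, with the standard normal framing and the standard frame $\Theta$ on $\R^n$.

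\textbf{Step 1: $\alpha(A_1 | \phi_0) = 0$.} The inclusion $f_0 \colon D^m \into \R^n$ is an extension datum of $\phi_0$ with $A_1(f_0) = \varnothing$. The frame $\theta_{\phi_0}$ of $\epsilon^{n-m} \oplus TM_0$ is the restriction of a global frame of $\epsilon^{n-m} \oplus TD^m$, namely the pull-back of $\Theta$ along $f_0$. Likewise $f_0^*\Theta$ is global. Hence all relative Stiefel--Whitney classes $w_i(\epsilon^{n-m} \oplus TD^m | \theta_{\phi_0})$ and $f_0^* w_j(T\R^n | \Theta)$ vanish in $H^*(D^m, M_0; \Z_2)$. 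By \cref{theorem:exp-imm} we then get $\connhom \alpha(A_1 | \phi_0) = 0$ in $H^{n-m+1}(D^m, S^{m-1}; \Z_2)$. Since $H^{n-m}(D^m; \Z_2) = 0$ for $n > m$, this $\connhom$ is injective. In the case $n = m$ we instead use that the composite $H^0(D^m) \to H^0(S^{m-1})$ is diagonal and argue directly with the class in $H^0$; alternatively we invoke \cref{theorem:inj-conn-hom} for the pair in $\BO(m) \times \BO(n)$ before pulling back. Either way, $\alpha(A_1 | \phi_0) = 0$.

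\textbf{Step 2: reduce to comparing two difference classes.} By \cref{theorem:correction-compare},
\[\alpha(A_1 | \phi) = d_{A_1}(j^K \phi, j^K \phi_0) - a(A_1) \cdot d_{w_q}(\theta_\phi, \theta_{\phi_0}), \qquad q = n - m + 1.\]
Here $a(A_1) = 1$, since $\Tp(A_1)(w_i, 0) = \bar{w}_q = w_q + (\text{decomposables})$. So it suffices to show
\[d_{A_1}(j^K\phi, j^K\phi_0) = d_{w_q}(\theta_\phi, \theta_{\phi_0}) \quad\text{in } H^{n-m}(S^{m-1}; \Z_2).\]
This is trivial unless $n - m \in \{0, m-1\}$.

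\textbf{Step 3: identify both classes through one map into $\GL(n;\R)$.} I would mimic the proof of \cref{theorem:opposite-sign-diff-and-frame}. First, $J^K(m,n) - \ol{A_1}$ deformation retracts onto the injective $1$-jets, which are homotopy equivalent to $V_{n,m}$. So $j^1\phi$ is a map $A \colon S^{m-1} \to V_{n,m}$. Using the trivial normal bundle, lift it to $\tilde{A} = [B|A] \colon S^{m-1} \to \GL(n;\R)$. The frame $\theta_\phi$ is then represented by $\tilde{A}^{-1}$, and $d_{w_q}$ is read off from its last $m$ columns. Thus both difference classes are images of $[\tilde{A}]$ (relative to $\phi_0$) under two maps $\GL(n) \to V_{n,m}$, one of which involves inversion.

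\textbf{Step 4 (the main obstacle): show the two images agree mod $2$.} Two issues arise. Inversion changes the sign of the homotopy class, which is harmless mod $2$. The real difficulty is that the two maps use the first $m$ columns of $\tilde{A}$ versus the last $m$ columns of $\tilde{A}^{-1}$; after reduction to $\O(n)$ the latter is the transpose of the last $m$ rows of $\tilde{A}$. Rather than chasing the map $\pi_{m-1}(\O(n)) \to \pi_{m-1}(V_{n,m})$, which need not be injective, I would detect both classes mod $2$ geometrically. Via \cref{theorem:another} and \cref{theorem:another-RCC-SW}, each difference class is a mod $2$ intersection count, over $S^{m-1} \times [0,1]$, of a homotopy of matrix-valued sections with the degeneracy cycle: $\ol{A_1}$ for $d_{A_1}$ and $\Sigma$ for $d_{w_q}$. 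The observation at the end of \cref{section:RCC} says that transposition carries $\Sigma$ to $A_1$ with the same refined fundamental class. Combining this with the facts that an $n \times n$ matrix is invertible if and only if its inverse/transpose is, and that $A$ has rank $m$ if and only if the complementary block of $\tilde{A}^{-1}$ does, I would check that a generic homotopy from $\tilde{A}$ to $\tilde{A}_0$ meets the two cycles in the same points mod $2$. For $m = n$ (so $q = 1$) the statement reduces to $\det \tilde{A}$ and $\det \tilde{A}^{-1}$ having the same sign, which is immediate.
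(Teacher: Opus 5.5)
Your overall route is the paper's. You compare $\phi$ with the standard $\phi_0$ via \cref{theorem:correction-compare} (with $a(A_1)=1$ mod $2$), kill $\alpha(A_1|\phi_0)$ using the non-singular extension $f_0\colon D^m\into\R^n$, and reduce to $d_{A_1}(j^1\phi,j^1\phi_0)=d_{w_q}(\theta_\phi,\theta_{\phi_0})$ mod $2$. The paper simply asserts this equality in the two non-trivial cases $n=2m-1$ and $n=m$. For $n>m$, your Step 1 justification of the injectivity of $\connhom$ via $H^{n-m}(D^m;\Z_2)=0$ is the one actually needed, since the vanishing is only obtained after pulling back to $(D^m,M_0)$.

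\textbf{The gap is in Step 4, where you go beyond the paper.}

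The ``real difficulty'' you identify does not exist. With your own convention $\tilde A=[B|A]$ from Step 3, $j^1\phi$ is the \emph{last} $m$ columns of $\tilde A$, not the first. The last $m$ vectors of $\theta_\phi$, which enter $d_{w_q}$, are the last $m$ columns of $\tilde A^{-1}$. So both classes come from the same projection $p\colon\GL(n;\R)\to V_{n,m}$. They are taken in the same fibre $\Hom(\R^m,\R^n)$, relative to the same degeneracy cycle (rank $<m$), and no transposition is involved. The transposition remark at the end of \cref{section:RCC} compares $\Hom(\R^m,\R^{m-q+1})$ with $\Hom(\R^{m-q+1},\R^m)$, i.e.\ the case $m\ge n$, and is irrelevant here.

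The geometric substitute you sketch also cannot be carried out as described. If a homotopy from $\tilde A$ to $\tilde A_0$ stayed in $\GL(n;\R)$, both difference classes would vanish. So in the interesting cases the homotopy must leave $\GL(n;\R)$, where $\tilde A^{-1}$ is undefined. Conversely, for invertible matrices both column blocks automatically have rank $m$, so the rank correspondence you invoke gives no control over where a homotopy meets the degeneracy cycle.

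The missing step is the one-line argument you already half-state. For $n=2m-1$,
$d_{A_1}(j^1\phi,j^1\phi_0)=p_*([\tilde A]-[\tilde A_0])$ and $d_{w_q}(\theta_\phi,\theta_{\phi_0})=p_*([\tilde A^{-1}]-[\tilde A_0^{-1}])$ in $\pi_{m-1}(V_{2m-1,m})$. Pointwise inversion is negation in $\pi_{m-1}(\GL(n;\R))$ and $p_*$ is a homomorphism, so $d_{w_q}=-d_{A_1}$. This gives equality mod $2$; sign and component bookkeeping is invisible mod $2$. No injectivity of $p_*$ is needed, only the homomorphism property, as in the descent step of \cref{theorem:opposite-sign-diff-and-frame}, of which this is the real analogue. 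For $n=m$ your determinant remark is fine.

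Finally, in Step 1 for $n=m$, your second alternative does not work. \cref{theorem:inj-conn-hom} cannot be applied ``before pulling back'', because you only know $\connhom\alpha(A_1|\phi_0)=0$ in $H^1(D^m,M_0;\Z_2)$. Moreover, in degree $q-1=0$ the universal $\connhom$ is not injective: it kills the constants, since $H^0(\B\G;\Z_2)\to H^0(M_0;\Z_2)$ is non-zero. Your first alternative is the right one. In both pairs the kernel of $\connhom$ on $H^0(M_0;\Z_2)$ is exactly the constants, so you get $\alpha(A_1|\phi_0)\equiv 0$ modulo $\Ker\connhom$. That is all \cref{maintheorem:BO-Z2-restated} determines in that degree.
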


This assertion is obvious when $n \ne m, 2m - 1$ by dimensional reason.

\begin{proof}
    Let $\phi_0 \colon S^{m - 1} \times [0, \epsilon] \into D^m \into \R^n$ denote the standard inclusion.
    By \cref{theorem:correction-compare}, 
    \[\alpha(A_1 | \phi) - \alpha(A_1 | \phi_0) = d_{A_1}(j^1 \phi, j^1 \phi_0) + d_{w_m}(\theta_\phi, \theta_{\phi_0}).\]
    We see that $\alpha(A_1 | \phi_0) = 0$ as follows.
    Let $f_0 \colon D^m \to \R^n$ denote the standard inclusion.
    Since $f_0$ is a non-singular extension of $\phi_0$, by \cref{theorem:exp-imm},
    \begin{align*}
        \connhom \alpha(A_1 | \phi_0) 
        &= [A_1(f_0)] - \Tp(A_1)(w_i(\epsilon^{n - m} \oplus TD^m | \theta_0), 0) \\
        &= 0 -\Tp(A_1)(0, 0) \\
        &= 0.
    \end{align*}
    Then \cref{theorem:inj-conn-hom} yields that $\alpha(A_1 | \phi_0) = 0$.

    When $n = 2m - 1$, 
    the classes $d_{A_1}(j^1 \phi, j^1 \phi_0)$, $d_{w_m}(\theta_\phi, \theta_{\phi_0})$ are defined in the mod $2$ reduction of the groups
    \[H^{m - 1}(S^{m - 1}; \{\pi_{m - 1}(J^1(m, 2m - 1) - \ol{A_1})\}) \cong H^{m - 1}(S^{m - 1}; \{\pi_{m - 1}(V_{2m - 1, m})\})\]
    and it holds that
    \[d_{A_1}(j^1 \phi, j^1 \phi_0) = d_{w_m}(\theta_\phi, \theta_{\phi_0}).\]
    Also when $n = m$, 
    the classes $d_{A_1}(j^1 \phi, j^1 \phi_0)$, $d_{w_1}(\theta_\phi, \theta_{\phi_0})$ are defined in the mod $2$ reduction of the groups
    \[H^0(S^{m - 1}; \{\pi_0(J^1(m, m) - \ol{A_1})\}) \cong H^0(S^{m - 1}; \{\pi_0(\O(m))\})\]
    and it holds that
    \[d_{A_1}(j^1 \phi, j^1 \phi_0) = d_{w_1}(\theta_\phi, \theta_{\phi_0}).\]
    This completes the proof.
\end{proof}

We should note that how restrictive the assumption that the normal bundle of $\phi$ is trivial is.
See the following two remarks.

\begin{remark}
    By Hirsch's lemma, for any immersion $\phi \colon S^{m - 1} \times [0, \epsilon] \imm \R^{2m - 1}$ with trivial normal bundle, there is an immersion $\iota \colon S^{m - 1} \imm \R^{2m - 2}$ with trivial normal bundle such that $\phi$ is regularly homotopic to the suspension $\iota \times \id_{[0, \epsilon]} \colon S^{m - 1} \times [0, \epsilon] \imm \R^{2m - 2} \times [0, \epsilon]$.
\end{remark}

\begin{remark}\label{remark:normal-bundle-triviality-A1-real}
    Let $\iota \colon S^{m - 1} \imm \R^{2m - 2}$ be an immersion.
    If $m = 2$, $4$, or $8$, then the normal bundle of $\iota$ is always trivial. Indeed, every orientable vector bundle over $S^{m - 1}$ of rank $m - 1$ is trivial in these cases.
    Otherwise, the normal bundle of $\iota$ is trivial if and only if $\iota$ is regularly homotopic to the standard inclusion.
    This fact can be seen as follows.
    When $m$ is even and $m \neq 2, 4, 8$, there is an immersion $S^{m - 1} \imm \R^{2m - 2}$ whose normal bundle is isomorphic to the non-trivial bundle $TS^{m - 1}$ (e.g., the Whitney sphere, which is an immersion with only one transverse double point). Such an immersion forms only one regular homotopy class which is not regularly homotopic to the standard inclusion.
    When $m$ is odd, the normal Euler class of $\iota$ is an integer and coincides with twice the algebraic number of multiple points of $\iota$ (this fact follows from Herbert's multiple-point formula, see \cref{theorem:HR-abs}).
    Since the algebraic number of multiple points of $\iota$ is equal to the Smale invariant of $\iota$, the fact follows.
\end{remark}

For later use, we focus on the case where $n = 2m - 1$ and also show the following incomplete form.

\begin{proposition}\label{formula:A1-real-benri}
    {\it
    Let $\iota \colon S^{m - 1} \to \R^{2m - 2}$ be an immersion with trivial normal bundle.
    Then
    \[\alpha(A_1 | \iota \times \id_{[0, \epsilon]}) = \Omega(\iota) + d_{w_m}(\theta_\iota, \theta_{\iota_0})\]
    in $H^{m - 1}(S^{m - 1}; \Z_2)$,
    where $\iota_0$ is the standard inclusion $S^{m - 1} \into \R^m \into \R^{2m - 2}$, and where $\theta_\iota$ (resp.~$\theta_{\iota_0}$) is the frame of $\epsilon^{m - 1} \oplus TS^{m - 1}$ induced by $\iota$ (resp.~$\iota_0$).
}
\end{proposition}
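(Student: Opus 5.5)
We apply \cref{theorem:correction-compare} to the two framable prescribed data $\phi = \iota \times \id_{[0, \epsilon]}$ and $\psi = \phi_0$, where $\phi_0 \colon S^{m-1} \times [0,\epsilon] \into D^m \into \R^{2m-1}$ is the standard inclusion. From the proof of \cref{formula:A1-real} we already have $\alpha(A_1 | \phi_0) = 0$. Since $n = 2m-1$ gives $q = m$, and $a(A_1) = 1$ modulo $2$, we get
\[\alpha(A_1 | \iota \times \id_{[0,\epsilon]}) = d_{A_1}(j^1(\iota \times \id), j^1 \phi_0) + d_{w_m}(\theta_{\iota \times \id}, \theta_{\phi_0})\]
in $H^{m-1}(S^{m-1}; \Z_2)$. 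It remains to identify the two terms on the right with $\Omega(\iota)$ and $d_{w_m}(\theta_\iota, \theta_{\iota_0})$, respectively.

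The key step is to note that $\phi_0$ is regularly homotopic to the suspension $\iota_0 \times \id_{[0,\epsilon]}$ of the standard inclusion $\iota_0 \colon S^{m-1} \into \R^m \into \R^{2m-2}$. Both are framable and have trivial normal bundle. By \cref{corollary:correction-reg-htpy-inv}, together with the additivity of difference classes, we may replace $\phi_0$ by $\iota_0 \times \id$ in both difference classes. This is justified because a regular homotopy makes $d_{A_1}(j^1\phi_0, j^1(\iota_0\times\id))$ and $d_{w_m}(\theta_{\phi_0},\theta_{\iota_0\times\id})$ vanish, as in the proof of that corollary.

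Next we compare the suspended data with the unsuspended data via the natural stabilization. The $1$-jet of $\iota \times \id$ along $S^{m-1} \times \{0\}$ is the $1$-jet of $\iota$ block-summed with the identity on the last coordinate. Under the stabilization map $V_{2m-2, m-1} \to V_{2m-1, m}$, which is the map on $J^1 - \ol{A_1}$, the difference class $d_{A_1}(j^1(\iota\times\id), j^1(\iota_0\times\id))$ is therefore the image of $d_{A_1}(j^1\iota, j^1\iota_0) = \Omega(\iota)$, reduced mod $2$. The stabilization is $\pi_{m-1}(V_{2m-2,m-1}) \to \pi_{m-1}(V_{2m-1,m})$, and mod $2$ it induces an isomorphism. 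We would check this from the fibration $V_{2m-2,m-1} \to V_{2m-1,m} \to S^{2m-2}$, which gives an isomorphism on $\pi_{m-1}$ for $m \ge 2$. Via this identification we write the first term as $\Omega(\iota)$.

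Similarly, the frame $\theta_{\iota \times \id}$ of $\epsilon^{m-1} \oplus TM_0$ is obtained from the frame $\theta_\iota$ of $\epsilon^{m-1} \oplus TS^{m-1}$ by adjoining the $[0,\epsilon]$-direction. Here $\theta_\iota$ comes from $\phi^*T\R^{2m-1} = \iota^*T\R^{2m-2} \oplus \epsilon$, and the extra vector is matched with the collar direction. Hence $d_{w_m}(\theta_{\iota\times\id},\theta_{\iota_0\times\id}) = d_{w_m}(\theta_\iota,\theta_{\iota_0})$ under the analogous stabilization of Stiefel manifolds, by \cref{theorem:naturality-diff}.

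The main obstacle is the bookkeeping in these stabilization identifications. One must verify that the position of the adjoined vector is consistent with the convention that $w_m$ uses the last components, and that the normal trivialization of $\iota \times \id$ is the suspension of that of $\iota$. Mod $2$ these ordering and sign issues should be harmless, so the plan is to check them by writing explicit block matrices, in the manner of the proof of \cref{theorem:opposite-sign-diff-and-frame}.
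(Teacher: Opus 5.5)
Your proposal is correct and follows essentially the same route as the paper: apply the comparison theorem against a standard datum whose correction term vanishes, then identify the two difference classes with $\Omega(\iota)$ and $d_{w_m}(\theta_\iota, \theta_{\iota_0})$ via the suspension/stabilization $V_{2m-2,m-1} \to V_{2m-1,m}$. Your extra regular-homotopy step, replacing the collar inclusion by $\iota_0 \times \id_{[0,\epsilon]}$, only spells out what the paper takes for granted when it says $\alpha(A_1 | \phi_0) = 0$ was already seen. One small correction: for $m = 2$ the fibration gives only a surjection $\Z \cong \pi_1(V_{2,1}) \to \pi_1(V_{3,2}) \cong \Z_2$ (because $\pi_2(S^2) \neq 0$), not an integral isomorphism, but this still yields the mod $2$ isomorphism you actually need.
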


\begin{proof}
    Put $\phi = \iota \times \id_{[0, \epsilon]}$ and $\phi_0 = \iota_0 \times \id_{[0, \epsilon]}$.
    Notice that their normal bundles are trivial by assumption.
    By \cref{theorem:correction-compare}, 
    \[\alpha(A_1 | \phi) - \alpha(A_1 | \phi_0) = d_{A_1}(j^1 \phi, j^1 \phi_0) + d_{w_m}(\theta_\phi, \theta_{\phi_0}).\]
    We have already seen that $\alpha(A_1 | \phi_0) = 0$.
    We see that
    \[d_{A_1}(j^1 \phi, j^1 \phi_0) = d_{A_1}(j^1 \iota, j^1 \iota_0) = \Omega(\iota)\]
    as follows.
    Recall that 
    \begin{align*}
        d_{A_1}(j^1 \phi, j^1 \phi_0) &\in H^{m - 1}(S^{m - 1} \times [0, \epsilon]; \pi_{m - 1}(J^1(m, 2m - 1) - \ol{A_1})), \\
        d_{A_1}(j^1 \iota, j^1 \iota_0) &\in H^{m - 1}(S^{m - 1} \times [0, \epsilon]; \pi_{m - 1}(V_{2m - 2,m - 1})).
    \end{align*}
    These classes were regarded as classes in the same cohomology via the isomorphisms
    \begin{align*}
        \pi_{m - 1}(J^1(m, 2m - 1) - \ol{A_1}) 
        &\cong \pi_{m - 1}(V_{2m - 1, m}) \\
        &\cong \pi_{m - 1}(V_{2m - 2,m - 1})
    \end{align*}
    Here, the second isomorphism is induced by the suspension
    \[\Imm[S^{m - 1}, \R^{2m - 2}] \to \Imm \left[ S^{m - 1} \times [0, \epsilon], \R^{2m - 2} \times [0, \epsilon] \right]; \quad \iota \mapsto \iota \times \id_{[0, \epsilon]}.\]
    It is clear that this isomorphism maps $d_{A_1}(j^1 \iota, j^1 \iota_0)$ to $d_{A_1}(j^1 \phi, j^1 \phi_0)$.
    In the same way, it is shown that $d_{w_m}(\theta_\phi, \theta_{\phi_0}) = d_{w_m}(\theta_\iota, \theta_{\iota_0})$.
    This completes the proof.
\end{proof}

\subsection{Case (ii): complex analytic case}\label{subsection:case-complex-1}

Again let $m, n$ be integers with $1 \le m \le n$, and consider the $\K$-singularity type $A_1 \subset J^K(m, n)$ in the complex jet space.
Recall that Thom polynomials of complex singularity types are polynomials in Chern classes.
It is known that for any generic holomorphic map $f \colon M \to N$, 
\[\Tp(A_1) = c_{n - m + 1}(f),\]
in particular,
\begin{align*}
    \Tp(A_1)(c_i, 0) 
    &= \bar{c}_{n - m + 1} \\
    &= -c_{n - m + 1} + \text{sum of products of lower degree terms}.
\end{align*}
We also consider the case where $n = 2m - 1$.

\begin{theorem}\label{formula:A1-cpx}
{\it
    Let $\phi \colon S^{2m - 1} \times [0, \epsilon] \imm \C^{2m - 1}$ be an immersion with trivial normal bundle which is holomorphic with respect to a nearly standard complex structure on $S^{2m - 1} \times [0, \epsilon]$.
    Then
    \[\alpha(A_1 | \phi) = 0\]
    in $H^{2m - 1}(S^{2m - 1}; \Z)$.
}
\end{theorem}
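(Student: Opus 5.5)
The plan is to follow the proof of \cref{formula:A1-real}, now in the complex analytic setting, using \cref{theorem:correction-compare-cpx} together with \cref{remark:almost-cpx-structure}. The latter lets us compare $\phi$ with the standard inclusion $\phi_0 \colon S^{2m-1} \times [0, \epsilon] \into D^{2m} \into \C^{2m-1}$, even though the two maps are holomorphic for different, but homotopic, almost complex structures. Here $\eta = A_1 \subset J^1(m, 2m-1)$ has complex codimension $q = m$. Since $\Tp(A_1)(c_i, 0) = \bar{c}_m = -c_m + (\text{lower products})$, the coefficient of $c_m$ is $a(A_1) = -1$. \cref{theorem:correction-compare-cpx} therefore gives
\[\alpha(A_1 | \phi) - \alpha(A_1 | \phi_0) = d_{A_1}(j^1\phi, j^1\phi_0) + d_{c_m}(\theta_\phi, \theta_{\phi_0})\]
in $H^{2m-1}(S^{2m-1}; \Z)$.

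First I show $\alpha(A_1 | \phi_0) = 0$, exactly as in the real case. The inclusion $f_0 \colon D^{2m} \into \C^{2m-1}$ is a non-singular complex extension datum of $\phi_0$. Its induced frame $\theta_0$ of $\epsilon^{m-1} \oplus TD^{2m}$ extends over the whole disk, and the frame $\Theta$ of $\C^{2m-1}$ is global. Hence all relative Chern classes in positive degree vanish (they are obstructions to extending frames that do extend). By \cref{maintheorem:BU-Z-restated} we get $\connhom \alpha(A_1 | \phi_0) = [A_1(f_0)] - \Tp(A_1)(0,0) = 0$. Injectivity of $\connhom$ (the complex analogue of \cref{theorem:inj-conn-hom}) then gives $\alpha(A_1|\phi_0) = 0$.

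It remains to show $d_{A_1}(j^1\phi, j^1\phi_0) = -d_{c_m}(\theta_\phi, \theta_{\phi_0})$. This is exactly \cref{theorem:opposite-sign-diff-and-frame}, since $d_{A_1}(j^1\phi, j^1\phi_0) = \Omega_\C(\phi)$ by \cref{definition:cpx-Smale-inv}. Both classes live in $H^{2m-1}(S^{2m-1}; \pi_{2m-1}(W_{2m-1,m})) \cong \Z$, and the coefficient systems are identified via $J^1(m,2m-1) - \ol{A_1} \simeq W_{2m-1,m}$. The sum therefore vanishes, and $\alpha(A_1|\phi) = 0$.

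The main obstacle is making sure the identifications are consistent. The difference class $d_{A_1}$ and the frame difference $d_{c_m}$ must be read in the same coefficient group $\pi_{2m-1}(W_{2m-1,m})$, with the same generator conventions as in \cref{theorem:opposite-sign-diff-and-frame}. The signs must then actually cancel: $a(A_1) = -1$ must match the minus sign in that lemma rather than double it. I would check this carefully, keeping track that $d_{c_m}$ is the difference class of the last $m$ vectors of the frames. I would also verify that the homotopy of almost complex structures (nearly standard) transports $A_1(M_0, N_0)$ compatibly, as stated in \cref{remark:almost-cpx-structure}.
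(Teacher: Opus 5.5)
Your proposal is correct and is essentially the paper's proof: apply \cref{theorem:correction-compare-cpx} via \cref{remark:almost-cpx-structure} with $a(A_1)=-1$, cancel $\Omega_\C(\phi)+d_{c_m}(\theta_\phi,\theta_{\phi_0})$ using \cref{theorem:opposite-sign-diff-and-frame}, and show $\alpha(A_1|\phi_0)=0$ using the non-singular extension $f_0$. One small precision: the identity $\connhom\alpha(A_1|\phi_0)=0$ that you obtain lives in $H^{2m}(D^{2m}, S^{2m-1};\Z)$, so the injectivity you actually need is that of $\connhom \colon H^{2m-1}(S^{2m-1};\Z)\to H^{2m}(D^{2m},S^{2m-1};\Z)$, which holds because $H^{2m-1}(D^{2m};\Z)=0$; the universal injectivity into $H^{2m}(\BU(m)\times\BU(2m-1), M_0;\Z)$ is not what is needed here.
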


\begin{proof}
    We may assume that the degree of jets is $K = 1$ since the type $A_1$ is determined only by $1$-jet part.
    By the assumption that the source of $\phi$ is nearly standard and \cref{remark:almost-cpx-structure}, we can apply \cref{theorem:correction-compare-cpx} to $\phi$ and $\phi_0$.
    Then we have
    \begin{align*}
        \alpha(A_1 | \phi) - \alpha(A_1 | \phi_0)
        &= d_{A_1}(j^K \phi, j^K \phi_0) - (- d_{c_m}(\theta_\phi, \theta_{\phi_0})) \\
        &= \Omega_\C(\phi) + d_{c_m}(\theta_\phi, \theta_{\phi_0}) \\
        &= 0.
    \end{align*}
    The last equality is by \cref{theorem:opposite-sign-diff-and-frame}.

    On the other hand, let $f_0 \colon D^{2m} \into \C^m \into \C^{2m - 1}$ denote the standard inclusion.
    Then, by \cref{maintheorem:BU-Z-restated}, we have
    \begin{align*}
        \connhom \alpha(A_1 | \phi_0) 
        &= [A_1(f_0)] - \Tp(A_1)(c_i(\epsilon^{m - 1} \oplus TD^{2m} | \theta_0), 0) \\
        &= 0 - \Tp(A_1)(0, 0) \\
        &= 0.
    \end{align*}
    Hence, $\alpha(A_1 | \phi_0) = 0$ and this completes the proof.
\end{proof}

\begin{remark}\label{remark:normal-bundle-triviality-cpx}
    Let $\phi \colon M_0 = S^{2m - 1} \times [0, \epsilon] \imm \C^{2m - 1}$ be an immersion which is holomorphic with respect to a nearly standard complex structure.
    Then the following are equivalent:
    \begin{enumerate}
        \item the normal bundle of $\phi$ is trivial as a complex vector bundle;
        \item the differential $d\phi \colon M_0 \to W_{2m - 1, m}$ lifts to a map $M_0 \to \GL(2m - 1; \C)$;
        \item $\Omega_\C(\phi)$ is a multiple of $(m - 1)!$
    \end{enumerate}
    (see the proof of \cref{theorem:opposite-sign-diff-and-frame}).
\end{remark}

The following incomplete form will fit later use.

\begin{proposition}\label{formula:A1-cpx-benri}
{\it
    Let $\phi \colon S^{2m - 1} \times [0, \epsilon] \imm \C^{2m - 1}$ be an immersion with trivial normal bundle which is holomorphic with respect to a nearly standard complex structure on $S^{2m - 1} \times [0, \epsilon]$.
    Then
    \[\alpha(A_1 | \phi) = \Omega_\C(\phi) + d_{c_m}(\theta_\phi, \theta_{\phi_0})\]
    in $H^{2m - 1}(S^{2m - 1}; \Z)$.
}
\end{proposition}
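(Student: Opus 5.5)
We will imitate the first half of the proof of \cref{formula:A1-cpx}, stopping before \cref{theorem:opposite-sign-diff-and-frame} is used. We work with $K = 1$, which is allowed because $A_1$ depends only on the $1$-jet. Let $\phi_0 \colon S^{2m-1} \times [0, \epsilon] \into D^{2m} \into \C^{2m-1}$ be the standard inclusion. By definition $\phi$ is holomorphic for a nearly standard complex structure, so by \cref{remark:almost-cpx-structure} the comparison \cref{theorem:correction-compare-cpx} applies to the pair $(\phi, \phi_0)$ even though their complex structures may differ. It gives
\[\alpha(A_1 | \phi) - \alpha(A_1 | \phi_0) = d_{A_1}(j^1 \phi, j^1 \phi_0) - a(A_1) \cdot d_{c_m}(\theta_\phi, \theta_{\phi_0})\]
in $H^{2m-1}(S^{2m-1}; \Z)$. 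Here $q = n - m + 1 = m$.

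Next we identify the three terms on the right.

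\begin{itemize}
\item By \cref{definition:cpx-Smale-inv}, the first difference class is exactly $\Omega_\C(\phi)$. This uses the same homotopy of almost complex structures used to define $\Omega_\C$.
\item The coefficient $a(A_1)$ of $c_m$ in $\Tp(A_1)(c_i, c'_j)$ is read from $\Tp(A_1)(c_i, 0) = \bar{c}_m = -c_m + (\text{decomposables})$. This gives $a(A_1) = -1$, so the second term becomes $+d_{c_m}(\theta_\phi, \theta_{\phi_0})$.
\item We show $\alpha(A_1 | \phi_0) = 0$ exactly as in \cref{formula:A1-cpx}. The standard inclusion $f_0 \colon D^{2m} \to \C^{2m-1}$ is a nonsingular extension of $\phi_0$, and the frame $\theta_0$ extends over $D^{2m}$. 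Hence all relative Chern classes vanish, and \cref{maintheorem:BU-Z-restated} gives $\connhom \alpha(A_1 | \phi_0) = 0$. Injectivity of $\connhom$ (the complex analogue of \cref{theorem:inj-conn-hom}) then yields $\alpha(A_1 | \phi_0) = 0$.
\end{itemize}

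Combining these identifications gives the stated formula.

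The only delicate point is to check that the coefficient systems agree. The class $d_{A_1}$ lives in $\pi_{2m-1}(J^1(m, 2m-1) - \ol{A_1}) \cong \pi_{2m-1}(W_{2m-1,m})$, and $d_{c_m}$ lives in $\pi_{2m-1}(W_{2m-1,m})$. Both must be identified with $\Z$ compatibly with the identification of $\pi_{2m-1}(W_{2m-1,m})$ with the coefficients of the relative Thom polynomial. We expect this to follow from \cref{theorem:another} and the generator conventions fixed before \cref{theorem:opposite-sign-diff-and-frame}. Consistency of those conventions is the main thing to verify, but the argument requires no new ideas.
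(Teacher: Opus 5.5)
Your proof is correct and follows the route the paper intends. The paper states this proposition without proof, but the intended argument is exactly the first two equalities in the proof of \cref{formula:A1-cpx}: compare $\phi$ with $\phi_0$ via \cref{theorem:correction-compare-cpx} and \cref{remark:almost-cpx-structure}, use $a(A_1) = -1$ and $\alpha(A_1 | \phi_0) = 0$, and stop before \cref{theorem:opposite-sign-diff-and-frame} is invoked.

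The coefficient identification you flag as delicate needs no separate verification. $\Omega_\C(\phi)$ is by definition the same class $d_{A_1}(j^1 \phi, j^1 \phi_0)$ that appears in \cref{theorem:correction-compare-cpx}, with the same identification of its coefficients with $\Z$. Generator conventions only become relevant in \cref{theorem:opposite-sign-diff-and-frame}, where $\Omega_\C(\phi)$ and $d_{c_m}(\theta_\phi, \theta_{\phi_0})$ are compared with each other.
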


\subsection{Relevance to N\'emethi--Pint\'er's formula}\label{subsection:relationship-to-NP15}

\cref{formula:A1-real,formula:A1-cpx} partially recover and generalize N\'emethi--Pint\'er's Smale invariant formulas for immersions associated with $\A$-finite map-germs.

Let $\Phi \colon (\R^m, 0) \to (\R^{2m - 1}, 0)$ be a map-germ which is singular only at the origin.
Take a sufficiently small ball $\mathfrak{B} \subset \R^{2m - 1}$ centered at the origin so that $\mathfrak{D} = \Phi^{-1}(\mathfrak{B})$ is diffeomorphic to the $m$-disk.
Then one has the immersion $\Phi|_{\partial \mathfrak{D}}^{\partial \mathfrak{B}} \colon \partial \mathfrak{D} = S^{m - 1} \imm \partial \mathfrak{B} = S^{2m - 2}$ and its Smale invariant 
\[\Omega(\Phi|_{\partial \mathfrak{D}}^{\partial \mathfrak{B}}) \in \pi_{m - 1}(V_{2m - 2, m - 1}) \cong 
\begin{cases}
    \Z_2 & (m > 2 \text{ even}), \\
    \Z & (m \text{ is odd or } m = 2).
\end{cases}
\]
We show the following.

\begin{theorem}\label{theorem:NP15-real}
{\it 
    Let $\Phi \colon (\R^m, 0) \to (\R^{2m - 1}, 0)$ be a map-germ which is $\A$-finite and singular only at the origin. 
    If the normal bundle of $\Phi|_{\partial \mathfrak{D}}^{\partial \mathfrak{B}}$ is trivial, then
    \[C(\Phi) = \Omega(\Phi|_{\partial \mathfrak{D}}^{\partial \mathfrak{B}}) \bmod 2 \in \Z_2,\]
    where $C(\Phi)$ is the number of $A_1$-points of a generic perturbation of $\Phi$.
}
\end{theorem}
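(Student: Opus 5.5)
The plan is to realize a generic perturbation of $\Phi$ as an extension datum of a framable prescribed datum, and then read off $C(\Phi)$ from \cref{theorem:exp-imm} using \cref{formula:A1-real} (or \cref{formula:A1-real-benri}). Take a stabilization $f \colon \mathfrak{D} \to \mathfrak{B}$ of $\Phi$, i.e.\ a generic perturbation supported in the interior. Then $f$ agrees with $\Phi$ near $\partial \mathfrak{D}$, is an immersion there, and has finitely many $A_1$-points (these are isolated, since $\operatorname{codim} A_1 = n - m + 1 = m$). A collar of $\partial \mathfrak{D}$ mapped by $\Phi$ into a collar of $\partial \mathfrak{B}$ is, up to regular homotopy and after identifying the collar of $\partial\mathfrak{B} \cong S^{2m-2}$ with a region of $\R^{2m-1}$ by removing a point, the suspension $\phi = \iota \times \id_{[0,\epsilon]}$ of $\iota = \Phi|_{\partial \mathfrak{D}}^{\partial \mathfrak{B}}$. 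Because $\Phi^{-1}(\partial\mathfrak{B}) = \partial\mathfrak{D}$ and $\Phi$ is transverse to the spheres near the boundary, we may take $M_0$ to be a collar of $\partial\mathfrak D$ and $N_0 = N = \mathfrak B \subset \R^{2m-1}$. The hypothesis that the normal bundle of $\iota$ is trivial makes $\phi$ framable.

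Next, apply \cref{theorem:exp-imm} with the standard frame $\Theta$ on $\R^{2m-1}$:
\[[A_1(f)] = \Tp(A_1)\big(w_i(\epsilon^{m-1}\oplus T\mathfrak D | \theta), f^*w_j(T\R^{2m-1}|\Theta)\big) + \connhom\alpha(A_1|\phi)\]
in $H^m(\mathfrak D,\partial\text{-collar};\Z_2)\cong\Z_2$. Since $\Theta$ extends over all of $\mathfrak B$, the second-argument classes vanish. In the first argument, $\mathfrak D$ is a disk, so all relative classes of degree $<m$ vanish, since $H^i(D^m,S^{m-1})=0$ for $0<i<m$. Products of lower-degree terms therefore drop out, and the first term reduces to $w_m(\epsilon^{m-1}\oplus TD^m|\theta_\phi)$, which equals $\connhom d_{w_m}(\theta_\phi,\theta_{\phi_0})$ because the standard frame $\theta_{\phi_0}$ extends over $D^m$. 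Evaluating on $[D^m,\partial D^m]$ gives
\[C(\Phi) \equiv \#A_1(f) = d_{w_m}(\theta_\iota,\theta_{\iota_0}) + \alpha(A_1|\phi) \pmod 2.\]

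Now substitute \cref{formula:A1-real-benri}, $\alpha(A_1|\phi) = \Omega(\iota) + d_{w_m}(\theta_\iota,\theta_{\iota_0})$. The $d_{w_m}$ terms cancel mod $2$, which yields $C(\Phi)\equiv\Omega(\iota)\bmod 2$. This is consistent with \cref{formula:A1-real}, which gives $\alpha=0$, i.e.\ $\Omega(\iota) = d_{w_m}(\theta_\iota,\theta_{\iota_0})$ mod $2$. We also use that the mod $2$ count of $A_1$-points of any stabilization is $C(\Phi)$, which holds since $\Phi$ is $\A$-finite.

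The main obstacle is the first step: making the identification of the boundary collar datum precise. One must check that restricting $\Phi$ to a collar of $\partial\mathfrak D$ into $\mathfrak B$ is regularly homotopic, as a framed immersion into a parallelizable target, to the suspension of the sphere immersion $\iota \colon S^{m-1}\imm S^{2m-2}$ viewed in $\R^{2m-2}\times[0,\epsilon]$. This uses the radial structure near $\partial\mathfrak B$ and the isomorphism $\Imm[S^{m-1},S^{2m-2}]\cong\Imm[S^{m-1},\R^{2m-2}]$. The correction term is a regular homotopy invariant by \cref{corollary:correction-reg-htpy-inv}, and the $d_{w_m}$ terms cancel anyway. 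So I would first try a radial isotopy of $\mathfrak B$ minus a point onto $\R^{2m-2}\times[0,\epsilon]$ near the sphere, checking that the induced frames agree up to homotopy.
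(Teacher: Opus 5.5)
Your proposal is correct and follows the paper's proof essentially step for step. The paper also takes a generic perturbation $f$ on $\mathfrak{D}$ and, after removing a point from $\partial\mathfrak{B}$, views it as an extension datum of $\phi = \iota \times \id_{[0,\epsilon]}$; it then combines \cref{theorem:exp-imm} with \cref{formula:A1-real-benri} and uses $\Tp(A_1)(w_i(\epsilon^{m-1}\oplus T\mathfrak{D}|\theta),0) = w_m(\epsilon^{m-1}\oplus T\mathfrak{D}|\theta) = \connhom d_{w_m}(\theta_\iota,\theta_{\iota_0})$, so that the two $d_{w_m}$ terms cancel mod $2$, while your extra remarks (vanishing of the lower-degree relative classes on the disk, and the regular-homotopy identification of the boundary collar with the suspension) only make explicit points the paper passes over silently.
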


\begin{remark}
    The number $C(\Phi)$ is finite by the assumption that $\Phi$ is $\A$-finite.
\end{remark}

\begin{remark}
    \cref{theorem:NP15-real} partially recovers N\'emethi--Pint\'er's formula \cite[Proposition 11.1.1]{NP15} (\cref{remark:normal-bundle-triviality-A1-real} explains in what sense this is partial).
    Their formula is valid without the assumption that the normal bundle of $\Phi|_{\partial \mathfrak{D}}^{\partial \mathfrak{B}}$ is trivial, and is also valid as an equality of integers when $m$ is odd or $m = 2$.
    Their proof focuses on the geometric relationship between $A_1$-points (Whitney umbrella) and self-intersections of a generic perturbation of $\Phi$, and uses the coincidence of the Smale invariant with the self-intersection number for $\Phi|_{\partial \mathfrak{D}}^{\partial \mathfrak{B}}$.
    On the other hand, our following proof is based on obstruction theoretic argument.
\end{remark}

\begin{proof}[Proof of \cref{theorem:NP15-real}]
    Take a sufficiently small ball $\mathfrak{B} \subset \R^{2m - 1}$ centered at the origin so that $\mathfrak{D} = \Phi^{-1}(\mathfrak{B})$ is diffeomorphic to the $m$-disk.
    After that, take a generic perturbation of $\Phi$ on $\mathfrak{D}$ which does not change the regular homotopy class of the boundary immersion.
    We also remove one point from $\partial \mathfrak{B}$.
    Then let $f \colon \mathfrak{D} \to \R^{2m - 1}$ denote the resulting map.
    Furthermore, put $\iota = \Phi|_{\partial \mathfrak{D}}^{\partial \mathfrak{B}}$ and $\phi = \Phi|_{\partial \mathfrak{D}}^{\partial \mathfrak{B}} \times \id_{[0, \epsilon]}$.
    Note that $f$ is an extension datum of the immersion $\phi$.
    
    By \cref{formula:A1-real-benri,theorem:exp-imm}, we have
    \begin{align*}
        [A_1(f)] 
        &= \Tp(A_1)(w_i(\epsilon^{m - 1} \oplus T\mathfrak{D} | \theta), 0) + \connhom \Omega(\iota) + \connhom d_{w_m}(\theta_\iota, \theta_{\iota_0})
    \end{align*}
    in $H^m(\mathfrak{D}, \partial \mathfrak{D}; \Z_2)$.
    We also have
    \begin{align*}
        \Tp(A_1)(w_i(\epsilon^{m - 1} \oplus T\mathfrak{D} | \theta), 0)
        &= w_m(\epsilon^{m - 1} \oplus T\mathfrak{D} | \theta) \\
        &= \connhom d_{w_m}(\theta_\phi, \theta_{\phi_0}) \\
        &= \connhom d_{w_m}(\theta_\iota, \theta_{\iota_0}).
    \end{align*}
    Therefore, 
    $[A_1(f)] = \connhom \Omega(\Phi|_{\partial \mathfrak{D}}^{\partial \mathfrak{B}}) \bmod 2$ and this completes the proof.
\end{proof}

Next, we consider the complex analytic case.
Let $\Phi \colon (\C^m, 0) \to (\C^{2m - 1}, 0)$ be a holomorphic map-germ which is singular only at the origin.
Take a sufficiently small ball $B \subset \C^{2m - 1}$ centered at the origin so that $\mathfrak{D} = \Phi^{-1}(B)$ is diffeomorphic to the unit $2m$-disk $D^{2m}$.
Since this diffeomorphism can be obtained by an isotopy in $\C^m$, a collar neighborhood $\nu(\partial \mathfrak{D})$ of $\partial \mathfrak{D}$ is nearly standard.
Then we have the complex Smale invariant of $\Phi|_{\nu(\partial \mathfrak{D})}$: 
\[\Omega_\C(\Phi|_{\nu(\partial \mathfrak{D})}) \in H^{2m - 1}(\partial \mathfrak{D}; \pi_{2m - 1}(W_{2m - 1, m})) \cong \Z.\]

\begin{theorem}\label{theorem:NP15}
{\it
    Let $\Phi \colon (\C^m, 0) \to (\C^{2m - 1}, 0)$ be a holomorphic map-germ which is singular only at the origin.
    If the normal bundle of $\Phi|_{\nu(\partial \mathfrak{D})}$ is trivial, then
    \[C(\Phi) = \Omega_\C(\Phi|_{\nu(\partial \mathfrak{D})}) \in \Z,\]
    where $C(\Phi)$ is the number of $A_1$-points of a generic holomorphic perturbation of $\Phi$.
}
\end{theorem}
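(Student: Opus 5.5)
The plan is to mirror the proof of \cref{theorem:NP15-real}, now in the complex analytic category, with the incomplete form \cref{formula:A1-cpx-benri} and the universality statement of \cref{maintheorem:BU-Z-restated} in place of their real counterparts. First take the ball $B$ small enough that $\mathfrak{D} = \Phi^{-1}(B)$ is diffeomorphic to $D^{2m}$ with nearly standard collar $\nu(\partial \mathfrak{D})$. Then take a generic holomorphic perturbation $f$ of $\Phi$ on $\mathfrak{D}$, small enough that $\phi = f|_{\nu(\partial\mathfrak{D})}$ is an immersion holomorphically regularly homotopic to $\Phi|_{\nu(\partial \mathfrak{D})}$ through immersions. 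This should give
\[\Omega_\C(\phi) = \Omega_\C(\Phi|_{\nu(\partial \mathfrak{D})}),\]
and the triviality of the normal bundle persists. Thus $f \colon \mathfrak{D} \to \C^{2m-1}$ is a complex extension datum of the framable complex prescribed datum $\phi$, with the standard frame $\Theta$ on $\C^{2m-1}$.

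Applying \cref{maintheorem:BU-Z-restated} to $f$, and using that $f^*c_j(T\C^{2m-1} | \Theta) = 0$, gives
\[[A_1(f)] = \Tp(A_1)\big(c_i(\epsilon^{m-1} \oplus T\mathfrak{D} | \theta_\phi), 0\big) + \connhom \alpha(A_1 | \phi)\]
in $H^{2m}(\mathfrak{D}, \partial \mathfrak{D}; \Z)$. The first term has degree $2m$, and $\Tp(A_1)(c_i, 0) = \bar c_m = -c_m + (\text{products of lower } c_i)$. For the products, the lower relative classes $c_i(\epsilon^{m-1}\oplus T\mathfrak{D} | \theta_\phi)$ with $i<m$ live in $H^{2i}(D^{2m}, S^{2m-1}; \Z) = 0$. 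So the substitution reduces to $-c_m(\epsilon^{m-1} \oplus T\mathfrak{D} | \theta_\phi)$. Since $c_m(\cdot | \theta_{\phi_0}) = 0$, \cref{theorem:diff-SW} gives
\[-c_m(\epsilon^{m-1} \oplus T\mathfrak{D} | \theta_\phi) = -\connhom d_{c_m}(\theta_\phi, \theta_{\phi_0}).\]
Here one must be careful that the relative classes $c_i$ with $i<m$ are even defined, since $\theta_\phi$ is a full frame.

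Next substitute \cref{formula:A1-cpx-benri}, $\alpha(A_1|\phi) = \Omega_\C(\phi) + d_{c_m}(\theta_\phi,\theta_{\phi_0})$. The $d_{c_m}$ terms cancel, leaving
\[[A_1(f)] = \connhom \Omega_\C(\phi).\]
Alternatively, \cref{formula:A1-cpx} (so $\alpha = 0$) together with \cref{theorem:opposite-sign-diff-and-frame} gives the same result. Evaluating on $[\mathfrak{D}, \partial\mathfrak{D}]$, the left side is $\# A_1(f)$. For holomorphic maps each $A_1$-point counts with sign $+1$ by complex orientation, so $\# A_1(f) = C(\Phi)$. The right side is $\Omega_\C(\phi)$ under $H^{2m-1}(S^{2m-1};\Z) \cong \Z$, with the identifications matching those in \cref{definition:cpx-Smale-inv}.

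The main obstacle I expect is the bookkeeping of identifications and signs. One must check that the coefficient system $\pi_{2m-1}(W_{2m-1,m})$, used for $\Omega_\C$ and $d_{c_m}$, is identified with $\Z$ compatibly with the $\Z$ in which $\Tp(A_1)$ and $c_m$ live. One must also check that the holomorphic perturbation keeps the boundary datum within the nearly standard class, so that \cref{remark:almost-cpx-structure} applies. I would resolve the sign issue first by testing on the trivial germ, and then on a model germ with one Whitney umbrella, such as $(x,y)\mapsto(x^2,xy,y)$ for $m=2$.
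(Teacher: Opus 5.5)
Your proposal is correct and follows essentially the same route as the paper: apply the universality of \cref{maintheorem:BU-Z-restated} to a generic holomorphic perturbation $f$, insert \cref{formula:A1-cpx-benri}, identify the substituted term with $-\connhom d_{c_m}(\theta_\phi, \theta_{\phi_0})$ so that it cancels, and evaluate $[A_1(f)] = \connhom \Omega_\C(\phi)$ on $[\mathfrak{D}, \partial \mathfrak{D}]$. Your extra remarks make explicit two steps the paper leaves implicit: the lower-degree products in $\bar{c}_m$ vanish because $H^{2i}(D^{2m}, S^{2m-1}; \Z) = 0$ for $0 < i < m$, and regular homotopy invariance lets you pass from $\Phi|_{\nu(\partial \mathfrak{D})}$ to $f|_{\nu(\partial \mathfrak{D})}$.
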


\begin{remark}
    The number $C(\Phi)$ is finite since $\Phi$ is $\A$-finite by Mather--Gaffney's criterion (cf.~\cite[Theorem 4.5]{MN20}).
    This is also computed from $\Phi$ in a completely algebraic way~\cite{Mon95, NP15}.
\end{remark}

\begin{remark}
    \cref{theorem:NP15} recovers N\'emethi--Pint\'er's formula~\cite[Theorems 5.1.1]{NP15}, and hence~\cite[Theorem 1.2.2]{NP15} (see~\cref{remark:high-dim-cpx-Smale-inv}).
    Indeed, their formula is stated for the case where $m = 2$, and in this case, we can omit the assumption that the normal bundle of $\Phi|_{\nu(\partial \mathfrak{D})}$ is trivial (see~\cref{remark:normal-bundle-triviality-cpx}).
    The proof in~\cite{NP15} uses local representation of $A_1$-singularities.
    Our following proof uses a similar argument to \cref{theorem:NP15-real}.
\end{remark}

\begin{proof}[Proof of \cref{theorem:NP15}]
    Take a sufficiently small ball $\mathfrak{B} \subset \C^{2m - 1}$ centered at the origin so that $\mathfrak{D} = \Phi^{-1}(\mathfrak{B})$ is diffeomorphic to the $2m$-disk.
    After that, take a generic holomorphic perturbation of $\Phi$ on $\mathfrak{D}$ which does not change the regular homotopy class of the boundary immersion. Let $f \colon \mathfrak{D} \to \C^{2m - 1}$ denote the resulting map.

    By \cref{formula:A1-cpx-benri}, we have 
    \[[A_1(f)] = -c_m(\epsilon^{m - 1} \oplus T \mathfrak{D} | \theta) + \connhom \Omega_\C(\phi) + \connhom d_{c_m}(\theta_\phi, \theta_{\phi_0})\]
    in $H^{2m}(\mathfrak{D}, \partial \mathfrak{D}; \Z)$, 
    where 
    $\phi = \Phi|_{\nu(\partial \mathfrak{D})}$
    and where
    $\phi_0 \colon S^{2m - 1} \times [0, \epsilon] \to \C^{2m - 1}$ is the restriction of the standard inclusion.
    We also have 
    \begin{align*}
        c_m(\epsilon^{m - 1} \oplus T \mathfrak{D} | \theta) 
        &= c_m(\epsilon^{m - 1} \oplus T \mathfrak{D} | \theta_\phi) - c_m(\epsilon^{m - 1} \oplus T \mathfrak{D} | \theta_{\phi_0}) \\
        &= \connhom d_{c_m}(\theta_\phi, \theta_{\phi_0}).
    \end{align*}
    Therefore, 
    \begin{align*}
        \# A_1(f) 
        &= \langle [A_1(f)], [\mathfrak{D}, \partial \mathfrak{D}] \rangle \\
        &= \langle \connhom \Omega_\C(\phi), [\mathfrak{D}, \partial \mathfrak{D}] \rangle \\
        &= \Omega_\C(\phi).
    \end{align*}
    This completes the proof.
\end{proof}

\section{Determining correction terms \texorpdfstring{\II}{II}: Further types in dimensions \texorpdfstring{$m \le n$}{m-le-n}}\label{section:case-general-posi}

In this section, we deal with $\K$-singularity types in dimensions $m \le n$.
We reinterpret several earlier results from the viewpoint of relative Thom polynomials, and determine the corresponding correction terms.

\subsection{Cases (vi) and (vii): type \texorpdfstring{$A_2$}{A2} in dimensions \texorpdfstring{$(m, n) = (4k, 6k-1)$}{(m,n)=(4k,6k-1)}}\label{subsection:A2_m<n}

Recall that for an integer $k \ge 1$, the type $A_2 \subset J^K(4k, 6k-1)$ is a coorientable $\K$-singularity type of codimension $4k$ and has the Thom polynomial
\begin{align*}
    \Tp(A_2)(p_i, 0) 
    &= \bar{p}_k \\
    &= -p_k + \text{sum of products of lower degree terms}
\end{align*}
\cite[Lemma 3]{Szu00}.

\begin{theorem}\label{formula:A2-ESz03}
{\it 
    Let $\phi \colon S^{4k - 1} \times [0, \epsilon] \imm \R^{4k + 1}$ be an immersion, and let $j \colon \R^{4k + 1} \into \R^{6k-1}$ denote the standard inclusion.
    Then
    \[\alpha(A_2 | j \circ \phi) = 0\]
    in $H^{4k - 1}(S^{4k - 1}; \Z)$.
}
\end{theorem}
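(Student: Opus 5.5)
We plan to show $\alpha(A_2 \mid j\circ\phi)=0$ by reading off, through the universality \cref{maintheorem:BO-Z-restated}, the Ekholm--Sz\H{u}cs formula~\cite{ESz03} for the Smale invariant in terms of cusps of a singular Seifert surface, and comparing it with the Kervaire--Milnor normalisation in \cref{theorem:KM-coeff}. Concretely, fix $\phi$ and choose a null-cobordism $M=D^{4k}$ of $S^{4k-1}$ together with a generic extension $f\colon D^{4k}\to\R^{6k-1}$ of $j\circ\phi$. Such an extension exists: first extend to $\R^{4k+1}\times\R^{2k-2}$ using a collar, then perturb. The framing of the normal bundle of $j\circ\phi$ consists of the original normal vector of $\phi$ in $\R^{4k+1}$ together with the standard $2k-2$ extra directions, so $j\circ\phi$ is a framable prescribed datum.

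Applying \cref{maintheorem:BO-Z-restated} with the standard frame $\Theta$ on $\R^{6k-1}$ kills all $p'_j$, and then lower-degree products vanish in $H^*(D^{4k},S^{4k-1})$ by \cref{theorem:easy} (the relative classes are pulled back to a pair whose absolute cohomology is trivial in positive degrees, so any product of two relative classes is zero). We obtain
\[
\#A_2(f)=-p_k[\epsilon^{2k-1}\oplus TD^{4k}\mid\theta]+\langle\alpha(A_2\mid j\circ\phi),[S^{4k-1}]\rangle .
\]
Since $\theta$ comes from $\phi$ (stabilised by trivial directions), the relative Pontryagin number equals the one attached to $\epsilon^2\oplus TD^{4k}$ with frame $\theta_\phi$. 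By \cref{theorem:KM-coeff}, with our chosen sign $-$, this number is
\[
-p_k[\epsilon^2\oplus TD^{4k}\mid\theta_\phi]= a_k(2k-1)!\,\Omega(\phi).
\]

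Next we invoke Ekholm--Sz\H{u}cs~\cite{ESz03}: for any such singular Seifert surface, $\Omega(\phi)$ equals $\#A_2(f)/(a_k(2k-1)!)$ up to their sign convention. Substituting this into the display above forces $\langle\alpha,[S^{4k-1}]\rangle=0$. Since $H^{4k-1}(S^{4k-1};\Z)\cong\Z$ is detected by evaluation on the fundamental class, this gives $\alpha=0$.

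A cleaner, more intrinsic route, which we would attempt first, runs as follows. By \cref{corollary:correction-reg-htpy-inv} and \cref{theorem:correction-compare},
\[
\alpha(A_2\mid j\circ\phi)-\alpha(A_2\mid j\circ\phi_0)=d_{A_2}-a(A_2)\,d_{p_k}.
\]
Here $\alpha(A_2\mid j\circ\phi_0)=0$, because the standard disk gives an extension without singularities and the Thom polynomial evaluates to $0$ there, exactly as in the proof of \cref{formula:A1-real}. It would then remain to identify the difference class $d_{A_2}$, which lives in $\pi_{4k-1}$ of the complement of $\ol{A_2}$, with $-d_{p_k}$.

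The main obstacle is this identification, or equivalently matching signs and normalising constants with~\cite{ESz03}. We expect to have to use that the fibre $J^K-\ol{A_2}$ retracts onto the locus $\Sigma^1$ minus $A_2$, and to compute the map that $\Omega$ induces there. If this becomes too technical, we will fall back on the first route, citing~\cite{ESz03} and fixing the sign through the convention chosen after \cref{theorem:KM-coeff}.
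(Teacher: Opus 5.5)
Your first route is correct and is essentially the paper's proof: you combine the Ekholm--Sz\H{u}cs formula, the sign convention fixed after \cref{theorem:KM-coeff}, and the universality in \cref{maintheorem:BO-Z-restated}, then use that $H^{4k-1}(S^{4k-1};\Z)\cong\Z$ is torsion-free. The one difference is that you take $M=D^{4k}$. Then the term $-\bar p_k[\hat M]$ in the Ekholm--Sz\H{u}cs formula vanishes because $\hat M$ is a homotopy sphere, and you should state this explicitly, since for a general $M$ that term is present. The paper instead allows an arbitrary null-cobordism $M$ and handles $\bar p_k[\hat M]$ by additivity of relative characteristic numbers, so your unfinished second, intrinsic route is not needed.
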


Notice that the normal bundle of $\phi$, and in turn that of $j \circ \phi$, is trivial.

\begin{proof}
    Choose an arbitrary extension datum $f \colon M \to \R^{6k-1}$ of $j \circ \phi$, where $M$ is an oriented null-cobordism of $S^{4k - 1}$.
    Then, by Ekholm--Sz\H{u}cs~\cite[Theorem 1.1(a)]{ESz03},
    \[\Omega(\phi|_{S^{4k - 1}}) = \frac{1}{a_k (2k-1)!} (-\bar{p}_k[\hat{M}] + \# A_2(f)),\]
    where $\hat{M}$ is a $4k$-manifold obtained by gluing $M$ and $D^{4k}$ by an orientation-reversing diffeomorphism between the boundaries.
    Let $\theta$ (resp.~$\theta_\phi$) denote the frame of $\epsilon^{2k-1} \oplus T(S^{4k - 1} \times [0, \epsilon])$ (resp.~$\epsilon \oplus T(S^{4k - 1} \times [0, \epsilon])$) induced by the standard frame on $\R^{6k-1}$ and $j \circ \phi$ (resp.~$\phi$).
    Then, by the additivity of characteristic numbers,
    \begin{align*}
        \bar{p}_k[\hat{M}] 
        &= \bar{p}_k[\epsilon^{2k-1} \oplus T \hat{M}] \\
        &= \bar{p}_k[\epsilon^{2k-1} \oplus TM | \theta] - \bar{p}_k[\epsilon^{2k-1} \oplus TD^{4k} | \theta].
    \end{align*}
    We also have
    \begin{align*}
        - \bar{p}_k[\epsilon^{2k-1} \oplus TD^{4k} | \theta]
        &= p_k[\epsilon^{2k-1} \oplus TD^{4k} | \theta] \\
        &= p_k[\epsilon \oplus TD^{4k} | \theta_\phi] \\
        &= - a_k (2k-1)! \cdot \Omega(\phi|_{S^{4k - 1}}),
    \end{align*}
    where the last equality is by the sign convention fixed right after~\cref{theorem:KM-coeff}.
    Combining these three equalities, we have
    \[[A_2(f)] = \Tp(A_2)(p_i(\epsilon^{2k-1} \oplus TM | \theta), 0).\]
    By \cref{maintheorem:BO-Z-restated} and the fact that $H^{4k - 1}(S^{4k - 1}; \Z)$ is torsion-free, we obtain the assertion.
\end{proof}

\begin{remark}
    Every immersion $\iota \colon S^{4k - 1} \imm \R^{4k + 1}$ has a trivial normal bundle and the choice of its trivialization is unique up to homotopy.
    Hence, $\iota$ is extended to a unique immersion $\phi \colon S^{4k - 1} \times [0, \epsilon] \imm \R^{4k + 1}$ up to regular homotopy.
\end{remark}

When $k = 1$, we also obtain a more general formula.
Let $V$ be a closed oriented $3$-manifold and $\iota \colon V \imm \R^5$ an immersion with trivial normal bundle.
Let $\theta$ denote the frame of $\epsilon^2 \oplus TV$ induced by $\iota$.
We recall the following integer invariants.

\begin{itemize}
    \item 
        For a frame $\theta'$ of $\epsilon^1 \oplus TV$, Hirzebruch's {\it signature defect} was defined to be 
        \[h(V, \theta') = p_1[TM | \theta'] - 3\sigma(M),\]
        where $M$ is an oriented null-cobordism of $V$.
        This is independent of the choice of $M$. See~\cite{KiMe99} for details.
        For any frame $\theta$ of $\epsilon^2 \oplus TV$, there is a frame $\theta'$ of $\epsilon^1 \oplus TV$ such that $1 \oplus \theta'$ is homotopic to $\theta$ as a frame of $\epsilon^2 \oplus TV$, by the Hirsch lemma; it holds that 
        $p_1[\epsilon \oplus TM | \theta] = p_1[TM | \theta']$.
        Thus, one can consider
        \[h(V, \theta) = p_1[\epsilon \oplus TM | \theta] - 3\sigma(M)\]
        and we will call this the {\it signature defect} of $(V, \theta)$.        
    \item 
        Put
        \[\tau(V) = \dim_{\Z_2} \Tor H_1(V; \Z) \otimes_\Z \Z_2.\]
        Saeki--Sz\H{u}cs--Takase constructed a regular homotopy invariant 
        \[i_a(\iota) = \frac{3}{2}(\sigma(M) - \tau(V)) + \frac{1}{2} \# A_2(f),\]
        where $M$ is an oriented null-cobordism of $V$ and $f \colon M \to \R^5$ is a generic extension of $\iota$ which is an immersion around $\partial M = V$.
        This is always an integer and independent of the choice of $M$ and $f$.
        Furthermore, the pair of $i_a$ and the so-called Wu invariant is a complete invariant for $\Imm[V, \R^5]$.
        See~\cite{SSzT02} for details.
\end{itemize}
Saeki--Sz\H{u}cs--Takase's invariant $i_a$ is a generalization of the Smale invariant based on Ekholm--Sz\H{u}cs's formula (see the proof of \cref{formula:A2-ESz03}). 
For this reason, this invariant is called the {\it Smale-type invariant} in~\cite{Tan25} or the {\it ESzTS invariant} in~\cite{GP24}.
By regarding these invariants as classes of $H^3(V; \Z) \cong \Z$, we obtain the expression of the correction term as follows.

\begin{theorem}\label{formula:A2-SSzT02}
{\it
    Let $V$ be a closed oriented $3$-manifold and $\phi \colon V \times [0, \epsilon] \imm \R^5$ an immersion with trivial normal bundle.
    Then
    \[\alpha(A_2 | \phi) = 2 i_a(\phi|_V) + 3\tau(V) + h(V, \theta)\]
    in $H^3(V; \Z)$.
}
\end{theorem}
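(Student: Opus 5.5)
We follow the pattern of the proof of \cref{formula:A2-ESz03}: compute the correction term by pairing the identity of \cref{maintheorem:BO-Z-restated} with a fundamental class. Here $k = 1$ and $(m, n) = (4, 5)$, and we use $\Tp(A_2)(p_i, 0) = \bar{p}_1 = -p_1$. Choose an oriented null-cobordism $M$ of $V$ and a generic extension $f \colon M \to \R^5$ of $\phi$; this is an extension datum, and the frame $\Theta$ on $N = \R^5$ is the standard one, whose pull-back contributes only relative Pontryagin classes of a trivialized trivial bundle. These vanish, so the $p'_j$ are replaced by $0$. By \cref{maintheorem:BO-Z-restated} we get
\[
[A_2(f)] \equiv -p_1(\epsilon \oplus TM | \theta) + \connhom \alpha(A_2 | \phi)
\]
in $H^4(M, \partial M; \Z)$ modulo $2$-torsion. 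Since $H^4(M, \partial M; \Z) \cong \Z$ is torsion-free, this is an honest equality. Evaluating on $[M, \partial M]$, and using that $\langle \connhom \alpha, [M, \partial M] \rangle = \langle \alpha, [V] \rangle$ up to the boundary orientation sign (outward normal first), we obtain
\[
\langle \alpha(A_2 | \phi), [V] \rangle = \# A_2(f) + p_1[\epsilon \oplus TM | \theta].
\]

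Next we rewrite both terms with the invariants in the statement. By the definition of $i_a$ we have $\# A_2(f) = 2 i_a(\phi|_V) - 3\sigma(M) + 3\tau(V)$. By the definition of the signature defect, with $\theta$ viewed as the frame of $\epsilon^2 \oplus TV$ induced by $\iota = \phi|_V$ and passed to $\epsilon \oplus TM$ via the Hirsch lemma as in the bullet above, we have $p_1[\epsilon \oplus TM | \theta] = h(V, \theta) + 3\sigma(M)$. Adding these, the $\sigma(M)$ terms cancel and we get $2 i_a(\phi|_V) + 3\tau(V) + h(V, \theta)$. Under $H^3(V; \Z) \cong \Z$ this is the claim. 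As a consistency check, the right-hand side is independent of $M$ and $f$, as it must be, since each of $i_a$ and $h$ is.

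The main obstacle is bookkeeping rather than ideas. One point is identifying precisely the frame $\theta$ of $\epsilon \oplus T(V \times [0, \epsilon])$ used in \cref{maintheorem:BO-Z-restated}, which is induced by $\phi$ and the standard frame of $\R^5$, with the frame of $\epsilon^2 \oplus TV$ used in $h(V, \theta)$. This needs the splitting $T(V \times [0, \epsilon]) = \epsilon \oplus TV$, with the collar direction matching the normal $1$-frame. Another point is the sign conventions: the orientation of the $A_2$ coorientation relative to the convention in $i_a$ from \cite{SSzT02}, and the sign of $\connhom$ paired with the outward-first boundary orientation. I would fix these signs by checking the case $V = S^3$: there $\tau = 0$, and combining with \cref{formula:A2-ESz03} (where $\alpha = 0$) and Ekholm--Sz\H{u}cs' formula gives $2 i_a + h = 0$. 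This should be consistent with $i_a$ extending the Smale invariant together with the sign fixed after \cref{theorem:KM-coeff}.
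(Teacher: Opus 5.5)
Your proposal is correct and is essentially the paper's proof. You pull back the identity of \cref{maintheorem:BO-Z-restated} along an extension datum $f \colon M \to \R^5$, use $\Tp(A_2)(p_i, 0) = -p_1$, rewrite $\# A_2(f)$ and $p_1[\epsilon \oplus TM | \theta]$ through the definitions of $i_a$ and $h(V, \theta)$ so that $\sigma(M)$ cancels, and use torsion-freeness to remove the modulo-$2$-torsion ambiguity; the paper does the same in two lines. Your explicit pairing with $[M, \partial M]$ (which, like the paper, tacitly uses that $V$ is connected so that $\langle \alpha, [V] \rangle$ determines $\alpha$) and your $V = S^3$ sign check against \cref{formula:A2-ESz03} are details the paper leaves implicit.
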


\begin{proof}[Proof of \cref{formula:A2-SSzT02}]
    Choose an extension datum $f \colon M \to \R^5$ of $\phi$, where $M$ is an oriented null-cobordism of $V$.
    Then
    \begin{align*}
        \# A_2(f) 
        &= -3(\sigma(M) - \tau(V)) + 2 i_a(\phi|_V) \\
        &= -p_1[\epsilon \oplus TM | \theta] + h(V, \theta) + 3\tau(V) + 2 i_a(\phi|_V)
    \end{align*}
    by the definitions of the invariants.
    Since $H^3(V; \Z)$ is torsion-free, we obtain the assertion.
\end{proof}

\begin{remark}
    Every immersion $\iota \colon V \imm \R^5$ with trivial normal bundle is extended to an immersion $\phi \colon V \times [0, \epsilon] \imm \R^5$.
    The choice of an extension $\phi$ corresponds to that of a normal frame of $\iota$, and in turn, to that of class in $H^1(V; \Z)$.
    However, this choice does not appear in the formula.
\end{remark}

\begin{remark}
    Juh\'asz~\cite{Juh05} generalized Saeki--Sz\H{u}cs--Takase's invariant $i_a$ to the case where the normal bundle of an immersion $\iota \colon V \imm \R^5$ is not trivial.
    However, such an immersion cannot be extended to any immersion $\phi \colon V \times [0, \epsilon] \imm \R^5$.
\end{remark}

We have the following corollary to the above theorem.

\begin{theorem}\label{conj-formula:A2-SSzT02}
{\it
    Let $V$ be a closed oriented $3$-manifold and $\phi \colon V \times [0, \epsilon] \imm \R^5$ an immersion with trivial normal bundle. If $\alpha(A_2 | \phi) = 0$, then
    \[2 i_a(\phi|_V) = - 3 \tau(V) - h(V, \theta).\]
}
\end{theorem}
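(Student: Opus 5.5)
This is immediate from \cref{formula:A2-SSzT02}, so the plan is just to specialize that identity. By \cref{formula:A2-SSzT02}, for the given immersion $\phi \colon V \times [0, \epsilon] \imm \R^5$ with trivial normal bundle we have
\[\alpha(A_2 | \phi) = 2 i_a(\phi|_V) + 3\tau(V) + h(V, \theta)\]
in $H^3(V; \Z)$. Here the integer invariants on the right are regarded as classes in $H^3(V; \Z) \cong \Z$ via the fundamental class of the closed oriented $3$-manifold $V$.

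Setting $\alpha(A_2 | \phi) = 0$ and pairing both sides with $[V]$ gives the integer identity $0 = 2 i_a(\phi|_V) + 3\tau(V) + h(V, \theta)$. Rearranging yields the claimed formula.

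The only point to check is that the identification $H^3(V;\Z)\cong\Z$ is the one used in the proof of \cref{formula:A2-SSzT02}. There the correction term was read off by evaluating $\connhom \alpha(A_2|\phi)$ on $[M,\partial M]$, which is the same as evaluating $\alpha(A_2|\phi)$ on $[\partial M] = [V]$. This identification is consistent with the outward normal first convention, so no sign issue arises and there is no real obstacle.

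If $V$ is disconnected, $H^3(V;\Z)$ is a direct sum of copies of $\Z$, one per component. In that case the invariants should be read as the sums over components that evaluation on $[V]$ produces. Evaluating the vanishing class on $[V]$ again gives the stated equality of integers.
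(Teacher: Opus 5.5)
Your proposal is correct and is exactly the paper's argument: the paper records this statement as an immediate corollary of \cref{formula:A2-SSzT02}, obtained by setting $\alpha(A_2 | \phi) = 0$ there and reading the resulting identity as an equality of integers via evaluation on $[V]$. Your extra remarks on the sign of $\connhom$ and on disconnected $V$ are harmless. The invariants $i_a$, $\tau$, $h$ are defined globally for $V$, so the total evaluation is all that is needed.
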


The author does not know if $\alpha(A_2 | \phi) = 0$. See also \cref{remark:conjecture-for-A2}.

\subsection{Case (viii): type \texorpdfstring{$\Sigma^2$}{Sigma2} in dimensions \texorpdfstring{$(m, n) = (4, 4)$}{(m,n)=(4,4)}}\label{subsection:Sigma2_m=n=4}

Recall that the type $\Sigma^2 \subset J^K(4, 4)$ is a coorientable $\K$-singularity type of codimension $4$ and has the Thom polynomial
\[\Tp(\Sigma^2)(p_i, 0) = -p_1\]
\cite{Ron71}.

\begin{theorem}\label{formula:Sigma2-dim4}
{\it 
    Let $V$ be a closed oriented $3$-manifold and $\phi \colon V \times [0, \epsilon] \imm \R^4$ an immersion.
    Then
    \[\alpha(\Sigma^2 | \phi) = 0\]
    in $H^3(V; \Z)$.
}
\end{theorem}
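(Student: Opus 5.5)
The proof will follow the pattern of \cref{formula:A2-ESz03,formula:A2-SSzT02}: reinterpret an earlier formula for $\#\Sigma^2(f)$ on singular Seifert surfaces as an identity in $H^4(M, V; \Z) \cong \Z$, and compare it with \cref{maintheorem:BO-Z-restated}. Note first that $H^3(V; \Z) \cong \Z$ is torsion-free and that $\connhom \colon H^3(V;\Z) \to H^4(M, M_0;\Z)$, evaluated on the fundamental class, is an isomorphism onto $\Z$ when $M$ is a connected oriented null-cobordism of $V$. So it suffices to show, for one (hence any) extension datum $f \colon M \to \R^4$ of $\phi$, that
\[
\#\Sigma^2(f) = -p_1[TM \mid \theta],
\]
where $\theta$ is the frame of $TM|_{M_0}$ induced by $\phi$ and the standard frame of $\R^4$. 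Here $\epsilon^{n-m}=0$ and the target frame is standard, so the $p'_j$ terms vanish.

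First I will fix such an $f$, which exists because $V$ is null-cobordant. I will then invoke the formula of Takase~\cite{Tak07a} and Ekholm--Takase~\cite{ET11}. It expresses a regular homotopy/cobordism invariant of the codimension-one immersion $\phi|_V \colon V \imm \R^4$ (equivalently of $\phi$) as a combination of $\#\Sigma^2(f)$, $\sigma(M)$, and data of $V$ such as the $\mu$-invariant or the signature defect. The table indicates the relevant identity, and the paper later derives \cref{formula:Sigma2-dim4} as equivalent to Takase's invariant being expressed through classical ones. The key input is that Ekholm--Takase show $\#\Sigma^2(f) + 3\sigma(M)$ depends only on $\phi|_V$ and equals a term identified with the Hirzebruch defect $h(V,\theta')$, where $\theta'$ is the frame of $\epsilon^1\oplus TV$ given by the Gauss map of the codimension-one immersion. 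Concretely, I expect a statement of the form $\#\Sigma^2(f) = -3\sigma(M) + h(V,\theta)$, or the same with $\#\Sigma^2(f)$ written via $p_1[TM|\theta]$.

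Second, I will use the definition $h(V,\theta) = p_1[TM \mid \theta] - 3\sigma(M)$ with the same $M$. This converts the identity into $\#\Sigma^2(f) = -p_1[TM\mid\theta]$, which is exactly $\Tp(\Sigma^2)(p_i(TM\mid\theta),0)$ evaluated on $[M,\partial M]$. \cref{maintheorem:BO-Z-restated} then gives $\langle \connhom\alpha(\Sigma^2|\phi),[M,\partial M]\rangle = 0$, hence $\alpha(\Sigma^2|\phi)=0$ by torsion-freeness. This step also needs the observation that the frame $\theta$ on $TM|_{V\times[0,\epsilon]}$ is the one used in the Ekholm--Takase normalization: the outward normal of $V$ in $M$ goes to the normal direction of $\phi$, and the orientation conventions must match.

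The main obstacle is matching conventions. The sign of $\Sigma^2$'s coorientation (so that $\Tp = -p_1$ rather than $p_1$), orientation of the boundary, and the identification of Ekholm--Takase's invariant with $h(V,\theta)$ must agree, since an overall sign or a factor of $3$ error would make the correction term nonzero. I would first test the identity on $V=S^3$ with $\phi$ the standard collar and $f$ the inclusion $D^4\subset\R^4$, where both sides vanish. Then I would test it on an immersion realizing a generator, checking against the sign fixed after \cref{theorem:KM-coeff}, which the paper states is compatible with Ekholm--Takase.
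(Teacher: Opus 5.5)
There is a genuine gap: the key input you cite is not available in the generality you need, and in that generality it is the statement itself. Ekholm--Takase's formula $h(S^3,\theta) = -3\sigma(M) - \#\Sigma^2(f)$ is proved only for $V = S^3$; the paper in fact recovers it as a corollary of this theorem. Takase's invariant $T(\iota)$ from \cite{Tak07a} only gives $\#\Sigma^2(f) + 3\sigma(M) \equiv 2T(\iota) + 3\mu(V,\theta) \pmod{48}$. That is neither an integral identity nor a relation with the signature defect; the congruence $2T(\iota) \equiv -3\mu(V,\theta) - h(V,\theta) \pmod{48}$ is derived in the paper \emph{from} this theorem.

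For an arbitrary closed oriented $3$-manifold $V$, the identity $\#\Sigma^2(f) = -3\sigma(M) - h(V,\theta)$ is, by the definition of $h$, literally equivalent to $\#\Sigma^2(f) = -p_1[TM \mid \theta]$, i.e.\ to $\alpha(\Sigma^2|\phi)=0$. Invoking it is therefore circular. The fact that $\#\Sigma^2(f) + 3\sigma(M)$ depends only on $\phi$ is the easy part (glue two extensions and apply the absolute Thom polynomial $-p_1$). The identification with $-h(V,\theta)$ is the whole content. There is also a sign slip: with $h(V,\theta) = p_1[TM\mid\theta] - 3\sigma(M)$, your expected formula $\#\Sigma^2(f) = -3\sigma(M) + h(V,\theta)$ becomes $p_1[TM\mid\theta] - 6\sigma(M)$, not $-p_1[TM\mid\theta]$; the correct form has $-h$. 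Testing examples such as the standard collar of $S^3$ cannot replace a proof.

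What you are missing is an actual argument that the integer $\#\Sigma^2(f) + p_1[TM\mid\theta]$ vanishes. The paper uses Takase's method, which rests on three facts.
\begin{enumerate}
\item Since $\SI(3,1)\cong\Z_{24}$ is finite, $24\cdot\phi|_V$ bounds an oriented immersion $g\colon M'\imm\R^5_+$. It can be arranged to restrict to the reversed collar $-\phi$ near each boundary component.
\item The projection $f' = \pi\circ g\colon M'\to\R^4$ drops rank by at most $1$, so $\Sigma^2(f')=\varnothing$.
\item $p_1[TM'\mid\theta] = p_1[\epsilon\oplus TM'\mid\theta_g] = 0$, because the boundary frame extends over $M'$ via $g$.
\end{enumerate}
Gluing $24$ copies of $f$ to $f'$ gives a generic map $F\colon M_1\to\R^4$ on a closed $4$-manifold. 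The absolute formula $\#\Sigma^2(F) = -p_1[M_1]$ and additivity of relative characteristic numbers then give $24\,(\#\Sigma^2(f) + p_1[TM\mid\theta]) = 0$. Since $H^3(V;\Z)$ is torsion-free, the claim follows. This uses exactly the three features singled out in \cref{remark:conjecture-for-A2}: a finite cobordism group, a type of corank at least $2$, and torsion-free coefficients.
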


To show this, we will employ Takase's method which was used in \cite[Proposition 3.1]{Tak12} and corresponds to Case (ix) below.
Let $\SI(k, r)$ denote the set of all oriented cobordism classes of immersions of closed oriented $k$-manifolds into $\R^{k + r}$.
This is endowed with an abelian group structure by disjoint union (or connected sum).
Recall that $\SI(3, 1)$ is a finite group:
\[\SI(3, 1) \cong \pi_3^S \cong \Z_{24}\]
\cite{Rok52,Wel66}.

\begin{proof}[Proof of \cref{formula:Sigma2-dim4}]    
    First, $24 \cdot \phi|_V$ is oriented null-cobordant.
    Take its oriented null-cobordism $g \colon M' \imm \R^5_+$, where $M'$ is the oriented null-cobordism of $24 \cdot V$ and $g^{-1}(\partial \R^5_+) = \partial M'$.
    Put 
    \[f' = \pi \circ g \colon M' \imm \R^5_+ \to \partial \R^5_+ = \R^4.\]
    One can construct $g$ so that the restriction of $f'$ to a collar neighborhood of each component $V$ coincides with `$-\phi$', the extension of $\phi|_V$ into the opposite direction of $\phi$.
    On the other hand, take an extension datum $f \colon M \to \R^4$ of $\phi$, where $M$ is an oriented null-cobordism of $V$.
    We obtain a closed oriented $4$-manifold $M_1$ by gluing $24 \cdot M$ and $M'$ by an orientation-reversing diffeomorphism between the boundaries, and a generic map 
    \[F \colon M_1 \to \R^4\]
    gluing $24 \cdot f$ and $f'$. 
    By construction, the restriction of $F$ to a bicollar of $V$ is an immersion of the normal bundle of $\phi|_V$.
    Furthermore, the projection $\pi$ makes the rank of $g$ decrease at most $1$.
    Therefore, we have
    \begin{align*}
        24 \cdot \# \Sigma^2(f) 
        &= \# \Sigma^2(F) \\
        &= -p_1[M_1] \\
        &= -24 \cdot p_1[TM | \theta] + p_1[TM' | \theta].
    \end{align*}
    However, $p_1[TM' | \theta] = p_1[\epsilon \oplus TM' | \theta_g] = 0$, where $\theta_g$ is the frame of $(\epsilon \oplus TM')|_{\partial M'}$ induced by $g$.
    Thus, we obtain
    \[\# \Sigma^2(f) = -p_1[TM | \theta],\]
    which completes the proof.
\end{proof}

\begin{remark}\label{remark:conjecture-for-A2}
    Takase's method is applicable to any case where the following are satisfied:
    \begin{enumerate}
        \item the cobordism classification of immersions is isomorphic to a finite group;
        \item the singularity type is of corank at least $2$;
        \item the coefficient ring of the cohomology has no torsion. 
    \end{enumerate}
    Note that Case (vii) (\cref{formula:A2-ESz03}) does not satisfy the condition (2), while it satisfies the others. In particular, 
    \[\SI(3, 2) \cong \pi^S_5(\C P^\infty) \cong \Z_2\]
    \cite{Wel66,Pas84}.
\end{remark}

Now, we give two applications.
First, applying \cref{formula:Sigma2-dim4} to the case where $V = S^3$, we recover the following.

\begin{theorem}[{Ekholm--Takase~\cite[Proposition 2.7]{ET11}}]
{\it
    Let $\iota \colon S^3 \imm \R^4$ be an immersion, and let $\theta$ denote the framing of $\epsilon \oplus TS^3$ induced by $\iota$.
    Choose an oriented null-cobordism $M$ of $S^3$ and a generic extension $f \colon M \to \R^4$ of $\iota$ which is an immersion around $\partial M = S^3$.
    Then
    \[h(S^3, \theta) = - 3 \sigma(M) - \# \Sigma^2(f).\]
}    
\end{theorem}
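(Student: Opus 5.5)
The plan is to derive this directly from \cref{formula:Sigma2-dim4} with $V = S^3$, together with the relative universality in \cref{maintheorem:BO-Z-restated}. The one new ingredient is the prescribed datum: extend $\iota$ along its normal direction to an immersion $\phi \colon S^3 \times [0, \epsilon] \imm \R^4$, so that $\phi|_{S^3} = \iota$. The given generic extension $f \colon M \to \R^4$, which is an immersion near $\partial M$, can be regarded, after shrinking the collar, as an extension datum of $\phi$.

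First, \cref{formula:Sigma2-dim4} gives $\alpha(\Sigma^2 | \phi) = 0$. We then apply \cref{maintheorem:BO-Z-restated} with $\Tp(\Sigma^2)(p_i, 0) = -p_1$, using the standard frame $\Theta$ on $N = \R^4$, whose pulled-back Pontryagin classes vanish. Since $H^3(S^3; \Z)$ and $H^4(M, \partial M; \Z)$ are torsion-free, the statement modulo $2$-torsion is an honest equality. This gives
\[
[\Sigma^2(f)] = -p_1(TM | \theta) \in H^4(M, \partial M; \Z),
\]
and evaluating on $[M, \partial M]$ yields $\# \Sigma^2(f) = -p_1[TM | \theta]$.

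Second, we rewrite this using the signature defect. Here $\theta$ is a frame of $TM|_{\partial M}$, because $\phi^*T\R^4 \cong TM_0$ (as $m = n$, there is no $\epsilon$-summand). We match it with the framing of $\epsilon \oplus TS^3$ induced by $\iota$ by splitting off the outward normal direction. With this identification, the definition of $h$ gives
\[
h(S^3, \theta) = p_1[TM | \theta] - 3\sigma(M).
\]
Substituting the first step gives $h(S^3, \theta) = -\#\Sigma^2(f) - 3\sigma(M)$, which is the claim.

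The main obstacle is the bookkeeping of framings and orientations. We must check that the frame $\theta$ on $\partial M$ induced by $(\phi, \Theta)$ corresponds exactly to Ekholm--Takase's framing of $\epsilon \oplus TS^3$ induced by $\iota$. This requires attention to whether the normal vector of $\iota$ is the outward or inward collar direction, and to the boundary orientation convention (outward normal first). If the conventions disagreed, a sign flip in the $\epsilon$-factor could change $\theta$ by a reflection. I would handle this by noting that $\phi$ is chosen to extend into $M$, so the first frame vector is the inward or outward normal consistently with the outward-first convention. I would then check the resulting sign against the standard inclusion $S^3 \subset \R^4$ bounding $D^4$. In that case $\Sigma^2(f) = \varnothing$, $\sigma(D^4) = 0$, and $p_1[TD^4 | \theta] = 0$, so the normalization $h(S^3, \theta_{\text{std}}) = 0$ is recovered.
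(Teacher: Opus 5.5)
Your proposal is correct and follows the paper's proof. The paper likewise takes the identity $\#\Sigma^2(f) = -p_1[TM \mid \theta]$, established in the proof of \cref{formula:Sigma2-dim4} (which you recover equivalently from $\alpha(\Sigma^2 \mid \phi) = 0$ and the universality in \cref{maintheorem:BO-Z-restated}), and substitutes it into the definition $h(S^3, \theta) = p_1[TM \mid \theta] - 3\sigma(M)$. Your framing and orientation check, and taking $\phi$ to be the restriction of $f$ to a collar, are bookkeeping that the paper leaves implicit.
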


\begin{proof}
    We obtained
    \[\# \Sigma^2(f) = - p_1[TM | \theta].\]
    Then the assertion follows by the definition of the signature defect.
\end{proof}

Second, we discover a relationship among the signature defect and the following invariants.
Let $V$ be a closed oriented $3$-manifold and $\iota \colon V \imm \R^4$ an immersion.
\begin{itemize}
    \item 
        We consider the {\it $\mu$-invariant}
        \[\mu(V, \theta) = \sigma(M) \bmod{16},\]
        where $M$ is a spin null-cobordism of $V$ with respect to the spin structure obtained by restricting $\theta$ to the $2$-skeleton of $V$.
        This is independent of the choice of $M$.
    \item 
        Takase constructed an oriented cobordism invariant\footnote{The sign of $\# \Sigma^2(f)$ is opposite to Takase's. Indeed, a coorientation of $\Sigma^2$ is chosen so that $\Tp(\Sigma^2)(p_i, 0) = +p_1$ in \cite[\S2.2]{Tak07a}.}\footnote{Takase's invariant is $-T(\iota)$. We adopt $T(\iota)$ just for compatibility to the other formulas.}
        \[T(\iota) = \frac{3}{2}(\sigma(M) - \mu(V, \theta)) + \frac{1}{2}\# \Sigma^2(f) \bmod 24,\]
        where $M$ is an oriented null-cobordism of $V$ and $f \colon M \to \R^4$ is a generic extension of $\iota$ which is an immersion around $\partial M = V$.
        This is independent of the choice of $M$ and $f$, and is also a complete invariant for $\SI(3, 1) \cong \Z_{24}$.
        See~\cite{Tak07a} for details.
\end{itemize}

Takase's invariant $T$ is a generalization of the Smale invariant $\Omega(j \circ \iota) \bmod{24}$ (also compare to Ekholm--Sz\H{u}cs' formula and Saeki--Sz\H{u}cs--Takase's invariant).
We obtain the following.

\begin{theorem}
{\it
    Let $V$ be a closed oriented $3$-manifold and $\iota \colon V \imm \R^4$ an immersion. Then
    \[2 T(\iota) \equiv - 3 \mu(V, \theta) - h(V, \theta) \pmod{48}.\]
}
\end{theorem}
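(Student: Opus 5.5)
We need to prove $2T(\iota) \equiv -3\mu(V,\theta) - h(V,\theta) \pmod{48}$.

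The plan is to combine the definition of $T(\iota)$ with the identity $\#\Sigma^2(f) = -p_1[TM \mid \theta]$, which is exactly the vanishing $\alpha(\Sigma^2 \mid \phi) = 0$ of \cref{formula:Sigma2-dim4} read through \cref{maintheorem:BO-Z-restated}. First extend $\iota$ to an immersion $\phi \colon V \times [0,\epsilon] \imm \R^4$ using the normal direction; since $V$ is oriented, the normal line bundle is trivial. Choose an oriented null-cobordism $M$ of $V$ and a generic extension $f \colon M \to \R^4$ of $\phi$. The frame $\theta$ of $\epsilon \oplus TV$ induced by $\iota$ is the one entering both $h(V,\theta)$ and $\mu(V,\theta)$. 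By \cref{formula:Sigma2-dim4},
\[\#\Sigma^2(f) = -p_1[TM \mid \theta] = -p_1[\epsilon \oplus TM \mid \theta].\]

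Next, multiply the definition of $T(\iota)$ by $2$. Since $T(\iota)$ is defined mod $24$, this gives an identity mod $48$:
\[2T(\iota) \equiv 3\sigma(M) - 3\mu(V,\theta) + \#\Sigma^2(f) \pmod{48}.\]
Here we must take care that $3(\sigma(M)-\mu)$ is well defined mod $48$. The quantity $\mu$ is only defined mod $16$, so $3\mu$ is defined mod $48$. We interpret $\sigma(M)-\mu(V,\theta)$ via any integer lift, consistent with Takase's convention that $\frac{3}{2}(\sigma(M)-\mu)$ is defined mod $24$.

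Substituting the identity from the first step yields
\[2T(\iota) \equiv 3\sigma(M) - p_1[\epsilon\oplus TM \mid \theta] - 3\mu(V,\theta).\]
By the definition of the signature defect, $h(V,\theta) = p_1[\epsilon\oplus TM \mid \theta] - 3\sigma(M)$, so the right-hand side equals $-h(V,\theta) - 3\mu(V,\theta)$ modulo $48$.

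The main obstacle is bookkeeping rather than a new idea. We must check two points:
\begin{enumerate}
\item The frame $\theta$ induced by the codimension-one immersion is the same frame (up to the stabilisation via the Hirsch lemma) used in the definitions of $h$ and $\mu$.
\item The sign conventions are consistent: the opposite coorientation of $\Sigma^2$ relative to Takase, and the sign of $T$, both noted in the footnotes.
\end{enumerate}
We also need the mod $48$ lift of Takase's mod $24$ half-integer expression to be legitimate; this holds because the integer $3(\sigma(M)-\mu_{\text{lift}}) + \#\Sigma^2(f)$ is even.
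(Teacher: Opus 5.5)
Your proposal is correct and is essentially the paper's argument: you double the defining expression of $T(\iota)$ to get a congruence mod $48$, replace $\#\Sigma^2(f)$ by $-p_1[TM \mid \theta]$ using the vanishing $\alpha(\Sigma^2 \mid \phi) = 0$ of \cref{formula:Sigma2-dim4}, and rewrite $p_1[TM \mid \theta] - 3\sigma(M)$ as $h(V,\theta)$. The only difference is the order of steps: the paper first identifies $\alpha(\Sigma^2 \mid \phi) \equiv 2T(\phi|_V) + 3\mu(V,\theta) + h(V,\theta) \pmod{48}$ and then invokes its vanishing, whereas you substitute the vanishing directly.
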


\begin{proof}
    By the definition of $T$, for any oriented null-cobordism $M$ of $V$ and any generic extension $f \colon M \to \R^4$ of $\iota$ which is an immersion around $\partial M = V$,
    \begin{align*}
        \# \Sigma^2(f) 
        &\equiv -3(\sigma(M) - \mu(V, \theta)) + 2T(\iota) \\
        &\equiv -p_1[TM | \theta] + h(V, \theta) + 3\mu(V, \theta) + 2T(\iota)
    \end{align*}
    modulo $48$.
    Hence, 
    \[\alpha(\Sigma^2 | \phi) \equiv 2 T(\phi|_V) + 3 \mu(V, \theta) + h(V, \theta).\]
    Then the assertion follows from \cref{formula:Sigma2-dim4}.
\end{proof}

\begin{remark}
    Ekholm~\cite{Ekh01b} constructed a complete invariant
    $(\lambda, \beta) \colon \SI(3, 1) \to \Z_3 \oplus \Z_8$
    using self-intersections of immersions.
    This coincides with $T$ up to automorphisms of $\Z_{24}$.
    Hence, the above formula gives an equality also for $(\lambda, \beta)$.
\end{remark}

\subsection{Case (ix): type \texorpdfstring{$\Sigma_\FR$}{Sigma-FR} in dimensions \texorpdfstring{$(m, n) = (8, 8)$}{(m,n)=(8,8)}}\label{subsection:SigmaFR_m=n=8}

Recall that the type $\Sigma_\FR \subset J^K(8, 8)$ is a coorientable $\K$-singularity type of codimension $8$ and has the Thom polynomial
\[\Tp(\Sigma_\FR)(p_i, 0) = 12 p_1^2 - 9 p_2\]
\cite{FR02}.
The following is immediate from the result of Takase~\cite[Proposition 3.1]{Tak12}.

\begin{theorem}\label{formula:8-dim}
{\it
    Let $V$ be a closed oriented $7$-manifold and $\phi \colon V \times [0, \epsilon] \imm \R^8$ an immersion.
    Then
    \[\alpha(\Sigma_\FR | \phi) = 0\]
    in $H^7(V; \Z)$.
}
\end{theorem}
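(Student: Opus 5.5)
The plan is to mirror the proof of \cref{formula:Sigma2-dim4}, using Takase's method, which applies here because the hypotheses of \cref{remark:conjecture-for-A2} hold. The cobordism group $\SI(7,1) \cong \pi_7^S \cong \Z_{240}$ is finite. The type $\Sigma_\FR$ has corank at least $2$, so composing an immersion with a projection, which drops the rank by at most one, creates no $\Sigma_\FR$-points. Finally, $H^7(V;\Z)$ is torsion-free. Since $H^7(V;\Z)$ is torsion-free and $\connhom$ is injective, it suffices by \cref{maintheorem:BO-Z-restated} to show, for a single extension datum $f \colon M \to \R^8$ of $\phi$ with $M$ an oriented null-cobordism of $V$, that
\[\#\Sigma_\FR(f) = \big(12p_1^2 - 9p_2\big)[TM \mid \theta].\]
Here the target is $\R^8$ with its standard frame, so only the $p_i$ of the source appear.

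First, since $\SI(7,1)$ is finite, $N\cdot \phi|_V$ (with $N = 240$) bounds an immersion $g \colon M' \imm \R^9_+$ with $g^{-1}(\partial\R^9_+) = \partial M'$. As in Case (viii), we arrange that $f' = \pi\circ g \colon M' \to \R^8$ coincides near each boundary component with the reversed collar extension $-\phi$. Gluing $N\cdot M$ and $M'$ gives a closed oriented $8$-manifold $M_1$ and a generic map $F \colon M_1 \to \R^8$ which is an immersion near the gluing locus. The map $f'$ has corank at most $1$, so after a small perturbation keeping it $\Sigma^2$-free we get $\Sigma_\FR(f') = \varnothing$, and hence $\#\Sigma_\FR(F) = N\cdot\#\Sigma_\FR(f)$. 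The absolute Thom polynomial (\cref{theorem:ATP}, oriented version) gives
\[\#\Sigma_\FR(F) = \big(12p_1^2 - 9p_2\big)[M_1].\]
Next we use additivity of relative characteristic numbers for the frames induced on both sides of $V$. This splits the right-hand side into $N$ times the relative number on $M$ plus the relative number on $M'$.

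The main obstacle is the last step: showing that the relative number $(12p_1^2-9p_2)[TM' \mid \theta]$ vanishes. The frame on $\partial M'$ coming from $f'$ is, after stabilization, the frame of $\epsilon\oplus TM'$ induced by the immersion $g$ into $\R^9_+$. The key point is that this frame extends over all of $M'$, because $g$ is an immersion of $M'$ into a parallelizable manifold with trivial normal line bundle. Hence all relative Pontryagin classes of $\epsilon\oplus TM'$ with respect to this frame vanish. Two points need care here. The first is the compatibility between the frame of $TM'$ induced by $\phi$ (through the collar) and the one induced by $g$. The second is that the product $p_1^2$ of relative classes is taken with respect to the same frame, so it also vanishes. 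Once both are checked, we get $N\cdot\#\Sigma_\FR(f) = N\cdot(12p_1^2-9p_2)[TM\mid\theta]$. Dividing by $N$ gives the required identity, and therefore $\alpha(\Sigma_\FR\mid\phi) = 0$ in $H^7(V;\Z)$.
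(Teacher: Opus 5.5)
Your proposal is correct and follows the paper's approach. The paper simply cites Takase's Proposition~3.1 in \cite{Tak12}, whose proof is the finite-cobordism-group ($\SI(7,1)\cong\Z_{240}$) and corank-$2$ argument that you carry over from Case (viii), including the vanishing of the relative number on $M'$ via the global frame of $\epsilon\oplus TM'$ induced by $g$.

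One small point, which the paper's own write-up of Case (viii) also passes over. The injectivity you actually use is that of $\connhom\colon H^7(V;\Z)\to H^8(M,\partial M;\Z)$, not the universal one. So when $V$ is disconnected, take $M$ to be a disjoint union of null-cobordisms of the components of $V$ (possible since $\Omega_7^{SO}=0$) and compare the numbers component by component.
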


We note that this was proven using the finiteness of the group $\SI(7, 1) \cong \Z_{240}$ and the fact that $\Sigma_\FR$ is of corank $2$.

\section{Determining correction terms \texorpdfstring{\III}{III}: Further types in dimensions \texorpdfstring{$m \ge n$}{m-ge-n}}\label{section:case-general-nega}

In this section, we deal with singularity types in dimensions $m \ge n$.

\subsection{Cases (iii) and (iv): type \texorpdfstring{$A_1$}{A1} in dimensions \texorpdfstring{$m \ge n$}{m-ge-n}}\label{subsection:case-A1-nega}

Recall that the type $A_1 \subset J^1(m, n)$ is of codimension $m - n + 1$ and has the Thom polynomial
\[\Tp(A_1)(w_i, 0) = w_{m - n + 1}.\]
We saw it at the end of \cref{section:RCC}.
We obtain its relative version as follows.

\begin{proposition}\label{theorem:RCC-is-RTP}
{\it
    Let $\phi \colon M_0 \to N$ be a framable prescribed datum and $\Theta$ a global frame on $N$.
    Then 
    \[\iota^* \Tp(A_1 | \phi) = w_{m - n + 1}(\gamma^m | \theta)\]
    in $H^{m - n + 1}(\BO(m), M_0; \Z_2)$.
} 
\end{proposition}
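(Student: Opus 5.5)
The plan is to identify both sides as obstruction classes of the same kind, using \cref{theorem:another} for the left-hand side and \cref{definition:RCC-SW} (equivalently \cref{theorem:another-RCC-SW}) for the right-hand side. Then we compare the sections over $M_0$ that define them. Since $A_1$ is determined by the $1$-jet, we may take $K = 1$. Then $J^1(m, n) = \Hom(\R^m, \R^n)$ and $\ol{A_1}$ is the set of linear maps of rank less than $n$. We pull back along $\iota = (\id, *)$, where $\kappa_2$ is the constant map, so the $\GL(n; \R)$-factor is trivialized by the fixed frame $\Theta$. By naturality of the refined fundamental class, $\iota^* \Tp(A_1 | \phi)$ is then the Thom polynomial, relative to the induced section $\iota^* j^1 \phi$ over $M_0 \subset \BO(m)$, of the $\GL(m; \R)$-invariant cycle $\ol{A_1} \subset \Hom(\R^m, \R^n)$. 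Here we forget the target action, as in the discussion at the end of \cref{section:RCC}.

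The next step is to transpose. Using the standard inner product, $\Hom(\R^m, \R^n) \cong \Hom(\R^n, \R^m)$, and this carries $\ol{A_1}$ onto the cycle $\Sigma$ of linear maps $\R^{n} \to \R^m$ of rank less than $n = m - (m-n+1) + 1$, equivariantly for $\O(m)$. As remarked at the end of \cref{section:RCC}, the two cycles have the same refined fundamental class. After this identification, the section $\iota^* j^1\phi$ over $M_0$ becomes a section of $\B_{\GL(m)} \Hom(\R^n, \R^m)$, that is, an $n$-tuple of sections of $\gamma^m|_{M_0}$. Concretely, this $n$-tuple is $(d\phi)^T$ applied to $\Theta$, i.e.\ the images of $\Theta$ under the adjoint $\phi^*TN \to TM_0$. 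By \cref{theorem:another-RCC-SW} (with $q = m-n+1$, so we need an $(m-q+1) = n$-frame), it then suffices to show that this $n$-tuple is homotopic, through $n$-frames over $M_0$, to the frame $\theta$. Recall that $\theta$ is built from $\Theta$ via a splitting $\phi^*TN_0 \to TM_0$ of $d\phi$.

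This comparison is where the main care is needed, though I expect it to be routine. Both $(d\phi)^T$ and the chosen splitting $s$ are right inverses of $d\phi$ up to the invertible map $d\phi \circ (d\phi)^T$. Indeed, $(d\phi)^T = s \circ (d\phi\,(d\phi)^T)$ when $s$ is taken to be the orthogonal splitting, and the space of splittings is convex. Composing with the positive definite $d\phi\,(d\phi)^T$ can be deformed to the identity through invertible maps by a straight-line homotopy. So the two $n$-frames are homotopic as injective $n$-tuples, i.e.\ as sections avoiding $\Sigma$. Homotopic sections avoiding the cycle give the same relative class, since the difference class in \cref{theorem:diff-RTP} vanishes. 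Hence
\[\iota^*\Tp(A_1|\phi) = \Tp(\Sigma \mid \theta) = w_{m-n+1}(\gamma^m|\theta).\]
One point to double-check is that \cref{definition:RCC-SW} uses the \emph{last} $(m-i+1)$ components. For $i = q = m-n+1$, this is the entire $n$-frame, so no reordering issue arises.
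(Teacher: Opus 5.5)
Your proposal is correct and follows the same route as the paper: forget the $\GL(n;\R)$-action and use the transpose to identify $\ol{A_1}\subset\Hom(\R^m,\R^n)$ with $\Sigma\subset\Hom(\R^n,\R^m)$, identify the prescribed section $j^1\phi$ with the frame $\theta$, and then apply \cref{theorem:another-RCC-SW}. The one addition is your explicit homotopy, through $n$-frames, from $(d\phi)^T\Theta$ to the splitting frame $\theta$; this fills in a step the paper asserts without proof.
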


\begin{proof}
    As we saw at the end of \cref{section:RCC}, forgetting the $\GL(n; \R)$-action on $A_1$, the subsets $A_1 \subset J^1(m, n) = \Hom(\R^m, \R^n)$ and $\Sigma \subset \Hom(\R^n, \R^m)$ form the same $\GL(m; \R)$-equivariant refined fundamental class.
    Furthermore, prescribing $j^1 \phi$ in the bundle $\B_\G J^1(m, n)$ over $M_0$ is the same as prescribing the induced frame $\theta$ in $\Hom(\R^n, \R^m)$.
\end{proof}

\begin{theorem}\label{formula:A1-nega}
{\it
    For any framable prescribed datum $\phi \colon M_0 \to N$ and any global frame $\Theta$ on $N$,
    \[\alpha(A_1 | \phi, \Theta) \equiv 0\]
    in $H^{m - n}(M_0; \Z_2)$ modulo $\Ker (\connhom \colon H^{m - n}(M_0; \Z_2) \to H^{m - n + 1}(\BO(m), M_0; \Z_2))$.
}
\end{theorem}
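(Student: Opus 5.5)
The plan is to combine \cref{theorem:RCC-is-RTP} with the defining identity of $\alpha(A_1 | \phi, \Theta)$ in \cref{theorem:exp-subm} and compare the two expressions for $\iota^* \Tp(A_1 | \phi)$.

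First, we check that \cref{theorem:exp-subm} applies to $\eta = A_1$ with $q = m - n + 1$. We have $\Tp(A_1)(w_i, 0) = w_{m-n+1}$, and this lies in $\Z_2[w_{m-n+1}, \dots, w_m]$. The hypothesis is therefore satisfied, and we obtain
\[\iota^* \Tp(A_1 | \phi) = \Tp(A_1)\big(\hat{w}_i(\gamma^m | \theta), 0\big) + \connhom \alpha(A_1 | \phi, \Theta).\]
Since the polynomial consists only of the variable $w_{m-n+1}$, and $i = m - n + 1$ is in the range where $\hat{w}_i = w_i(\gamma^m | \theta)$, the substitution term is exactly $w_{m-n+1}(\gamma^m | \theta)$. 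We must confirm that the frame $\theta$ used in the substitution is the same $n$-frame of $TM_0$ induced by $\Theta$ and the splitting of $d\phi$. This matches the setup before \cref{theorem:exp-subm}. The relevant relative class $w_{m-n+1}(\gamma^m | \theta)$ uses $\theta^{m-n+1}$, the last $n$ components, which is all of $\theta$.

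Second, \cref{theorem:RCC-is-RTP} gives $\iota^* \Tp(A_1 | \phi) = w_{m-n+1}(\gamma^m | \theta)$ in the same group $H^{m-n+1}(\BO(m), M_0; \Z_2)$. Subtracting the two expressions yields
\[\connhom \alpha(A_1 | \phi, \Theta) = 0,\]
that is, $\alpha(A_1 | \phi, \Theta) \in \Ker \connhom$. This is precisely the assertion, since $\alpha$ is only defined modulo $\Ker \connhom$ anyway.

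The main obstacle is making sure both identities refer to the same embedding $M_0 \subset \BO(m)$ and the same frame $\theta$. \cref{theorem:RCC-is-RTP} is phrased via transposition $A_1 \leftrightarrow \Sigma$ and the frame $\theta$ in $\Hom(\R^n, \R^m)$. \cref{theorem:exp-subm} instead uses the embedding $\kappa_1$, with $\kappa_2$ constant, and $\iota = (\id, *)$. I would first verify that, with $\kappa_2$ constant and $\Theta$ trivializing $TN$, the section $j^1\phi$ restricted along $\iota$ corresponds under transposition to the frame $\theta$ obtained from the chosen splitting. This holds up to homotopy because the splitting is homotopically unique, and homotopic frames give equal relative classes. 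Once this identification holds, the argument above is purely formal.
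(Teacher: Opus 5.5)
Your proposal is correct and is essentially the paper's proof, which simply combines \cref{theorem:RCC-is-RTP} with \cref{theorem:exp-subm}. Both results compute $\iota^*\Tp(A_1 | \phi)$, and the substitution term is exactly $w_{m-n+1}(\gamma^m | \theta)$, so subtracting gives $\connhom \alpha(A_1 | \phi, \Theta) = 0$.
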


\begin{proof}
    Combine \cref{theorem:RCC-is-RTP} and \cref{theorem:exp-subm}.
\end{proof}

We note that Saeki applied this type of observation to a generic extendability problem of a prescribed map in the case $(m, n) = (3, 2)$.

\begin{theorem}[{\cite[Theorem 2.3]{Sae20}}]
{\it
    Let $M$ be a closed oriented $3$-manifold, $L \subset M$ an oriented null-cobordant link with normal frame $\theta_L$, and $J \subset M$ an unoriented link disjoint from $L$.
    Then the following conditions are equivalent: 
    \begin{enumerate}
        \item there is a generic map $f \colon M \to \R^2$ and a regular value $y \in \R^2$ such that $f^{-1}(y)$ coincides with $L$ as an oriented framed link and that $\ol{A_1(f)} = J$;
        \item $[J] = w_2(TM | \theta_L) \in H^2(M, L; \Z_2)$.
    \end{enumerate} 
}
\end{theorem}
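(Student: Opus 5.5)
The plan is to deduce the implication (1) $\Rightarrow$ (2) directly from the relative framework, \cref{theorem:RCC-is-RTP} combined with \cref{theorem:exp-subm}. For the converse (2) $\Rightarrow$ (1) I will construct the map, and that construction is where the real work lies.

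\textbf{Setup.} Let $M_0 = \nu(L) \cong L \times D^2$ be a closed tubular neighbourhood of $L$ identified via $\theta_L$. Let $\phi \colon M_0 \to D^2_y \subset \R^2$ be the projection onto a small disk centred at $y$, oriented compatibly with the orientation of $L$. This $\phi$ is a submersion, hence a prescribed datum for $A_1$. It is framable in the sense of \cref{definition:framable-prescribed-data-subm}, because $\R^2$ is parallelizable. Fix the standard frame $\Theta$. The $2$-frame of $TM_0$ induced by $\phi$ and $\Theta$ is exactly the normal framing $\theta_L$, extended radially over $M_0$. Excision and the deformation retraction $M_0 \simeq L$ give $H^2(M, M_0; \Z_2) \cong H^2(M, L; \Z_2)$, and under this isomorphism $w_2(TM|\theta)$ becomes $w_2(TM|\theta_L)$.

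\textbf{(1) $\Rightarrow$ (2).} Let $f$ and $y$ be as in (1). Since $y$ is a regular value and $f^{-1}(y) = L$ as oriented framed links, $f$ restricted to a thin neighbourhood of $L$ is a submersion onto a neighbourhood of $y$. It is homotopic through submersions, rel $L$, to $\phi$. Modify $f$ by this homotopy in a collar of $\partial M_0$, away from $J$. This does not change the singular locus, and it turns $f$ into an extension datum of $\phi$ in the sense of \cref{definition:extension-data}.

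We have $\Tp(A_1)(w_i,0) = w_2$, and $\Tp(A_1)(w_i,0)$ lies in $\Z_2[w_2,w_3]$, so \cref{theorem:exp-subm} applies. It gives
\[
[J] = [\ol{A_1(f)}] = w_2(TM|\theta) + \connhom\alpha(A_1|\phi,\Theta).
\]
\Cref{formula:A1-nega} says $\alpha(A_1|\phi,\Theta) \in \Ker\connhom$, where $\connhom$ is the connecting map into $H^2(\BO(3), M_0; \Z_2)$. The correction term we need to kill is its pull-back to $H^2(M, M_0; \Z_2)$ along the classifying map. That pull-back factors through this $\connhom$, because pull-back commutes with the connecting homomorphisms of the long exact sequences of pairs. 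So the correction term vanishes in $H^2(M, M_0; \Z_2)$, and (2) follows.

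\textbf{(2) $\Rightarrow$ (1).} I would proceed in two steps.

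First, build some generic extension $f_1 \colon M \to \R^2$ of $\phi$ with $f_1^{-1}(y) = L$. Since $L$ is null-cobordant, it bounds an oriented Seifert surface $F$. Use the framing $\theta_L$ and a Pontryagin--Thom type construction: map $M \setminus \nu(L)$ into $\R^2 \setminus \Int D^2_y$, extending the boundary map $\partial\nu(L) \to \partial D^2_y$. This is possible because the obstruction lies in $H^2(M, \nu(L); \pi_1(S^1))$ and is killed by the existence of the null-cobordism. Then perturb to a generic map, keeping $y$ outside the image of $M \setminus \nu(L)$.

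Second, the first part gives $[\ol{A_1(f_1)}] = w_2(TM|\theta_L) = [J]$ in $H^2(M, L; \Z_2)$. Hence $\ol{A_1(f_1)}$ and $J$ are $\Z_2$-homologous in $M \setminus L$. Realize such a homology by homotopies of $f_1$ supported away from $\nu(L)$ and away from $f_1^{-1}(y)$. Concretely, one uses Levine's elimination and birth moves for fold curves together with an h-principle for fold maps (Eliashberg type) to move the singular link onto any prescribed link in its mod $2$ homology class.

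I expect this last realization step to be the main obstacle, since it is not formal from the cohomological framework. I would follow the strategy in \cite{Sae20}. The key point to verify is that the homotopies can be chosen so that $y$ never acquires new preimages. I would handle this by first pushing $f_1(M \setminus \nu(L))$ off a neighbourhood of $y$ and performing all moves inside $\R^2$ minus that neighbourhood.
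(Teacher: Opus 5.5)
Your proof of (1) $\Rightarrow$ (2) follows the paper's own route. The paper only remarks that this implication is the special case of \cref{formula:A1-nega} for the prescribed datum $\pr_2 \colon L \times D^2 \to \R^2$, whose induced frame is $\theta_L$. Your naturality argument disposing of $\connhom\alpha(A_1|\phi,\Theta)$ in $H^2(M, M_0; \Z_2)$ is correct. It can be shortened in two ways. First, the classifying map of $J^K(M,\R^2)$ factors through $\iota$, so \cref{theorem:rel-locus} together with \cref{theorem:RCC-is-RTP} gives $[J] = w_2(TM|\theta)$ with no correction term at all. Second, $f$ need not be modified: for a small disk $D_y$ around $y$, $f^{-1}(D_y)$ is already a tubular neighbourhood of $L$ on which $f$ is the projection. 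Note that the paper does not prove (2) $\Rightarrow$ (1); it quotes Saeki for that direction.

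Your (2) $\Rightarrow$ (1) has a genuine gap. The realization step is the whole content of that implication, and you only name it. Moreover, as you formulate it, it cannot be right.

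\begin{itemize}
\item \textbf{Nonemptiness of $J$ must be used.} Nonemptiness of $J$, in every component of $M$, has to enter the argument. If $J = \varnothing$, then (1) is impossible, because a closed $M$ admits no submersion to $\R^2$. Yet (2) can hold. Take $L \subset S^3$ the boundary of a Hopf band, with the band framing. It is the binding of an open book, hence a regular fibre of a map $S^3 \to \R^2$. Both components are $\pm 1$-framed. Since $H^1(S^3;\Z_2)=0$, a normal framing of a link in $S^3$ extends to a global $2$-frame exactly when every component is oddly framed (a Hopf fibre is $\pm1$-framed by the left-invariant frame). Hence $w_2(TS^3|\theta_L) = 0 = [\varnothing]$.
\item \textbf{Your realization claim ignores this.} You claim the singular link can be moved onto an arbitrary link in its $\Z_2$-homology class in $M - L$. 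That claim never invokes nonemptiness, so it cannot be proved as stated. What is actually needed is an Eliashberg-type relative $h$-principle for fold maps $M - \Int M_0 \to \R^2 - \Int D_y$ with prescribed fold set $J$. Working with this target also keeps $y$ from acquiring new preimages. For such an $h$-principle, (2) is essentially the formal condition, and the example above shows it cannot hold without $J \neq \varnothing$.
\item \textbf{Smaller issue in your first step.} The obstruction in $H^2(M, \nu(L); \Z)$ vanishes only if the Seifert surface induces $\theta_L$. It does not vanish for a $1$-framed unknot in $S^3$. So the hypothesis on $L$ must be read as framed null-cobordance, which (1) forces anyway.
\end{itemize}
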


The implication $(1) \Rightarrow (2)$ was proven in \cite[Lemma 2.4]{Sae20} and is a special case of \cref{formula:A1-nega}.
In this case, the corresponding framed prescribed datum is $(\pr_2 \colon L \times D^2 \to \R^2, \Theta)$, where $\Theta$ is a homotopically unique frame on $\R^2$. It is clear that the frame $\theta_L$ is induced by $(\pr_2, \Theta)$.

\vspace{10pt}

In the case where $m \ge n = 1$ and manifolds are oriented, we also have an integral coefficient version.
Here, we consider the action of orientation-preserving diffeomorphisms 
$\mathcal{R}^+ = \Diff^+(\R^m, 0)$
on the space of function-germs $f \colon (\R^m, 0) \to (\R, 0)$ given by
$\sigma.f = f \circ \sigma^{-1}$ ($\sigma \in \mathcal{R}^+$).
Notice that this action descends to $J^K(m, 1)$ and that
\[\mathcal{R}^+ \simeq \SO(m).\]
One can show that $A_1 \subset J^K(m, 1)$, the type consisting of Morse singularities, forms a coorientable $\mathcal{R}^+$-singularity type of codimension $m$ (cf.~\cite{Vas81}, \cite[Chapter 4, \S2.4]{AGLV93}).
Then the Poincar\'e--Hopf theorem yields that
\[\Tp(A_1)(p_i, e) = e\]
in $H^m(\BSO(m); \Z)$ modulo $2$-torsion ($e = 0$ if $m$ is odd).
Also in this situation, \cref{theorem:exp-subm} is available in appropriate form, and the following holds.

\begin{theorem}\label{formula:A1-fxn}
{\it
    Let $V$ be a closed oriented $(m - 1)$-manifold which is oriented null-cobordant. Let $\Theta$ denote the standard frame on $\R$.
    Then, for any framable prescribed datum $\phi \colon V \times [0, \epsilon] \to \R$ such that $\phi^{-1}(0) = V$ and $\phi(V \times [0, \epsilon]) \subset (-\infty, 0]$, 
    \[\alpha(A_1 | \phi, \Theta) \equiv 0\]
    in $H^{m - 1}(V; \Z)$ modulo $\Ker (\connhom \colon H^{m - 1}(V; \Z) \to H^m(\BSO(m), V; \Z))$.
}
\end{theorem}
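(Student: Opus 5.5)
The plan is to argue as in \cref{formula:A1-nega}: identify the relative Thom polynomial of $A_1$ (Morse points) with a relative Euler class. Then show this Euler class is exactly the substitution term $\Tp(A_1)(\hat{p}_i, e(\gamma^m | \theta))$ in the oriented version of \cref{theorem:exp-subm}.

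\textbf{Step 1: identify the fibres.} Since $A_1$ is determined by the $1$-jet, I will take $K = 1$. Then $J^1(m,1) = \Hom(\R^m, \R) \cong \R^m$ via the gradient, with an equivariant inner product for $\SO(m)$. The closure $\ol{A_1}$ is then $\{0\}$, with coorientation given by the orientation of $\R^m$. So $\B_{\mathcal{R}^+} J^1(m,1) \to \BSO(m)$ is (the dual of) the tautological bundle $\gamma^m$. The subset $\B_{\mathcal{R}^+}\ol{A_1}$ is its zero section.

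\textbf{Step 2: identify the prescribed section.} The prescribed section $j^1\phi$ over $V \times [0,\epsilon] \subset \BSO(m)$ is the nowhere-vanishing section $d\phi$. Under the splitting used in the definition of framable prescribed data for submersions, this is precisely the $1$-frame $\theta$ induced by $(\phi, \Theta)$; here $\Theta$ is the standard frame on $\R$ and $\BSO(1)$ is a point, so the second factor is trivial. By \cref{theorem:another} (integral version for coorientable types), $\iota^*\Tp(A_1 | \phi)$ is the primary obstruction to extending this nonvanishing section of $\gamma^m$. By \cref{definition:RCC-Euler}, this is
\[
\iota^*\Tp(A_1 | \phi) = e(\gamma^m | \theta) \in H^m(\BSO(m), V; \Z).
\]
This is the integral analogue of \cref{theorem:RCC-is-RTP}. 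The only care needed is to check that the coorientation conventions for $\ol{A_1}$ and for the Euler class agree; transposing a covector to a vector is orientation-compatible, as in the end of \cref{section:RCC}.

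\textbf{Step 3: conclude via the exact sequence.} Run the proof of \cref{theorem:exp-subm} with $\Z$ coefficients on the pair $(\BSO(m), V)$:
\[
H^{m-1}(V;\Z) \xto{\connhom} H^m(\BSO(m), V;\Z) \xto{j^*} H^m(\BSO(m);\Z).
\]
Substituting the relative classes into $\Tp(A_1) = e$ (modulo $2$-torsion) gives the substitution term $e(\gamma^m | \theta)$. By Step 2 this equals $\iota^*\Tp(A_1|\phi)$ exactly. Hence the correction term lies in $\Ker\connhom$, i.e., $\alpha(A_1|\phi,\Theta) \equiv 0$.

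\textbf{Main obstacle.} The delicate point is the $2$-torsion and odd-$m$ bookkeeping. The absolute formula $\Tp(A_1) = e$ holds only modulo $2$-torsion, while Step 2 gives an exact equality with the relative Euler class. So I would phrase the substitution term directly as $e(\gamma^m|\theta)$, making the statement hold on the nose.

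The hypotheses $\phi^{-1}(0) = V$ and $\phi(V\times[0,\epsilon]) \subset (-\infty,0]$ fix $d\phi$ to be an outward-pointing normal on $V$. This orientation matters only for interpreting $\langle e(TM|\theta), [M,\partial M]\rangle$ via relative Poincar\'e--Hopf as $\chi(M)$ (Morse's formula). The vanishing statement itself does not depend on it.
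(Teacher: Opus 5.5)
Your proof is correct, but it takes a different route from the paper's.

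\textbf{The paper's route.} The paper works with a single extension datum. It picks an oriented null-cobordism $M$ of $V$ and extends $\phi$ to a Morse function $f\colon M\to\R$ with $f^{-1}(0)=V$. The relative Poincar\'e--Hopf theorem then gives $\#A_1(f)=\chi(M)$, i.e.\ $[A_1(f)]=e(TM|\theta)$ with $\theta$ outward. The hypotheses that $V$ is null-cobordant, $\phi^{-1}(0)=V$ and $\phi\le 0$ are used exactly here: to build $M$, and to make the outward normal agree with the gradient of $\phi$, hence with the frame induced by $(\phi,\Theta)$.

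\textbf{Your route.} You prove $\iota^*\Tp(A_1|\phi)=e(\gamma^m|\theta)$ directly in $H^m(\BSO(m),V;\Z)$ from the obstruction-theoretic description. This is the integral analogue of \cref{theorem:RCC-is-RTP} and of the paper's own proof of \cref{formula:A1-nega}.

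\textbf{What your route buys.} First, you get $\connhom\alpha(A_1|\phi,\Theta)=0$ in the universal relative group, which is how the statement is phrased. The paper's argument a priori yields this only after pulling back to $H^m(M,V;\Z)$, and that connecting map can have a strictly larger kernel when $V$ is disconnected. Second, it shows the extra hypotheses on $V$ and $\phi$ are not needed for the vanishing; as you say, they only serve to read the relative Euler number as $\chi(M)$. Third, it makes explicit that the substitution term must be the relative Euler class itself. This matters for odd $m$: there $e(\gamma^m)$ is torsion but $e(\gamma^m|\theta)$ is not. For example, for $V=S^{m-1}$ with outward $\theta$ it pulls back to $\chi(D^m)=1$ in $H^m(D^m,S^{m-1};\Z)$.

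Incidentally, you are more cautious than necessary: $\Tp(A_1)=e(\gamma^m)$ holds exactly, since it is the zero-section pull-back of the Thom class, so no torsion is lost.

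\textbf{What the paper's route buys.} It exhibits the statement as a reformulation of Morse's classical count $\#A_1(f)=\chi(M)$.
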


\begin{proof}
    Choose an oriented null-cobordism $M$ of $V$.
    We also extend $\phi$ to a Morse function $f \colon M \to \R$ so that $f^{-1}(0) = V$.
    By the relative version of the Poincar\'e--Hopf theorem, we have
    \[\# A_1(f) = \chi(M), \quad \textrm{i.e.,} \quad [A_1(f)] = e(TM | \theta),\]
    where $\theta$ is an outward normal vector field defined around $\partial M$.
    The frame $\theta$ coincides with the gradient vector field of $\phi$, and in turn, with the frame induced by $\Theta$ and $\phi$.
    This completes the proof.
\end{proof}

\subsection{Case (v): type \texorpdfstring{$A_2$}{A2} in dimensions \texorpdfstring{$(m, n) = (2k, 2)$}{(m,n)=(2k,2)}}\label{subsection:A2_m>=n=2}

Recall that for $m \ge 2$, the type $A_2 \subset J^K(m, 2)$ is of codimension $m$ and has the Thom polynomial 
\[\Tp(A_2)(w_i, 0) = w_m\]
\cite{Tho55}.
We consider the case where $m = 2k$, and will use the result of H.~Levine~\cite{Lev95}. 
For a generic map $f \colon M \to \R^2$ on an oriented manifold $M$ and an oriented simple closed curve $C \subset M$, one can define the {\it projectivized rotation number} $\prot(f, C)$ to be the mapping degree of
\[C \to \R P^1; \quad x \mapsto \Im d(f|_C)_x \subset T_{f(x)} \R^2 \cong \R^2.\]

\begin{theorem}[\cite{Lev95}; see also~\cite{Lev66}]\label{theorem:Levine}
{\it
    Let $M$ be a compact oriented $2k$-manifold with boundary and $f \colon M \to \R^2$ a generic map which is a submersion around $\partial M$.
    Let $\{S_i\}$ denote the set of components of $\Sigma(f)$.
    Let $\{B_j\}$ denote the set of components of $\partial M$ if $k = 1$, and of $\Sigma(f|_{\partial M})$ if $k > 1$.
    Then there is a unique tuple of orientations of $S_i$ and $B_j$ such that 
    \[\chi(M) = \sum_i \prot(f, S_i) + \frac{1}{2} \sum_j \prot(f, B_j).\]
}
\end{theorem}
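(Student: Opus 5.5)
The plan is to prove the formula by Poincar\'e--Hopf for a height function $g = \langle f, e \rangle \colon M \to \R$, where $e \in S^1 \subset \R^2$ is a generic direction. This turns $\chi(M)$ into a signed count of critical points of $g$, and those critical points sit exactly where the relevant image lines are orthogonal to $e$.

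\textbf{Interior critical points.} Away from $\Sigma(f)$ the differential $df$ is surjective, so $dg = \langle df, e\rangle \neq 0$ there. Hence $\mathrm{Crit}(g) \subset \Sigma(f)$.

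At a fold point $x \in S_i$ the image $\Im df_x$ is a line, and it coincides with $\Im d(f|_{S_i})_x$. So $x$ is critical for $g$ exactly when this line is orthogonal to $e$. Choose $e$ generic so that:
\begin{itemize}
\item $e^\perp$ avoids the finitely many cusp values of the image lines;
\item every such critical point is a nondegenerate Morse point of $g$.
\end{itemize}
Then the critical points on $S_i$ are precisely the preimages of the point $[e^\perp] \in \R P^1$ under the map $S_i \to \R P^1$, $x \mapsto \Im d(f|_{S_i})_x$.

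Working in the normal form $(t, y) \mapsto (t, \sum \pm y_j^2)$ of a fold, $g$ restricted to a slice is a quadratic form plus the term coming from $t$. I will check that its Morse sign $(-1)^{\lambda}$ equals the local degree sign of $S_i \to \R P^1$ for one of the two orientations of $S_i$. This orientation is determined by the fold index together with the side of the image, and it is the same at every point of $S_i$. With $S_i$ so oriented, the signed count on $S_i$ is $\prot(f, S_i)$.

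\textbf{Boundary contribution.} I will use the relative Poincar\'e--Hopf theorem in the form for functions. Besides the interior critical points, $\chi(M)$ picks up the critical points of $g|_{\partial M}$ at which $\nabla g$ points inward (or outward, by convention), counted with the index of $g|_{\partial M}$.
\begin{itemize}
\item For $k > 1$, $f$ is a submersion near $\partial M$, so $\mathrm{Crit}(g|_{\partial M}) \subset \Sigma(f|_{\partial M})$.
\item For $k = 1$, $\partial M$ is a union of curves, and $\mathrm{Crit}(g|_{\partial M})$ consists of the points where $\Im d(f|_{B_j}) \perp e$.
\end{itemize}
In either case these points are the preimages of $[e^\perp]$ under $B_j \to \R P^1$.

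To remove the inward/outward selection, I average the two formulas for $e$ and $-e$. Both directions give the same interior critical set, and each boundary critical point is inward for exactly one of $\pm e$. Hence
\[2\chi(M) = 2\sum_i \prot(f, S_i) + \sum_j \prot(f, B_j)\]
once the $B_j$ are oriented analogously, which gives the stated formula.

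\textbf{Uniqueness.} Reversing the orientation of a component changes the sign of its $\prot$. The orientations above are forced by requiring agreement with the Morse signs point by point, which is the sense in which the tuple is unique. I expect the main obstacle to be this local sign bookkeeping:
\begin{itemize}
\item showing that the fold-index and side-of-image data give a globally consistent orientation of each $S_i$ and $B_j$, including across cusp points of $f$ lying on $S_i$;
\item checking that the Morse sign matches the degree sign of the projectivized Gauss map.
\end{itemize}
I would handle the first item using the local normal forms of the fold and cusp, and the second by direct computation in the fold normal form.
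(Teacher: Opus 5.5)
The paper gives no proof of this statement. It is quoted from Levine and used only as a black box in the proof of Case (v), together with the parity $\prot(f,S_i)\equiv\#(\text{cusps on }S_i)\pmod 2$ and the identity $\prot(f,B_j)=\prot(\phi,B_j)$. So your argument has to stand on its own, and it does.

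Morse theory for $g=\langle f,e\rangle$, combined with the boundary version of Poincar\'e--Hopf and averaged over $\pm e$, is a correct self-contained proof. The averaging is exactly what produces the factor $\frac{1}{2}$: interior critical points are counted twice with the same sign (because $\dim M=2k$ is even), and boundary ones once. Nothing in the argument uses the orientation of $M$. Citing Levine gives the paper brevity; your route additionally gives an explicit orientation rule, which the paper never needs beyond parity (hence its $\pm$ and $\epsilon_j$ in Case (v)).

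Two points need more care than your sketch gives them.

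\emph{The outward version.} The parenthetical \emph{or outward, by convention} is wrong as written. Since $\dim\partial M=2k-1$ is odd, replacing $g$ by $-g$ flips $(-1)^{\lambda_\partial}$, so the outward version reads $\chi(M)=\sum_{\mathrm{int}}(-1)^{\lambda}-\sum_{\mathrm{out}}(-1)^{\lambda_\partial}$. Hence the averaged boundary term is $\sum_p\sigma(p)(-1)^{\lambda_\partial(p)}$, where $\lambda_\partial$ is the index of $g|_{\partial M}$ and $\sigma(p)=\sgn dg_p(n_{\mathrm{in}})$. Dropping $\sigma$ would give $\chi(\partial M)=0$, and the formula would already fail for $D^2$.

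\emph{The orientations.} Near a fold point write $f=\psi\circ(t,Q(y))$, put $h=\langle\psi,e\rangle$, and let $J$ be rotation by $\pi/2$. At a critical point the Hessian of $g$ is $\mathrm{diag}(h_{uu},h_v\,\mathrm{Hess}(Q))$. So nondegeneracy means $[e^\perp]$ is a regular value of the Gauss map, which Sard gives for generic $e$. Since $Q$ has $2k-1$ variables, $(-1)^{\lambda}=\sgn(h_{uu})\sgn(h_v)(-1)^{\iota}$, where $\iota$ is the index of $Q$. The local degree of the Gauss map with respect to $\partial_t$ is $-\sgn(h_{uu})\sgn\langle df(\partial_t),Je\rangle$. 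Their ratio is $(-1)^{\iota}\sgn\det(df(\partial_t),\partial_v\psi)$, which is independent of $e$ and of the choice of side (reversing $\partial_v\psi$ replaces $\iota$ by $2k-1-\iota$). This is your fold-index-and-side rule. At a cusp both factors change sign (the index jumps by one and $df(\tau)$ reverses), so the orientation extends across cusps.

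On $B_j$ the same computation, now with $2k-2$ kernel variables, gives $(-1)^{\lambda_\partial}=\sgn(h_{uu})(-1)^{\iota_\partial}$. Here $\iota_\partial$ is the fold index of $f|_{\partial M}$, whose parity is now intrinsic. One also has $-\sigma(p)\sgn\langle df(\tau),Je\rangle=\sgn\det(df(\tau),df(n_{\mathrm{in}}))$, so the $e$-dependence of $\sigma$ cancels. Thus $B_j$ must be oriented by $(-1)^{\iota_\partial}\det(df(\tau),df(n_{\mathrm{in}}))>0$; for $k=1$ this just says $(df(\tau),df(n_{\mathrm{in}}))$ is positively oriented. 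In other words, \emph{analogously} means the side is supplied by $df(n_{\mathrm{in}})$, not by the image of $f|_{\partial M}$. This orientation is again continuous across cusps of $f|_{\partial M}$.

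Finally, uniqueness of the tuple fails literally: a component with $\prot=0$ can be reversed freely. So your reading — orientations forced by pointwise agreement with Morse signs — is the right one.
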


\begin{theorem}\label{formula:A2-subm}
{\it
    Let $k \ge 1$ be an integer, $V$ an oriented null-cobordant $(2k-1)$-manifold, and $\phi \colon V \times [0, \epsilon] \to \R^2$ a submersion.
    Then, for a homotopically unique frame $\Theta$ on $\R^2$, 
    \[\alpha(A_2 | \phi, \Theta) \equiv 0\]
    in $H^{2k-1}(V; \Z_2)$ modulo $\Ker (\connhom \colon H^{2k-1}(V; \Z_2) \to H^{2k}(\BO(2k) \times \BO(2), V; \Z_2))$.
}
\end{theorem}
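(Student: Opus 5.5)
By \cref{theorem:exp-subm}, whose hypothesis holds because $\Tp(A_2)(w_i,0)=w_{2k}$ lies in $\Z_2[w_{2k-1},w_{2k}]$, it suffices to show the following. For one (equivalently, any) extension datum $f\colon M\to\R^2$ of $\phi$, with $M$ an oriented null-cobordism of $V$, we need
\[
[\ol{A_2(f)}] = w_{2k}(TM\mid\theta) \in H^{2k}(M,M_0;\Z_2).
\]
Indeed, $\connhom\alpha(A_2\mid\phi,\Theta)$ is the difference of the two sides pulled back to $\BO(m)$. So if this difference is zero for a family of test extensions that detects the class, $\alpha$ vanishes modulo $\Ker\connhom$. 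I would argue at the level of the universal pair: the class $\iota^*\Tp(A_2\mid\phi)-w_{2k}(\gamma^{2k}\mid\theta)$ comes from $H^{2k-1}(V;\Z_2)$. Any extension $f$ detects its image through the pairing with $[M,\partial M]$. Since $H^{2k-1}(V;\Z_2)$ is a sum of copies of $\Z_2$ indexed by the components of $V$, I would work componentwise, taking $M$ to be the union of null-cobordisms as needed. The claim then reduces to the numerical identity
\[
\#A_2(f)\equiv w_{2k}[TM\mid\theta]\pmod 2.
\]

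Next I would rewrite both sides. The frame $\theta$ is the $2$-frame of $TM$ along $M_0$ obtained by lifting $d\phi$ to the standard frame. Hence $w_{2k}(TM\mid\theta)$ is the mod $2$ relative Euler class, i.e.\ $w_{2k}[TM\mid\theta]$ is the mod $2$ obstruction to extending the last vector of $\theta$. This vector is a gradient-like field of a component of $\phi$ that points transversely to $\partial M$, though not necessarily outward. I would homotope it through nonvanishing fields near $V$ to the outward normal, or compare the two via their index on $V$. By the relative Poincar\'e--Hopf theorem, the relative Euler number then equals $\chi(M)$ up to a boundary correction $\chi(V)$-type term, which vanishes since $\dim V$ is odd.

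For the left side I would use Levine's \cref{theorem:Levine}. Since $f$ is a submersion near $\partial M$, we have $\Sigma(f|_{\partial M})=\varnothing$ when $k>1$. When $k=1$, each boundary component $B_j$ is mapped by an immersion of a circle and $\prot(f,B_j)$ is even, because the tangent line lifts to an oriented direction. So modulo $2$ the formula gives $\chi(M)\equiv\sum_i\prot(f,S_i)$. The classical fact I would then invoke is that the projectivized rotation number of a fold curve $S_i$ is congruent modulo $2$ to the number of cusps on it: the image line flips orientation exactly at cusps. This gives $\#A_2(f)\equiv\chi(M)\pmod 2$, which matches the right side.

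The main obstacle is the middle step: identifying $w_{2k}[TM\mid\theta]$ with $\chi(M)\bmod 2$ when the vector of $\theta$ used for $w_{2k}$ is not the outward normal. I would handle it by splitting $V$ into the regions where that field points inward or outward relative to the collar. The difference class then equals the mod $2$ Euler characteristic of a codimension-$0$ piece of $V$, and I would show that this vanishes because its boundary is a closed odd-dimensional manifold. If this becomes delicate, a fallback is to note that modulo $2$ the outward/inward ambiguity changes the relative Euler number by $\chi(V)\equiv0$.
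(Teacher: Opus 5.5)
The parts of your proposal that match the paper are these: the reduction to $\#A_2(f)\equiv w_{2k}[TM\mid\theta]\pmod 2$, the reading of $w_{2k}(TM\mid\theta)$ as the mod $2$ relative Euler class of the last frame vector $\theta_1$, and the cusp parity of $\prot(f,S_i)$. The gap is in the middle, where you discard two boundary terms that are in general odd.

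\textbf{First term.} The term $\frac{1}{2}\sum_j\prot(f,B_j)$ in Levine's formula does not vanish mod $2$.
\begin{itemize}
\item For $k>1$, your claim $\Sigma(f|_{\partial M})=\varnothing$ is false. That $f$ is a submersion on the collar says nothing about $\phi|_V$. Its singular set $\{x\in V \mid \Ker d\phi_x\subset T_xV\}$ is generically a union of curves. It is never empty when $V\neq\varnothing$, since a map from a closed manifold to $\R^2$ cannot be a submersion.
\item For $k=1$, $\prot(f,B_j)=2\rot(\phi,B_j)$ is indeed even, but the formula uses half of it, which can be odd.
\end{itemize}

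\textbf{Second term.} $w_{2k}[TM\mid\theta]$ is not $\chi(M)$ mod $2$. The field $\theta_1$ is not transverse to $V$ in general. The discrepancy $\langle d_e(\theta_0,\theta_1),[V]\rangle$ with the outward field $\theta_0$ is not a $\chi(V)$-type term. A codimension-$0$ piece of $V$ has \emph{even}-dimensional boundary and can have odd Euler characteristic; a hemisphere of $S^3$ is a ball.

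\textbf{Counterexamples.} Take the inclusion $D^2\subset\R^2$ or the linear projection $D^4\to\R^2$.
\begin{itemize}
\item $\#A_2(f)=0=w_{2k}[TM\mid\theta]$, because $\theta_1=\partial_2$ extends as a constant field.
\item $\chi(M)=1$.
\item $\theta_1$ is tangent to $\partial M$ along $\{x_2=0\}$.
\item $\langle d_e(\theta_0,\theta_1),[V]\rangle=\pm1$.
\item Levine's boundary term equals $1$; for $D^4$, the curve $B=S^3\cap\{x_3=x_4=0\}$ has $\prot=2$.
\end{itemize}
So your intermediate congruence $\#A_2(f)\equiv\chi(M)\pmod 2$ is false. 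The target identity holds only because your two errors cancel.

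That cancellation is exactly what the paper proves. After combining Levine's formula with the relative Poincar\'e--Hopf theorem, it reduces the statement to
\[\langle d_e(\theta_0,\theta_1),[V]\rangle\equiv\frac{1}{2}\sum_j\prot(\phi,B_j)\pmod 2.\]
For $k=1$, both sides are identified with $\sum_j\rot(\phi,B_j)$.

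For $k>1$, the paper introduces a homotopy-invariant count for nowhere-vanishing sections $\sigma=(a,\sigma')$ of $\epsilon^1\oplus E$, where $E=\R\theta_2\oplus(\Ker d\phi)|_V$:
\[I(\sigma)=\sum_{\sigma'(x)=0}\sgn(a(x))\ind(\sigma',x).\]
It evaluates $I$ on two homotopic sections.
\begin{itemize}
\item On the outward field $\theta_0$, it gives $\pm2\langle d_e(\theta_0,\theta_1),[V]\rangle$.
\item On the metric dual $\nu^\sharp$ of the projection onto $\R\theta_0$ along $TV$, it gives a signed sum of the $\prot(\phi,B_j)$. The $E$-component of $\nu^\sharp$ vanishes exactly at the points of $\Sigma(\phi|_V)=\bigcup_jB_j$ where $\theta_2$ is tangent to $V$.
\end{itemize}
Your proposal has no substitute for this step.
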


\begin{proof}
    Take an extension datum $f \colon M \to \R^2$ of $\phi$, where $M$ is an oriented null-cobordism of $V$.
    First, by \cref{theorem:Levine}, we have
    \[\chi(M) = \sum_i \prot(f, S_i) + \frac{1}{2} \sum_j \prot(f, B_j).\]
    Second, let $\theta_0$ be an outward vector field of $V = V \times \{0\} \subset V \times [0, \epsilon]$
    and $\theta_1$ a vector field on $V$ tangent to $M$ such that $d\phi(\theta_1)$ is nowhere-vanishing.
    Then, by the relative version of the Poincar\'e--Hopf theorem, we have 
    \begin{align*}
        \chi(M) 
        &= e[TM | \theta_0] \\
        &= e[TM | \theta_1] + \langle \connhom d_e(\theta_0, \theta_1), [M, V] \rangle \\
        &= e[TM | \theta_1] + \langle d_e(\theta_0, \theta_1), [V] \rangle.
    \end{align*}
    Third, by geometric observation, the term $\prot(f, S_i)$ has the same parity as the number of cusps of the plane curve $f|_{S_i}$. Hence, we have
    \[\sum_i \prot(f, S_i) \equiv \# A_2(f) \pmod 2.\]
    Fourth, it is clear that for each $j$,
    \[\prot(f, B_j) = \prot(\phi, B_j).\]
    Combining these four equalities, we have
    \[\# A_2(f) \equiv e[TM | \theta_1] + \langle d_e(\theta_0, \theta_1), [V] \rangle + \frac{1}{2} \sum_j \prot(\phi, B_j) \pmod 2.\]
    Furthermore, comparing it to \cref{theorem:exp-subm} and the equality $e[TM | \theta_1] \equiv w_{2k}[TM | \theta_1]$ yields that it suffices to show that
    \[\langle d_e(\theta_0, \theta_1), [V] \rangle \equiv \frac{1}{2} \sum_j \prot(\phi, B_j) \pmod 2.\]

    When $k = 1$, since the restriction $\phi|_{B_j}$ is an immersion, 
    the usual rotation number $\rot(\phi, B_j)$ is properly defined.
    Furthermore, by geometric observation, 
    \begin{align*}
        \frac{1}{2} \sum_j \prot(\phi, B_j) 
        &= \sum_j \rot(\phi, B_j) \\
        &= \sum_j \langle d_e(\theta_0, \theta_1), [B_j] \rangle \\
        &= \pm \langle d_e(\theta_0, \theta_1), [V] \rangle.
    \end{align*}
    The last sign ambiguity is due to the possibility that the orientations of $B_j$ chosen in \cref{theorem:Levine} differ from the orientation of $V$.
    This completes the proof for the case where $k = 1$. 

    Assume that $k > 1$. We use the following invariant.

    \begin{definition}
        Let $E$ be an oriented vector bundle over $V$ of rank $2k-1$.
        For a generic nowhere-vanishing section $\sigma = (a, \sigma') \colon V \to \epsilon^1 \oplus E$, we consider the integer
        \[I(\sigma) = \sum_{\sigma'(x) = 0} \sgn(a(x)) \cdot \ind(\sigma', x),\]
        where $\ind(\sigma', x)$ is the local degree of $\sigma'$ around $x$ with respect to the orientation of $V$ and $E$.
    \end{definition}

    It is elementary that the value $I(\sigma)$ depends only on the homotopy class of $\sigma$ through nowhere-vanishing sections.
    We will consider the bundle $E = \epsilon^1 \oplus (\Ker d\phi)|_V$, more precisely, 
    \begin{align*}
        T(V \times [0, \epsilon])|_V 
        &= \R \theta_1 \oplus \left(\R \theta_2 \oplus (\Ker d\phi)|_V \right) \\
        &= \epsilon^1 \oplus E,
    \end{align*}
    and the following two sections.
    One is the outward vector field $\theta_0$.
    On the other hand, the projection
    \[\nu \colon \epsilon^1 \oplus E = T(V \times [0, \epsilon])|_V \to \R \theta_0\]
    forms a section of $(\epsilon^1 \oplus E)^* = \epsilon^1 \oplus E^*$. 
    We choose a Riemannian metric $g$ which makes the decomposition orthogonal and is standard on $\R \theta_i$ for each $i = 1, 2$.
    Then the other section which we consider is the dual of $\nu$ under $g$:
    \[\nu^\sharp = (\nu(\theta_1), (\nu|_E)^\sharp) \colon V \to \R \theta_0 \oplus E = \epsilon^1 \oplus E.\]
    It is obvious that the two sections $\theta_0$ and $\nu^\sharp$ are homotopic through nowhere-vanishing sections of $\epsilon^1 \oplus E$.
    Therefore, we have 
    \[I(\theta_0) = I(\nu^\sharp)\]
    and compute both sides.

\begin{claim}
{\it
    $I(\theta_0) = \pm 2 \langle d_e(\theta_0, \theta_1), [V] \rangle$.
}
\end{claim}

\begin{proof}
    Write $\theta_0 = (a, v)$, where $a$, $v$ are the $\theta_1$- and $E$-component of $\theta_0$, respectively.
    Since $E$ admits the nowhere-vanishing section $\theta_2$,
    \[\sum_{v(x) = 0} \ind(v, x) = \langle e(E), [V]\rangle = 0.\]
    Hence,
    \begin{align*}
        I(\theta_0)
        &= \sum_{\substack{v(x) = 0 \\ a(x) > 0}} \ind(v, x)
        - \sum_{\substack{v(x) = 0 \\ a(x) < 0}} \ind(v, x) \\
        &= -2 \sum_{\substack{v(x) = 0 \\ a(x) < 0}} \ind(v, x).
    \end{align*} 
    Now, notice that if $v(x) = 0$, then $\theta_0(x) = a(x) \theta_1(x)$.
    Hence, for $x \in V$,
    \[v(x) = 0 \text{ and } a(x) < 0 \iff \theta_0(x) \text{ is a negative multiple of } \theta_1(x).\]
    Since $d_e(\theta_0, \theta_1)$ is the precise obstruction to homotoping $\theta_1$ to $\theta_0$, it counts, with signs, all such points $x \in V$. Namely,
    \[\sum_{\substack{v(x) = 0 \\ a(x) < 0}} \ind(v, x) = \pm \langle d_e(\theta_0, \theta_1), [V]\rangle.\]
    This shows the claim.
\end{proof}

\begin{claim}
{\it
    $\displaystyle I(\nu^\sharp) = \sum_j \epsilon_j \cdot \prot(\phi, B_j)$ for some $\epsilon_j \in \{\pm 1\}$.
}
\end{claim}

\begin{proof}
    Recall that 
    \[I(\nu^\sharp) = \sum_{(\nu|_E)^\sharp(x) = 0} \sgn(\nu(\theta_1(x))) \cdot \ind((\nu|_E)^\sharp, x).\]
    Since the metric $g$ makes $E = \R \theta_2 \oplus (\Ker d\phi)|_V$ orthogonal, for $x \in V$, 
    \begin{align*}
        (\nu|_E)^\sharp(x) = 0 
        &\iff \nu(\theta_2(x)) = 0 \text{ and } \nu|_{\Ker d\phi_x} = 0 \\
        &\iff \nu(\theta_2(x)) = 0 \text{ and } \Ker d\phi_x \subset T_x V \\
        &\iff \nu(\theta_2(x)) = 0 \text{ and } x \in \Sigma(\phi|_V) \\
        &\iff \nu(\theta_2(x)) = 0 \text{ and } x \in B_j \text{ for some } j
    \end{align*}
    Hence, 
    \[I(\nu^\sharp) = \sum_j \sum_{\substack{x \in B_j \\ \nu(\theta_2(x)) = 0}} \sgn(\nu(\theta_1(x))) \cdot \ind((\nu|_E)^\sharp, x).\]
    On the other hand, the integer $\prot(\phi, B_j)$ was the mapping degree of the map
    \[\gamma_j \colon B_j \to \R P^1; \quad x \mapsto \Im d(\phi|_V)_x\]
    (the curve $B_j$ was oriented when we apply \cref{theorem:Levine}).
    Then it suffices to show that for any $j$, there is a sign $\epsilon_j \in \{\pm 1\}$ such that
    \[\sum_{\substack{x \in B_j \\ \gamma_j(x) = \R e_2}} \sgn d(\gamma_j)_x = \epsilon_j \sum_{\substack{x \in B_j \\ \nu(\theta_2(x)) = 0}} \sgn(\nu(\theta_1(x))) \cdot \ind((\nu|_E)^\sharp, x).\]

    Fix an index $j$. In fact, for $x \in B_j$, 
    \[\gamma_j(x) = \R e_2 \iff \nu(\theta_2(x)) = 0.\] 
    The direction `$\Leftarrow$' is obvious since $x \in \Sigma(\phi|_V)$ and $\theta_2(x) \in T_x V$.
    We show `$\Rightarrow$'.
    Suppose that $\Im d(\phi|_V)_x = \R e_2$.
    Then there is a vector $v \in T_x V$ such that
    \[d\phi_x(v) = e_2 = d\phi_x(\theta_2(x)).\]
    Hence,
    \[v - \theta_2(x) \in \Ker d\phi_x.\]
    Since $\Ker d\phi_x \subset T_x V$ and $v \in T_xV$, we have $\theta_2(x) \in T_x V$, i.e., $\nu(\theta_2(x)) = 0$.

    Finally, we compare the signs.
    Let $x \in B_j$ be a point such that $\gamma_j(x) = \R e_2$.
    Since $\nu(\theta_2(x)) = 0$, the map $\gamma_j$ is represented by the function $\nu(\theta_2) / \nu(\theta_1)$ around $x$.
    Hence, 
    \begin{align*}
        \sgn d(\gamma_j)_x 
        &= \sgn \left( \frac{1}{\nu(\theta_1(x))}d(\nu(\theta_2)|_{B_j})_x \right) \\
        &= \sgn \nu(\theta_1(x)) \cdot \sgn d(\nu(\theta_2)|_{B_j})_x.
    \end{align*}
    On the other hand, 
    \[\ind((\nu|_E)^\sharp, x) = \sgn d(\nu(\theta_2)|_{B_j})_x \cdot \sgn \det d\left(\nu'|_{N_x}\right)_x,\]
    where $\nu' = (\nu|_{(\Ker d\phi)|_V})^\sharp$, $N$ is the normal bundle to $B_j$ (which is identified with a tubular neighborhood), and $N_x$ is the fiber at $x$.
    Notice that an orientation of $N$ is chosen to be compatible with those of $B_j$ and $V$, and that of $(\Ker d\phi)|_V$ is also chosen to be compatible with those of $\R \theta_2$ and $E = \R \theta_2 \oplus (\Ker d\phi)|_V$.
    The correspondence $x \mapsto \sgn \det d\left(\nu'|_{N_x}\right)_x$ is constant on $B_j$. 
    Let $\epsilon_j$ denote the constant sign. Then
    \[\sgn d(\gamma_j)_x = \epsilon_j \cdot \sgn \nu(\theta_1(x)) \cdot \ind((\nu|_E)^\sharp, x).\]
    This shows the claim.
\end{proof}

Thus, \cref{formula:A2-subm} is proven also for the case where $k > 1$.
\end{proof}

\section{Suggestive evidence of relative Thom polynomials for multi-singularities}\label{section:multi-case}

In this section, we give the precise statement and proof of the relative version of Herbert's multiple-point formula.
We also apply it to Ekholm's linking invariant.
This section can be read independently of other sections.

\subsection{Herbert's formula}\label{section:abs-multi}

Let $M$ be a compact $m$-manifold, $N$ an $n$-manifold, and $f \colon M \imm N$ a generic (i.e., self-transverse) immersion.

For an integer $k \ge 1$, we define the {\it multiple-point loci} of $f$ by
\begin{alignat*}{2}
    N_k &= N_k(f) = \{ y \in N \mid \# f^{-1}(y) = k \} &{}& \subset N, \\
    M_k &= M_k(f) = f^{-1}(N_k) &{}& \subset M.
\end{alignat*}
Then their closures
$\overline{N_k} \subset N$, $\overline{M_k} \subset M$
are locally unions of some normally crossing linear subspaces.
Hence, they admit the Whitney stratifications
\[\overline{N_k} = \bigcup_{i = k}^\infty N_i, \quad \overline{M_k} = \bigcup_{i = k}^\infty M_i\]
and form Whitney stratified cycles (both the unions stop at finite multiplicity).
Note that their normal bundles consist of copies of the fiber of the normal bundle $\nu_f$ of $f$.

Then we have two types of cohomology classes as follows. Here, the coefficient ring is chosen to be $\Z$ if $\nu_f$ is oriented and $r$ is even, or $\Z_2$ otherwise.
Put $r = n - m$.
\begin{itemize}
    \item 
        The {\it multiple-point loci classes}:
        \[n_k =  \left[ \ol{N_k} \right] \in H^{kr}(N), \quad m_k =  \left[ \ol{M_k} \right] \in H^{(k-1)r}(M).\]
    \item   
        The {\it normal Euler class} of $f$:
        \[e(\nu_f) = \left[ s^{-1}(M) \right] \in H^r(M),\]
        where $s \colon M \to \nu_f$ is a section which is transverse to the zero section. This is independent of the choice of $s$.
\end{itemize}

In terms of these classes, Herbert's formula is stated as follows.

\begin{theorem}[\cite{Her75, Ron80}]\label{theorem:HR-abs}
{\it
    Let $k \ge 1$ be an integer.
    Then 
    \[m_{k + 1} = f^* n_k - e(\nu_f) \smile m_k\]
    in $H^{kr}(M)$.
}
\end{theorem}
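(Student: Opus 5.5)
The approach is the classical one of Herbert and Ronga: realise both sides of the identity as fundamental classes of explicit cycles in $M$, and compare them inside the double-point manifold of $f$.

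\textbf{Step 1 (the double-point manifold).} Put
\[\tilde{M}_2 = \{(x, x') \in M \times M \mid x \ne x',\ f(x) = f(x')\}.\]
Self-transversality makes $\tilde{M}_2$ a closed $(m-r)$-manifold. Let $p_1 \colon \tilde{M}_2 \to M$ be the first projection, and let $\tilde{f} = f \circ p_1 \colon \tilde{M}_2 \imm N$. Then $\tilde{f}$ is again a generic immersion, with normal bundle $p_2^* \nu_f \oplus p_1^* \nu_f$, and $p_1$ is an immersion with normal bundle $p_2^*\nu_f$. In this notation $m_2 = (p_1)_!1$.

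\textbf{Step 2 (the key identity).} I would prove
\[f^* f_! u = e(\nu_f) \smile u + (p_1)_! p_2^* u, \qquad u \in H^*(M).\]
To do this, perturb $f$ by a section $s$ of $\nu_f$ that is transverse to the zero section, obtaining $f' \colon M \to N$. Then $f^{-1}(f'(M))$ splits into two pieces:
\begin{itemize}
\item a piece near the diagonal, which is exactly $s^{-1}(0)$ and contributes $e(\nu_f)$;
\item the sheets over $\tilde{M}_2$, which contribute through $p_1$.
\end{itemize}
Capping with $u$ gives the stated formula. The orientation and sign conventions (coefficients $\Z$ versus $\Z_2$) follow the paper's choice of coefficient ring.

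\textbf{Step 3 (induction on $k$).} Every multiple-point class of $f$ is a pushforward of the corresponding class of $\tilde{f}$:
\[n_k(f) = \tfrac{1}{k}\, f_! m_k(f), \qquad m_{k+1}(f) = (p_1)_! m_k(\tilde{f}).\]
Here the division by $k$ is genuine over $\Z$, because we have a cycle-level $k$-fold cover. Over $\Z_2$ one avoids the division by working at cycle level with $\ol{N_k} = f(\ol{M_k})$, which is a degree-one image.

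Apply Step 2 with $u = m_k$:
\[f^* f_! m_k = e(\nu_f)\, m_k + (p_1)_! p_2^* m_k.\]
At the cycle level, $p_2^* \ol{M_k(f)}$ corresponds to points of $\tilde{M}_2$ whose second sheet is a $k$-fold point. This means $p_1$ lands in $\ol{M_{k+1}}$, with multiplicity $k$ coming from the choice of partner sheet. Hence $(p_1)_! p_2^* m_k = k\, m_{k+1}$.

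It then remains to reconcile the factor $k$ with $f^* n_k$. The cleaner route, which I would follow, is to bypass the division altogether. Apply $f^*$ directly to the cycle $\ol{N_k}$, made transverse by the same normal push, to get
\[f^* n_k = e(\nu_f)\, m_k + m_{k+1}.\]
The reason is that a point of $M$ over $\ol{N_k}$ either lies on one of the $k$ preimage sheets itself, which gives the Euler term, or lies on an extra $(k+1)$-st sheet, which gives $m_{k+1}$ with multiplicity one.

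\textbf{Main obstacle.} The hardest step is this transversality computation of $f^*[\ol{N_k}]$: I need to show that pushing $\ol{N_k}$ off itself along $\nu_f$ separates the two contributions cleanly. The subtle part is the self-contribution. Near a point of $M_k$ the cycle $\ol{N_k}$ has $k$ local branches, but only the branch containing the given sheet meets that sheet non-transversally. The other branches meet it transversally and give precisely the double-point correction already accounted for in $\ol{M_{k+1}}$. I would verify this in a local normal-crossing model $\R^m$ sheets in $\R^n$, and then globalize using the Whitney stratification.
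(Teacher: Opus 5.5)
Your committed route (push $f$ off along a generic section $s$ of $\nu_f$ to $g\simeq f$, then split $g^{-1}(\overline{N_k})$ into the self-sheet part $s^{-1}(0)\cap\overline{M_k}$, dual to $e(\nu_f)\smile m_k$, and the extra-sheet part, cobordant to $\overline{M_{k+1}}$ via the homotopy $\exp(\rho(t)s)$) is exactly Herbert's argument, which the paper runs with boundary bookkeeping as the near/far decomposition in the proof of Theorem~\ref{theorem:HR-rel}. However, the abandoned Step~3 is false, not merely inconvenient: $p_2$ is not transverse to $\overline{M_k}$ (pairs of distinct points of $M_k$ over one point of $N_k$ form an excess component), so in fact $(p_1)_!p_2^*m_k=(k-1)\,e(\nu_f)\smile m_k+k\,m_{k+1}$ (for Boy's surface with $k=2$ this is $w_1(\R P^2)^2\neq 0$, not $2m_3=0$); and the local count in your last paragraph is reversed, since near a $(k+1)$-fold point $k$ of the $k+1$ branches of $\overline{N_k}$ lie inside the given sheet and exactly one meets it transversally, which is why $m_{k+1}$ enters with multiplicity one.
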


The following is immediate by solving the above recursive equation.

\begin{corollary}\label{theorem:HR-abs-cor}
{\it
    Let $k \ge 1$ be an integer.
    Then 
    \[m_{k + 1} = f^* n_k + (-1)^k e(\nu_f)^k\]
    in $H^{kr}(M)$.
}
\end{corollary}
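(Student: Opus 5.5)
We prove \cref{theorem:HR-abs-cor} by induction on $k$, unrolling the recursion of \cref{theorem:HR-abs}. We also use the naturality of $f^*$ together with the relation between $f^* n_k$ and the self-intersection data. First note that $m_1 = [\ol{M_1}] = [M] = 1 \in H^0(M)$, since every point of $M$ lies in $\ol{M_1}$. Likewise $n_0 = 1$, so that the recursion makes sense at $k = 0$.

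For the base case $k = 1$, \cref{theorem:HR-abs} gives directly
\[m_2 = f^* n_1 - e(\nu_f) \smile m_1 = f^* n_1 - e(\nu_f),\]
which is the claimed formula. For the inductive step, the naive substitution
\[m_{k+1} = f^* n_k - e(\nu_f)\smile\big(f^* n_{k-1} + (-1)^{k-1} e(\nu_f)^{k-1}\big)\]
produces an extra cross term $-e(\nu_f)\smile f^* n_{k-1}$. To match the stated formula, this term must be absorbed. The key step is therefore to show
\[e(\nu_f) \smile f^* n_{k-1} = 0,\]
or more precisely that it is accounted for in the convention for $f^*n_k$.

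We plan to handle this via the projection formula $f_*(f^* x \smile y) = x \smile f_* y$ together with $f^* f_* 1 = f^* n_1 + e(\nu_f)$, the classical double-point relation. Alternatively, we reinterpret $f^* n_k$ as $f^*f_* m_k$, i.e.\ $f^* n_k$ in the sense used by Herbert, where $n_k$ is the pushforward class $f_* m_k$ counted with multiplicity. Under that reading the corollary is literally the iteration $m_{k+1} = f^*f_*m_k - e\,m_k$, and the cross terms telescope.

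The main obstacle is this bookkeeping: deciding which normalization of $n_k$ makes the closed formula exactly true, and verifying that the cross terms cancel. We would first try to check the identity $f^*(f_* m_k) = m_{k+1} + e(\nu_f)\smile m_k$ by comparing the pushforward with the geometric preimage. The preimage of $\ol{N_k}$ splits into the sheet through the point, which contributes the Euler class term, and the other sheets, which contribute $m_{k+1}$. Once this is settled, the induction is a one-line computation, with signs $(-1)^k$ arising from repeated subtraction of $e(\nu_f)$.
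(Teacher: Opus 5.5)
Your base case and your identification of the cross term are correct. The gap is that the step you call key cannot be proved, because it is false. Neither $e(\nu_f)\smile f^*n_{k-1}=0$ nor the displayed closed formula holds in general.

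Solving the recursion of \cref{theorem:HR-abs} honestly, by induction from $m_1=1$ and writing $n_0=1$, gives
\[
m_{k+1}=\sum_{j=0}^{k}(-1)^{k-j}\,e(\nu_f)^{k-j}\smile f^*n_j .
\]
The middle terms $1\le j\le k-1$ need not vanish. Here is a counterexample with $k=2$:
\begin{itemize}
\item Let $f\colon \C P^2\into\C P^3$ be a linear embedding, so $r=2$ and the coefficients are $\Z$.
\item With $h$ the hyperplane class, $e(\nu_f)=h$ and $f^*n_1=h$.
\item Since $f$ is an embedding, $m_2=m_3=0$ and $n_2=0$.
\end{itemize}
Herbert's recursion is consistent: $m_2=h-h=0$ and $m_3=0-h\smile 0=0$. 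The stated formula, however, gives $m_3=f^*n_2+e(\nu_f)^2=h^2\neq 0$ in $H^4(\C P^2;\Z)$. Equivalently, your cross term is $e(\nu_f)\smile f^*n_1=h^2\neq0$. The embedding $\R P^2\into\R P^3$ gives the same failure mod $2$: there $e=a$ and $a^2\neq 0$.

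Neither of your proposed escape routes helps.
\begin{itemize}
\item A recursion of the shape $m_{k+1}=f^*x_k-e(\nu_f)\smile m_k$ produces cross terms $e(\nu_f)^{k-j}\smile f^*x_j$ whatever the classes $x_j$ are. So nothing telescopes under any renormalization of $n_k$.
\item The reading $n_k=f_*m_k$ is not available anyway. The map $\ol{M_k}\to\ol{N_k}$ is generically $k$-to-$1$, so $f_*m_k=k\,n_k$, whereas \cref{theorem:HR-abs} concerns $n_k=[\ol{N_k}]$ itself.
\item The double-point relation you quote is off. It should read $f^*f_*1=f^*n_1=m_2+e(\nu_f)$, not $f^*n_1+e(\nu_f)$.
\item Your near/far splitting of the preimage of $\ol{N_k}$ only reproves \cref{theorem:HR-abs}. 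It says nothing about the cross terms.
\end{itemize}

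The paper's own justification, ``immediate by solving the recursive equation'', silently drops exactly the terms you found. What can actually be proved is the sum formula above. The stated closed form holds if and only if
\[
\sum_{j=1}^{k-1}(-1)^{k-j}e(\nu_f)^{k-j}\smile f^*n_j=0 .
\]
This happens, for instance, whenever $H^{>0}(N)=0$: then $f^*n_j=0$ for $j\ge1$ and $m_{k+1}=(-1)^k e(\nu_f)^k$. That case covers where the paper ultimately uses the formula, with $N$ a half-space in \cref{theorem:gen-ESz03}. You should therefore either add such a hypothesis or state and prove the sum formula. Your first inductive computation already essentially proves the latter.
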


\subsection{Relative version of Herbert's formula}\label{section:rel-multi}

Again, let $M$ be a compact $m$-manifold, $N$ an $n$-manifold, and $f \colon M \imm N$ a generic (i.e., self-transverse) immersion. Let $r = n - m$ denote the codimension of $f$.
Under additional assumptions on the boundary, the basic cohomology classes in the previous subsection can be upgraded into relative versions as follows.

First, we assume that $f$ satisfies that $f^{-1}(\partial N) = \partial M$, and put
\[f_\partial = f|_{\partial M}^{\partial N} \colon \partial M \imm \partial N.\]
We also assume that $f$ is transverse to $\partial N$, i.e., the restriction of $f$ to a collar neighborhood of $\partial M$ is of the form $f_\partial \times \id_{[0, \epsilon]}$.
Then we obtain the following.
\begin{itemize}
    \item 
        If $M_{k + 1}(f_\partial) = \varnothing$, then the multiple-point loci of $f$ are disjoint from the boundary.
        Therefore, we obtain relative classes:
        \[n_k^\rel =  \left[ \ol{N_k} \right] \in H^{kr}(N, \partial N), \quad m_k^\rel =  \left[ \ol{M_k} \right] \in H^{(k-1)r}(M, \partial M).\]
    \item   
        If $f_\partial$ admits a nowhere-vanishing normal vector field $\theta$, then the normal Euler class of $f$ {\it relative to} $\theta$ is defined. Here, we express it by
        \[e(\nu_f | \theta) = \left[ s^{-1}(M) \right] \in H^r(M, \partial M),\]
        where $s \colon M \to \nu_f$ is a section which is transverse to the zero section and $s|_{\partial M} = \theta$. This is independent of the choice of $s$.
\end{itemize}

Then a relative version of Theorem~\ref{theorem:HR-abs} is stated as follows.

\begin{theorem}\label{theorem:HR-rel}
{\it
    Let $k \ge 2$ be an integer.
    Assume that $M_k(f_\partial) = \varnothing$ and that $f_\partial$ admits a nowhere-vanishing normal vector field $\theta$.
    Then
    \[m_{k + 1}^\rel = f^* n_k^\rel - e(\nu_f | \theta) \smile m_k^\rel\]
    in $H^{kr}(M, \partial M)$.
}
\end{theorem}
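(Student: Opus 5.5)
We prove the identity geometrically, following the standard proof of \cref{theorem:HR-abs}, and check that every cycle involved stays away from $\partial M$, so that each class lifts to $H^*(M, \partial M)$. Set $D = \ol{\{(x, x') \in M \times M \mid x \ne x',\ f(x) = f(x')\}}$ and let $p \colon D \to M$ be the first projection. Then $\ol{M_{k+1}}$ is the image under $p$ of the locus of points of $D$ lying over $\ol{M_k}$ in the second factor. This locus is the $k$-fold point locus of the immersion $p$, counted with multiplicity.

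Concretely, we compare the relative cycles $f^{-1}(\ol{N_k})$ and $\ol{M_{k+1}}$ in $M$. Since $f$ is self-transverse, $f^{-1}(\ol{N_k})$ decomposes as the union of two parts. The first is $\ol{M_{k+1}}$: near such a point, some branch of $f$ is not among the $k$ sheets forming the $k$-fold point. The second is the part of $\ol{M_k}$ where the branch through $x$ is itself one of the $k$ sheets. Along the second part, $f^{-1}(\ol{N_k})$ is not transverse. Its excess normal bundle is $\nu_f$ restricted to $\ol{M_k}$, and by the excess intersection formula it contributes the Euler class $e(\nu_f)$ capped with $[\ol{M_k}]$. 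To obtain the relative version, perturb $f$ near $\ol{M_k}$ by pushing the sheet through $x$ along a generic section $s$ of $\nu_f$ with $s|_{\partial M} = \theta$. Equivalently, pull back $[[\ol{N_k}]]$ via $f_s = \exp \circ\, s$. The set $f_s^{-1}(\ol{N_k})$ is then the transverse union of $\ol{M_{k+1}}$ and $s^{-1}(0) \cap \ol{M_k}$. Fundamental classes give
\[f^* n_k^\rel = m_{k+1}^\rel + e(\nu_f | \theta) \smile m_k^\rel.\]
The cup product here uses the pairing $H^r(M, \partial M) \otimes H^{(k-1)r}(M, \partial M) \to H^{kr}(M, \partial M)$. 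Relative cup products land correctly because both factors are refined (Thom) classes of cycles supported away from $\partial M$, and their transverse intersection is again supported away from $\partial M$.

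The boundary bookkeeping is as follows. The hypothesis $M_k(f_\partial) = \varnothing$ implies $M_{k'}(f_\partial) = \varnothing$ for all $k' \ge k$. Since $f$ is a product $f_\partial \times \id_{[0,\epsilon]}$ on the collar, the closures $\ol{M_k}$, $\ol{M_{k+1}}$ and $\ol{N_k}$ miss a neighbourhood of the boundary, so $m_k^\rel$, $m_{k+1}^\rel$ and $n_k^\rel$ are defined. Next, $f^* n_k^\rel$ makes sense in $H^{kr}(M, \partial M)$ because $f^{-1}(\partial N) = \partial M$. The perturbation by $s$ can be chosen supported away from the collar, since $\theta$ is nowhere zero there and hence $s^{-1}(0)$ misses the collar. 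This makes the identity hold at the level of refined classes relative to the collar.

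The main obstacle is the local analysis at points of $\ol{M_k}$ of higher multiplicity. There we must verify that after the perturbation by $s$ the intersection is transverse, with multiplicities and (co)orientations matching the absolute case. We handle this exactly as in Herbert's and Ronga's proof of \cref{theorem:HR-abs}. The check is local in the interior, so it transfers verbatim, and the only new input is the choice of $s$ extending $\theta$. With $\Z$ coefficients the signs come from the orientation of $\nu_f$ with $r$ even, as in the absolute case.
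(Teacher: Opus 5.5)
Your proposal follows the same route as the paper, which is Herbert's push-off argument adapted to the boundary: replace $f$ by $g=\exp\circ s$ for a section $s$ of $\nu_f$ with $s|_{\partial M}=\theta$, split $g^{-1}(\overline{N_k})$ into $s^{-1}(0)\cap\overline{M_k}$ (dual to $e(\nu_f|\theta)\smile m_k^\rel$) and a far part (dual to $m_{k+1}^\rel$), and check that nothing reaches $\partial M$.

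Two statements should be corrected when you write it out. First, the far part is not $\overline{M_{k+1}}$ itself, which is in general not contained in $g^{-1}(\overline{N_k})$; it is a displaced copy of $\overline{M_{k+1}}$, identified with it by the cobordism swept out under the homotopy $\exp(t\,s)$, $t\in[0,1]$. Second, the perturbation is not supported away from the collar, since there $s=\theta\neq0$; only its zero set misses the collar. So both the fact that this cobordism lies in $\Int M$ and the equality $g^*n_k^\rel=f^*n_k^\rel$ should be deduced from the observation that, for $s$ small, the homotopy $\exp(t\,s)$ keeps a collar of $\partial M$ away from $\overline{N_k}$.
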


We also have the following modified version.

\begin{theorem}\label{theorem:HR-rel-modified}
{\it
    Let $k \ge 1$ be an integer.
    Assume that $M_{k + 1}(f_\partial) = \varnothing$ and that $f_\partial$ admits a nowhere-vanishing normal vector field $\theta$.
    Then
    \[m_{k + 1}^\rel = \left[ g^{-1}(\overline{N_k}) \right] - e(\nu_f | \theta) \smile m_k\]
    in $H^{kr}(M, \partial M)$, 
    where $g \colon (M, \partial M) \imm (N, \partial N)$ is a perturbation of $f$ which is obtained by pushing $f$ into the $\theta$-direction around $\partial M$.
}
\end{theorem}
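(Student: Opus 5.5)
We prove the modified relative Herbert formula (\cref{theorem:HR-rel-modified}) by repeating the geometric proof of \cref{theorem:HR-abs}, localized away from $\partial M$. Put $g$ for the perturbation of $f$ obtained by pushing $f$ slightly along a section $s$ of $\nu_f$ with $s|_{\partial M} = \theta$ and $s$ transverse to the zero section. Concretely, $g = \exp \circ (\delta s)$ near $f(M)$ for small $\delta>0$. Since $\theta$ is nowhere vanishing on $\partial M$, $g$ and $f$ have no coincidences near $\partial M$. Also $g(\partial M)\subset\partial N$ after an adjustment in the collar, using that $f$ is of product form there. The class $[g^{-1}(\ol{N_k})]$ is then a priori a class in $H^{kr}(M,\partial M)$, provided $g^{-1}(\ol{N_k})\cap\partial M=\varnothing$. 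This is the first point to check. Near $\partial M$, points of $g^{-1}(\ol{N_k})$ come from points of $\partial M$ mapped by $f_\partial$ close to a $k$-fold point of $f_\partial$. Pushing off along $\theta$ could meet $\ol{N_k(f_\partial)}$ only when the point is itself $(k{+}1)$-fold or when $\theta$ is tangent to the other sheets. The first is excluded by $M_{k+1}(f_\partial)=\varnothing$. For the second, I would choose the pushing direction generic relative to the finitely many sheets, keeping its homotopy class.

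The main step is a pointwise decomposition of the transverse preimage $g^{-1}(\ol{N_k})$ into two parts. The first part lies near $\ol{M_{k+1}}$: these are points $x$ such that $f(x)$ lies on $k$ other sheets. The second part lies near $s^{-1}(0)\cap\ol{M_k}$: these are points where the pushed sheet meets its own original $k$-fold locus. For the latter, in local coordinates where $f$ is a linear embedding near a point of $\ol{M_k}$, the intersection of $g$ with $\ol{N_k}$ along the sheet through $x$ is cut out by $s=0$ restricted to $\ol{M_k}$. It therefore represents $e(\nu_f|\theta)\smile m_k$. We interpret this class via the cup product $H^r(M,\partial M)\times H^{(k-1)r}(M)\to H^{kr}(M,\partial M)$, which is why $m_k$ appears absolutely. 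In the absolute proof this contributes with sign $(-1)$ from orientation comparison, and the same local computation applies verbatim here, because everything is supported in the interior.

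The first part is isotopic to $\ol{M_{k+1}}$, so it gives $m_{k+1}$. Since $M_{k+1}(f_\partial)=\varnothing$, it is compactly supported in the interior and hence represents $m_{k+1}^{\rel}$. The remaining task is to make this rigorous at the level of refined classes. Both $s^{-1}(0)$ and the locus near $\ol{M_{k+1}}$ are closed and disjoint from $\partial M$. Using $[[\cdot]]$ and the excision $H^*(M,M-C)\to H^*(M,\partial M)$, the identity $[g^{-1}(\ol{N_k})] = m_{k+1}^{\rel} + e(\nu_f|\theta)\smile m_k$ up to the sign convention holds in $H^{kr}(M,\partial M)$. One obtains it by summing the Thom classes of the two disjoint pieces of the transverse preimage. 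Rearranging gives the statement.

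I expect the main obstacle to be the boundary behavior. One must show that $g$ can be chosen with $g^{-1}(\ol{N_k})$ disjoint from $\partial M$ and transverse to the strata, while $s$ still restricts to $\theta$. One must also check that no extra intersections escape to the collar during the homotopy from $f$ to $g$. I would handle this in the product collar $f_\partial\times\id_{[0,\epsilon]}$, reducing it to a statement about $f_\partial$ and the normal field $\theta$ on $\partial M$, where $M_{k+1}(f_\partial)=\varnothing$ gives the needed separation.
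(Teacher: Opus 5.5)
Your proposal is correct and is essentially the paper's proof, which reruns the argument for \cref{theorem:HR-rel}: split $g^{-1}(\overline{N_k})$ into the clopen near part $s^{-1}(s_0(M))\cap\overline{M_k}$, dual to $e(\nu_f|\theta)\smile m_k$ with $m_k$ absolute, and a far part cobordant to $\overline{M_{k+1}}$, dual to $m_{k+1}^{\rel}$, choosing $s$ small enough that $g^{-1}(\overline{N_k})$ does not meet $\partial M$. Two small corrections: your second boundary case (tangency of $\theta$ to other sheets) is vacuous, since a point pushed off its own sheet lands in $\overline{N_k}$ only if $k$ other sheets pass nearby, and for a small push this forces a nearby point of $\overline{M_{k+1}}$, which is already excluded, so you need not modify $\theta$; and the paper's coorientation check shows the near part enters $[g^{-1}(\overline{N_k})]$ with a plus sign, so the minus sign in the statement comes only from rearranging.
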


Note that although $m_k$ is an absolute cohomology class, the product $e(\nu_f | \theta) \smile m_k$ is a relative one.
We also note that Theorems~\ref{theorem:HR-rel} and~\ref{theorem:HR-rel-modified} hold even in the case relative to a submanifold $S \subset M$ which does not meet $\partial M$.

Before the proofs of these theorems, we see an immediate consequence.
The following is the relative version of Corollary~\ref{theorem:HR-abs-cor}.

\begin{corollary}\label{corollary:HR-rel-cor}
{\it
    Under the same setup in Theorem~\ref{theorem:HR-rel-modified}, 
    \[m_{k + 1}^\rel = \left[ g^{-1}(\overline{N_k}) \right] - e(\nu_f | \theta) \smile f^* n_k + (-1)^k e(\nu_f | \theta)^k\]
    in $H^{kr}(M, \partial M)$.
}
\end{corollary}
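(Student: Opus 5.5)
The plan is to iterate \cref{theorem:HR-rel-modified} by induction on $k$, in the same way that \cref{theorem:HR-abs-cor} follows from \cref{theorem:HR-abs}. The only new ingredient is to control cup products of the relative class $e(\nu_f | \theta)$ with absolute classes.

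The key identity to prove first is
\[e(\nu_f | \theta) \smile m_k = e(\nu_f | \theta) \smile f^* n_{k-1} - e(\nu_f | \theta)^2 \smile m_{k-1},\]
valid for $k \ge 2$ as products $H^r(M, \partial M) \times H^*(M) \to H^*(M, \partial M)$. It comes from multiplying the absolute Herbert formula (\cref{theorem:HR-abs}) on the left by $e(\nu_f | \theta)$. The relative-times-absolute cup product is well defined, and cup product is natural in the absolute factor, so equalities in $H^*(M)$ may be multiplied by the relative class.

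Iterating this identity down to $m_1 = 1$ gives the expansion
\[e(\nu_f | \theta) \smile m_k = \sum_{j=1}^{k-1} (-1)^{j-1} e(\nu_f | \theta)^j \smile f^* n_{k-j} + (-1)^{k-1} e(\nu_f | \theta)^k.\]
Substituting this into \cref{theorem:HR-rel-modified} produces the stated formula, provided the intermediate terms $e(\nu_f | \theta)^j \smile f^* n_{k-j}$ with $j \ge 2$ vanish. The sign check is that $-(-1)^{k-1} = (-1)^k$, which matches.

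The main obstacle is justifying that these intermediate terms vanish. I would first try \cref{theorem:easy}, arguing that $e(\nu_f|\theta)^2$ (the square of a relative class) is killed when multiplied by any class coming from the boundary. That argument does not directly apply, since $f^*n_{k-j}$ is absolute, not a coboundary. The more honest route is a geometric one. Realize $e(\nu_f|\theta)$ by the zero set $Z = s^{-1}(0)$ of a generic section $s$ extending $\theta$. Then $e(\nu_f|\theta)^j \smile f^*n_{k-j}$ is represented by $j$ transverse copies of $Z$ intersected with $f^{-1}(\ol{N_{k-j}})$. I would then push, in the relative setting, the same argument as in the absolute Herbert formula, namely that the $e$-terms telescope.

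If this fails, I would fall back to reading the corollary as the direct recursive expansion above, i.e.\ keeping those terms, and check whether the stated formula is intended with $f^*n_k$ absorbed. Concretely, I would verify the case $k=1$ first (where it reads $m_2^\rel = [g^{-1}(\ol{N_1})] - e\smile f^*n_1 - e$, with $n_1 = [f(M)]$) and $k=2$ by hand to confirm the intended bookkeeping before writing the general induction.
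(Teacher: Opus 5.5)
\textbf{There is a genuine gap, and it cannot be closed: for $k\ge 3$ the statement is false in general.}

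Your bookkeeping is correct. It is the same substitution into \cref{theorem:HR-rel-modified} that the paper makes. The paper inserts the closed form $m_k = f^*n_{k-1} + (-1)^{k-1}e(\nu_f)^{k-1}$ from \cref{theorem:HR-abs-cor} (printed there with $n_k$) and uses $e(\nu_f|\theta)\smile e(\nu_f)^{k-1}=e(\nu_f|\theta)^k$. You instead iterate the recursion of \cref{theorem:HR-abs} directly.

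Note the index. The class $e(\nu_f|\theta)\smile f^*n_k$ has degree $(k+1)r$, so the middle term must be read as $e(\nu_f|\theta)\smile f^*n_{k-1}$. Your proposed $k=1$ check mixes degrees $r$ and $2r$ and would have shown this; for $k=1$ the correct statement is just \cref{theorem:HR-rel-modified} with $m_1=1$. With this reading, your expansion proves the corollary for $k=2$, where no intermediate terms occur.

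For $k\ge 3$, the step you could not complete is exactly the step that fails. The terms $e(\nu_f|\theta)^j\smile f^*n_{k-j}$, $2\le j\le k-1$, do not vanish in general, so no telescoping or geometric argument can remove them. The paper's proof avoids them only because \cref{theorem:HR-abs-cor} already drops the corresponding cross terms $e(\nu_f)^j\smile f^*n_{k-j}$ when ``solving'' the recursion.

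A counterexample: take $r=2$ and let $f$ be the inclusion of a hyperplane $\C P^6\subset\C P^7$. Remove a small round ball about a point of the image from $\C P^7$, and its trace from $\C P^6$.
\begin{itemize}
\item There are no multiple points, $n_2=n_3=0$, and $f^*n_1=e(\nu_f)=h$.
\item By excision, $H^*(M,\partial M)\cong\tilde H^*(\C P^6)$ multiplicatively, with $e(\nu_f|\theta)\mapsto h$.
\item For $k=3$ the stated formula reads $0=-h^3$, while your expansion gives $0=h^3-h^3$.
\item The same example already contradicts \cref{theorem:HR-abs-cor} for $k=2$: it would give $m_3=h^2\neq 0$.
\end{itemize}

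So your fallback is the correct general statement,
\[
m_{k+1}^{\rel} = \left[ g^{-1}(\ol{N_k}) \right] - \sum_{j=1}^{k-1} (-1)^{j-1}\, e(\nu_f|\theta)^j \smile f^*n_{k-j} + (-1)^k e(\nu_f|\theta)^k .
\]
The corollary as stated holds only when the extra terms cancel, for instance when $H^{>0}(N)=0$. That case covers its only use, in \cref{theorem:gen-ESz03}, where $N$ is a half-space and all $f^*n_j$ with $j\ge 1$ vanish.
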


\begin{proof}
    By Theorem~\ref{theorem:HR-rel-modified}, we have
    \[m_{k + 1}^\rel = \left[ g^{-1}(\overline{N_k}) \right] - e(\nu_f | \theta) \smile m_k.\]
    On the other hand, by Theorem~\ref{theorem:HR-abs}, we have
    \[m_k = f^* n_k + (-1)^{k-1} e(\nu_f)^{k-1}.\]
    Therefore, 
    \begin{align*}
        e(\nu_f | \theta) \smile m_k
        &= e(\nu_f | \theta) \smile f^* n_k + e(\nu_f | \theta) \smile (-1)^{k-1} e(\nu_f)^{k-1} \\
        &= e(\nu_f | \theta) \smile f^* n_k + e(\nu_f | \theta) \smile (-1)^{k-1} e(\nu_f | \theta)^{k-1} \\
        &= e(\nu_f | \theta) \smile f^* n_k + (-1)^{k-1} e(\nu_f | \theta)^k.
    \end{align*}
    This completes the proof.
\end{proof}

We give the proof of Theorems~\ref{theorem:HR-rel} and~\ref{theorem:HR-rel-modified}.
Again, note that these are due to Herbert's original proof and Ekholm--Sz\H{u}cs' argument~\cite{Her75, ESz03}.

\subsubsection{Proof of Theorem~\ref{theorem:HR-rel}}

    Fix a Riemannian metric $d_2$ on $N$ and induce $d_1$ on $M$ by the immersion $f$.
    Let $B_1(x; r)$ denote the open ball in $M$ centered at $x$ with radius $r > 0$ with respect to $d_1$. The notation is similar for $d_2$.

    Since $M$ is compact and $f$ is an immersion, 
    there is a positive number $\eta > 0$ such that for any $x \in M$, $f$ is an embedding on the open ball $B_1(x; 3 \eta)$.
    Furthermore, there is another positive number $\delta > 0$ such that for any $x, x' \in M$, if $d_2(f(x), f(x')) < \delta$, then either $d_1(x, x') < \eta$ or $d_1(x, x') > 2 \eta$.
    Hereafter, we fix such numbers.

    We choose the normal bundle $\nu_f$ to $f \colon M \imm N$ so that it is compatible with the immersion $f_{\partial} = f|_{\partial M}^{\partial N} \colon \partial M \imm \partial N$.
    Let $s_0$ denote the zero-section of $\nu_f$ and $\bar{f} \colon \nu_f \to TN$ the natural bundle homomorphism.
    We take a tubular neighborhood $T$ of $s_0(M)$ in $E(\nu_f)$ so that the map $\exp = \exp_N \circ \bar{f}|_T \colon T \to N$ is an immersion which is an embedding on $T \cap E(\nu_f|_{B_1(p; \eta)})$ for each $p \in M$.
    We also choose a section $s \colon M \to T$ so that
    \begin{enumerate}
        \item $s$ is transverse to $s_0(M)$;
        \item $s|_{\partial M}$ coincides with the given normal vector field $\theta$ up to scaling;
        \item the map $g = \exp \circ s \colon M \to N$ is an immersion and transverse to $N_i$ for any $1 \le i \le k$, and satisfies that $d_2(f(x), g(x)) < \delta$ for any $x \in M$.
    \end{enumerate}

    In the following, we will prove the formula showing
    \[g^* n_k^\rel = e(\nu_f | \theta) \smile m_k^\rel + m_{k + 1}^\rel.\]
    First, set 
    \[M_{k + 1}^g = g^{-1}(N_k) = \{ x \in M \mid g(x) = f(x_1) \text{ for some } x_1 \in M_k \}.\]
    Since $\overline{N_k}$ does not meet $\partial N$ and $g$ is transverse to any stratum of $\overline{N_k}$, the closure $\overline{M_{k + 1}^g} = g^{-1}(\overline{N_k})$ is also a stratified cycle and carries the class $g^* n_k^\rel \in H^*(M, \partial M)$ as its dual.
    We also parameterize $\overline{M_{k + 1}^g}$ by the manifold
    \[\tilde{M}_{k + 1}^g = \{ (x, [x_1, \dots, x_k]) \in M \times \tilde{N}_k \mid g(x) = f(x_1) \}\]
    and the projection $\pi \colon M \times \tilde{N}_k \to M$.
    It is clear that $\pi(\tilde{M}_{k + 1}^g) = \overline{M_{k + 1}^g}$.

    Now, we consider subsets
    \begin{align*}
        O_\near &= \{(x, [x_1, \dots, x_k]) \in M \times \tilde{N}_k \mid d_1(x, x_i) < \eta \text{ for some } i\}, \\
        O_\far &= \{(x, [x_1, \dots, x_k]) \in M \times \tilde{N}_k \mid d_1(x, x_i) > 2 \eta \text{ for any } i\}
    \end{align*}
    of $M \times \tilde{M}_k$. These are open and disjoint.
    Furthermore, since $g$ was a $\delta$-approximation to $f$ and by the choice of $\delta$, we have
    \[\tilde{M}_{k + 1}^g \subset O_\near \cup O_\far.\]
    Putting
    \[X_\near = \pi(\tilde{M}_{k + 1}^g \cap O_\near), \quad X_\far = \pi(\tilde{M}_{k + 1}^g \cap O_\far),\]
    which are clopen subsets of $\overline{M_{k + 1}^g}$, we have the decomposition 
    \[\overline{M_{k + 1}^g} = X_\near \cup X_\far.\]
    Then we will show that the duals of $X_\near$ and $X_\far$ are $e(\nu_f) \smile m_k^\rel$ and $m_{k + 1}^\rel$, respectively.

    \begin{claim}
    {\it
        One has $X_\near = s^{-1}(s_0(M)) \cap \overline{M_k}$.
        Moreover, each intersection $s^{-1}(s_0(M)) \cap M_i$ ($i \ge k$) is transverse.
    }
    \end{claim}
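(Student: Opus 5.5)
We prove both inclusions directly from the choice of $\eta$ and $\delta$ and the properties of the section $s$; transversality then comes from a local computation in the chart where $f$ is an embedding.

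First, the inclusion $X_\near \subset s^{-1}(s_0(M)) \cap \overline{M_k}$. Take $(x, [x_1, \dots, x_k]) \in \tilde{M}_{k+1}^g \cap O_\near$, say $d_1(x, x_1) < \eta$. On $B_1(x_1; 3\eta)$ the map $\exp$ is an embedding on $T \cap E(\nu_f|_{B_1(x_1;\eta)})$, and $\exp(s(x)) = g(x) = f(x_1) = \exp(s_0(x_1))$. Since both $s(x)$ and $s_0(x_1)$ lie in that piece of $T$, injectivity gives $s(x) = s_0(x_1)$. Comparing base points, $x = x_1$ and $s(x) = 0$. Hence $x \in s^{-1}(s_0(M))$ and $x = x_1 \in \overline{M_k}$.

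Conversely, let $x \in s^{-1}(s_0(M)) \cap \overline{M_k}$. Then $g(x) = f(x)$, and $x$ lies over a point of $\overline{N_k}$. So we can choose $x_2, \dots, x_k$ with $[x, x_2, \dots, x_k] \in \tilde{N}_k$, since $\tilde{N}_k$ parametrizes $\overline{N_k}$ by unordered $k$-tuples with equal image. Then $(x, [x, x_2, \dots, x_k]) \in \tilde{M}_{k+1}^g \cap O_\near$, because $d_1(x, x) = 0 < \eta$. Both inclusions need only the injectivity of $\exp$ on the small pieces, which is exactly how $\eta$ and $T$ were chosen.

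For transversality, fix $x \in s^{-1}(s_0(M)) \cap M_i$ with $i \ge k$. Near $x$, the sheets of $f$ through $f(x)$ are $f(B_1(x;\eta))$ together with $i-1$ other sheets. These meet transversally, and $M_i$ near $x$ is the preimage under $f$ of the intersection of the other $i-1$ sheets. The key step is to translate condition (3) on $g$ into a statement about $s$. Work in the tubular coordinates $\exp$, which identify a neighborhood of $f(x)$ with the total space of $\nu_f$ over $B_1(x;\eta)$; the sheet through $x$ is the zero section. In these coordinates the other $i-1$ sheets are transverse to the zero section and meet it along $M_i$. Transversality of $g = \exp \circ s$ to the stratum $N_i$, which corresponds to the intersection of all $i$ sheets, means the following: the graph of $s$ over $M_i$ is transverse to the zero section within $\nu_f|_{M_i}$. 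This holds since the normal directions of $N_i$ inside the intersection of the other $i-1$ sheets are exactly the fiber of $\nu_f$. That is precisely transversality of $s|_{M_i}$ to $s_0$, i.e. of $s^{-1}(s_0(M))$ and $M_i$.

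The main obstacle is this last linear-algebra identification: the tangent space of $N_i$ must be identified with $df(T_xM_i)$, and the normal space of $N_i$ inside the $(i-1)$-fold intersection with $(\nu_f)_x$. I would verify it in a local model of $i$ normally crossing linear subspaces, where it reduces to a direct-sum decomposition of $T_{f(x)}N$. If condition (3) alone turns out not to suffice, I would instead perturb $s$ generically on the interior of $M$, keeping it fixed near $\partial M$. This is allowed because $\overline{M_k}$ avoids $\partial M$ by the hypothesis $M_k(f_\partial) = \varnothing$. Such a perturbation makes $s$ transverse to $s_0$ on each stratum $M_i$.
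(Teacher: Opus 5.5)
Your proposal is correct and is essentially the paper's proof. The set equality comes from injectivity of $\exp$ on $T \cap E(\nu_f|_{B_1(x_1;\eta)})$, and you also write out the reverse inclusion that the paper leaves implicit. Transversality comes from two facts: at a zero $x$ of $s$, $dg_x$ is $df_x$ plus the vertical derivative of $s$, and $T_{f(x)}N_i = df_x(T_xM_i) \subset df_x(T_xM)$. The paper exploits this by intersecting $dg_x(T_xM) + df_x(T_xM_i) = T_{f(x)}N$ with $df_x(T_xM)$, whereas you project onto $(\nu_f)_x$. So the identification you flag as the main obstacle does go through, and your fallback of re-perturbing $s$ is unnecessary: transversality of $g$ to the strata $N_i$ ($i \ge k$) of $\ol{N_k}$, which the paper also uses, suffices.
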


    \begin{proof}
        Let $x \in X_\near$. There is $[x_1, \dots, x_k] \in \tilde{M}_k$ such that $(x, [x_1, \dots, x_k]) \in \tilde{M}_{k + 1}^g \cap O_\near$.
        Without loss of generality, assume that $d_1(x, x_1) < \eta$.
        Since $g(x) = f(x_1)$ and $\exp$ was an embedding $T \cap E(\nu_f|_{B_1(p; \eta)})$, we have that $s(x) = s_0(x_1)$ and hence $x = x_1$.
        This means that
        \[X_\near = \{ x \in M \mid g(x) = f(x), x \in \overline{M_k} \},\]
        which completes the proof of the former assertion.

        We show the latter assertion.
        Fix an integer $i \ge k$.
        Let $x \in s^{-1}(s_0(M)) \cap M_i$ and put $y = f(x) \in N_i$.
        Then we have distinct points $x, x_2, \dots, x_i \in f^{-1}(y)$.
        Since $g$ is transverse to $N_i = f(M_i)$, we have
        \[dg_x(T_x M) + df_x(T_x M_i) = T_y N.\]
        Restricting this equality to $df_x(T_x M)$,
        \[df_x(T_x s^{-1}(s_0(M))) + df_x(T_x M_i) = df_x(T_x M).\]
        However, since $df_x$ was injective, 
        \[T_x s^{-1}(s_0(M)) + T_x M_i = T_x M.\]
        This means that the intersection $s^{-1}(s_0(M)) \cap M_i$ is transverse.
    \end{proof}
    
    By the condition (2) for $s$, the set $s^{-1}(s_0(M))$ carries the class $\bar{e}$ as its dual.
    Furthermore, $\overline{M_k}$ carries the class $m_k^\rel$ as its dual.
    Therefore, the set $X_\near$ carries the class $e(\nu_f) \smile m_k^\rel \in H^{kr}(M, \partial M)$ as its dual for the $\Z_2$-coefficient case.
    Let us consider the integer coefficient case, i.e., $\nu_f$ is cooriented and $r$ is even.

    \begin{claim}
    {\it 
        The set $X_\near$ carries the class $e(\nu_f) \smile m_k^\rel \in H^{kr}(M, \partial M)$ as its dual without sign ambiguity.
    }
    \end{claim}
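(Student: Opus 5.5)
The claim is a sign comparison, so I will check it pointwise. The previous claim gives $X_\near = s^{-1}(s_0(M)) \cap \overline{M_k}$ with every stratum intersection transverse. Only the top stratum $s^{-1}(s_0(M)) \cap M_k$ matters for the dual class, because the lower strata have codimension at least $2$ in $X_\near$. Two coorientations of $X_\near$ are in play:
\begin{itemize}
\item[(a)] the one it inherits as a clopen piece of $g^{-1}(\overline{N_k})$, pulled back from the coorientation of $\overline{N_k}$ via $dg$; this is the coorientation carrying $g^* n_k^\rel$;
\item[(b)] the product coorientation of the transverse intersection of $s^{-1}(s_0(M))$ (dual to $e(\nu_f|\theta)$) and $\overline{M_k}$ (dual to $m_k^\rel$).
\end{itemize}
Transverse intersections of cooriented cycles are dual to cup products, with the coorientation of the intersection taken as the direct sum of the conormal orientations in that order. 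So it suffices to show that (a) and (b) agree at a generic point $x \in X_\near \cap M_k$.

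Fix such $x$, let $y = f(x)$, and let $f^{-1}(y) = \{x, x_2, \dots, x_k\}$. The normal space of $N_k$ at $y$ is $\nu_x \oplus \nu_{x_2} \oplus \cdots \oplus \nu_{x_k}$, with coorientation given by the orientations of the fibres of $\nu_f$ in some order. Since $r$ is even, reordering the summands costs no sign. Two identifications enter:
\begin{itemize}
\item the normal space of $s^{-1}(s_0(M))$ at $x$ is identified with $\nu_x$ by the vertical derivative $\nabla s_x$;
\item the normal space of $M_k$ in $M$ at $x$ is identified with $\nu_{x_2} \oplus \cdots \oplus \nu_{x_k}$ by composing $df_x$ with the projections.
\end{itemize}
Thus coorientation (b) corresponds to the map $T_xM \to \bigoplus_i \nu_{x_i}$ given by $(\nabla s_x, \pr \circ df_x)$, while (a) corresponds to the map $T_xM \to T_yN / T_yN_k$ induced by $dg_x$.

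To compare them, write $g = \exp \circ s$ and expand at a zero $x$ of $s$: $dg_x(v) = df_x(v) + \nabla s_x(v)$ in $\nu_x \subset T_yN$. I will choose the normal bundle $\nu_f$ near the multiple points so that, at normal crossings, $\nu_x \subset \sum_{i \ge 2} df_{x_i}(T_{x_i}M)$. This is compatible with the earlier choice near $\partial M$, since $X_\near$ stays away from $\partial M$. With respect to a splitting $T_xM = T_x(s^{-1}(s_0) \cap M_k) \oplus (\text{complement})$, the two maps then differ by a block-triangular correction:
\begin{itemize}
\item the $\nu_x$-component of $dg_x$ is exactly $\nabla s_x$, because $df_x(v)$ has no $\nu_x$-component;
\item the $\nu_{x_i}$-components of $dg_x$ equal those of $df_x$, plus a term that vanishes on $\ker \nabla s_x$.
\end{itemize}
Hence the determinant comparing (a) and (b) equals $\det(\nabla s_x) \cdot \det(\pr \circ df_x|)$, which is the same sign. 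To avoid fiddling with the second-order term I can replace $s$ by $ts$ and let $t \to 0$: the sign is locally constant along this homotopy, which keeps $g$ transverse near $x$.

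The main obstacle is this linear algebra: fixing orientation conventions (outward normal first, the order of factors in the cup product, the order of the $\nu_{x_i}$) and checking the off-diagonal block really vanishes. The evenness of $r$ is what makes every reordering harmless, and I would state that explicitly. Once the local signs agree, the refined fundamental classes agree, so the dual of $X_\near$ is $e(\nu_f|\theta) \smile m_k^\rel$ in $H^{kr}(M, \partial M; \Z)$ with no sign ambiguity.
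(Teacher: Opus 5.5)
Your argument is correct and is essentially the paper's. Both compare, at a point $x \in X_\near \cap M_k$, the coorientation pulled back by $dg_x$ with the intersection coorientation of $s^{-1}(s_0(M)) \cap M_k$. Your observation that the two surjections $T_xM \to \bigoplus_i \nu_{x_i}$ differ by a unipotent block-triangular automorphism is an explicit justification of the step the paper leaves unexplained, where it moves the ambient space from $dg_x(T_xM)$ to $df_x(T_xM)$.

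The special choice of $\nu_f$ and the rescaling $s \mapsto ts$ are unnecessary; as written with a sum, the choice of $\nu_f$ is also vacuous for $k \ge 3$. At a zero of $s$ one has exactly $dg_x = df_x + \bar{f} \circ \nabla s_x$, so the block form holds for any normal bundle.
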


    \begin{proof}
        We show the coorientation of $X_\near \subset \overline{M_{k + 1}^g} = g^{-1}(\overline{N_k})$ in $M$ as transverse pull-back coincides with that of $s^{-1}(s_0(M)) \cap \overline{M_k}$ in $M$ as transverse intersection.

        For a cooriented subspace $V$ of a vector space $W$, let $\Co(V \subset W)$ denote its coorientation. Then for $x \in M_{k + 1}^g \cap X_\near$,
        \begin{alignat*}{7}
            & \Co( T_x M_{k + 1}^g &{}& \subset T_x M) \\
            = & \Co( T_{g(x)} N_k &{}& \subset T_{g(x)} N) \\
            = & \Co( dg_x (T_x M) \cap T_{g(x)} N_k &{}& \subset dg_x (T_x M)) \\
            = & \Co( df_x (T_x M) \cap dg_x (T_x M) \cap T_{g(x)} N_k &{}& \subset dg_x (T_x M)) \\
            = & \Co( df_x (T_x M) \cap dg_x (T_x M) \cap T_{g(x)} N_k &{}& \subset df_x (T_x M)) \\
            = & \Co( df_x (T_x s^{-1}(s_0(M))) \cap df_x (T_x M_k) &{}& \subset df_x (T_x M)) \\
            = & \Co( T_x s^{-1}(s_0(M)) \cap T_x M_k &{}& \subset T_x M).
        \end{alignat*}
    \end{proof}

    We also observe $X_\far$.

    \begin{claim}
    {\it
        The set $X_\far$ is homologous to $\overline{M_{k + 1}}$ as a stratified cycle of $\Int M$.
    }
    \end{claim}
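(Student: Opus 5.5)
The plan is to deform $g$ back to $f$ and follow the ``far'' part of the preimage along the deformation. Consider the family of sections $s_t = t s$ for $t \in [0, 1]$ and the maps $g_t = \exp \circ s_t$, so that $g_0 = f$ and $g_1 = g$. Define the parametrized set
\[
    \tilde{W} = \{ (t, x, [x_1, \dots, x_k]) \in [0,1] \times M \times \tilde{N}_k \mid g_t(x) = f(x_1),\ d_1(x, x_i) > 2\eta \text{ for all } i \}.
\]
The choice of $\delta$ gives a dichotomy: since each $g_t$ is a $\delta$-approximation to $f$, for every $t$ the solution set splits into a near part and a far part with nothing in between. Consequently $\tilde{W}$ is a closed and open piece of the full parametrized solution set. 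Its ends are the following:
\begin{itemize}
    \item at $t = 1$ it is $\tilde{M}_{k+1}^g \cap O_\far$, which maps onto $X_\far$;
    \item at $t = 0$ it consists of tuples with $f(x) = f(x_1)$ and $x \ne x_i$ for all $i$, which is exactly the ordered-parametrization of $\overline{M_{k+1}}$.
\end{itemize}
Indeed, $x$ together with $x_1, \dots, x_k$ gives $k+1$ distinct preimages of a point, and projecting to the $x$-coordinate recovers $\overline{M_{k+1}}$ with multiplicity one.

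The key steps are as follows.
\begin{enumerate}
    \item Perturb the family $g_t$ rel $t = 0, 1$, keeping it $\delta$-close to $f$ and immersive, so that the map $(t, x) \mapsto g_t(x)$ is transverse to every stratum $N_i$, $i \ge k$, on the far region. At $t = 0$ no perturbation is needed, because self-transversality of $f$ already gives transversality of $f$ to $N_k$ at far points.
    \item Conclude that $\pi(\tilde{W}) \subset [0,1] \times M$ is a stratified chain whose boundary is $\{1\} \times X_\far - \{0\} \times \overline{M_{k+1}}$. Pushing this chain to $M$ yields the desired homology in $M$.
    \item Ensure the homology lives in $\Int M$, i.e.\ that $\tilde{W}$ stays away from $\partial M$. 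Near the boundary, $f$ has the collar form $f_\partial \times \id_{[0, \epsilon]}$ and $s$ is a multiple of the normal field $\theta$, which is tangent to $\partial N$. A far solution near $\partial M$ would therefore give $k+1$ distinct preimages under $f_\partial$ of nearby points. Using compactness and the hypothesis $M_k(f_\partial) = \varnothing$ (which implies $M_{k+1}(f_\partial) = \varnothing$), I will shrink $\delta$ so that no such solutions occur within the collar.
    \item Handle coorientations in the integer case. These come from the coorientation of $N_k$ pulled back along $g_t$, and they vary continuously in $t$, so the two ends are identified with matching signs.
\end{enumerate}

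The main obstacle is the boundary control in step (3). The parametrized far set might approach $\partial M$ even though both ends avoid it, because $\ol{N_k}$ can accumulate near $\partial N$ through points of $N_k(f_\partial \times \id)$. My first attempt is to use the product structure of the collar: in the collar the whole family is a product with $[0, \epsilon]$ up to rescaling along $\theta$, which moves only within $\partial N$-slices. Far solutions there would thus be far solutions for $f_\partial$ pushed along $\theta$ in $\partial N$. Since $M_k(f_\partial)$ is empty, a compactness and limit argument as the push length tends to $0$ rules these out once $\delta$ is small. If this fails for $k$-fold points, I will shrink the collar and take $s$ to vanish to first order only transversally, adjusting condition (2) accordingly.
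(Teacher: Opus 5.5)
Your proposal is correct and is essentially the paper's argument. The paper also runs the homotopy $h_t=\exp(\rho(t)s)$ from $f$ to $g$, with $\rho$ flat near the ends where you use the linear family $ts$. It takes the far part $C=\bigcup_t \pi(\widetilde{M}^{h_t}_{k+1}\cap O_\far)\times\{t\}$, which is clopen in $H^{-1}(\overline{N_k})$ by the $\delta$-dichotomy. After making $H$ transverse (it deforms the strata where you perturb the family rel the ends), it reads $C$ as a cobordism from $X_\far$ to $\overline{M_{k+1}}$.

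Your steps (3) and (4) spell out what the paper leaves implicit, and (3) is easier than you fear. The hypothesis $M_k(f_\partial)=\varnothing$ already makes $\overline{N_k}$ a compact subset of $\Int N$. So for $\delta$ small relative to its distance from $\partial N$, no $h_t$ maps a thin collar of $\partial M$ into $\overline{N_k}$, and your fallback of altering condition (2) is unnecessary.
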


    \begin{proof}
        Let $x \in X_\far$. Then there is $[x_1, \dots, x_k] \in \tilde{M}_k$ such that $(x, [x_1, \dots, x_k]) \in \tilde{M}_{k + 1}^g \cap O_\far$. 
        Since $d_1(x, x_i) > 2 \eta$, the points $x, x_1, \dots, x_k$ are distinct.
        This means that
        \[X_\far = \left\{ x \in M \ \middle| \ 
        \begin{array}{l}
            \text{there are } x_1, \dots, x_k \in M \text{ such that } \\
            g(x) = f(x_1) = \dots = f(x_k) \\
            \text{and } x, x_1, \dots, x_k \text{ are distinct}
        \end{array} \right\}.\]
        We define the homotopy
        \[H \colon M \times I \to N; \quad H(x, t) = h_t(x) = \exp (\rho(t) \cdot s(x)),\]
        between $h_0 = f$ and $h_1 = g$, where $I = [0, 1]$ and $\rho \colon I \to I$ is a function such that 
        \[\rho([0, 1/3]) = {0}, \quad \rho([2/3, 1]) = {1}.\]
        Then consider the set
        \[C = \bigcup_{t \in I} \pi(\tilde{M}_{k + 1}^{h_t} \cap O_\far) \times \{t\} \subset M \times I,\]
        which is a clopen subset of $H^{-1}(\overline{N_k})$.
        Now, we deform the stratification 
        $f(M) = \bigcup_{i = 1}^\infty N_i$
        so that $H$ is transverse to it.
        Then $C$ forms a cobordism between $X_\far$ and $\overline{M_{k + 1}}$.
    \end{proof}

    Since $\overline{M_{k + 1}}$ does not meet $\partial M$ by assumption, the cycle $X_\far$ carries the class $m_{k + 1}^\rel \in H^{kr}(M, \partial M)$ as its dual.
    This completes the proof of Theorem~\ref{theorem:HR-rel}.

\subsubsection{Proof of Theorem~\ref{theorem:HR-rel-modified}}

    We just mimic the above proof.
    Notice that the section $s$ (the perturbation $g$ of $f$) can be chosen sufficiently small so that the set $M_{k + 1}^g$ does not meet $\partial M$ by dimensional reason.
    This means that the absolute class $f^* n_k \in H^{kr}(M)$ admits the lift $\left[ g^{-1}(\ol{N_k}) \right] \in H^{kr}(M, \partial M)$.

\subsection{Application to Ekholm's linking invariant}\label{section:linking}

We briefly recall Ekholm's work. See \cite[\S4]{Ekh01a} for details.
Let $k \ge 1, r \ge 2$ be integers, $V$ a closed $(kr - 1)$-manifold, and $\iota \colon V \imm \R^{(k + 1)r - 1}$ a generic immersion.
Let $R$ denote the ring $\Z$ if $V$ is oriented and $r$ is even, or $\Z_2$ otherwise. 
First, the following holds by dimensional reason.

\begin{lemma}
{\it
    The $k$-fold loci $M_k(\iota) \subset V$ and $N_k(\iota) \subset \R^{(k + 1)r - 1}$ form closed submanifolds.
    Furthermore, the restriction of $\iota$ to $M_k(\iota)$ admits a nowhere-vanishing normal vector field.
}
\end{lemma}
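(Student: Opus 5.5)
The plan is a dimension count followed by a local-coordinate check at the relevant points. Set $m = kr - 1$ and $n = (k+1)r - 1$, so the codimension of $\iota$ is $r$. Since $\iota$ is generic (self-transverse), the $j$-fold self-intersection locus $\overline{M_j(\iota)}$ has codimension $(j-1)r$ in $V$. Its dimension is therefore
\[kr - 1 - (j-1)r.\]
For $j = k+1$ this equals $-1$, so $M_{k+1}(\iota) = \varnothing$ and more generally $M_j(\iota) = \varnothing$ for all $j \ge k+1$. Hence $\overline{M_k(\iota)} = M_k(\iota)$, because the closure only adds strata of higher multiplicity. Similarly $\overline{N_k(\iota)} = N_k(\iota)$.

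Next I would show these loci are closed submanifolds. At a $k$-fold point, self-transversality says the $k$ local branches meet transversally in $\R^n$. In local coordinates $N_k(\iota)$ is then a transverse intersection of $k$ submanifolds of codimension $r$, so it is a submanifold of dimension $n - kr = r - 1$. Each of its preimage components in $M_k(\iota)$ maps diffeomorphically onto it, so $M_k(\iota)$ is an $(r-1)$-manifold. Both are compact because $V$ is compact and the closures contain no extra strata.

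For the normal vector field, I would consider $\iota|_{M_k(\iota)} \colon M_k(\iota) \imm \R^n$, whose normal bundle has rank $n - (r-1) = kr$. Near a point $x \in M_k(\iota)$ with other preimages $x_2, \dots, x_k$, the tangent spaces of the other branches give a splitting of the normal space. Namely,
\[T\R^n = d\iota(T_x V) + \sum_{i \ge 2} d\iota(T_{x_i} V),\]
and $d\iota(T_x V) \cap \bigcap_{i \ge 2} d\iota(T_{x_i}V) = T N_k$. A vector of $d\iota(T_x V)$ complementary to $T_x M_k$ is normal to $\iota|_{M_k}$ but tangent to $\iota(V)$. Such vectors form the normal bundle of $M_k$ in $V$, which has rank $(k-1)r$. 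The obstruction to a nowhere-vanishing section of this rank-$(k-1)r$ bundle over an $(r-1)$-dimensional manifold vanishes whenever $(k-1)r > r - 1$, i.e. $k \ge 2$. In that case a section exists by general position.

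The case $k = 1$ needs separate treatment, and I expect it to be the main obstacle. There $M_1 = V$ is of dimension $r-1$ and the normal bundle of $\iota$ has rank $r$. A generic section of a rank-$r$ bundle over an $(r-1)$-manifold misses the zero section by dimension count. So in every case a nowhere-vanishing normal field exists by transversality, since the rank $kr$ of the full normal bundle exceeds $\dim M_k = r-1$.

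This dimension argument in fact covers all $k \ge 1$ uniformly and makes the splitting discussion unnecessary. I would present it as the main proof and keep the tangent-to-other-sheets construction only as a remark, since the later linking-number argument may want that specific choice.
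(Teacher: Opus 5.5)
Your dimension count for the first part is fine and is exactly the paper's ``dimensional reason'': the $(k+1)$-fold locus would have dimension $-1$, so $M_k(\iota)$ and $N_k(\iota)$ are closed $(r-1)$-manifolds. The problem is in the second part, where you put a nowhere-vanishing section on the wrong bundle.

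In the paper, the field $\theta$ supplied by this lemma is a normal field \emph{of $\iota$} along $M_k(\iota)$, that is, a nowhere-vanishing section of $\nu_\iota|_{M_k(\iota)}$, which has rank $r$. The surrounding text forces this reading in three places:
\begin{itemize}
\item In the next definition, $\Theta=\sum_i\theta(x_i)$ has to push $N_k(\iota)$ off $\iota(V)$.
\item In the definition after that, $\theta(M_k(\iota))$ is taken as a cycle in the sphere bundle $\partial E(\nu_\iota)$.
\item In the later theorem, $\theta$ is the restriction of a normal field of $\iota$ defined on all of $\partial M$.
\end{itemize}
The bundle you use is the normal bundle of the immersion $\iota|_{M_k(\iota)}$ in $\R^{(k+1)r-1}$. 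It splits as $\nu(M_k(\iota)\subset V)\oplus\nu_\iota|_{M_k(\iota)}$. A nowhere-vanishing section of this rank-$kr$ bundle can have $\nu_\iota$-component equal to zero, and then it gives neither a point of $\partial E(\nu_\iota)$ nor a push-off. Your $k\ge 2$ construction is exactly such a section, since by your own description it is tangent to $\iota(V)$ everywhere. So the ``uniform'' rank-$kr$ argument does not prove the lemma in the form the paper uses it.

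The fix is already in your $k=1$ paragraph, and it works verbatim for every $k\ge1$. The bundle $\nu_\iota|_{M_k(\iota)}$ has rank $r$ over the $(r-1)$-dimensional manifold $M_k(\iota)$, so a generic section misses the zero section. Equivalently, the first obstruction lies in $H^r(M_k(\iota);\{\pi_{r-1}(S^{r-1})\})=0$. Present that as the proof, and drop both the rank-$kr$ argument and the splitting discussion.

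Your closing remark also mislabels your construction. A vector in $d\iota(T_xV)$ complementary to $T_xM_k(\iota)$ is tangent to the sheet through $x$ and generally transverse to the other sheets. The choice relevant to pushing $N_k(\iota)$ off $\iota(V)$ is the reverse: normal to the sheet through $x$, tangent to the other sheets.
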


\begin{definition}[{\cite[\S4.5]{Ekh01a}}]\label{definition:linking-theta}
    Let $\theta$ be a nowhere-vanishing normal vector field of the restriction of $\iota$ to $M_k(\iota)$.
    Define a normal vector field $\Theta$ on $N_k(\iota) \subset \R^{(k + 1)r - 1}$ by
    \[\Theta(y) = \theta(x_1) + \cdots + \theta(x_k) \quad (y \in N_k(\iota), \iota^{-1}(y) = \{x_1, \cdots, x_k\}),\]
    where the normal vectors are regarded as vectors in $\R^{(k + 1)r - 1}$ based at $y$.
    Push $N_k(\iota)$ into the $\Theta$-direction and let $N_k(\iota)^\theta$ denote its resulting submanifold.
    Then we define the {\it linking invariant of $\iota$ with respect to $\theta$} to be the linking number of $\iota(V)$ and $N_k(\iota)^\theta$ in $\R^{(k + 1)r - 1}$:
    \[L_k(\iota)_\theta = \lk_{\R^{(k + 1)r - 1}} (\iota(V), N_k(\iota)^\theta) \in R.\]
\end{definition}

Hereafter, we additionally assume that $H_{r - 1}(V; R) = H_r(V; R) = 0$ so we have natural isomorphisms
\[H_{r - 1}(\partial E(\nu_\iota); R) \cong H_{r - 1}(S^{r - 1}; R) \cong R.\]

\begin{definition}[{\cite[\S4.6]{Ekh01a}}]\label{definition:linking}
    Consider the class $[\theta(M_k(\iota))] \in H_{r - 1}(\partial E(\nu_\iota); R)$. Let it also denote the corresponding number in $R$ via the above isomorphisms.
    Then we define the {\it linking invariant} of $\iota$ to be
    \[L_k(\iota) = L_k(\iota)_\theta - [\theta(M_k(\iota))] \in R.\]
\end{definition}

\begin{lemma}[{\cite[Lemmata 4.15 and 4.17]{Ekh01a}}]
{\it
    The quantity $L_k(\iota)$ is independent of the choice of $\theta$ and invariant under regular homotopy through generic immersions.
}
\end{lemma}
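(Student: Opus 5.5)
We must show two things: that $L_k(\iota)$ does not depend on $\theta$, and that it is unchanged under regular homotopy through generic immersions. The plan is to treat both as direct computations on $\iota$ and the normal sphere bundle $\partial E(\nu_\iota)$, tracking how $L_k(\iota)_\theta$ and $[\theta(M_k(\iota))]$ change and checking that the changes cancel.

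\emph{Independence of $\theta$.} Let $\theta,\theta'$ be two nowhere-vanishing normal fields on $M_k=M_k(\iota)$. Since $\dim M_k=(k-1)r$ is a closed manifold, we join them by a generic path $\theta_t$ of normal fields, which vanishes only on a codimension-$r$ cycle $Z\subset M_k\times I$. Correspondingly, the pushed-off copies $N_k^{\theta}$ and $N_k^{\theta'}$ are joined by a cobordism $W=\bigcup_t N_k^{\theta_t}$.
\begin{enumerate}
\item The difference $L_k(\iota)_{\theta'}-L_k(\iota)_\theta$ is the algebraic intersection of $W$ with $\iota(V)$. This intersection occurs only where the pushing vector $\Theta_t=\sum_i\theta_t(x_i)$ fails to move the sheet through $x_i$ off $\iota(V)$.
\item Since $\Theta_t(y)$ is a sum of normal vectors, one for each of the $k$ sheets, its component normal to sheet $x_i$ is essentially $\theta_t(x_i)$ modulo the tangent directions of the other sheets. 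Hence $W$ meets the $i$-th sheet exactly over the zeros of $\theta_t(x_i)$.
\item Summing over the $k$ sheets, the change equals the intersection number of $Z$ with the fundamental class, read in $H_{r-1}(\partial E(\nu_\iota))\cong R$. This is precisely $[\theta'(M_k)]-[\theta(M_k)]$, because the homology of $\theta_t(M_k)$ in $E(\nu_\iota)$ minus the zero section jumps by the class of $Z$.
\end{enumerate}
Hence the difference $L_k(\iota)_\theta-[\theta(M_k)]$ is constant. The hypothesis $H_{r-1}(V)=H_r(V)=0$ enters here: it makes $H_{r-1}(\partial E)\cong R$ generated by a fiber, so both changes are measured by the same integer.

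\emph{Regular homotopy invariance.} Let $\iota_t$ be a regular homotopy through generic immersions. By genericity, the multiple-point manifolds $M_k(\iota_t)$ form an isotopy, and we can transport $\theta$ continuously along it. Then $N_k(\iota_t)^{\theta_t}$ stays disjoint from $\iota_t(V)$ for all $t$, so the linking number is locally constant. The class $[\theta_t(M_k(\iota_t))]$ is likewise constant by continuity, since the identifications $H_{r-1}(\partial E(\nu_{\iota_t}))\cong R$ vary continuously.

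\emph{Main obstacle.} The main obstacle is the sign and local computation in step 2 of the first part: we must confirm that each zero of $\theta_t(x_i)$ contributes to the intersection number exactly as it contributes to the homology jump of $\theta_t(M_k)$. We will first try a local model at a $k$-fold point with linear sheets $\R^{kr-1}$ in general position in $\R^{(k+1)r-1}$. In this model the computation reduces to a degree computation on $S^{r-1}$, which we then check directly.
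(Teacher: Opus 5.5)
The paper gives no proof of this lemma; it is quoted from \cite[Lemmata 4.15 and 4.17]{Ekh01a}. Your sketch is therefore a self-contained reconstruction of the standard argument, and its architecture is sound. In the first part, the zeros of a generic homotopy $\theta_t$ account both for the crossings of the swept push-off $W=\bigcup_t N_k^{\theta_t}$ with $\iota(V)$ and for the jumps of $[\theta_t(M_k)]$ by fiber classes. In the second part, along a path of self-transverse immersions everything moves by isotopy. Your literal reading of ``through generic immersions'' is the right one: across an instant with a $(k+1)$-fold point, $L_k$ in general changes (compare the coefficient $k+1$ in \cref{theorem:gen-ESz03}). That second part is fine as written. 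The first part needs the following repairs.

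First, $\dim M_k(\iota)=r-1$; the number $(k-1)r$ is its codimension in $V$. So $Z\subset M_k\times I$ is a finite set of interior points. Step~3 is then just a signed count over $Z$, not an intersection of $Z$ with a fundamental class.

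Second, write $P_i$ for the sheet through $y$ coming from $x_i$. Step~2 holds only if the vector representing $\theta(x_j)$ in $\R^{(k+1)r-1}$ is chosen tangent to all the other sheets, $\theta(x_j)\in\bigcap_{i\neq j}T_yP_i$. Transversality allows this, and the definition of $L_k(\iota)_\theta$ has to be read this way. Then $\Theta_t(y)\equiv\theta_t(x_i)\bmod T_yP_i$, so for a small push-off $W$ meets $P_i$ exactly over the zeros of $\theta_t$ at $x_i$; zeros at two preimages of the same $y$ do not occur generically. With, say, orthogonal normal vectors this fails. Let $\pi_i\colon T_y\R^{(k+1)r-1}\to T_y\R^{(k+1)r-1}/T_yP_i$ be the quotient map. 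The $P_i$-normal part of $\Theta$ is then $\theta(x_i)+\sum_{j\neq i}\pi_i\theta(x_j)$, which can vanish although $\theta$ does not, and even $N_k^\theta\cap\iota(V)=\varnothing$ is lost.

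Third, the sign check you single out does go through, with nothing $i$-dependent in it. In the integral case $r$ is even, so every normal fiber has even rank. Hence the orientation of $M_k$ at each of the $k$ preimages of $y$ (cooriented in $V$ by the other $k-1$ normal fibers) agrees with that of $N_k$ (cooriented by all $k$ of them). Also, the coorientation of $P_i$ is the fiber orientation of $\nu_\iota$ at $x_i$. Consequently each $z\in Z$ contributes $\sgn\det D\theta_t(z)$ to both changes, up to a single universal sign. One computation in the linear model, with the conventions fixed for $\lk$ and for the generator of $H_{r-1}(\partial E(\nu_\iota);R)\cong R$, settles it. The outcome must match the identity $L_k(\iota)_{-\theta}=L_k(\iota)_\theta-([\theta(M_k(\iota))]-[-\theta(M_k(\iota))])$ used in the proof of \cref{theorem:gen-ESz03}.

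Finally, the isomorphism $H_{r-1}(\partial E(\nu_\iota);R)\cong H_0(V;R)\cong R$ comes from the Gysin sequence and uses that $V$ is connected.
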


Now, we show the following.

\begin{theorem}\label{theorem:gen-ESz03}
{\it
    Let $M$ be a compact $kr$-manifold with boundary such that
    \[H_{r - 1}(\partial M; R) = H_r(\partial M; R) = 0.\]
    Let $f \colon (M, \partial M) \imm (\R^{(k + 1)r}_+, \R^{(k + 1)r - 1})$ be a generic immersion.
    Assume that $\iota = f_\partial$ admits a nowhere-vanishing normal vector field $\theta$ on the whole of $\partial M$.
    Then
    \[L_k(\iota)_\theta = (k + 1) \# N_{k + 1}(f) - (-1)^k \left\langle e(\nu_f | \theta)^k, [M, \partial M] \right\rangle,\]
    where $\# N_{k + 1}(f)$ is the algebraic number of $N_{k + 1}(f)$.
}
\end{theorem}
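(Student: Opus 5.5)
The approach is to apply Corollary~\ref{corollary:HR-rel-cor} with $k+1$ in place of $k$ and $m_{k+1}$ reinterpreted via the $\theta$-pushed immersion $g$, then evaluate on the fundamental class $[M, \partial M]$. Here $f$ has codimension $r$, $\dim M = kr$, and $\dim \R^{(k+1)r}_+ = (k+1)r$. So $\overline{M_{k+1}(f)}$ is a finite set of points in $\Int M$, and $\overline{M_{k+2}} = \varnothing$. Also $M_{k+1}(f_\partial) = \varnothing$ for dimensional reasons, since $f_\partial$ has $(k+1)$-fold points in codimension $(k+1)r > kr - 1$. Hence the hypotheses of Theorem~\ref{theorem:HR-rel-modified} hold with index $k$, and we get
\[
m_{k+1}^\rel = \left[ g^{-1}(\overline{N_k}) \right] - e(\nu_f | \theta) \smile m_k \in H^{kr}(M, \partial M).
\]
Evaluating on $[M,\partial M]$ gives $\langle m_{k+1}^\rel, [M,\partial M]\rangle = (k+1)\# N_{k+1}(f)$, since each $(k+1)$-fold point of the image has $k+1$ preimages with matching signs in the oriented case.

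Next I expand $e(\nu_f|\theta)\smile m_k$ as in the proof of Corollary~\ref{corollary:HR-rel-cor}: $m_k = f^*n_k + (-1)^{k-1}e(\nu_f)^{k-1}$. The term $f^*n_k$ needs care, because the target is the half-space, which is contractible, so $n_k$ vanishes absolutely. Thus $e(\nu_f|\theta)\smile f^*n_k = 0$ in $H^{kr}(M,\partial M)$, and the cup product with the relative Euler class gives $(-1)^{k-1}\langle e(\nu_f|\theta)^k,[M,\partial M]\rangle$. Therefore
\[
(k+1)\#N_{k+1}(f) = \#\, g^{-1}(\overline{N_k}) + (-1)^{k}\langle e(\nu_f|\theta)^k,[M,\partial M]\rangle .
\]

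The remaining and main step is to identify $\# g^{-1}(\overline{N_k}(f))$, the algebraic intersection number of the pushed-off $g(M)$ with the $r$-dimensional stratified cycle $\overline{N_k(f)}$ in the half-space, with $L_k(\iota)_\theta$. My plan is to view $\overline{N_k(f)}$ as a relative cycle whose boundary is $N_k(\iota) \subset \R^{(k+1)r-1}$, and $g(M)$ as a chain whose boundary is the pushed copy of $\iota(V)$ in the $\theta$-direction. The intersection number of two relative chains in the half-space with disjoint boundaries then computes the linking number of their boundaries in $\partial\R^{(k+1)r}_+$. This is the standard description of linking via bounding chains, one of which is pushed into the interior.

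One then must match this with Definition~\ref{definition:linking-theta}. Pushing $\iota(V)$ by $\theta$ rather than pushing $N_k(\iota)$ by $\Theta = \sum\theta(x_i)$ needs justification. Near $y\in N_k(\iota)$ the sheets are moved individually by $\theta(x_i)$, and these are normal to their own sheets. I would use a simultaneous isotopy/translation argument (Ekholm--Sz\H{u}cs style): moving the whole sheet picture by $-\theta$ relative to $N_k$ is locally the same as moving $N_k$ by the sum of the normal displacements, because the sheets intersect transversally. The hypotheses $H_{r-1}=H_r=0$ ensure the linking number is well defined.

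Sign conventions, including the boundary orientation and the order in $\lk$, are the likely obstacle. I would fix them by checking the case $k=1$, $r$ even, against the Ekholm--Sz\H{u}cs formula.
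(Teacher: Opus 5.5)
\paragraph{The gap: the final identification with $L_k(\iota)_\theta$.} Everything up to
\[
(k+1)\#N_{k+1}(f) = \#\, g^{-1}(\ol{N_k}) + (-1)^{k}\langle e(\nu_f|\theta)^k,[M,\partial M]\rangle
\]
agrees with the paper's argument, including the step where you rewrite $\#\,g^{-1}(\ol{N_k})$ as $\lk(g(\partial M), N_k(\iota))$. The problem is the last step, where you identify this linking number with $L_k(\iota)_\theta$. The map $g$ of \cref{theorem:HR-rel-modified} pushes the sheets of $\iota(V)$ in the $+\theta$ direction. Your local picture is correct, but it is for the opposite push: pushing the sheets by $-\theta$ with $N_k$ fixed is the same as pushing $N_k$ by $+\Theta$.

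For your $g$, near a $k$-fold point a translation by $-\Theta$ undoes the push of the sheets and moves $N_k(\iota)$ to $N_k(\iota)^{-\theta}$. So the ambient isotopy gives
\[
\lk(\iota(V)^{\theta}, N_k(\iota)) = \lk(\iota(V), N_k(\iota)^{-\theta}) = L_k(\iota)_{-\theta},
\]
and not $L_k(\iota)_\theta$. The paper also arrives at $L_k(\iota)_{-\theta}$ at this point. What you have proved is therefore the formula with $L_k(\iota)_{-\theta}$ on the left. This is not an overall sign convention that a check in a special case could settle. It is a comparison between two a priori different quantities.

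\paragraph{The missing step: $L_k(\iota)_{-\theta}=L_k(\iota)_\theta$.} This is where the hypothesis $H_{r-1}(\partial M;R)=H_r(\partial M;R)=0$ is actually used. It is not needed to make the linking number well defined: disjoint closed cycles of dimensions $kr-1$ and $r-1$ in $\R^{(k+1)r-1}$ always have a linking number. The paper's argument runs as follows.
\begin{itemize}
\item The hypothesis gives $H_{r-1}(\partial E(\nu_\iota);R)\cong R$.
\item By Ekholm's independence lemma, $L_k(\iota)=L_k(\iota)_\theta-[\theta(M_k(\iota))]$ does not depend on $\theta$, so $L_k(\iota)_{-\theta}=L_k(\iota)_\theta-\bigl([\theta(M_k(\iota))]-[-\theta(M_k(\iota))]\bigr)$.
\item The antipodal map of the fibre $S^{r-1}$ has degree $(-1)^r$, so the bracket equals $(1-(-1)^r)[\theta(M_k(\iota))]$.
\item This vanishes in $R$, because either $R=\Z_2$ or $r$ is even.
\end{itemize}

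\paragraph{An alternative repair.} You can also stay on your own route by applying \cref{theorem:HR-rel-modified} to the normal field $-\theta$, so that the isotopy lands on $L_k(\iota)_\theta$. You then need to compare the Euler class terms.
\begin{itemize}
\item Both $e(\nu_f|-\theta)$ and $e(\nu_f|\theta)$ restrict to $e(\nu_f)$, so their difference lies in $\Im\connhom$.
\item By \cref{theorem:easy}, every cross term in the expansion of the $k$-th power vanishes, so $e(\nu_f|-\theta)^k=e(\nu_f|\theta)^k$ for $k\ge 2$.
\item For $k=1$, the hypothesis forces $\partial M=\varnothing$, so there is nothing to compare.
\end{itemize}
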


\begin{proof}
    First, the $k$-fold locus of $f$ does not meet the boundary by dimensional reason.
    Applying Corollary~\ref{corollary:HR-rel-cor} to $f$ with normal frame $\theta$, we have
    \[m_{k + 1} = \left[ g^{-1}(\overline{N_k}) \right] + (-1)^k e(\nu_f | \theta)^k,\]
    where $g \colon (M, \partial M) \imm (\R^{(k + 1)r}_+, \R^{(k + 1)r - 1})$ is a perturbation of $f$ into the $\theta$-direction around $\partial M$.
    Evaluating both sides by the fundamental class $[M, \partial M]$, 
    \[(k + 1) \cdot \# N_{k + 1} = \left\langle \left[ g^{-1}(\overline{N_k})  \right], [M, \partial M] \right\rangle + (-1)^k \left\langle e(\nu_f | \theta)^k, [M, \partial M] \right\rangle.\]
    Therefore, it suffices to show that
    \[\left\langle \left[ g^{-1}(\overline{N_k}) \right], [M, \partial M] \right\rangle = L_k(\iota)_\theta.\]
    One can see that
    \begin{align*}
        \left\langle \left[ g^{-1}(\overline{N_k}) \right], [M, \partial M] \right\rangle
        &= \# g^{-1}(\ol{N_k(f)}) \\
        &= \I_{\R^{(k + 1)r}_+}(g(M), \ol{N_k(f)}) \\
        &= \lk_{\R^{(k + 1)r - 1}}(g(\partial M), N_k(\iota)),
    \end{align*}
    where $\I_X(Y, Z)$ denotes the algebraic number of the intersection of chains $Y$ and $Z$ in $X$.
    Now, we push $g(\partial M)$ back to $f(\partial M) = \iota(\partial M)$ and also push $N_k(\iota)$ forward to the $(-\theta)$-direction.
    These pushings can be simultaneously realized by an ambient isotopy of $\R^{(k + 1)r - 1}$.
    Hence, 
    \begin{align*}
        \left\langle \left[ g^{-1}(\overline{N_k}) \right], [M, \partial M] \right\rangle 
        &= \lk_{\R^{(k + 1)r - 1}}(\iota(\partial M), N_k(\iota)^{-\theta}) \\
        &= L_k(\iota)_{-\theta}.
    \end{align*}
    To complete the proof, we see that
    \[L_k(\iota)_{-\theta} = L_k(\iota)_{\theta}.\]
    By the assumption $H_{r - 1}(\partial M; R) = H_r(\partial M; R) = 0$, we have
    \[L_k(\iota)_{-\theta} = L_k(\iota)_{\theta} - ([\theta(M_k(\iota))] - [-\theta(M_k(\iota))])\]
    in $H_{r - 1}(S^{r - 1}; R)$.
    Furthermore, the term
    \[[\theta(M_k(\iota))] - [-\theta(M_k(\iota))] = [\theta(M_k(\iota))] - (-1)^r [\theta(M_k(\iota))]\]
    vanishes. This is obvious in the unoriented case. Furthermore, in the oriented case, the codimension $r$ is assumed to be even. 
    This completes the proof.
\end{proof}

When $k = 2l$ and $r = 2$, Theorem \ref{theorem:HR-rel-modified} directly implies Ekholm--Sz\H{u}cs' formula.
Let $V$ be a closed oriented $(4l-1)$-manifold and $\iota \colon V \imm \R^{4l+1}$ a generic immersion.

\begin{lemma}\label{lemma:ESz03-indep}
{\it
    If $V$ is $2$-connected, then there always exists a nowhere-vanishing normal vector field $\theta$ of $\iota$ and it is homotopically unique.
    Therefore, $L_{2l}(\iota)_\theta$ is independent of the choice of $\theta$.
}
\end{lemma}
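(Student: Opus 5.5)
Since $\iota \colon V \imm \R^{4l+1}$ has codimension $2$ and $V$ is oriented, the normal bundle $\nu_\iota$ is an oriented plane bundle, i.e.\ a complex line bundle over $V$. I will therefore treat existence and homotopical uniqueness of a nowhere-vanishing section by obstruction theory for sections of the circle bundle $S(\nu_\iota) \to V$, whose fiber is $S^1$. The primary (and only) obstruction to the existence of a section is the Euler class $e(\nu_\iota) \in H^2(V; \Z)$. Sections up to homotopy then form a torsor over $H^1(V; \pi_1(S^1)) = H^1(V; \Z)$; there are no higher obstructions or differences, because $\pi_i(S^1) = 0$ for $i \ge 2$. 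When $V$ is $2$-connected, the Hurewicz and universal coefficient theorems give $H^1(V;\Z) = 0$ and $H^2(V;\Z) = \Hom(H_2(V),\Z) \oplus \mathrm{Ext}(H_1(V),\Z) = 0$. Hence $e(\nu_\iota) = 0$, a nowhere-vanishing normal field $\theta$ exists, and any two such fields are homotopic through nowhere-vanishing normal fields.

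The one point needing care is that $V$ may have dimension $4l-1 \ge 3$, so one might worry about obstructions beyond dimension $2$. This is resolved by noting that the fiber $S^1$ is a $K(\Z,1)$: the relevant obstruction groups $H^{i+1}(V; \pi_i(S^1))$ and difference groups $H^i(V; \pi_i(S^1))$ vanish for $i \ge 2$. So the two cohomology groups computed above are the only ones that matter. Equivalently, $S(\nu_\iota)$ is classified by a map $V \to \mathbb{CP}^\infty = K(\Z,2)$, which is nullhomotopic, so $S(\nu_\iota) \cong V \times S^1$ and sections correspond to maps $V \to S^1$, i.e.\ to $H^1(V;\Z) = 0$.

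For the second assertion, let $\theta_0, \theta_1$ be two nowhere-vanishing normal fields and let $\theta_t$ be a homotopy between them through such fields. For $k = 2l$ and $r = 2$, the $k$-fold locus $M_k(\iota)$ is a closed submanifold, and $N_k(\iota)$ is a closed $(2l-1)$-manifold disjoint from $\iota(V)$ away from itself. As $t$ varies, the pushed copies $N_k(\iota)^{\theta_t}$, defined via $\Theta_t$ as in \cref{definition:linking-theta}, vary continuously. For sufficiently small push-off length they remain disjoint from $\iota(V)$, since $\Theta_t$ stays transverse to the sheets of $\iota(V)$ through $N_k(\iota)$ and the pushing direction never vanishes on any sheet. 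The linking number is invariant under such a homotopy through disjoint cycles, so $L_{2l}(\iota)_{\theta_0} = L_{2l}(\iota)_{\theta_1}$. Alternatively, this follows from Ekholm's independence statement for $L_k(\iota)$, since the correction term $[\theta(M_k(\iota))]$ is unchanged under a homotopy of $\theta$.

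The main obstacle is checking that $N_k(\iota)^{\theta_t}$ stays disjoint from $\iota(V)$ throughout the homotopy. I would check this locally near $N_k(\iota)$. At $y \in N_k(\iota)$, the $k$ sheets meet in general position, and $\Theta_t(y) = \sum_i \theta_t(x_i)$. The component of $\Theta_t(y)$ normal to the $i$-th sheet is $\theta_t(x_i)$, because the other summands are tangent to that sheet: the normals to the different sheets are transverse at a $k$-fold point with $k r = $ full normal dimension of $N_k$ minus $\dim N_k$. Since $\theta_t(x_i) \neq 0$, a small push along $\Theta_t$ leaves every sheet. Uniform smallness of the push-off then follows by compactness.
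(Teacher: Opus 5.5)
The paper states this lemma without proof, and your argument is the one its wording ("homotopically unique. Therefore...") points to: $e(\nu_\iota)\in H^2(V;\Z)=0$ gives existence, and $H^1(V;\Z)=0$ gives homotopical uniqueness, with $S^1$ a $K(\Z,1)$ so there are no higher obstructions. Independence of $\theta$ then follows from homotopy invariance of the linking number with the push-off, or equivalently from invariance of $[\theta(M_{2l}(\iota))]$ in Ekholm's $L_{2l}(\iota)$. There is one minor slip, and one assumption to make explicit: $N_{2l}(\iota)$ is a closed $1$-manifold (dimension $r-1$), not $(2l-1)$-dimensional; and your claim that $\theta(x_j)$ is tangent to the $i$-th sheet for $j\neq i$ holds only for normal representatives adapted to the transverse intersection, which is the convention implicit in the definition of $\Theta$.
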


Hence, $L_{2l}(\iota)_\theta$ will be denoted by $L_{2l}(\iota)$ below.
Furthermore, one can show the following.

\begin{lemma}[{\cite[\S8.1, Claim (a) in the proof of Theorem 1.3]{ESz03}}]\label{lemma:ESz03-nullcob}
{\it
    There is a positive integer $d$ such that the disjoint union of $d$ copies of $\iota$ is null-cobordant as an oriented immersion. That is, $d \cdot \iota \colon d \cdot V \imm \R^{4l+1}$ bounds a generic immersion $f \colon M \imm \R^{4l+2}_+$ of a connected compact oriented $4l$-manifold $M$.
}
\end{lemma}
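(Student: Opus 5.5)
The plan is to reduce the statement to the finiteness of the relevant cobordism group of immersions, following Takase's method already used in the proof of \cref{formula:Sigma2-dim4}. Consider the group $\SI(4l-1, 2)$ of oriented cobordism classes of immersions of closed oriented $(4l-1)$-manifolds into $\R^{4l+1}$. The group structure is disjoint union, as recalled before the proof of \cref{formula:Sigma2-dim4}. The immersion $\iota$ defines a class $[\iota] \in \SI(4l-1, 2)$. It therefore suffices to show that $[\iota]$ has finite order $d$, and then to improve a null-cobordism of $d \cdot \iota$ into one with the required properties.

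For finiteness I would invoke Wells' Pontryagin--Thom type isomorphism~\cite{Wel66}. It identifies the cobordism group of oriented codimension-$r$ immersions of $k$-manifolds with the stable homotopy group $\pi^S_{k+r}(\mathrm{MSO}(r))$. For $r = 2$ we have $\mathrm{MSO}(2) \simeq \C P^\infty$, which gives
\[\SI(4l-1, 2) \cong \pi^S_{4l+1}(\C P^\infty).\]
This is consistent with $\SI(3,2) \cong \pi^S_5(\C P^\infty) \cong \Z_2$ quoted in \cref{remark:conjecture-for-A2}. The group is finitely generated. By Serre's theorem its rationalization is $H_{4l+1}(\C P^\infty; \mathbb{Q}) = 0$, since the stable Hurewicz map is a rational isomorphism. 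Hence $\SI(4l-1,2)$ is finite, and we let $d \ge 1$ be the order of $[\iota]$.

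By the definition of the cobordism relation, $d \cdot \iota$ then bounds an oriented immersion $f \colon M \imm \R^{4l+2}_+$ with $f^{-1}(\R^{4l+1}) = \partial M = d \cdot V$. This immersion can be chosen to be of product form near the boundary, i.e.\ $(d\cdot\iota) \times \id_{[0,\epsilon]}$ on a collar. It remains to adjust $f$ in three ways.
\begin{itemize}
\item[(a)] I would make $f$ generic (self-transverse) by a small perturbation supported in the interior. This does not change the boundary, and near the boundary self-transversality is inherited from the genericity of $\iota$, because of the product structure.
\item[(b)] I would make $M$ connected. Any closed components of $M$ can simply be discarded. Each remaining component can then be joined to the next by an interior $1$-handle: remove small disks from two components, and connect their images by an immersed thin tube $S^{4l-1} \times [0,1]$ running through the interior of $\R^{4l+2}_+$. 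The tube can be oriented compatibly because the orientation can be arranged by the choice of attaching. This is an oriented boundary connected sum, which preserves both the boundary and the immersion property. A further perturbation restores genericity.
\item[(c)] Since $d \ge 1$ and $V \neq \varnothing$, the manifold $M$ is nonempty.
\end{itemize}

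The main obstacle is the finiteness step, specifically citing correctly that Wells' isomorphism applies with oriented normal bundles in codimension $2$, so that the classifying Thom space is $\mathrm{MSO}(2) \simeq \C P^\infty$ rather than $\mathrm{MO}(2)$. The rest (genericity and connectedness) is routine general position in the interior.
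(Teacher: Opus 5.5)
Your argument is correct and follows essentially the same route as the claim in \cite{ESz03} that the paper cites without reproducing it: Wells' isomorphism gives $\SI(4l-1,2)\cong\pi^S_{4l+1}(\C P^\infty)$, Serre's rational computation shows this group is finite, which supplies $d$, and genericity and connectedness of $M$ are then arranged by general position and tubing in the interior. One small slip: the tube construction in (b) is an interior connected sum, not a boundary connected sum (a boundary connected sum would change $\partial M$), but the operation you actually describe is the right one and preserves $\partial M = d \cdot V$.
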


We have the following, combining the above and Theorem~\ref{theorem:gen-ESz03}.

\begin{corollary}[{\cite[Theorem 1.3]{ESz03}}]\label{corollary:ESz03-linking}
{\it
    Let $V$ be a $2$-connected closed oriented $(4l-1)$-manifold and $\iota \colon V \imm \R^{4l+1}$ a generic immersion.
    Choose a positive integer $d$, a connected compact oriented $4l$-manifold $M$, and a generic immersion $f \colon M \imm \R^{4l+2}_+$ such that $\partial M = d \cdot V$ and $f$ is bounded by $d \cdot \iota \colon d \cdot V \imm \R^{4l+1}$.
    Then
    \[L_{2l}(\iota) = \frac{1}{d} \cdot \left\{ (2l + 1) \# N_{2l+1}(f) - \left\langle e(\nu_f | \theta)^{2l}, [M, \partial M] \right\rangle \right\},\]
    where $\theta$ is an arbitrary nowhere-vanishing normal vector field of $\iota$.
}
\end{corollary}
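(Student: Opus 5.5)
We apply \cref{theorem:gen-ESz03} with $k = 2l$ and $r = 2$ to $d \cdot \iota$, and then divide by $d$. Here $d$ and $f \colon M \imm \R^{4l+2}_+$ are supplied by \cref{lemma:ESz03-nullcob}. The boundary $\partial M = d \cdot V$ is a disjoint union of copies of the $2$-connected manifold $V$, so $H_1(\partial M; \Z) = H_2(\partial M; \Z) = 0$. Since $r = 2$ is even and everything is oriented, $R = \Z$, and the homological hypothesis of \cref{theorem:gen-ESz03} holds. By \cref{lemma:ESz03-indep}, $\iota$ admits a nowhere-vanishing normal vector field $\theta$, unique up to homotopy. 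Taking $\theta$ on each copy gives a nowhere-vanishing normal field on all of $\partial M$ for $f_\partial = d \cdot \iota$. The theorem then yields
\[
L_{2l}(d \cdot \iota)_\theta = (2l+1)\,\# N_{2l+1}(f) - \left\langle e(\nu_f | \theta)^{2l}, [M, \partial M] \right\rangle,
\]
where the sign is $(-1)^{2l} = 1$.

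It remains to show $L_{2l}(d \cdot \iota)_\theta = d \cdot L_{2l}(\iota)_\theta$. We may assume the $d$ copies of $\iota$ lie in $\R^{4l+1}$ translated far apart. This changes neither $f$'s boundary data nor the right-hand side up to regular homotopy, and it does change things up to a cobordism we can absorb. Alternatively we choose them so from the start when applying \cref{lemma:ESz03-nullcob}, reading ``disjoint union'' as separated copies. The $2l$-fold locus of $d\cdot\iota$ is then the disjoint union of the $2l$-fold loci of the copies: points lying on images of two different far-apart copies do not occur. Consequently $N_{2l}(d\cdot\iota)^\theta$ is a union of pushed-off copies. The linking number $\lk(\iota(V), N_{2l}(\iota)^\theta)$ is bilinear in the two cycles. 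Each cross term, linking copy $a$ of $\iota(V)$ with copy $b$ of $N_{2l}^\theta$ for $a \ne b$, vanishes because the two cycles lie in disjoint balls. The diagonal terms each equal $L_{2l}(\iota)_\theta$, since translation preserves linking numbers. This gives $d \cdot L_{2l}(\iota)_\theta$. Finally, \cref{lemma:ESz03-indep} lets us write $L_{2l}(\iota)$ for this value, and dividing by $d$ gives the formula for an arbitrary $\theta$.

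The main obstacle is the additivity step. The linking invariant is a priori defined for a single generic immersion, and we need the null-cobordism's boundary to be realized by separated copies. The cleanest route is to choose the copies separated before invoking \cref{lemma:ESz03-nullcob}. Otherwise, we would note that $L_{2l}$ is invariant under regular homotopy through generic immersions (the lemma after \cref{definition:linking}). Moving the copies apart can be made a regular homotopy through generic immersions, by translating copies generically. That regular homotopy extends to a collar on $f$, which changes neither $\# N_{2l+1}(f)$ nor the relative Euler number.
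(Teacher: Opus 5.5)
Your proposal is correct and follows the paper's (one-line) proof: apply \cref{theorem:gen-ESz03} with $k = 2l$, $r = 2$, $R = \Z$ to the given $f$, checking the hypotheses via the $2$-connectedness of $V$ and \cref{lemma:ESz03-indep}, and then use $L_{2l}(d \cdot \iota)_\theta = d \cdot L_{2l}(\iota)_\theta$ for separated copies, an additivity step the paper leaves implicit. You should drop the fallback in your last paragraph, because $d \cdot \iota$ already denotes the separated cobordism sum, and that fallback is wrong as written: translating overlapping copies apart generally passes through $(2l+1)$-fold points, so it is not a regular homotopy through generic immersions, and its track can add $(2l+1)$-fold points to $f$, which contradicts your claim that $\# N_{2l+1}(f)$ is unchanged.
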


\begin{remark}
    Let $L^{\mathrm{ESz}}_{2l}$ denote the linking invariant with the sign convention of~\cite{ESz03}.
    With our convention, we have
    \[L^{\mathrm{ESz}}_{2l}(\iota) = - L_{2l}(\iota).\]
    In other words, \cite[Theorem 2]{ESz03} shows that
    \[-L^{\mathrm{ESz}}_{2l}(\iota) = \frac{1}{d} \cdot \left\{ (2l + 1) \# N_{2l+1}(f) - \left\langle e(\nu_f | \theta)^{2l}, [M, \partial M] \right\rangle \right\}.\]
\end{remark}

\begin{remark}
    In~\cite{ESz03}, the formula corresponding to Theorem~\ref{theorem:HR-rel-modified} was used to show the invariance of the quantity 
    \[(2l + 1) \# N_{2l+1}(f) - \left\langle e(\nu_f | \theta)^{2l}, [M, \partial M] \right\rangle + d \cdot L^{\mathrm{ESz}}_{2l}(\iota)\]
    up to cobordism.
    Then it was shown that this quantity is zero using a fact about the corresponding cobordism group, and in turn, Corollary~\ref{corollary:ESz03-linking} was proven.
\end{remark}

\end{document}